\numberwithin{equation}{section}
\theoremstyle{definition}
\newtheorem{theorem}{Theorem}[subsection]
\newtheorem*{theorem*}{Theorem}
\newtheorem{definition}[theorem]{Definition}
\newtheorem*{definition*}{Definition}
\newtheorem{prop}[theorem]{Proposition}
\newtheorem*{prop*}{Proposition}
\newtheorem{lemma}[theorem]{Lemma}
\newtheorem*{lemma*}{Lemma}
\newtheorem{rem}[theorem]{Remark}
\newtheorem*{rem*}{Remark}
\newtheorem{cor}[theorem]{Corollary}
\newtheorem*{cor*}{Corollary}
\newtheorem{prop-def}[theorem]{Proposition-Definition}
\newtheorem*{prop-def*}{Proposition-Definition}
\newtheorem{example}[theorem]{Example}
\begin{document}

\title{On comparison between relative log de Rham-Witt cohomology 
and relative log crystalline cohomology}

\author{Kazuki Hirayama\thanks{Sompo Japan Nipponkoa Insurance Inc., 
1-26-1, Nishi-Shinjuku, Shinjuku-ku, Tokyo 160-8338, Japan.} \,\,\, and\,\,\, Atsushi Shiho\thanks{Graduate School of Mathematical Sciences, the Unversity of Tokyo, 3-8-1, Komaba, Meguro-ku, Tokyo 153-8914, Japan.}}

\date{}

\maketitle

\begin{abstract} 
In this article, we prove the comparison theorem 
between the relative log de Rham-Witt cohomology and 
the relative log crystalline cohomology for a log smooth saturated 
morphism of fs log schemes satisfying certain condition. 
Our result covers the case where the base fs log scheme is 
etale locally log smooth over a scheme with trivial log structure or the case 
where the base fs log scheme is hollow, and so it generalizes the previously known results of 
Matsuue. In Appendix, we prove that our relative log de Rham-Witt complex and 
our comparison map are compatible with those of Hyodo-Kato. 
\end{abstract}

\tableofcontents 

\section*{Introduction}

For a scheme $X$ of characteristic $p>0$, its de Rham-Witt complex 
$W_m\Omega_{X}^{\bullet}$ is defined by Illusie \cite{Ill}. 
When $X$ is smooth over a perfect scheme $Y$, 
he proved the theorem that 
the cohomology of this complex relative to $Y$ is isomorphic to 
the relative crystalline cohomology of $X/W_m(Y)$, which we call the comparison theorem. 
In \cite[I 2]{Ill}, he calculated the concrete `basis' of $W_m \Omega_X^{\bullet}$ when 
$X$ is the spectrum $\mbox{Spec}(\mathbb{F}_p[T_1, \ldots, T_r])$
of the polynomial ring, and it is the key ingredient for the proof of 
comparison theorem. 

Langer and Zink \cite{LZ} generalized the construction of Illusie and 
defined the notion of relative de Rham-Witt 
complex $W_m\Omega_{X/Y}^{\bullet}$ when $Y$ is a scheme 
on which $p$ is nilpotent and $X$ is a $Y$-scheme. 
Moreover, they calculated the concrete `basis' (called the 
basic Witt differentials) of $W_m \Omega_{X/Y}^{\bullet}$
when $Y={\rm Spec}\,R$ is affine and $X$ is the spectrum 
${\rm  Spec}\,R[T_1, \ldots, T_r]$ of the polynomial ring, and using them, 
they proved the comparison theorem between the cohomology 
of the complex $W_m \Omega_{X/Y}^{\bullet}$ relative to $Y$ and 
the relative crystalline cohomology of $X/W_m(Y)$ when 
$Y={\rm Spec}\,R$ as above and $X$ is smooth over $Y$. 

Matsuue \cite{Ma} generalized the construction of Langer-Zink 
and defined the notion of 
relative log de Rham-Witt complex $W_m\Lambda_{(X, \mathcal{M})/(Y, \mathcal{N})}^{\bullet}$ 
for a morphism $(X,{\mathcal M}) \to (Y, {\mathcal N})$ of fine log schemes on 
which $p$ is nilpotent. Also, 
$(1)$ when $(Y,  \mathcal{N}) = {\rm Spec}(R, *)$ (where $*$ denotes the trivial 
log structure) and $(X, \mathcal{M})$ is a pair of a smooth $Y$-scheme and 
the log structure associated to a relative normal crossing divisor, or 
$(2)$ when $(Y,  \mathcal{N}) = {\rm Spec}(R, {\mathbb{N}})$ is the log scheme 
associated to the monoid homomorphism ${\mathbb{N}} \to R; 1 \mapsto 0$ and 
$(X, \mathcal{M}) \to (Y,  \mathcal{N})$ is a semistable log scheme
(logarithmic semistable reduction in the terminology of \cite[p.~346]{KF2}), 
he proved the comparison theorem between the cohomology of the complex 
$W_m \Lambda_{(X,  \mathcal{M})/(Y,  \mathcal{N})}^{\bullet}$ relative to $Y$ and 
the relative crystalline cohomology of 
$(X,  \mathcal{M})/W_m(Y,  \mathcal{N})$, again via the calculation of 
basic Witt differentials. 

In this article, we prove the comparison theorem 
between the cohomology of the relative log de Rham-Witt complex 
$W_m \Lambda_{(X,  \mathcal{M})/(Y,  \mathcal{N})}^{\bullet}$ relative to $Y$ and 
the relative crystalline cohomology of 
$(X,  \mathcal{M})/W_m(Y,  \mathcal{N})$ when 
$f:(X, \mathcal{M}) \to (Y, \mathcal{N})$ is a log smooth saturated morphism 
of fs log schemes on which $p$ is nilpotent 
and the log scheme $(Y, \mathcal{N})$ satisfies the following condition: 
\bigskip 

\noindent 
$(\spadesuit)$ \, Etale locally around any geometric point $y$ of $Y$, 
there exist a chart $\varphi: Q_Y \to {\mathcal N}$ inducing the isomorphism 
$Q \xrightarrow{\cong} {\mathcal N}_y/{\mathcal O}_{Y,y}^*$, 
a radical ideal $J$ of $Q$ such that the composite 
$J_Y \subset Q_Y \xrightarrow{\varphi} {\mathcal N} \to {\mathcal O}_Y$ is zero  
and a ring homomorphism 
$\psi: R \to \Gamma(Y,{\mathcal O}_Y)$ such that 
the morphism $(Y, \mathcal{N}) \to {\rm Spec}(R[Q]/JR[Q],Q)$ induced by 
$\varphi, \psi$ is strict smooth. 
\bigskip 

%either (1)' $(Y, \mathcal{N})$ is an fs log scheme which is log smooth over 
%${\rm Spec}R$ or (2)' $(Y, \mathcal{N})$ is hollow. 
As we will explain in Remark \ref{rem:remrem}, the condition $(\spadesuit)$ is satisfied 
in the cases (1), (2) of Matsuue, 
% The case (1)' (resp. (2)') of our result covers the case (1) (resp. (2)) of Matsuue's result 
and so our result generalizes that of Matsuue. 
Also, our result would contain many interesting cases which are not covered by 
his result. 

The key ingredient of the proof is the calculation of 
relative log de Rham-Witt complex 
% $W_m \Lambda_{(X,  \mathcal{M})/(Y,  \mathcal{N})}^{\bullet}$ 
% when $(X,  \mathcal{M}) \to (Y,  \mathcal{N})$ is a morphism associated to 
associated to certain homomorphisms of pre-log rings. More concretely, we will calculate 
the following three types of relative log de Rham-Witt complexes, where $R$ is 
a ${\mathbb Z}_{(p)}$-algebra: 
\begin{enumerate}[(a)] 
\item The complex $W_m\Lambda_{(R[P],  P)/(R, *)}^{\bullet}$ 
associated to the pre-log ring $(R[P],  P)$ over $(R, *)$, 
where $P$ is an fs monoid with $P^{\rm gp}$ torsion free. 
\item The complex $W_m\Lambda_{(R[P],  P)/(R[Q], Q)}^{\bullet}$ 
associated to the pre-log ring $(R[P],  P)$ over $(R[Q], Q)$ 
which is induced by an injective $p$-saturated morphism 
$Q \to P$ of fs monoids with $Q^{\rm gp}, P^{\rm gp}, P^{\rm gp}/Q^{\rm gp}$ 
torsion free. 
\item The complex 
$W_m\Lambda_{(R[P]/JR[P], P)/(R[Q]/JR[Q], Q)}^{\bullet}$
associated to the pre-log ring $(R[P]/JR[P], P)$ over $(R[Q]/JR[Q], Q)$ 
which we obtain by dividing the rings 
$R[P], R[Q]$ in (b) by the ideals $JR[P], JR[Q]$ 
generated by a radical ideal $J$ of the monoid $Q$. 
%with $Q^{*} = \{0\}$ by the morphism $(R[Q], Q) \to (R,Q)$ 
%associated to the monoid homomorphism $Q \to R; q \mapsto 0 \, (\forall q \not= 0)$.  
%% which sends every nonzero element to $0$. 
\end{enumerate}

The calculation in the case (a) is the basis of all the calculations: 
Contrary to the cases treated in \cite{LZ} and \cite{Ma}, it is hard to 
construct the `basis' of $W_m\Lambda_{(R[P],  P)/(R, *)}^{\bullet}$ 
directly. Nonetheless, by comparing the complex 
$W_m\Lambda_{(R[P],  P)/(R, *)}^{\bullet}$ with 
the complex $W_m\Lambda_{(R[P^{\rm gp}],  P^{\rm gp})/(R, *)}^{\bullet}$
which is accessible by the method in \cite{LZ}, 
we can form a decomposition of the complex $W_m\Lambda_{(R[P],  P)/(R, *)}^{\bullet}$ 
indexed by the set $P[\frac{1}{p}]$. Using this decomposition, we can prove the 
comparison isomorphism when
($f:(X, \mathcal{M}) \to (Y, \mathcal{N})$ is as above and) 
the log structure ${\mathcal N}$ of 
$(Y,{\mathcal N})$ is trivial, namely, when we can take the monoid $Q$ 
in the condition $(\spadesuit)$ to be the trivial one and the ideal $J$ to be the empty set. 

In the case (b), we prove a similar decomposition by using the decomposition 
in the case (a) and the fundamental exact sequence for 
relative log de Rham-Witt complexes proven in \cite{Ma}. Using this decomposition, 
we can prove the comparison isomorphism 
when
($f:(X, \mathcal{M}) \to (Y, \mathcal{N})$ is as above and)  
$(Y, \mathcal{N})$ is etale locally log smooth over a scheme with trivial log structure, namely,  
when we can take the radical ideal $J$ in the condition $(\spadesuit)$ to be the empty set. 

In the case (c), we prove a similar decomposition (with a smaller index set) 
by using the decomposition in the case (b) and noting the fact (proven in \cite{Ma}) that the complex in (c) is a certain 
quotient of the complex in (b).   Using this decomposition, 
we can prove the comparison isomorphism in general case. 

The content of each section is as follows. In Section 1, 
we give a review of relative de Rham-Witt complexes and basic Witt differentials 
defined in \cite{LZ} and 
%notions on log structures defined in 
%\cite{Kato}, \cite{KF}, \cite{O}, \cite{Ma} and 
relative log de Rham-Witt complexes defined in \cite{Ma}. 
We also define the comparison morphism from 
the relative log crystalline cohomology to 
the cohomology of relative log de Rham-Witt complex 
in a slightly more general situation than that treated in \cite{Ma}. 
In Section 2, we calculate the relative 
log de Rham-Witt complex $W_m\Lambda_{(R[P], P)/(R, {*})}^{\bullet}$ 
in the case (a) above and prove our main comparison theorem  
when the log structure on the base log scheme $(Y,{\mathcal N})$ is trivial. 
%when $R$ is an $\mathbb{Z}_{(p)}$-algebra in which $p$ is nilpotent 
%and $P$ is an fs monoid such that $P^{\rm gp}$ is torsion free. 
%Using this calculation, we prove the comparison theorem in the case (1)' 
%when the log structure on the base scheme $(Y,{\mathcal N})$ is trivial. 
In Section 3, we calculate the relative 
log de Rham-Witt complex $W_m\Lambda_{(R[P], P)/(R[Q], Q)}^{\bullet}$
in the case (b) and prove our main comparison theorem 
when the base log scheme $(Y, \mathcal{N})$ is etale locally log smooth over a scheme with trivial log structure. 
%%%%%we can take the radical ideal $J$ in the condition $(\spadesuit)$ to be the trivial one.   
%when the log structure on the base scheme $(Y,{\mathcal N})$ is trivial. 
%when $R$ is as above, $(R[P],  P)/(R[Q],  Q)$ is 
%log smooth, $Q^{\rm gp} \to P^{\rm gp}$ is injective and 
%$Q^{\rm gp} ,P^{\rm gp}, P^{\rm gp}/Q^{\rm gp}$ are torsion-free, by using 
%the calculation in the previous section. Using this calculation, 
%we prove the comparison theorem in the case (1)'. 
In Section 4, we calculate  the relative 
log de Rham-Witt complex $W_m\Lambda_{(R[P]/JR[P], P)/(R[Q]/JR[Q], Q)}^{\bullet}$
in the case (c) and prove our main comparison theorem in general case. 

%when $R$ is as above, $(R[P],  P)/(R[Q],  Q)$ is 
%log smooth, $Q^{\rm gp} \to P^{\rm gp}$ is injective, $Q^* = P^* = \{0\}$ and 
%$Q^{\rm gp} ,P^{\rm gp}, P^{\rm gp}/Q^{\rm gp}$ are torsion-free, by using 
%the calculation in the previous section. Using this calculation, 
%we prove the comparison theorem in the case (2)'. 
%between the cohomology of 
%$W_m \Lambda_{(X,  \mathcal{M})/(S,  \mathcal{N})}^{\bullet}$ relative to $S$ and 
%the relative crystalline cohomology of 
%$(X,  \mathcal{M})/W_m(S,  \mathcal{N})$

When $k$ is a perfect field of characteristic $p>0$, $Y = {\rm Spec}\,k$ and 
$f: (X,{\mathcal M}) \to (Y,{\mathcal N})$ is a log smooth $p$-saturated morphism of 
fine log schemes, Hyodo-Kato defined in \cite{HK} the log de Rham-Witt complex 
$W_m\omega^{\bullet}_{(X,{\mathcal M})/(Y,{\mathcal N})}$ whose definition is 
a priori different from \cite{Ma} and the comparison map 
from the complex computing the crystalline cohomology to their log de Rham-Witt complex 
$W_m\omega^{\bullet}_{(X,{\mathcal M})/(Y,{\mathcal N})}$. 
In Appendix, we prove that, in the above situation, their log de Rham-Witt complex 
$W_m\omega^{\bullet}_{(X,{\mathcal M})/(Y,{\mathcal N})}$ is isomorphic to 
our log de Rham-Witt complex $W_m\Lambda^{\bullet}_{(X,{\mathcal M})/(Y,{\mathcal N})}$
 and their comparison map is compatible with ours. This compatibility is proven in 
\cite{GL} when $f$ is a semistable log scheme, and our result generalizes it. 

Throughout this article, $\mathbb{N}$ denotes the set of 
integers equal or greater than $0$. A monoid is always commutative and 
we write the operation of a monoid by addition, unless it is the multiplication of 
a commutative ring. 
% otherwise stated. 
Also, the symbol $*$ denotes the trivial monoid. 
We will freely use the notions concerning log structures defined in 
\cite{Kato}, \cite{KF2}, \cite{KF} and \cite{O}, although some of them will be reviewed
in the text. 

\section{Preliminaries}

\subsection{Relative de Rham-Witt complex and basic Witt differential}
In this sebsection, we give a review of 
relative de Rham-Witt complexes and basic Witt differentials which are defined in 
\cite{LZ}. 

\begin{definition}\label{def:1.1.1}
Let $R$ be a commutative ring and $S$ a commutative $R$-algebra. 

\begin{enumerate}[(1)]

\item  \cite[Def.~1.1]{LZ} 
Assume moreover that $S$ is equipped with a divided power structure 
$(I, \{ \gamma_n \})$. For an $S$-module $M$, an $R$-linear derivation 
$D: S \to M$ is called a pd-derivation if we have the equality 
$D(\gamma_{n}(b)) = \gamma_{n-1} (b) D(b) $ for any $n \geq 1$ and $b \in I$. 
We have the universal pd-derivation, which we denote by 
$d: S \to \breve{\Omega}_{S/R}^{1}$. 

\item A differential graded $S/R$-algebra $(E^{\bullet}, D)$ is a pair of 
an $\mathbb{N}$-graded $S$-algebra $E^{\bullet}$ and an $R$-linear map 
$D: E^{\bullet} \to E^{\bullet}$ homogeneous of degree $1$ which satisfies 
the equalities 
%\begin{align*}
%\omega \eta & =  (-1)^{ij} \eta \omega  \quad (\omega \in E^{i}, \eta \in E^{j}), \\
%d(\omega \eta) & =  (d\omega)\eta + (-1)^i \omega d\eta  \quad (\omega \in E^{i}, \eta \in E^{j}), \\
%d^2 & = 0. 
%\end{align*}
\begin{eqnarray*}
\omega \eta & = & (-1)^{ij} \eta \omega  \quad (\omega \in E^{i}, \eta \in E^{j}), \\
d(\omega \eta) & = & (d\omega)\eta + (-1)^i \omega d\eta  \quad (\omega \in E^{i}, \eta \in E^{j}), \\
d^2 & = &0. 
\end{eqnarray*}
In the situation of (1), if we put 
$\breve{\Omega}_{S/R}^{i} := \bigwedge_S^{i} \breve{\Omega}_{S/R}^{1}$, 
the pd-derivation $d$ induces a structure of 
differential graded $S/R$-algebra on 
$\breve{\Omega}_{S/R}^{\bullet}$.

\item \cite[Def.~1.4]{LZ} An $F$-$V$-procomplex over an $R$-algebra $S$ is 
a projective system of differential graded $W_m(S)/W_m(R)$-algebras 
$\{E_m^{\bullet}:= (E_m^{\bullet}, D_m), \pi_m : E_{m+1}^{\bullet} \to E_m^{\bullet} \}_{m \in \mathbb{N}}$ with $E_0^{\bullet}=0$ equipped with maps of graded Abelian groups 
\[ F : E_{m+1}^{\bullet} \to E_m^{\bullet} \quad V: E_m^{\bullet} \to E_{m+1}^{\bullet} \]
satisfying the following conditions: 

\begin{enumerate}[(i)]
\item For any $m \geq 0$, the canonical map $W_m(S) \to E_m^{0}$ 
is compatible with $F,  V$. 

\item If we denote by $E_{m,  [F]}^{\bullet}$ the algebra 
$E_{m}^{\bullet}$ regarded as a $W_{m+1}(S)$-algebra via the map 
$F:W_{m+1}(S) \to W_{m}(S)$, $F$ induces a morphism of graded 
$W_{m+1}(S)$-algebras $E_{m+1}^{\bullet} \to E_{m,  [F]}^{\bullet}$. 

\item The following equalities hold. 
\begin{eqnarray*}
  {}^{FV}\omega & = & p\omega \quad (\omega \in  E_m^{\bullet}),   \\
   {}^{F}D_{m+1}^{V}\omega & = & D_m \omega \quad (\omega \in  E_m^{\bullet}), \\
  {}^{F}D_{m+1}[x] & = & [x^{p-1}]D_m[x]  \quad ( x \in S ), \\
  {}^{V}(\omega {}^{F}\eta) & = & {}^{V}(\omega) \eta \quad (\eta \in E_{m+1}^{\bullet}).   
\end{eqnarray*} 
A morphism of $F$-$V$-procomplexes over an $R$-algebra $S$ is a map of 
projective systems which is 
compatible with $D_m (m \in \mathbb{N}), F , V$. 

\end{enumerate}

\end{enumerate}

\end{definition}

The definition of relative de Rham-Witt complex $\{W_m \Omega_{S/R}^{\bullet} \}_m$
is given as follows. 

\begin{prop-def}
\cite[Prop.~1.6]{LZ}
The category of $F$-$V$-procomplexes over an $R$-algebra $S$ has the initial 
object. We denote it by 
$\{(W_m \Omega_{S/R}^{\bullet}, d_m)\}_m$ and call it the 
relative de Rham-Witt complex of the $R$-algebra $S$. 
We will denote $d_m$ simply by $d$ in the following. 
Also, we put 
$W\Omega_{S/R}^{\bullet} := \displaystyle \lim_{\begin{subarray}{c} \longleftarrow\\ m\end{subarray}} W_m \Omega_{S/R}^{\bullet}$. 
\end{prop-def}

By definition, for any $F$-$V$-procomplex $\{ (E_m^{\bullet}, D_m) \}_m$ 
over an $R$-algebra $S$, there exists uniquely a morphism of 
$F$-$V$-procomplexes 
\[ \{(W_m \Omega_{S/R}^{\bullet} , d)\}_m \to \{(E_m^{\bullet}, D_m) \}_m \]
over the $R$-algebra $S$. 

Concretely, $W_m \Omega_{S/R}^{\bullet}$ is defined as a 
certain quotient of the differential graded 
$W_m(S)/W_m(R)$-algebra 
$\breve{\Omega}_{W_m(S)/W_m(R)}^{\bullet}$. 

\quad

Next we recall the definition of basic Witt differentials, which are used 
to express the elements of $W_m \Omega_{S/R}^{\bullet}$ when 
$S = R[\mathbb{T}] := R[T_1, \ldots , T_r]$ is the polynomial ring over 
a ${\mathbb Z}_{(p)}$-algebra $R$. 
Let $X_i := [T_i] \in W(R[\mathbb{T}]) $ be the Teichm${\rm \ddot{u}}$ller
lift of $T_i$. We call a map of sets 
$k: [1,  r] \to \mathbb{N}[\frac{1}{p}] $ as weight, and we will denote 
its value $k(i)$ at $i$ simply by $k_i$ in the following. 
We put ${\rm supp}\,k := \{ i \in [1, r ] ; k_i \neq 0 \} $. 
For each weight $k$, we put a total order ${\rm supp}\,k= \{ i_1, i_2, \ldots, i_s \}$ on 
the set ${\rm supp}\,k$ in such a way that the inequality 
$\mbox{ord}_p k_{i_{j}} \leq \mbox{ord}_p k_{i_{j+1}} $ holds for any $j$. 
Also, we assume that the total order on ${\rm supp}\,k$ is the same as that 
on ${\rm supp}\,p^{a}k~(a \in \mathbb{Z})$. 

We call a subset $I$ of ${\rm supp}\,k$ an interval if any element 
$a \in {\rm supp}\,k$ which is bigger than some element $b$ in $I$ and 
smaller than some element $c$ in $I$ with respect to the total order 
on ${\rm supp}\,k$ belongs to $I$.  

A tuple $\mathcal{P}:= (I_0, I_1, \ldots, I_l)$ of intervals of 
${\rm supp}\,k$ is called a partition of $k$ if ${\rm supp}\,k = I_0 \sqcup \cdots \sqcup I_l$, $I_1 , \ldots, I_l \neq \emptyset$ and 
if any element in $I_j$ is smaller than any element in $I_{j+1}$ with respect to the total order on 
${\rm supp}\,k$ for $j = 0, \dots , l-1$. 
 
For a weight $k$ and a nonempty set $I \subset [1,r]$, let $t(k_{I}) \in \mathbb{Z}$ be the unique integer 
such that the elements $p^{t(k_I)} k_i \, (i \in I)$ are all integers and at least one of them is prime to $p$. 
Also, we put $u(k_{I}) := \max \{ t(k_I),  0 \}$. When there is no risk of confusion, we will denote 
$t(k_{I}), u(k_I)$ simply by $t(I), u(I)$, respectively. Also, we put $t(\emptyset) = u(\emptyset) := 0$. 

For a triple $(\xi, k, \mathcal{P})$ consisting of a weight $k$, its partition $\mathcal{P} = (I_0, I_1, \ldots, I_l)$ and $\xi =  {}^{V^{u(I)}} \eta \in {}^{V^{u(I)}}W(R)$ (where $I = {\rm supp}\,k$), 
we define the element $e(\xi , k, \mathcal{P})$ in $W\Omega_{R[\mathbb{T}]/R}^{l} $ 
in the following way: We put $X^{p^{t(I_i)}k_{I_i}} := \prod_{j \in I_i} X_j^{p^{t(I_i)}k_{j}}$ for each $0 \leq i \leq l$. Then, 
\leftline{when $I_0 \neq \emptyset$, }
\[e(\xi , k, \mathcal{P}) := {}^{V^{u(I_0)}}(\eta X^{p^{u(I_0)}k_{I_0}}) d^{V^{t(I_1)}}(X^{p^{t(I_1)}k_{I_1}})\cdots d^{V^{t(I_l)}}(X^{p^{t(I_l)}k_{I_l}}). 
 \]
When $I_0 = \emptyset, t(I_1) \geq 1$, 
\[
e(\xi , k, \mathcal{P}) := d^{V^{t(I_1)}}(\eta X^{p^{t(I_1)}k_{I_1}}) d^{V^{t(I_2)}}(X^{p^{t(I_2)}k_{I_2}}) \cdots d^{V^{t(I_l)}}(X^{p^{t(I_l)}k_{I_l}}). 
 \]
When $I_0 = \emptyset, t(I_1) \leq 0$, 
\[
e(\xi , k, \mathcal{P}) := \eta d^{V^{t(I_1)}}(X^{p^{t(I_1)}k_{I_1}})\cdots d^{V^{t(I_l)}}(X^{p^{t(I_l)}k_{I_l}}). \]
Here, when $t(I_i) <0$, we put 
\[ d^{V^{t(I_i)}}( X^{p^{t(I_i)}k_{I_i}}) := {}^{F^{-t(I_i)}}d X^{p^{t(I_i)}k_{I_i}} \]
by abuse of notation. We will denote the image of the element $e(\xi , k, \mathcal{P})$ 
in $W_m \Omega_{R[\mathbb{T}]/R}^{\bullet}$ by the same symbol. 
We call an element in $W \Omega_{R[\mathbb{T}]/R}^{\bullet}$ or 
$W_m \Omega_{R[\mathbb{T}]/R}^{\bullet}$
of the form $e(\xi , k , \mathcal{P})$ a basic Witt differential. 

A weight $k$ is called integral if $k_i \in {\mathbb Z}$ for any $i$. 

\begin{prop}\label{2.17}
\cite[Prop.~2.17]{LZ}
Any element $\omega$ in $W_m\Omega_{R[\mathbb{T}]/R}^{\bullet}$
is written uniquely as a sum of basic Witt differentials 
\[ \omega = \sum_{k, \mathcal{P}} e(\xi_{k, \mathcal{P}} , k , \mathcal{P}) \]
such that, in the sum, $k$ runs through weights with $p^{m-1} k$ integral
and ${\mathcal P}$ runs through partitions of $k$. 
\end{prop}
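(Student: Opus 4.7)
The plan is to establish existence and uniqueness of the basic-Witt-differential decomposition separately. For existence, I would let $B_m^{\bullet} \subset W_m\Omega^{\bullet}_{R[\mathbb{T}]/R}$ be the $W_m(R)$-submodule spanned by the $e(\xi, k, \mathcal{P})$ with $p^{m-1}k$ integral, and show $B_m^{\bullet} = W_m\Omega^{\bullet}_{R[\mathbb{T}]/R}$. By the universal property of the initial $F$-$V$-procomplex, it suffices to exhibit the collection $\{B_m^{\bullet}\}_m$ as an $F$-$V$-sub-procomplex: closed under the differential $d$, products, and the operators $F$ and $V$, and containing the Teichmüller lifts $[T_i]$ (which together with the Witt-vector decomposition in degree $0$ recover all of $W_m(R[\mathbb{T}])$). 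Each closure claim is an explicit computation from the definition of $e(\xi, k, \mathcal{P})$: differentiating lengthens the partition by appending a new interval, applying $V$ shifts $u(I_0)$ up by one, applying $F$ rescales the weight by $p$ while contracting $u$ and $t$, and a product of two basic Witt differentials decomposes, after reindexing the total support and signs from the graded commutativity relation, into a finite signed sum of basic Witt differentials whose weights add.

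For uniqueness, I would construct an auxiliary $F$-$V$-procomplex $\mathcal{E}_m^{\bullet}$ which, as a graded $W_m(R)$-module, is the direct sum indexed by pairs $(k, \mathcal{P})$ with $p^{m-1}k$ integral, each summand being a copy of ${}^{V^{u(I)}}W_m(R)$ (where $I = {\rm supp}\,k$) placed in degree equal to the length of $\mathcal{P}$. On $\mathcal{E}_m^{\bullet}$ I would impose the $W_m(R[\mathbb{T}])$-algebra structure, differential, and $F$, $V$ operators dictated by the same combinatorial formulas used to define $e(\xi, k, \mathcal{P})$ and the product formulas derived during the existence step. Once the $F$-$V$-procomplex axioms are verified, the universal property produces a canonical morphism $W_m\Omega^{\bullet}_{R[\mathbb{T}]/R} \to \mathcal{E}_m^{\bullet}$ that sends each $e(\xi, k, \mathcal{P})$ to its formal counterpart. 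This realizes the spanning set as the image of a free family, forcing the decomposition in the proposition to be unique.

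The main obstacle is the construction of $\mathcal{E}_m^{\bullet}$, and most specifically the definition of multiplication there. The product of $e(\xi_1, k_1, \mathcal{P}_1)$ and $e(\xi_2, k_2, \mathcal{P}_2)$ requires merging the two partitions into a single partition on ${\rm supp}(k_1 + k_2)$, and the combinatorics depends sensitively on the $p$-adic valuations $t(I)$ and on the case split at $I_0 = \emptyset$ versus $I_0 \neq \emptyset$ (and at $t(I_1) \geq 1$ versus $t(I_1) \leq 0$). The identities ${}^{FV}\omega = p\omega$, ${}^{F}d^{V}\omega = d\omega$, and ${}^{V}(\omega \cdot {}^{F}\eta) = {}^{V}(\omega)\cdot \eta$ must all be checked to be consistent with these case splits, and the truncation constraint $p^{m-1}k \in \mathbb{Z}^r$ must govern the vanishing of basic Witt differentials in level $m$ in a way compatible with the projection $\pi_m$. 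Carrying out this bookkeeping—showing that the explicit formulas assemble into a genuine $F$-$V$-procomplex structure on $\mathcal{E}_m^{\bullet}$—is the technical heart of the argument and the bulk of the work.
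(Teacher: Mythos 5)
Your existence argument tracks the line of \cite{LZ} closely: the explicit closure computations for $d$, $F$, $V$ and products of basic Witt differentials are exactly what one does, and once the span $B_m^{\bullet}$ is shown to be an $F$-$V$-sub-procomplex containing $W_m(R[\mathbb{T}])$, it exhausts the initial object. Your uniqueness argument, however, follows a genuinely different route from the one cited in the paper and used in \cite{LZ}. You propose to hand-build a free $F$-$V$-procomplex $\mathcal{E}_m^{\bullet}$ on the index set of pairs $(k,\mathcal{P})$ and invoke the universal property to get a comparison morphism. The proof cited here instead maps out of $W_m\Omega^{\bullet}_{R[\mathbb{T}]/R}$ via the ``phantom component'' maps $\underline{\omega}^m = (\omega_0,\dots,\omega_{m-1})$ landing in $P_m^{\bullet} := \bigoplus_{i=0}^{m-1} \Omega^{\bullet}_{R[\mathbb{T}]/R, \mathbf{w}_i}$ (recalled in Section~1.1 precisely because it is ``the key ingredient of the proof of Proposition~\ref{2.17}''). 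Each $\omega_i$ sends $e(\xi,k,\mathcal{P})$ to an explicit $p$-basic element scaled by $\mathbf{w}_{i-u(k)}(\xi)$, and the $p$-basic elements are an $R$-basis of the ordinary Kähler differentials; so for $p$-torsion free $R$, vanishing of the phantom components forces every $\xi_{k,\mathcal{P}} = 0$. General $R$ is handled by lifting to a $p$-torsion free $\tilde{R}$ with $R = \tilde{R}/\mathfrak{a}$ and passing to a suitable quotient $F$-$V$-procomplex (exactly the argument reused in the proof of Proposition~\ref{thm20}).

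The trade-off is substantial. The phantom-map route pushes all the hard work into the single computation of $\omega_m(e(\xi,k,\mathcal{P}))$ (the analogue of Lemma~\ref{thm18}) and then relies on freeness of the module of ordinary Kähler differentials, a statement with no Witt-vector content at all; the target $P_m^{\bullet}$ is a ready-made graded algebra and no new $F$-$V$-procomplex has to be constructed. Your route instead concentrates all the difficulty in defining the product on $\mathcal{E}_m^{\bullet}$ and verifying every $F$-$V$-procomplex axiom by hand across the case split at $I_0 = \emptyset$, $t(I_1) \gtrless 1$, and so on. You flag this yourself, and it is a real issue: the product-decomposition formulas you would impose on $\mathcal{E}_m^{\bullet}$ are exactly the identities satisfied by products of basic Witt differentials \emph{modulo} linear relations you have not yet ruled out, so one must be careful that the formal product on $\mathcal{E}_m^{\bullet}$ is well-defined and associative before any appeal to universality. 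This is doable (Illusie's original construction of the de Rham--Witt complex is in this spirit), but it is considerably heavier than the phantom-map argument and still needs a reduction to the $p$-torsion free case, since $V$ is not injective over a general base; your outline does not mention that reduction, which is where your plan as written has a gap.
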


We will not recall the proof of Proposition \ref{2.17}, but we recall here the 
definition of the map 
giving the phantom component of relative de Rham-Witt complex, which is the key 
ingredient of the proof of Proposition \ref{2.17} (\cite[Sec.~2.4]{LZ}). 
% , because we will need it later. 
Let $R$ be a ${\mathbb Z}_{(p)}$-algebra, 
let $S$ be an $R$-algebra and let ${\bf{w}}_m: W(S) \to S$ be 
the $m$-th Witt polynomial. For an $S$-module $M$, we will denote 
by $M_{{\bf{w}}_m}$ the module $M$ regarded as a $W(S)$-module via the map 
${\bf{w}}_m$. We put 
\[ P_m^{\bullet} := \bigoplus_{i=0}^{m-1} \Omega_{S/R, {\bf w}_i}^{\bullet} \]
and regard $\{P_m^{\bullet} \}_m$ as a projective system by natural projections. 
For $m \geq 0$, define the map $\delta_m : W(S) \to \Omega_{S/R, {{\bf w}_m}}^{1}$ by  
\[ (x_i)_{i \in \mathbb{N}} \mapsto \sum_{i=0}^{m} x_i^{p^{m-i}-1}dx_i. \]
This is a $W(R)$-linear pd-derivation and so it induces the map 
\[ \omega_m: \breve{\Omega}_{W_{m+1}(S)/W_{m+1}(R)}^{1} \to \Omega_{S/R, {{\bf w}_m}}^{1}. \]
By taking exterior products, this induces the maps 
\[ \omega_m: \breve{\Omega}_{W_{m+1}(S)/W_{m+1}(R)}^{i} \to \Omega_{S/R, {{\bf w}_m}}^{i} \quad (i \in \mathbb{N}), \]
hence the map of projective systems of graded algebras 
\[ \{(\omega_0, \ldots , \omega_{m-1})\}_m: 
\{\breve{\Omega}_{W_{m+1}(S)/W_{m+1}(R)}^{\bullet}\}_m \to 
\{P_m^{\bullet}\}_m. \] 
Then it is shown in \cite[Prop.~2.15]{LZ} that it induces the map 
 of projective system of graded algebras 
\[ \{\underline{\omega}^{m}\}_m : \{W_m\Omega_{S/R}^{\bullet}\}_m \to \{P_m^{\bullet}\}_m. \]
(However, note that the maps $\underline{\omega}^{m}$ are not compatible with 
$d$.) As a part of this map, we obtain the map of graded algebras 
\begin{equation}\label{phantom}
\omega_{m} : W_{m+1}\Omega_{S/R}^{\bullet} \to \Omega_{S/R, {\bf w}_m}^{\bullet}. 
\end{equation}
We will use the map \eqref{phantom} later in this article. 

\subsection{Relative log de Rham-Witt complex}

In this subsection, we give a review of 
relative log de Rham-Witt complexes which is defined in 
\cite{Ma}. First we fix terminologies on monoids, pre-log rings and log Witt schemes. 

\begin{definition}
Let $P$ be a monoid. 
 \begin{enumerate}[(1)]
  \item $P$ is called integral if an equality 
$x+z=y+z$ in $P$ implies the equality $x=y$. 
  \item $P$ is called fine if $P$ is integral and finitely generated. 
  \item $P$ is called saturated if $P$ is integral and if any element $x$ in $P^{\rm gp}$
such that $nx \in P$ for some $n \in \mathbb{N}_{>0}$ belongs to $P$.  
  \item $P$ is called fs if it is fine and saturated. 
 \end{enumerate} 
\end{definition}

\begin{definition}

\begin{enumerate}[(1)]

\item A pre-log ring is a triple $(R,  P,  \alpha)$ consisting of a commutative ring 
$R$, a fine monoid $P$ and a monoid homomorphism 
$\alpha:P \to R$, where $R$ is regarded as a monoid with respect to the multiplication on it. We will often denote a pre-log ring $(R,  P,  \alpha)$ simply by $(R,P)$. 
The notion of a morphism of pre-log rings is defined in natural way. 

\item For a commutative ring $R$ and a fine monoid $P$, we denote by 
$(R[P],P)$ the pre-log ring which consists of the monoid algebra $R[P]$ over $R$ 
associated to $P$, the monoid $P$ and the canonical map $P \to R[P]$. 

\item For a pre-log ring $(A, P, \alpha)$, we denote by 
$\mbox{Spec}(A, P)$ the log scheme whose underlying scheme is 
$X = \mbox{Spec}A$ and whose log structure is the one associated to 
the pre-log structure $P_X \to \mathcal{O}_X$ (where $P_X$ is the 
constant sheaf on $X_{{\rm et}}$ associated to $P$) which is induced by 
the monoid homomorphism $\alpha: P \to A$. 

\item A morphism of pre-log rings $\varphi: 
(A,  P) \to (B,  Q)$ is log smooth~(resp. log etale)
if the kernel and the torsion part of the cokernel (resp. the cokernel) 
of the map $P^{\rm gp} \to Q^{\rm gp}$ have order invertible in $B$ and if 
the map $A \otimes_{\mathbb{Z}[P]} \mathbb{Z}[Q] \to B$ induced by $\varphi$ 
is etale. In this case, $\varphi$ induces a log smooth (resp. log etale) 
morphism $\mbox{Spec}(B, Q) \to \mbox{Spec}(A, P)$ of log schemes by 
\cite[Thm.~3.5]{Kato}. 
\end{enumerate}

\end{definition}

\begin{definition}

\begin{enumerate}[(1)]

\item Let $(X, \mathcal{M}, \alpha)$ be a log scheme on which 
$p$ is nilpotent and let $m \in {\mathbb N}_{>0}$. Then its log Witt scheme 
$(X, \mathcal{M}, \alpha)$ is the log scheme 
$(W_m(X), W_m(\mathcal{M}), W_m(\alpha))$, where 
$W_m(X)$ is the Witt scheme of $X$, $W_m(\mathcal{M})$ is the monoid sheaf 
$\mathcal{M} \oplus \mbox{Ker}(W_m(\mathcal{O}_X)^{*} \to \mathcal{O}_X^{*})$ and  $W_m(\alpha): W_m(\mathcal{M}) \to W_m(\mathcal{O}_{X})$ is the homomorphism of monoid sheaves 
defined by $(m,u) \mapsto [\alpha(m)]u \, \,(m \in {\mathcal M}, u \in 
 \mbox{Ker}(W_m(\mathcal{O}_X)^{*} \to \mathcal{O}_X^{*}))$. 
We will denote the log Witt scheme $(W_m(X), W_m(\mathcal{M}), W_m(\alpha))$
simply by $W_m(X, \mathcal{M})$ in the following. 

\item For a pre-log ring $(R,  P,  \alpha)$ and $m \in \mathbb{N}_{>0}$, 
its Witt pre-log ring is the pre-log ring $(W_m(R), P , \beta)$, where $\beta: P \to W_m(R)$
is the monoid homomorphism defined by 
$\beta(x) = [ \alpha(x)] \, (x \in P)$. We will denote the Witt pre-log ring 
$(W_m(R), P , \beta)$ simply by $W_m(R,P)$ in the following. 
By definition,when $p$ is nilpotent in $R$, the log Witt scheme of 
${\rm Spec}\,(R,P)$ is ${\rm Spec}\,W_m(R,P)$. 

\end{enumerate}

\end{definition}

Now we recall the definition of relative log de Rham-Witt complex 
in a parallel way to Section 1.1.

\begin{definition}\label{def:rellogdrw}
Let $(R, P) \to (S , Q)$ be a morphism of pre-log rings. 

\begin{enumerate}[(1)]

\item \cite[Def.~1.1.9]{O} Let $M$ be an $S$-module. A log derivation on 
$(S,  Q)/(R,  P)$ is a pair $(D: S \to M, \delta : Q \to M)$
consisting of an $R$-linear derivation $D$ and a monoid homomorphism 
$\delta$ which satisfies the following two conditions: 

\begin{enumerate}[(i)]

\item If we denote the structure monoid homomorphism 
$Q \to S$ of $(S,Q)$ by $\beta$, we have the equality 
$D(\beta(x))= \beta(x) \delta(x)$ for any $x \in Q$. 

\item If we denote the monoid part $P \to Q$ of the morphism 
$(R, P) \to (S , Q)$ by 
$\theta$, we have the equality $\delta(\theta(x)) =0$ for any $x \in P$. 

\end{enumerate}

We have the universal log derivation, which we denote by 
$(d: S \to \Lambda_{(S, Q)/(R, P)}^{1}, \delta: Q \to \Lambda_{(S, Q)/(R, P)}^{1})$.

\item  \cite[Def.~3.2]{Ma},  \cite[Def.~1.1.9]{O} 
Assume moreover that $S$ is equipped with a divided power structure $(I, \{ \gamma_n \})$. For an $S$-module $M$, a log derivation $(D: S \to M, \delta : Q \to M)$
is called a log pd-derivation if $D$ is a pd-derivation. 
We have the universal log pd-derivation, which we denote by $(d: S \to \breve{\Lambda}_{(S, Q)/(R, P)}^{1}, \delta: Q \to \breve{\Lambda}_{(S, Q)/(R, P)}^{1} )$.

\item \cite[Def.~3.3]{Ma} A log differential graded $(S,  Q)/(R,  P)$-algebra 
is a triple $(E^{\bullet}, D, \partial)$ consisting of a 
differential graded $S/R$-algebra $(E^{\bullet}, D)$ and 
a map $\partial : Q \to E^{1}$ such that the pair 
$(d, \partial)$ is a log derivation with $d\partial =0$. 
In the situation of (2), if we put 
$\breve{\Lambda}_{S/R}^{i} := \bigwedge_S^{i} \breve{\Lambda}_{S/R}^{1}$, the 
log pd-derivation $(d, \delta)$ induces a structure of log differential graded 
$(S,  Q)/(R,  P)$-algebra on 
$\breve{\Lambda}_{S/R}^{\bullet}$. 

\item \cite[Def.~3.4]{Ma} 
A log $F$-$V$-procomplex over an $(R, P)$-algebra $(S,  Q)$ is a projective system 
of log differential graded $W_m(S,  Q)/W_m(R,  P)$-algebras 
$\{E_m^{\bullet}:= (E_m^{\bullet}, D_m, \partial_m), \pi_m : E_{m+1}^{\bullet} \to E_m^{\bullet} \}_{m \in \mathbb{N}}$ with $E_0^{\bullet}=0$ equipped with maps of 
graded Abelian groups 
\[ F : E_{m+1}^{\bullet} \to E_m^{\bullet}  \quad V: E_m^{\bullet} \to E_{m+1}^{\bullet} \]
satisfying the following conditions: 

\begin{enumerate}[(i)]
\item $\partial_m$ is compatible with $\pi_m$. 

\item For any $m \geq 0$, the canonical map $W_m(S) \to E_m^{0}$ is 
compatible with $F,  V$. 

\item For any $m \geq 1$, the map $\pi_m$ commutes with $F,  V$. 

\item If we denote by $E_{m,  [F]}^{\bullet}$ the algebra $E_{m}^{\bullet}$
regarded as a $W_{m+1}(S)$-algebra via the map 
$F:W_{m+1}(S) \to W_{m}(S)$, $F$ induces a morphism of graded 
$W_{m+1}(S)$-algebras $E_{m+1}^{\bullet} \to E_{m,  [F]}^{\bullet}$. 

\item The following equalities hold. 
\begin{eqnarray*}
     {}^{FV}\omega  & = & p\omega \quad (\omega \in E_m^{\bullet}),   \\
   {}^{F}D_{m+1}^{V}\omega & = &D_m \omega \quad (\omega \in E_m^{\bullet}), \\
  {}^{F}D_{m+1}[x] & = & [x^{p-1}]D_m[x]  \quad (x \in S), \\
  {}^{V}(\omega {}^{F}\eta) & = & {}^{V}(\omega) \eta \quad ( \eta \in E_{m+1}^{\bullet}), \\  
  {}^{F}(\partial_{m+1} q) & = &  \partial_{m} q \quad( q \in Q).
\end{eqnarray*}

A morphism of log $F$-$V$-procomplexes over an $(R, P)$-algebra is 
a map of projective systems which is compatible with 
$D_m, \partial_m (m \in \mathbb{N}), F, V$. 

\end{enumerate}

\end{enumerate}

\end{definition}

The definition of relative log de Rham-Witt complex 
$\{W_m \Lambda_{S/R}^{\bullet} \}_m$ is given as follows. 

\begin{prop-def}
\label{thm30}
\cite[Prop.~3.5]{Ma}\footnote{$R$ is assumed to be a $\mathbb{Z}_{(p)}$-algebra in \cite[Prop.~3.5]{Ma}, but this assumption is unnecessary (cf. \cite[Prop.~1.6]{LZ}).} 
The category of log $F$-$V$-procomplexes over an
$(R, P)$-algebra $(S,  Q)$ has the initial object. We denote it by 
$\{(W_m \Lambda_{(S,  Q)/(R,  P)}^{\bullet}, d_m, d \log_m)\}_m$ and call it 
the relative log de Rham-Witt complex of the $(R, P)$-algebra $(S,  Q)$. 
We will denote $d_m, d \log_m  (m\in \mathbb{N})$ simply $d, d\log$
respectively in the following. Also, we put 
$W\Lambda_{(S,  Q)/(R,  P)}^{\bullet} := \displaystyle \lim_{\begin{subarray}{c} \longleftarrow\\ m\end{subarray}} W_m \Lambda_{(S,  Q)/(R,  P)}^{\bullet}$. 
\end{prop-def}

By definition, for any log $F$-$V$-procomplex $\{ (E_m^{\bullet}, D_m, \partial_m) \}_m$ 
over an $(R, P)$-algebra $(S,  Q)$, there exists uniquely 
a morphism of log $F$-$V$-procomplexes 
\[ \{(W_m \Lambda_{(S,  Q)/(R,  P)}^{\bullet} ,  d , d\log) \}_m \to \{ (E_m^{\bullet}, D_m, \partial_m)\}_m\]
over the $(R, P)$-algebra $(S,  Q)$. 

Concretely, $W_m \Lambda_{(S,  Q)/(R,  P)}^{\bullet}$
is defined as a quotient of the log differential graded 
$W_m(S,  Q)/W_m(R,  P)$-algebra 
$\breve{\Lambda}_{W_m(S,  Q)/W_m(R,  P)}^{\bullet}$. 

We can sheafify the definition above as follows. 

\begin{prop-def}\label{thm15}
\cite[Prop.-Def.~3.10]{Ma}  
Let $f : (X,  \mathcal{M}) \to (Y, \mathcal{N})$ be a morphism of 
fine log schemes such that $p$ is nilpotent on $Y$. 
(Then we can identify the etale site on $W_m(X)$ with that on $X$.) 
Then there exists uniquely a complex of quasi-coherent sheaves 
$W_m \Lambda_{(X, \mathcal{M})/(Y, \mathcal{N})}^{\bullet}$
which satisfies the following condition: 
For any commutative diagram 
\[ \xymatrix{
 U = \mbox{Spec}~S' \ar[d]_{\gamma'} \ar[r] & V = \mbox{Spec}~R' \ar[d]_{\gamma} \\
 X \ar[r]_{f}  & Y 
}
\] with $\gamma,  \gamma'$ etale and a chart 
$(Q_U \to \mathcal{M}|_U, P_V \to \mathcal{N}|_V , P \to Q)$ of the morphism 
$(U, \mathcal{M}|_U) \to (V ,  \mathcal{N}|_V)$, there exists the canonical 
equality 
\[ \Gamma(U, W_m \Lambda_{(X, \mathcal{M})/(Y, \mathcal{N})}^{\bullet} ) = W_m \Lambda_{(S' ,  Q)/(R' , P)}^{\bullet}. \]
\end{prop-def}

\subsection{Definition of comparison morphism}

In this subsection, we give a definition of the comparison morphism from 
the complex computing relative log crystalline cohomology to 
the relative log de Rham-Witt complex, following the construction in 
\cite[Sec.~6.2]{Ma}. We work in a slightly more general setting than that in 
\cite{Ma}, where the base log scheme was assumed to be affine. 

First we recall the definition of a log Frobenius lift of a log smooth morphism of 
pre-log rings. 

\begin{definition}
\cite[Def.~5.2]{Ma} \, 
Let $R$ be a commutative ring on which $p$ is nilpotent and let 
$(R,  Q) \to (S,  P)$ be a log smooth morphism of pre-log rings. 
A log Frobenius lift of $(S, P)$ over $(R,Q)$ is a triple 
\begin{equation}\label{eq:fl1}
( \{(S_n, P_n)\}_{n \geq 1}, 
\{\phi_n: (S_n, P_n) \to (S_{n-1},  P_{n-1})\}_{n \geq 2}, 
\{\delta_n: (S_n, P_n) \to W_n(S,  P)\}_{n \geq 1}) 
\end{equation}
satisfying the following conditions: 
\begin{enumerate}
\item[(1)] $\{(S_n, P_n)\}_{n \geq 1}$ is a projective system of 
pre-log rings over $\{W_n(R,  Q)\}_{n \geq 1}$ with 
$(S_1, P_1) = (S,  P)$ such that each $(S_n, P_n)$ is log smooth over 
$W_n(R,  Q)$ and that the transition morphisms induce the isomorphisms 
$$ W_n(R,  Q) \otimes_{W_{n+1}(R,  Q)} (S_{n+1}, P_{n+1}) \xrightarrow{\cong} 
(S_n, P_n) \qquad (n \geq 1). $$
\item[(2)] 
$\{\phi_n: (S_n, P_n) \to (S_{n-1},  P_{n-1})\}_{n \geq 2}$
is a morphism of projective systems which is compatible with 
the morphism $\{F: W_n(R,  Q) \to W_{n-1}(R,  Q) \}_{n \geq 2}$,  
the absolute Frobenius 
$S/pS \to S/pS$ and the map 
$P \to P$ of multiplication by $p$. 
\item[(3)] $\{\delta_n: (S_n, P_n) \to W_n(S,  P)\}_{n \geq 1}$
is a morphism of projective systems over 
$\{W_n(R,  Q)\}_{n \geq 1}$ such that the following diagram 
is commutative: 
\[ \xymatrix{
(S_{n+1}, P_{n+1}) \ar[r]_{\delta_{n+1}} \ar[d]_{\phi_{n+1}} & W_{n+1}(S, P) \ar[d]_F \\
(S_{n}, P_{n}) \ar[r]_{\delta_n} \ar[r]_{\delta_{n}} & W_{n}(S, P).
}
\]
In particular, if ${\bf w}_0: W_n(S,  P) \to (S,  P)$ denotes the morphism induced by the $0$-th Witt polynomial 
${\bf w}_0: W_n(S) \to S$ and ${\rm id}_P$, the composite ${\bf w}_0 \circ \delta_n: 
(S_n, P_n) \to (S,  P)$ is equal to the transition morphism
$(S_n, P_n) \to (S_1, P_1) = (S,  P). $ 
\end{enumerate}
In the following, we will denote the log Frobenius lift \eqref{eq:fl1}
simply by $((S_n,  P_n), \phi_n, \delta_n)_n$ or by 
$(S_n, \phi_n, \delta_n)_n$. 
\end{definition}

It is proven in \cite[Lem.~5.5(1)]{Ma} that 
there always exists 
a log Frobenius lift of a log smooth morphism of 
pre-log rings as above. 

\begin{example}\label{exam}
Here we give a construction of log Frobenis lift when 
the log smooth morphism $(R,  Q) \to (S,  P)$ in the definition is of the form 
$(R[Q],  Q) \to (R[P],  P)$, where $P,  Q$ are fine monoids 
and the morphism is the one naturally induced by a monoid 
homomorphism $Q \to P$. 

Put $A_n := W_n(R[Q]) \otimes_{{\mathbb{Z}}[Q]} {\mathbb{Z}}[P]$, and let 
$\phi:A_{n+1} \to A_{n}$ is the morphism over $F: W_{n+1}(R[Q]) \to W_{n}(R[Q])$
which sends $1 \otimes T^x \, (x \in P)$ to $1 \otimes T^{px} \, (x \in P)$, 
where we denoted the image of $x \in P$ by the canonical map $P \to  
R[P]$ by $T^x$. Also, let $\delta_{n}: A_n \to W_n(R[P])$ be the morphism 
extending $W_n(R[Q]) \to W_n(R[P])$ which sends 
$T^x \, (x \in P)$ to $[T^x]$. Then the triple $((A_n, P), \phi_n, \delta_n)_n$
is a log Frobenius lift of $(R[P],  P)$ over 
$(R[Q], Q)$. 
\end{example}

Next we recall the definition of a log Frobenius lift of a log smooth morphism of 
fine log schemes.

\begin{definition}
\cite[Def.~5.4]{Ma} \, Let $Y$ be a scheme on which $p$ is nilpotent 
and let $f: (X, {\mathcal M}) \to (Y , {\mathcal N})$ be a log smooth 
morphism of fine log schemes. 
A log Frobenius lift of $(X, {\mathcal M})$ over $(Y , {\mathcal N})$ 
is a triple 
\begin{align}
( \{(X_n, {\mathcal M}_n)\}_{n \geq 1}, 
\{\Phi_n: (X_{n-1}, {\mathcal M}_{n-1}) & \to (X_{n},  {\mathcal M}_{n})\}_{n \geq 2}, 
\label{eq:fl2} \\ 
& \{\Delta_n: W_n(X, {\mathcal M}) \to (X_n, {\mathcal M}_n)\}_{n \geq 1}) \nonumber 
\end{align}
satisfying the following conditions: 
\begin{enumerate}
\item[(1)] 
$\{(X_n, {\mathcal M}_n)\}_{n \geq 1}$ 
is an inductive system of fine log schemes over 
$\{W_n(Y,  {\mathcal N})\}_{n \geq 1}$ with 
$(X_1, {\mathcal M}_1) = (X,  {\mathcal M})$ such that each 
$(X_n, {\mathcal M}_n)$ is log smooth over $W_n(Y,  {\mathcal N})$
and that the transition morphisms induce the isomorphisms 
$$ (X_n, {\mathcal M}_n) \overset{\cong}{\to} 
W_n(Y,  {\mathcal N}) \times_{W_{n+1}(Y,  {\mathcal N})} 
(X_{n+1}, {\mathcal M}_{n+1}) \qquad (n \geq 1). $$
\item[(2)] 
$\{\Phi_n: (X_{n-1}, {\mathcal M}_{n-1}) \to (X_{n},  {\mathcal M}_{n})
\}_{n \geq 2}$ is a morphism of inductive systems which is compatible 
with the morphism 
$\{F: W_{n-1}(Y,  {\mathcal N}) \to W_{n-1}(Y,  {\mathcal N}) \}_{n \geq 2}$
 and the absolute Frobenius 
$(X,  {\mathcal M}) \otimes_{{\mathbb Z}_{(p)}} {\mathbb F}_p 
\to (X,  {\mathcal M}) \otimes_{{\mathbb Z}} {\mathbb F}_p$. 
\item[(3)] $\{\Delta_n: W_n(X, {\mathcal M}) \to (X_n, {\mathcal M}_n)\}_{n \geq 1}$ 
is a morphism of inductive systems over 
$\{W_n(Y,  {\mathcal N})\}_{n \geq 1}$ such that the following diagram 
is commutative: 
\[ \xymatrix{
W_n(X,  {\mathcal M}) \ar[r]_{\Delta_n} \ar[d]_{F} & (X_n, {\mathcal M}_n) \ar[d]_{\Phi_{n+1}} \\
W_{n+1}(X,  {\mathcal M}) \ar[r]_{\Delta_{n+1}} & (X_{n+1}, {\mathcal M}_{n+1}) . 
}
\]
In particular, if $w_0: (X, {\mathcal M}) \to W_n(X,  {\mathcal M})$
denotes the morphism induced by the $0$-th Witt polynomial 
$w_0: X \to W_n(X)$ and 
${\rm id}_{\mathcal M}$, the composite $\Delta_n \circ w_0: 
(X, {\mathcal M}) \to (X_n,  {\mathcal M}_n)$ is equal to the transition 
morphism 
$$ (X, {\mathcal M}) = (X_1, {\mathcal M}_1) \to (X_n, {\mathcal M}_n). $$ 
In the following, we will denote the log Frobenius lift \eqref{eq:fl2} simply by 
$((X_n,  {\mathcal M}_n), \Phi_n, \Delta_n)_n$ or by $(X_n, \Phi_n, \Delta_n)_n$.
\end{enumerate}
\end{definition}

It is proven in \cite[Lem.~5.5(2)]{Ma} that 
there always exists 
a log Frobenius lift of a log smooth morphism 
$(X, {\mathcal M}) \to (Y , {\mathcal N})$ of fine log schemes as above 
etale locally on $X$. 

\quad

Now we define the comparison morphism using log Frobenius lift, 
following \cite[Sec.~6.2]{Ma}.

Let $Y$ be a scheme on which $p$ is nilpotent and let 
$f: (X, \mathcal{M}) \to (Y, \mathcal{N})$ be a morphism of fine log schemes. 
%such that the canonical pd-structure on 
%the Witt scheme $W_m(Y)$ extends to $X$. 
Then we have a canonical morphism of topoi 
\[ u_m: ((X, \mathcal{M})/W_m(Y, \mathcal{N}))_{{\rm crys}}^{{\rm log}} \to X_{{\rm et}}. \]
We denote the structure sheaf on the log crystalline topos 
$((X, \mathcal{M})/W_m(Y, \mathcal{N}))_{{\rm crys}}^{{\rm log}}$ by 
$\mathcal{O}_m$. 

Then we define the comparison morphism 
\[\mathbb{R}u_{m*} \mathcal{O}_m \to W_m \Lambda_{(X, \mathcal{M})/(Y, \mathcal{N})}^{\bullet},  \]
which is a morphism in the derivd category 
$D^{+}(X_{\rm et},  f^{-1}(W_m({\mathcal O}_Y)))$, in the following way. 

First we consider the case where 
$(X, \mathcal{M})$ admits an immersion into a fine log scheme 
 $(Z, \mathcal{L})$ which is log smooth over $(Y, \mathcal{N})$ and 
there exists a log Frobenius lift $((Z_m, \mathcal{L}_m), \Phi_m, \Delta_m)_m$ 
of the morphism  $(Z, \mathcal{L}) \to (Y, \mathcal{N})$. 
In this case, we have the following commutative diagram: 
\[
\xymatrix{
(X, \mathcal{M}) \ar[r] \ar[d]_{w_0} & (Z, \mathcal{L}) \ar[d]_{w_0} \ar[r] & (Z_m, \mathcal{L}_m) \\
W_m(X, \mathcal{M}) \ar[r]  & W_m(Z, \mathcal{L})  \ar[ru]_{\Delta_m}. &  \\
}
\]
So the composite 
$W_m(X, \mathcal{M}) \to W_m(Z, \mathcal{L}) \xrightarrow{\Delta_m} (Z_m, \mathcal{L}_m)$ 
factors through the log pd-envelope $(\bar{Z}_m, \bar{\mathcal{N}}_m)$ of 
the composite $(X, \mathcal{M}) \to  (Z,  \mathcal{L}) \to (Z_m,  \mathcal{L}_m)$
with respect to the canonical pd-structure on $W_m(Y)$. 
We denote the resulting morphism by 
$\mu : W_m(X, \mathcal{M}) \to (\bar{Z}_m, \bar{\mathcal{L}}_m) $. 
Then, by \cite[Thm 6.4]{Kato}, we have the isomorphism 
\[ \mathbb{R}u_{m*}\mathcal{O}_m \to \mathcal{O}_{\bar{Z}_m} \otimes_{\mathcal{O}_{Z_m}} \Lambda_{(Z_m, \mathcal{L}_m)/W_m(Y, \mathcal{N})}^{\bullet} \]
in $D^{+}(X_{\rm et},  f^{-1}(W_m({\mathcal O}_Y)))$. 
(Note that, since the morphism $X \to \bar{Z}_m$ defined by the construction of log 
pd-envelope is a nilimmersion, the sheaves appearing on 
right hand side can be regarded as sheaves on $X_{{\rm et}}$.)

Also, by \cite[Sec.~6.1]{Ma}, we have the isomorphism of complexes 
\[ \breve{\Lambda}_{(\bar{Z}_m, \bar{\mathcal{L}}_m)/W_m(Y, \mathcal{N})}^{\bullet} \xrightarrow{\cong} \mathcal{O}_{\bar{Z}_m} \otimes_{\mathcal{O}_{\bar{Z}_m}} \Lambda_{(Z_m, \mathcal{L}_m)/W_m(Y, \mathcal{N})}^{\bullet}. \]
Moreover, the morphism 
$\mu$ induces the map 
$\breve{\Lambda}_{(\bar{Z}_m, \bar{\mathcal{L}}_m)/W_m(Y, \mathcal{N})}^{\bullet}  \to  \breve{\Lambda}_{W_m (X, \mathcal{M})/W_m(Y, \mathcal{N})}^{\bullet}$, and 
there exists a canonical surjection 
$\breve{\Lambda}_{W_m (X, \mathcal{M})/W_m(Y, \mathcal{N})}^{\bullet} \to W_m\Lambda_{(X, \mathcal{M})/(Y, \mathcal{N})}^{\bullet}$ as we explained after Proposition-Definition \ref{thm30}. 
The comparison morphism is defined as the composition of these maps: 
\begin{eqnarray}
\mathbb{R}u_{m*}\mathcal{O}_m 
& \xrightarrow{\cong} & \mathcal{O}_{\bar{Z}_m} \otimes_{\mathcal{O}_{{Z}_m}} \Lambda_{(Z_m, \mathcal{L}_m)/W_m(Y, \mathcal{N})}^{\bullet} \nonumber \\
& \xleftarrow{\cong} & \breve{\Lambda}_{(\bar{Z}_m, \bar{\mathcal{L}}_m)/W_m(Y, \mathcal{N})}^{\bullet} \nonumber \\
& \to & \breve{\Lambda}_{W_m (X, \mathcal{M})/W_m(Y, \mathcal{N})}^{\bullet} \nonumber \\
& \to & W_m \Lambda_{(X, \mathcal{M})/(Y, \mathcal{N})}^{\bullet}. \label{eq:comp}
\end{eqnarray}

In general case, we take a strict etale covering $\{(X(i),{\mathcal M}(i)) \}_{i \in I} $
of $(X, \mathcal{M})$ such that each $(X(i),{\mathcal M}(i))$ admits 
an immersion into a fine log scheme 
 $(Z(i), \mathcal{L}(i))$ which is log smooth over $(Y, \mathcal{N})$ such that 
there exists a log Frobenius lift $((Z_m(i), \mathcal{L}_m(i)), \Phi_m(i), \Delta_m(i))_m$ 
of the morphism  $(Z(i), \mathcal{L}(i)
) \to (Y, \mathcal{N})$. For $i_1, \ldots, i_r \in I$, we put 
\begin{align*}
& (X(i_1, \ldots, i_r), {\mathcal M}(i_i, \ldots, i_r)) :=
(X(i_1), {\mathcal M}(i_1)) \times_{(X {\mathcal M})} \cdots \times_{(X {\mathcal M})} 
(X(i_r), {\mathcal M}(i_r)), \\
& (Z(i_1, \ldots, i_r), {\mathcal L}(i_1, \ldots, i_r)) :=  
(Z(i_1), {\mathcal L}(i_1)) \times_{(Y, \mathcal{N})} \cdots 
\times_{(Y, \mathcal{N})}  (Z(i_r), {\mathcal L}(i_r)), \\ 
& (Z_m(i_1, \ldots, i_r), {\mathcal L}_m(i_1, \ldots, i_r)) :=  
(Z_m(i_1), {\mathcal L}_m(i_1)) \times_{W_m(Y, \mathcal{N})} \cdots 
\times_{W_m(Y, \mathcal{N})}  (Z_m(i_r), {\mathcal L}_m(i_r)), 
\end{align*}
and for 
% Then we have the induced immersion $X(i_1, \ldots, i_r) \to Z_m(i_1, \ldots, i_r)$. 
$r \in \mathbb{N}$, we put 
\begin{align*}
& (X^r, {\mathcal M}^r) := \sqcup_{i_1,  \ldots i_r \in I}(X(i_1, \ldots, i_r), {\mathcal M}(i_i, \ldots, i_r)), \\ 
& (Z^r, {\mathcal L}^r) := \sqcup_{i_1,  \ldots i_r \in I} 
(Z (i_1, \ldots, i_r),  {\mathcal L}(i_1, \ldots, i_r)), \\ 
& (Z_m^r, {\mathcal L}_m^r) := \sqcup_{i_1,  \ldots i_r \in I} 
(Z_{m} (i_1, \ldots, i_r),  {\mathcal L}_m(i_1, \ldots, i_r)). 
\end{align*}
Then we have a immersion 
$(X^{\bullet}, {\mathcal M}^{\bullet}) \hookrightarrow (Z^{\bullet}, {\mathcal L}^{\bullet})$
of simplicial fine log schemes such that 
$ (Z^{\bullet}, {\mathcal L}^{\bullet})$ is log smooth over 
$(Y, \mathcal{N})$ and $\{(Z_m^{\bullet}, \mathcal{L}_m^{\bullet})\}_m$ 
is a part of the data of a simplicial version of log Frobenius lift of the morphism 
$(Z^{\bullet}, {\mathcal L}^{\bullet}) \to (Y, \mathcal{N})$. 
We denote the log pd-envelope of the composite 
$(X^{\bullet}, {\mathcal M}^{\bullet}) \to (Z^{\bullet}, {\mathcal L}^{\bullet}) 
\to (Z_m^{\bullet}, \mathcal{L}_m^{\bullet})$
with respect to the canonical pd-structure on $W_m(Y)$ 
by $(\bar{Z}_m^{\bullet}, \bar{\mathcal{L}}_m^{\bullet})$. 
Also, let $\theta: (X^{\bullet})_{{\rm et}}^{~} \to X_{{\rm et}}^{~}$ be the canonical augmentation morphism. 

Then, by combining the simplicial version 
\[ \mathcal{O}_{\bar{Z}_m^{\bullet}} \otimes_{\mathcal{O}_{Z_m^{\bullet}}} \Lambda_{(Z_m^{\bullet}, \mathcal{L}_m^{\bullet})/W_m(Y, \mathcal{N})}^{\bullet}
 \to W_m \Lambda_{(X^{\bullet}, \mathcal{M}^{\bullet})/(Y, \mathcal{N})}^{\bullet} \]
of the morphism \eqref{eq:comp}, the isomorphism 
 \[ \mathbb{R}u_{m*}\mathcal{O}_m \xrightarrow{\cong} \mathbb{R}\theta_{*} (\mathcal{O}_{\bar{Z}_m^{\bullet}} \otimes_{\mathcal{O}_{\bar{Z}_m^{\bullet}}} \Lambda_{(Z_m^{\bullet}, \mathcal{L}_m^{\bullet})/W_m(Y, \mathcal{N})}^{\bullet}) \]
in \cite[Prop.~2.20]{HK} and the isomorphism 
\[ \mathbb{R}\theta_{*} W_m\Lambda_{(X^{\bullet}, \mathcal{M}^{\bullet})/(Y, \mathcal{N})}^{\bullet} \xrightarrow{\cong} W_m\Lambda_{(X, \mathcal{M})/(Y, \mathcal{N})}^{\bullet} \]
which follows from the etale base change property of 
relative log de Rham-Witt complex (\cite[Prop.~3.7]{Ma}), we can construct 
the comparison morphism 
\[ \mathbb{R}u_{m*}\mathcal{O}_m \to W_m\Lambda_{(X, \mathcal{M})/(Y, \mathcal{N})}^{\bullet}, \]
as required. 

\begin{rem}
One can prove that the comparison morphism is independent of any choices 
we made, and it is functorial with respect to $(X,{\mathcal M})/(Y,{\mathcal N})$. 
Thus, to prove that it is a quasi-isomorphism, we can work etale locally on 
$X$ and on $Y$. 
\end{rem}

\section{Relative log de Rham-Witt complex (I)}

In the first four subsection in this section, we study the relative log de Rham-Witt complex 
$W_m\Lambda^{\bullet}_{(R[P],P)/(R,*)}$ for a ${\mathbb Z}_{(p)}$-algebra $R$ 
% in which $p$ is nilpotent 
and an fs monoid $P$ with $P^{\rm gp}$ torsion free. 
The main result is the existence of natural decomposition 
\begin{equation}\label{eq:decomp}
W_m\Lambda^{\bullet}_{(R[P],P)/(R,*)} 
= \bigoplus_{x \in P[\frac{1}{p}]} W_m\Lambda^{\bullet}_{(R[P],P)/(R,*), x}
\end{equation}
indexed by $P[\frac{1}{p}]$ (see Definition \ref{2.1.1} for the definition of $P[\frac{1}{p}]$). 
The strategy of the proof is as follows. 
\begin{enumerate}
\item[(1)] 
First we introduce the notion of $P$-basic Witt differentials, which generates 
$W_m\Lambda^{\bullet}_{(R[P],P)/(R,*)}$. 
\item[(2)] 
Next we introduce the notion of basic Witt differentials in the sense of 
Langer-Zink for the complex 
$W_m\Lambda^{\bullet}_{(R[P^{\rm gp}],P^{\rm gp})/(R,*)} \cong 
W_m\Omega^{\bullet}_{R[P^{\rm gp}]/R}
$ and prove that they form `a basis'. This is a variant of the result in 
\cite[Sec.~2]{LZ}. 
\item[(3)] 
Using $P$-basic Witt differentials, we define the $x$-part 
$W_m\Lambda^{\bullet}_{(R[P],P)/(R,*), x}$ of 
$W_m\Lambda^{\bullet}_{(R[P],P)/(R,*)}$ for $x \in P[\frac{1}{p}]$ 
with \[W_m\Lambda^{\bullet}_{(R[P],P)/(R,*)} 
= \sum_{x \in P[\frac{1}{p}]} W_m\Lambda^{\bullet}_{(R[P],P)/(R,*), x}. \] 
If we apply this construction to 
$W_m\Lambda^{\bullet}_{(R[P^{\rm gp}],P^{\rm gp})/(R,*)}$ (using 
$P^{\rm gp}$-basic Witt differentials), we can define the $x$-part 
$W_m\Lambda^{\bullet}_{(R[P^{\rm gp}],P^{\rm gp})/(R,*), x}$ of  
$W_m\Lambda^{\bullet}_{(R[P^{\rm gp}],P^{\rm gp})/(R,*)}$ for $x \in 
P^{\rm gp}[\frac{1}{p}]$. 
By relating $P^{\rm gp}$-basic Witt differentials 
with the basic Witt differentials in (2), we prove that 
\[ 
W_m\Lambda^{\bullet}_{(R[P^{\rm gp}],P^{\rm gp})/(R,*)} = 
\bigoplus_{x \in P^{\rm gp}[\frac{1}{p}]} W_m\Lambda^{\bullet}_{(R[P^{\rm gp}],P^{\rm gp})/(R,*), x}.\] 
Then, by relating $P$-basic Witt differentials in (1) with the basic Witt differentials in (2), 
we prove that 
\begin{equation}\label{eq:iso}
W_m\Lambda^{\bullet}_{(R[P],P)/(R,*), x} \cong 
W_m\Lambda^{\bullet}_{(R[P^{\rm gp}],P^{\rm gp})/(R,*), x} \quad (x \in P[\tfrac{1}{p}]). 
\end{equation}
By combining these, we obtain the decomposition \eqref{eq:decomp}. 
\end{enumerate}
In the last subsection, by using the decomposition \eqref{eq:decomp} and the isomorphism \eqref{eq:iso}, we prove that 
the comparison morphism 
\[\mathbb{R}u_{m*} \mathcal{O}_m \to W_m \Lambda_{(X, \mathcal{M})/(Y, \mathcal{N})}^{\bullet} \]
constructed in Section 1.3 is a quasi-isomorphism when ${\mathcal N}$ is trivial and 
$(X, \mathcal{M}) \to (Y, \mathcal{N})$ is a log smooth morphism of fs log schemes. 

\subsection{Preliminaries on monoids (I)}
In this subsection, we give several more notions on monoids which we use 
in the rest of the article. 

Let $p$ be a prime and let $\mathbb{Z} [\frac{1}{p}]:=\{ \frac{x}{p^n} \in \mathbb{Q}; n \in \mathbb{Z} \} $. Then, for a monoid $P$, the group 
$ P^{\rm gp} \otimes_{\mathbb{Z}} \mathbb{Z} [\frac{1}{p}] $ is defined. 

\begin{definition}\label{2.1.1}
For an integral monoid $P$, we put 
\[ P[\tfrac{1}{p} ] := \{ x \in P^{\rm gp} \otimes \mathbb{Z} [ \tfrac{1}{p} ] \, ; \,  \exists n \in \mathbb{N} , p^n x \in P \}. \]
\end{definition}

\begin{definition}
For an integral monoid $P$, define the function 
$u: P[\frac{1}{p}] \to \mathbb{N}$ by 
\[u(x)=u_{P}(x) := \min \{ n \geq 0 ; p^n x \in P \}. \]
\end{definition}

\begin{lemma}
Let $P$ be an integral monoid. 
 \begin{enumerate}[(1)]
  \item For any $x \in  P[\frac{1}{p}] $, $u(px)=u(x)-1$. 
  \item For any $x,  y  \in P[\frac{1}{p}] $, $ u(x+y) \leq \max \{ u(x), u(y) \}. $
%Moreover, if $P$ is saturated and $u(x) \not= u(y)$, $ u(x+y) = \max \{ u(x), u(y) \}. $
\end{enumerate}
\end{lemma}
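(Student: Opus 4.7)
For part (1), the plan is to unfold the definition of $u$ directly. Write $n := u(x)$, so by definition $p^n x \in P$ and (if $n \geq 1$) $p^{n-1} x \notin P$. The inequality $u(px) \leq n-1$ is immediate from $p^{n-1}(px) = p^n x \in P$, and the reverse inequality follows because any $k < n-1$ with $p^k(px) \in P$ would give $p^{k+1} x \in P$ with $k+1 < n$, contradicting the minimality of $n$. The boundary case $n = 0$ (meaning $x \in P$ already) should be noted separately: then $px \in P$ and $u(px) = 0$, so the equality is understood with the convention that one does not subtract below zero. I would include a parenthetical remark to that effect, since otherwise the formula as stated is slightly ambiguous.

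For part (2), set $a := u(x)$, $b := u(y)$, and $M := \max\{a, b\}$. The key observation is that once $p^a x \in P$, one has $p^M x = p^{M-a}(p^a x) \in P$ as well, because $P$ is closed under multiplication by non-negative integers (in additive notation, $p^{M-a}$-fold addition of an element of $P$ lies in $P$). Symmetrically $p^M y \in P$, whence $p^M(x+y) = p^M x + p^M y \in P$. By minimality in the definition of $u$, this gives $u(x+y) \leq M$, as required.

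Neither step contains a real obstacle: both are pure bookkeeping with the definition of $u$. The only point that requires a moment of attention is making sure the subtraction in part (1) is handled correctly in the corner case $u(x) = 0$, and that in part (2) we use integrality of $P$ (needed implicitly to ensure that $P^{\rm gp}$, and hence $P[\tfrac{1}{p}]$, is well-defined as a submonoid-like subset of a group). No appeal to any result beyond the definitions given at the start of the subsection is needed.
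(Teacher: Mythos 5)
Your proof is correct and takes the one natural route — the paper itself dismisses this lemma with ``Trivial,'' so there is no alternative approach to compare against. You were right to flag the corner case $u(x)=0$ in part (1): as stated, $u(px)=u(x)-1$ would read $u(px)=-1$ there, whereas $u$ is $\mathbb{N}$-valued and one actually has $u(px)=u(x)=0$; your convention of not subtracting below zero (equivalently, restricting the identity to $u(x)\geq 1$) is the correct reading, and worth the parenthetical you included. Part (2) is exactly the bookkeeping you describe; no further comment needed.
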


\begin{proof}
Trivial. 
\end{proof}

\begin{comment}
\begin{proof} 
The assertion (1) and the former assertion of (2) are trivial. 
We prove the latter assertion of (2). We take elements $x,  y  \in P[\frac{1}{p}]$ 
with $u(x) < u(y), u(x+y) \leq u(y) -1$ and deduce a contradiction. 
% In this case, $ u(y) \geq 1$. 
By assumption, $ p^{u(y)-1}(x+y) \in P,  p^{u(y)-1}x \in P$ and so 
$ p^{u(y)-1} y =  p^{u(y)-1}(x+y) -   p^{u(y)-1}x  \in P^{\rm gp} $. 
Since $ p^{u(y)} y  \in P$ and $P$ is saturated, we deduce that 
$ p^{u(y)-1}y \in P $. This contradicts the definition of $u(y)$. 
\end{proof}
\end{comment}

Note that, for an integral monoid $P$, the function 
$u_{P^{\rm gp}}: P^{\rm gp}[\frac{1}{p}] \to \mathbb{N}$ is also defined. 

\begin{lemma}\label{thm3}
Let $P$ be a saturated monoid. Then 
$ u_{P}(x)=u_{P^{\rm gp}}(x)$ for 
$x \in P[\frac{1}{p}] $. 
\end{lemma}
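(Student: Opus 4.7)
The plan is to establish both inequalities $u_P(x) \ge u_{P^{\rm gp}}(x)$ and $u_P(x) \le u_{P^{\rm gp}}(x)$ separately. The first inequality is immediate from the definitions: if $n = u_P(x)$ then $p^n x \in P \subset P^{\rm gp}$, so $u_{P^{\rm gp}}(x) \le n$. No hypothesis on $P$ beyond integrality is needed here.

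For the reverse inequality, I would set $n := u_{P^{\rm gp}}(x)$, so $p^n x \in P^{\rm gp}$, and aim to show $p^n x \in P$. Since $x \in P[\frac{1}{p}]$ there exists some $N \in \mathbb{N}$ with $p^N x \in P$; we may assume $N \ge n$. Then $y := p^n x \in P^{\rm gp}$ satisfies $p^{N-n} y = p^N x \in P$, and saturation of $P$ (applied to the positive integer $p^{N-n}$) forces $y \in P$. Hence $u_P(x) \le n$, which is the desired bound.

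There is essentially no obstacle; the lemma is a direct unwinding of the definition of saturated monoid, and both inequalities together yield $u_P(x) = u_{P^{\rm gp}}(x)$. The only subtle point is remembering that the saturation condition is phrased for arbitrary $n \in \mathbb{N}_{>0}$ (not just $n=p$), so that we can apply it with multiplier $p^{N-n}$.
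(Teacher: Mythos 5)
Your proof is correct and is essentially the paper's proof: both sides establish $u_P(x)\ge u_{P^{\rm gp}}(x)$ directly from $P\subseteq P^{\rm gp}$, and both obtain the reverse inequality by applying saturation to $y=p^{u_{P^{\rm gp}}(x)}x\in P^{\rm gp}$ together with the fact that some higher power $p^N x$ lies in $P$. The paper simply takes $N=u_P(x)$ at the outset, whereas you take an arbitrary $N$ and enlarge it if needed; this is a cosmetic difference only.
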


\begin{proof}
We have $ u_{P}(x) \geq u_{P^{\rm gp}}(x) $ by definition. 
On the other hand, since $p^{u_P(x) - u_{P^{\rm gp}}(x)} p^{ u_{P^{\rm gp}}(x)} x \in P $
and $P$ is saturated, $p^{ u_{P^{\rm gp}}(x)} x \in P$. Hence 
$ u_{P}(x) \leq u_{P^{\rm gp}}(x) $. 
\end{proof}

\subsection{$P$-basic Witt differential}

Let $p$ be a prime, let $R$ be a $\mathbb{Z}_{(p)}$-algebra and let $P$ be an fs monoid 
with $P^{\rm gp}$ torsion free, and we fix an isomorphism 
$P^{\rm gp} \cong \mathbb{Z}^r$. 
In the following in this article, we denote the image of $x \in P$
by the canonical map $P \to R[P]$ by $T^x$ and put 
$X^x := [T^x] \in W(R[P])$. In this subsection, we introduce the notion of 
$P$-basic Witt differentials and prove that any element in 
$W_m \Lambda_{(R[P], P )/(R, {*})}^{\bullet}$ can be written as a sum of 
$P$-basic Witt differentials.  

\begin{definition}
For $x \in P[\frac{1}{p}]$ and $\xi \in {}^{V^{u(x)}}W(R)$, 
we define the $P$-basic Witt differential $b(\xi, x) \in W(R[P])$ of degree $0$ 
by 
\[ b(\xi, x) = {}^{V^{u(x)}}(\eta X^{p^{u(x)}x}),\] where $\xi = {}^{V^{u(x)}}\eta$. 
We call $x$ the weight of $b(\xi, x)$.  
\end{definition}

Note that, by construction in 
\cite[Sec.~3.4]{Ma}, $W\Lambda_{(R[P],  P)/(R,  {*})}^{0} = W(R[P])$. 

\begin{lemma}\label{thm11}
Any element in $W(R[P])$ can be written uniquely as the following convergent sum: 
\[  \sum_{x \in P[\frac{1}{p}]}  b(\xi_x, x) \qquad (\xi_x \in W(R)).  \] 
Here the convergence means that, for any $m \in {\mathbb{N}}$, 
${\xi}_{x} \in {}^{V^{m}}W(R)$ for almost all $x$. Moreover, the above element 
belongs to $\mbox{Ker}(W(R[P]) \to W_m (R[P]))$ if and only if 
${\xi}_{x} \in {}^{V^{m}}W(R)$ for all $x \in P[\frac{1}{p}]$. 
\end{lemma}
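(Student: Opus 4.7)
The assertion is that the map
\[ \Phi : (\xi_x)_{x \in P[\frac{1}{p}]} \longmapsto \sum_x b(\xi_x, x) \]
is a bijection from the restricted product of the groups ${}^{V^{u(x)}}W(R)$ (with the stated convergence condition) onto $W(R[P])$, and that the level-$m$ kernels on the two sides match. The plan is to verify this first when $R$ is $\mathbb{Z}_{(p)}$-flat by a direct ghost-component calculation combined with Dwork's criterion, and then to extend to a general $\mathbb{Z}_{(p)}$-algebra by a base-change argument; the kernel/convergence statement will be read off from the same ghost formula.

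From $\mathbf{w}_n({}^{V^k}\alpha) = p^k\, \mathbf{w}_{n-k}(\alpha)$ for $n \geq k$ and $\mathbf{w}_n([s]) = s^{p^n}$, writing $y = p^{u(x)}x \in P$, one computes
\[ \mathbf{w}_n(b(\xi_x, x)) = \begin{cases} 0 & (n < u(x)), \\ p^{u(x)}\, \mathbf{w}_{n-u(x)}(\eta_x) \cdot T^{p^n x} & (n \geq u(x)). \end{cases} \]
The key combinatorial observation is that each monomial $T^y$ in $\mathbf{w}_n(\omega)$ is hit by exactly one basic Witt differential, namely the one indexed by $x := y/p^n \in P[\tfrac{1}{p}]$ (the inequality $u(x) \leq n$ being automatic from $p^n x = y \in P$). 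Thus matching monomial coefficients of $\mathbf{w}_n(\sum_x b(\xi_x, x))$ and $\mathbf{w}_n(\omega)$ produces an explicit candidate for the inverse of $\Phi$.

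When $R$ is $\mathbb{Z}_{(p)}$-flat, the ghost map $\mathbf{w} : W(R[P]) \hookrightarrow R[P]^{\mathbb{N}}$ is injective, with image characterised by Dwork's congruences relative to the Frobenius lift $\phi$ on $R[P]$ satisfying $\phi(T^y) = T^{py}$. Expand $\mathbf{w}_n(\omega) = \sum_{y \in P} c_{n,y}\, T^y$ and prescribe $\mathbf{w}_k(\eta_x) := p^{-u(x)} c_{k+u(x),\, p^{k+u(x)} x}$. The divisibility $p^{u(x)} \mid c_{n,\, p^n x}$ for $n \geq u(x)$ is proved by induction on $n$: at the base case $n = u(x)$, the minimality of $u(x)$ forces $p^{u(x)-1}x = y/p \notin P$, so the coefficient $c_{u(x)-1,\, y/p}$ is zero and Dwork's congruence at level $u(x)$ reads $c_{u(x),\, y} \equiv 0 \pmod{p^{u(x)}}$; the induction step is immediate from Dwork. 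A second application of Dwork's congruences, divided by $p^{u(x)}$, then shows that $(\mathbf{w}_k(\eta_x))_k$ is itself a legitimate ghost sequence, so $\eta_x \in W(R)$ is well defined, and injectivity of the ghost map gives both existence and uniqueness of the decomposition in this case.

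For general $R$ the plan is to express $R$ as a quotient of a polynomial $\mathbb{Z}_{(p)}$-algebra $R'$ and exploit that both $W(R[P])$ and the restricted product on the left are coordinate-wise quotients of the corresponding objects over $R'$ in a way compatible with $\Phi$, yielding surjectivity; for uniqueness one may argue inductively on the Witt length, noting that the $0$-th Witt coordinate of $\sum_x b(\xi_x, x)$ equals $\sum_{x \in P}(\eta_x)_0\, T^x$ (the terms with $u(x) \geq 1$ lying in $V\,W(R[P])$), so that peeling this off and dividing by $V$ reduces the Witt length. The kernel and convergence assertions follow directly from the defining formula: the condition $\omega \in \mbox{Ker}(W(R[P]) \to W_m(R[P]))$ is equivalent to $c_{n,y} = 0$ for $n < m$ and $c_{n,y} \in p^m R$ for $n \geq m$, which via $\mathbf{w}_k(\eta_x) = p^{-u(x)} c_{k+u(x),\, p^{k+u(x)} x}$ is equivalent to $\eta_x \in {}^{V^{m-u(x)}}W(R)$ for every $x$, that is, $\xi_x \in {}^{V^m}W(R)$. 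I expect the main difficulty to lie in making the base-change/uniqueness step work over a non-flat $R$, where the ghost map is no longer injective and one must either control the Witt addition polynomials directly or lift elements to the flat case and chase the kernel.
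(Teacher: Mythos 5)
Your approach goes through, but it is a genuinely different (and considerably heavier) route than the one taken in the paper. The paper's proof is a direct $V$-adic induction valid for any $\mathbb{Z}_{(p)}$-algebra $R$: given $z\in W(R[P])$, write $\mathbf{w}_0(z)=\sum_{x\in P}a_xT^x$, observe $z-\sum_x[a_x]X^x\in {}^VW(R[P])$, strip off one $V$, and iterate; this simultaneously produces the canonical expansion $z=\sum_{m,x}{}^{V^m}([a_{x,m}]X^x)$, its uniqueness, and the `Moreover' clause, after which a regrouping (replacing $x\in P$ at level $m$ by $p^{-n}x$ with $n$ maximal) yields the $P$-basic Witt differentials. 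You instead compute ghost components and invoke Dwork's criterion, which is correct and does give an explicit formula $\mathbf{w}_k(\eta_x)=p^{-u(x)}c_{k+u(x),\,p^{k+u(x)}x}$ for the inverse, but only in the presence of a ring endomorphism of $R[P]$ lifting Frobenius; such a lift exists when $R$ is polynomial over $\mathbb{Z}_{(p)}$ but not for an arbitrary $\mathbb{Z}_{(p)}$-flat $R$, so your ``flat case'' should really be stated as the ``Frobenius-lift case'' (which is all you use, since you pass through a polynomial covering anyway).

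For the general $R$, your surjectivity-by-lifting is fine, but note two points about the uniqueness step you sketched. First, ``inductively on the Witt length'' is imprecise for $W(R[P])$; what you actually need is the implication ``$\sum_x b(\xi_x,x)\in{}^{V^m}W(R[P])\Rightarrow\xi_x\in{}^{V^m}W(R)$ for all $x$,'' proved by induction on $m$, and $\xi_x=0$ then follows since $\bigcap_m{}^{V^m}W(R)=0$. Second, and more substantively, when you strip off a $V$ from $b(\xi_x,x)={}^{V^{u(x)}}(\eta_xX^{p^{u(x)}x})$ (or from $\eta_xX^x$ with $\eta_x\in{}^VW(R)$ when $u(x)=0$), the weight of the resulting $P$-basic Witt differential is $px$, not $x$: one finds $b(\xi_x,x)={}^V b(\zeta_{px},px)$ with $\zeta_{px}\in{}^{V^{u(px)}}W(R)$, and $\xi_x={}^V\zeta_{px}$. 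Since $x\mapsto px$ is a bijection of $P[\tfrac1p]$ and $V$ is injective, the induction closes, but this weight-shift is the crux of the argument and should be spelled out. In effect, your treatment of the general-$R$ uniqueness and of the `Moreover' clause forces you back onto the paper's $V$-adic peeling, rendering the ghost/Dwork excursion optional; the paper's direct induction, which needs no flatness and no Frobenius lift, is simpler and covers every case at once.
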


\begin{proof} (cf. \cite[Prop.~2.3]{LZ}) 
Take any $z \in W(R[P])$ and let 
${\bf w}_{0} (z)=\displaystyle{\sum_{x \in P} a_{x}T^{x}} \, (a_x \in R)$. 
Then 
\[ z -  \sum [a_{x}] X^{x}  \in {}^{V}W(R[P]).\]
Using this and arguing by induction, we obtain the unique expression of $z$ of the form 
\[ z = \sum_{m \geq 0 , x \in P}  {}^{V^{m}}([a_{x,  m}]X^{x}). \]
Next we rewrite the each term in the following way: For each $m, x$, let $n$ 
be the maximal element in $\mathbb{N}$ with $p^{-n}x \in P$ and $n \leq m$. 
Then 
\[^{V^{m}}([a_{x,  m}]X^{x}) =^{V^{m-n}}(^{V^{n}}[a_{x,  m}] X^{p^{-n}x}). \]
Then we obtain the required expression for $z$. 
The latter assertion also follows from this argument. 
\end{proof}

\begin{lemma}\label{thm12}
A product of $P$-basic Witt differentials of degree $0$ 
is again a $P$-basic Witt differential of degree $0$. 
\end{lemma}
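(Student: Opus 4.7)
Take two $P$-basic Witt differentials of degree $0$,
$b(\xi_1,x_1) = {}^{V^{u_1}}(\eta_1 X^{p^{u_1}x_1})$ and $b(\xi_2,x_2) = {}^{V^{u_2}}(\eta_2 X^{p^{u_2}x_2})$, where $u_i = u(x_i)$ and $\xi_i = {}^{V^{u_i}}\eta_i$. Without loss of generality assume $u_1 \leq u_2$. The plan is to compute the product directly using the standard Witt vector identities $^V(\alpha\cdot {}^F\beta) = {}^V\alpha \cdot \beta$, ${}^{F}{}^V\alpha = p\alpha$ and ${}^{F}X^{x} = X^{px}$, and then to reorganize the result into the standard form of a $P$-basic Witt differential of weight $x_1 + x_2$.

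First I would move everything under a single $V^{u_2}$. Using ${}^{F^{u_2-u_1}}{}^{V^{u_2}} = p^{u_2-u_1}$ composed with the appropriate projection formula one gets
\[
b(\xi_1,x_1) \cdot b(\xi_2,x_2) = {}^{V^{u_2}}\bigl(p^{u_1} \cdot {}^{F^{u_2-u_1}}(\eta_1) \cdot \eta_2 \cdot X^{p^{u_2}(x_1+x_2)}\bigr),
\]
using that ${}^{F^{u_2-u_1}}(X^{p^{u_1}x_1}) = X^{p^{u_2}x_1}$ and that $X^{p^{u_2}x_1} \cdot X^{p^{u_2}x_2} = X^{p^{u_2}(x_1+x_2)}$ in $W(R[P])$.

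Next I would note that $x_1 + x_2 \in P[\tfrac{1}{p}]$: indeed $p^{u_2}(x_1+x_2) = p^{u_2-u_1}(p^{u_1}x_1) + p^{u_2}x_2 \in P$. Set $u' := u(x_1+x_2) \leq u_2$ (this inequality is the preceding easy lemma). If $u' = u_2$ the expression above is already of the desired form $b(\xi,\,x_1+x_2)$ with $\xi := {}^{V^{u_2}}(p^{u_1}\cdot {}^{F^{u_2-u_1}}(\eta_1)\cdot \eta_2) \in {}^{V^{u'}}W(R)$. If $u' < u_2$, I would rewrite $X^{p^{u_2}(x_1+x_2)} = {}^{F^{u_2-u'}}(X^{p^{u'}(x_1+x_2)})$ and pull the $V$'s out via $^{V^{u_2-u'}}(\alpha \cdot {}^{F^{u_2-u'}}\beta) = {}^{V^{u_2-u'}}(\alpha) \cdot \beta$, obtaining
\[
b(\xi_1,x_1) \cdot b(\xi_2,x_2) = {}^{V^{u'}}\bigl(\eta' \cdot X^{p^{u'}(x_1+x_2)}\bigr),
\]
with $\eta' := {}^{V^{u_2-u'}}\bigl(p^{u_1} \cdot {}^{F^{u_2-u_1}}(\eta_1)\cdot \eta_2\bigr) \in W(R)$. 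Setting $\xi' := {}^{V^{u'}}\eta'$ exhibits the product as $b(\xi',\,x_1+x_2)$, which completes the argument.

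The only delicate point is the bookkeeping of $u'$ versus $u_2$: one must verify that after absorbing the Teichmüller factor into a single monomial $X^{p^{u_2}(x_1+x_2)}$, the exponent can genuinely be lowered to $p^{u'}(x_1+x_2)$ so that the result fits the normalized form of Definition of $b(\xi,x)$. Everything else is a direct application of the Witt vector identities together with the fact that the map $P[\tfrac{1}{p}] \to W(R[P])$, $x \mapsto X^{p^{u(x)}x}$, is compatible with addition in $P[\tfrac{1}{p}]$ after multiplication.
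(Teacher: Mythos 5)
Your proof is correct and follows essentially the same two-step calculation as the paper: first use the projection formula to bring both factors under a single $V^{\max(u_1,u_2)}$ (picking up the factor $p^{\min(u_1,u_2)}$), then lower the $V$-exponent to $u(x_1+x_2)$ by extracting ${}^{F^{\max(u_1,u_2)-u(x_1+x_2)}}$ from the Teichmüller factor. The only cosmetic difference is that you normalize with $u_1 \le u_2$ while the paper takes $u(x_1) \ge u(x_2)$.
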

\begin{proof}
It suffices to prove that the product of 
$b(\xi_1, x_1)$, $b(\xi_2,  x_2)$ is a $P$-basic Witt differential of degree $0$ 
when $u(x_1) \geq u(x_2)$. The product is calculated as follows: 
\begin{eqnarray*}
b(\xi_1, x_1) b(\xi_2,  x_2)
&= & ^{V^{u(x_1)}}(\eta_1 X^{p^{u(x_1)}x_1}) {}^{V^{u(x_2)}}(\eta_2 X^{p^{u(x_2)}x_2}) \\
&= & ^{V^{u(x_2)}}( {}^{V^{u(x_1)-u(x_2)}} (\eta_1 X^{p^{u(x_1)}x_1}) {}^{F^{u(x_2)}V^{u(x_2)}}(\eta_2 X^{p^{u(x_2)}x_2})) \\
&= & ^{V^{u(x_2)}}( {}^{V^{u(x_1)-u(x_2)}} (\eta_1 X^{p^{u(x_1)}x_1})(p^{u(x_2)} \eta_2 X^{p^{u(x_2)}x_2})) \\
&= & ^{V^{u(x_2)}}( {}^{V^{u(x_1)-u(x_2)}} (\eta_1 p^{u(x_2)} X^{p^{u(x_1)}x_1})( \eta_2 X^{p^{u(x_2)}x_2})) \\
&= & ^{V^{u(x_1)}}(\eta_1  p^{u(x_2)} {}^{F^{u(x_1)-u(x_2)}}( \eta_2 ) X^{p^{u(x_1)}(x_1+x_2)} ).
\end{eqnarray*}
Since $u(x_1+x_2) \leq u(x_1)$, we can calculate further as follows: 
\begin{eqnarray*}
&\quad & ^{V^{u(x_1)}}(\eta_1 p^{u(x_2)} {}^{F^{u(x_1)-u(x_2)}}( \eta_2 ) X^{p^{u(x_1)}(x_1+x_2)} )  \\
&= & ^{V^{u(x_1)}}(\eta_1 p^{u(x_2)} {}^{F^{u(x_1)-u(x_2)}}(  \eta_2 ) {}^{F^{u(x_1)-u(x_1+x_2)}} X^{p^{u(x_1+x_2)}(x_1+x_2)} ) \\
&= & ^{V^{u(x_1+x_2)}}({}^{V^{u(x_1)-u(x_1+x_2)}}(\eta_1 p^{u(x_2)} {}^{F^{u(x_1)-u(x_2)}}(\eta_2 ))  X^{p^{u(x_1+x_2)}(x_1+x_2)} ). 
\end{eqnarray*}
The last one is a $P$-basic Witt differential of degree $0$.  
\end{proof}

Next we discuss the elements of degree $1$. 
Since $P \to W\Lambda_{(R[P],  P)/(R,  {*})}^{1}; \, x \mapsto d\log X^x$
is a homomorphism of monoids, it induces the group homomorphism 
${\mathbb{Z}}^r \cong P^{\rm gp} \to W\Lambda_{(R[P],  P)/(R,  {*})}^{1}$. 
We denote the image of the $i$-th standard basis of $\mathbb{Z}^r$
by this map by $d \log X_i$. 

\begin{definition}
For $x \in P[\frac{1}{p}]$, $\xi \in {}^{V^{u(x)}}W(R)$ and $n = 0,  1, \ldots, r$, 
we define the $P$-basic Witt differential $b(\xi, x, n) \in W\Lambda^{1}_{(R[P],P)/(R,*)}$ 
of degree $1$ as follows: 
 \[ b(\xi, x, n) = \left\{ \begin{array}{ll}
  db(\xi, x) & (n= 0),  \\
  b(\xi, x)d\log X_n  & (n \neq 0). 
\end{array}  \right.\]
Here $b(\xi, x)$ is the $P$-basic Witt differential of degree $0$. 
$x$ is called the weight of $b(\xi, x, n)$. 
\end{definition}

\begin{prop}\label{thm16}
Any element in 
$W\Lambda_{(R[P],  P)/(R,  {*})}^{1}$
can be written as the following convergent sum 
\[ \sum_{x \in P[\frac{1}{p}], 0 \leq n \leq r} b(\xi_{x,  n}, x, n) \quad (\xi_{x,  n} \in {}^{V^{u(x)}}W(R) ), \]
%\[ \sum_{x \in P[\frac{1}{p}], 1 \leq i \leq r} {}^{V^{u(x)}}({\eta}_{x, i }X^{p^{u(x)}x})d\log X_i + \sum_{x \in P[\frac{1}{p}]} d^{V^{u(x)}}({\eta}_{x}X^{p^{u(x)}x}) \].
where the convergence is defined in the same way as that in Lemma 
\ref{thm11}. 
\end{prop}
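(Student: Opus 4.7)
Since $W\Lambda^{\bullet}_{(R[P],P)/(R,*)}$ is a quotient of the log pd-de Rham complex $\breve{\Lambda}^{\bullet}_{W(R[P])/W(R)}$, any element of degree $1$ is (the image of) a finite sum of elements of the form $a\,db$ with $a,b\in W(R[P])$ and $a\,d\log X^y$ with $a\in W(R[P])$ and $y\in P^{\rm gp}\cong\mathbb{Z}^r$. The plan is to reduce each such element to a convergent sum of the $P$-basic Witt differentials $b(\xi,x,n)$, using the relations in the log $F$-$V$-procomplex, Lemma \ref{thm11}, and Lemma \ref{thm12}.

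\textbf{Steps.} First I would use Lemma \ref{thm11} to write both $a$ and $b$ as convergent sums of degree-$0$ $P$-basic Witt differentials; by bilinearity it suffices to treat a single pair of terms. Case (i): for $b(\xi,x)\cdot d\log X^y$, expand $y=\sum_{i=1}^{r} n_i e_i$ in the fixed basis, so that $d\log X^y=\sum_i n_i\,d\log X_i$ and hence
\[
b(\xi,x)\cdot d\log X^y \;=\; \sum_{i=1}^r n_i\,b(\xi,x,i),
\]
a finite sum of degree-$1$ $P$-basic Witt differentials. Case (ii): for $b(\xi_1,x_1)\cdot d\,b(\xi_2,x_2)$, the key sub-step is to compute $d\,b(\xi_2,x_2)$ explicitly. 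Writing $\xi_2 = {}^{V^{u(x_2)}}\eta_2$ and $b(\xi_2,x_2) = {}^{V^{u(x_2)}}(\eta_2 X^{p^{u(x_2)}x_2})$, I would apply the identities $F\,d\,V=d$, $V(\omega\,{}^F\eta) = V(\omega)\,\eta$ and $FV=p$, together with the Leibniz rule and ${}^F d[T^z]=[T^{(p-1)z}]\,d[T^z]$, to move $d$ inside the $V^{u(x_2)}$; the outcome is an expression where each summand is a product of a degree-$0$ term in the image of $V^{u(x_2)}$ (hence a degree-$0$ $P$-basic differential of weight $x_2$ by Lemma \ref{thm11}) with a $d\log X_i$. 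Multiplying on the left by $b(\xi_1,x_1)$ and invoking Lemma \ref{thm12} for the degree-$0$ factor produces a sum of degree-$1$ $P$-basic Witt differentials, reducing case (ii) to case (i).

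\textbf{Convergence.} A summand $b(\xi,x,n)$ maps to $0$ in $W_m\Lambda^{\bullet}$ as soon as $\xi\in{}^{V^m}W(R)$, which is the same convergence condition as in Lemma \ref{thm11}. Each rewriting step above strictly preserves (or only increases) the $V$-level of the coefficients appearing, since Leibniz-style expansions and the identities $F\,d\,V = d$, $V(\omega\,{}^F\eta)=V(\omega)\eta$ either keep $V^{u}$ unchanged or replace it by $V^{u'}$ with $u'\ge u$. Thus a convergent sum of input terms produces a convergent sum of $b(\xi,x,n)$'s.

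\textbf{Main obstacle.} The delicate step is computing $d\,b(\xi,x)$ and showing the result is already of the announced form (with the same weight $x$ in the sense of $P[\tfrac 1 p]$). This requires careful bookkeeping with the interplay of $d$ and $V^{u(x)}$: one has to pull $d$ past $V^{u(x)}$ using $F d V = d$, account for the Teichm\"uller lift relation ${}^F d[T^{z}]=[T^{(p-1)z}]\,d[T^{z}]$, and check that the weight of the resulting degree-$0$ factor still lies in $P[\tfrac 1 p]$ (equivalently, its image under some $V$-power is in $W(R)\cdot X^{p^{u(x)}x}$). Once this computation is in hand, cases (i) and (ii) combine to give the desired convergent $P$-basic expansion.
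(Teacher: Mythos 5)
There is a genuine gap in Case (ii). You propose to compute $d\,b(\xi_2,x_2)$ \emph{in isolation}, by ``moving $d$ inside the $V^{u(x_2)}$'', and to obtain a sum of terms, each a degree-$0$ $P$-basic differential (in the image of $V^{u(x_2)}$) times a single $d\log X_i$. This works when $u(x_2)=0$ (there $d\,b(\xi_2,x_2)=\eta_2 X^{x_2}\sum_i (x_2)_i\,d\log X_i$), but it is \emph{false} for $u(x_2)>0$. The identity ${}^F d\,{}^V=d$ only lets you evaluate ${}^{F^{u(x_2)}}\!\bigl(d\,{}^{V^{u(x_2)}}(\eta_2 X^{p^{u(x_2)}x_2})\bigr)$, not $d\,{}^{V^{u(x_2)}}(\eta_2 X^{p^{u(x_2)}x_2})$ itself; there is no inverse of $F$ to undo this. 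The other identity you invoke, ${}^V(\omega\,{}^F\eta)={}^V(\omega)\eta$, requires an ambient product and cannot be applied to $d\,b(\xi_2,x_2)$ alone. In general $d\,{}^{V}(a)$ is not in the image of $V$, so your claimed summands cannot exist.

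The reason the actual proof works is that it is the full product $b(\xi_1,x_1)\,d\,b(\xi_2,x_2)$ that gets simplified, and this requires a case split that your scheme omits. When $u(x_1)\ge u(x_2)$, the factor $b(\xi_1,x_1)\in{\rm Im}\,V^{u(x_1)}$ lets one pull the whole product under a single $V^{u(x_2)}$ via ${}^V(\omega\,{}^F\eta)={}^V(\omega)\eta$, and then (and only then) the resulting ${}^{F^{u(x_2)}}\!d\,{}^{V^{u(x_2)}}$ collapses to $d$; the degree-$0$ factor that remains is a product of $P$-basic elements and is controlled by Lemma~\ref{thm12}. When $u(x_1)<u(x_2)$ this pull-through is unavailable; one instead applies Leibniz to write $b(\xi_1,x_1)\,d\,b(\xi_2,x_2)=d\bigl(b(\xi_1,x_1)b(\xi_2,x_2)\bigr)-b(\xi_2,x_2)\,d\,b(\xi_1,x_1)$, reducing the second term to the first case and accepting the first term as an \emph{irreducible} $n=0$ contribution $b(\xi',x_1+x_2,0)=d\,b(\xi',x_1+x_2)$. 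Your outline does not produce any $n=0$ term — it aims to express everything as ``degree-$0$ times $d\log X_i$'' — so it cannot recover this necessary summand. You need both the $u(x_1)$-versus-$u(x_2)$ case split and the allowance for $n=0$ terms in the output.
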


Note that we do not claim the uniqueness of the expression in the above 
proposition. 

\begin{proof}
Note first that, by Lemma \ref{thm11} and Lemma \ref{thm12}, any element in 
$W\Lambda_{(R[P],  P)/(R,  {*})}^{1}$ is written as the convergent sum of the 
elements of the following types: 
\begin{enumerate}
\item $b(\xi, x) d \log X^y$,
\item $b(\xi_1, x_1) db(\xi_2,  x_2)$.  
\end{enumerate}
Here $y \in P$ and $b(\xi, x)$, $b(\xi_1, x_1)$, $b(\xi_2,  x_2)$
are $P$-basic Witt differentials of degree $0$. 
So it suffices to prove that the element of the above types can be written as 
a sum of $P$-basic Witt differentials of degree $1$. 

An element of type 1 is calculated as follows: Denote the image of 
$y \in P$ by the map $P \to P^{\rm gp} \cong \mathbb{Z}^r$ by $(y_i)_{i}$. 
Then  
\[ b(\xi, x) d\log X^y = b(\xi, x) \sum_{i=1}^{r} y_{i} d\log X_i = \sum_{i=1}^r b(y_i\xi,x,i). 
\]

An element of type 2 is calculated as follows: 
\medskip 

\noindent 
{\bf Case 1} \, The case $u(x_1) \geq u(x_2)$. 

We denote the image of $p^{u(x_2)}x_2 \in P$ by the map $P \to P^{\rm gp} \cong {\mathbb{Z}}^r$ by $(z_i)_i$. Then,    
{\allowdisplaybreaks{
\begin{eqnarray*}
b(\xi_1, x_1) db(\xi_2,  x_2)
&= & ^{V^{u(x_1)}}(\eta_1 X^{p^{u(x_1)}x_1}) d^{V^{u(x_2)}}(\eta_2 X^{p^{u(x_2)}x_2}) \\
&= & ^{V^{u(x_2)}}( {}^{V^{u(x_1)-u(x_2)}} (\eta_1 X^{p^{u(x_1)}x_1}) {}^{F^{u(x_2)}}d^{V^{u(x_2)}}(\eta_2 X^{p^{u(x_2)}x_2})) \\
&= & ^{V^{u(x_2)}}( {}^{V^{u(x_1)-u(x_2)}} (\eta_1 X^{p^{u(x_1)}x_1}) d(\eta_2 X^{p^{u(x_2)}x_2})) \\
&= & ^{V^{u(x_2)}}( {}^{V^{u(x_1)-u(x_2)}}(\eta_1 X^{p^{u(x_1)}x_1}) (\eta_2 X^{p^{u(x_2)}x_2} d\log X^{p^{u(x_2)}x_2})) \\
&= & ^{V^{u(x_2)}}( {}^{V^{u(x_1)-u(x_2)}}(\eta_1 X^{p^{u(x_1)}x_1}) (\eta_2 X^{p^{u(x_2)}x_2} \sum_{i} z_i d\log X_i) )\\
&= & \sum_{i} {}^{V^{u(x_2)}}( {}^{V^{u(x_1)-u(x_2)}}(\eta_1 X^{p^{u(x_1)}x_1}) (\eta_2 z_i X^{p^{u(x_2)}x_2} d\log X_i)) \\
&= & \sum_{i} {}^{V^{u(x_2)}}( {}^{V^{u(x_1)-u(x_2)}}(\eta_1z_i {}^{F^{u(x_1)-u(x_2)}}( \eta_2 ) X^{p^{u(x_1)}(x_1+x_2)} d\log X_i )) \\
&= & \sum_{i} {}^{V^{u(x_1)}}(\eta_1 z_i {}^{F^{u(x_1)-u(x_2)}}( \eta_2 ) X^{p^{u(x_1)}(x_1+x_2)} d\log X_i ) \\
&= & \sum_{i} {}^{V^{u(x_1)}}(\eta_1 z_i {}^{F^{u(x_1)-u(x_2)}}( \eta_2 ) X^{p^{u(x_1)}(x_1+x_2)} )d\log X_i. 
\end{eqnarray*}}}%
The coefficient of $d\log X_i$ on the right hand side is written as a sum of 
$P$-basic Witt differentials of degree $0$ by Lemma \ref{thm12}. 
Thus the above element is written as a sum of the elements of the form $b(\xi,x,n)$ with 
$n = 1, \ldots, r$. 
\medskip 

\noindent
{\bf Case 2} \, The case $u(x_1) < u(x_2)$. 

By Leibniz rule, 
\[b(\xi_1, x_1) db(\xi_2,  x_2) =d \left( b(\xi_1, x_1) b(\xi_2,  x_2) \right)- b(\xi_2, x_2) db(\xi_1,  x_1). \]
By Lemma 
\ref{thm12}, $b(\xi_1, x_1) b(\xi_2,  x_2)$ is a $P$-basic Witt differential of degree $0$ 
and so the first term of the above element is of the form $b(\xi,x,0)$. 
The second term is written as a sum of the elements of the form $b(\xi,x,n)$ with 
$n = 1, \ldots, r$ by Case 1. 

So we finished the proof of the proposition. 
\end{proof}

Finally we discuss the elements of higher degree. 

\begin{definition}
For $x \in P[\frac{1}{p}]$, $\xi \in {}^{V^{u(x)}}W(R)$ and 
$I \subset \{ 0,  1, \ldots r \}$, we define the $P$-basic Witt differential 
$b(\xi, x, I) \in W\Lambda^{|I|}_{(R[P], R)/(R,*)}$ 
in the following way: 
 \[ b(\xi, x, I) = \left\{ \begin{array}{ll}
  db(\xi, x) \bigwedge_{i \in I \setminus \{0\}} d \log X_i & ( 0 \in I), \\
    b(\xi,   x) \bigwedge_{i \in I } d \log X_i & (0 \notin I ). 
 \end{array}  \right. \]
Here $b(\xi, x)$ is the $P$-basic Witt differential of degree $0$ and for 
$I = \{i_1, \ldots i_s \} \subset [1,  r]~ (i_1 < \cdots < i_s)$, 
\[ \bigwedge_{i \in I } d \log X_i := d \log X_{i_1} \wedge \cdots \wedge d \log X_{i_s}. \]
(When $I = \emptyset$, we put $\bigwedge_{i \in I } d \log X_i := 1$.)  
We call $x$ the weight of $b(\xi, x, I)$. 
\end{definition}

By definition, we have 
\[ db(\xi, x, I) = \left\{ \begin{array}{ll}
  0 & ( 0 \in I), \\
    b(\xi,   x,   I \cup \{ 0 \}) & (0 \notin I ).  
 \end{array}  \right. \]

\begin{cor}\label{thm23}
Any element in $W\Lambda_{(R[P],  P)/(R,  {*})}^{n}$ can be written as the following  convergent sum  
\[ \sum_{x \in P[\frac{1}{p}], \atop I \subset [0,  r], |I|=n} b(\xi_{x,  I}, x, I) \quad (\xi_{x,  n} \in {}^{V^{u(x)}}W(R) ), \]
%\[ \sum_{x \in P[\frac{1}{p}], \atop I \subset [1,  r], |I|=n} {}^{V^{u(x)}}({\eta}_{x, I}X^{p^{u(x)}x}) \bigwedge_{I} dlog X_i + \sum_{x \in P[\frac{1}{p}], \atop J \subset [1,  r], |J|=n-1} d^{V^{u(x)}}({\eta}_{x,  J}X^{p^{u(x)}x}) \bigwedge_{J} dlog X_i\]. 
where the convergence means that, for any $m \in {\mathbb{N}}$, $\xi_{x,I} \in {}^{V^m}W(R)$ for almost all $(x,I)$. 
\end{cor}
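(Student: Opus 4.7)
The plan is to proceed by induction on the degree $n$, taking Lemma \ref{thm11} as the base case $n=0$ and Proposition \ref{thm16} as the case $n=1$. Since $W\Lambda^{\bullet}_{(R[P],P)/(R,*)}$ is a graded-commutative differential graded algebra, any element of degree $n \geq 2$ can be written as a (convergent) sum of products of the form $b_0 \cdot \beta_1 \wedge \cdots \wedge \beta_n$ with $b_0 \in W\Lambda^0$ and each $\beta_i \in W\Lambda^1$; by the base cases I may assume $b_0$ is a $P$-basic Witt differential of degree $0$ and each $\beta_i$ is a $P$-basic Witt differential $b(\xi_i, x_i, n_i)$ of degree $1$. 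Hence it is enough to show that the product of two $P$-basic Witt differentials $b(\xi_1, x_1, I_1) \cdot b(\xi_2, x_2, I_2)$ is itself a (finite) sum of $P$-basic Witt differentials.

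I would split the argument by how $0$ meets $I_1$ and $I_2$. First, when $0 \notin I_1 \cup I_2$, the product is zero if $I_1 \cap I_2 \neq \emptyset$ and otherwise is exactly a single $P$-basic Witt differential whose weight-$0$ part is $b(\xi_1, x_1) \cdot b(\xi_2, x_2)$, reduced by Lemma \ref{thm12}. Second, when exactly one contains $0$, say $0 \in I_1$, I would use graded anticommutativity to pull $b(\xi_2, x_2)$ next to $db(\xi_1, x_1)$ (up to a sign), then expand $b(\xi_2, x_2) \cdot db(\xi_1, x_1)$ as a sum of degree-$1$ $P$-basic Witt differentials via Proposition \ref{thm16}, and wedge with the remaining $d\log X_i$ factors; terms whose new index is $0$ produce $P$-basic Witt differentials whose index set contains $0$, while those with index $n \in [1,r]$ either merge into the wedge of $d\log$'s or force the product to vanish. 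Third, when $0 \in I_1 \cap I_2$, I would use the Leibniz rule to rewrite $db(\xi_1,x_1) \wedge db(\xi_2,x_2) = d\bigl(b(\xi_1,x_1) \cdot db(\xi_2,x_2)\bigr)$, apply Proposition \ref{thm16} to the inner degree-$1$ element, and apply the identity $d\,b(\xi,x,n) \in \{0,\; b(\xi,x,\{0,n\})\}$ recorded just before the corollary; then multiply by the remaining $d\log$ factors as in the previous case.

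The main technical point is bookkeeping the convergence: if each $\xi_i$ lies in ${}^{V^m}W(R)$ for almost all weights, I need the same property for the coefficients of the final expansion. This follows because every operation above (the product formula in Lemma \ref{thm12}, the two cases of Proposition \ref{thm16}, and the application of $d$) preserves or increases the order of $V$ on the coefficients, as is visible from the explicit formulas there; the signs coming from graded anticommutativity are irrelevant for convergence. No genuinely new obstacle arises beyond the degree-$1$ calculations already carried out.
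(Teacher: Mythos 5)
Your proof is correct and follows essentially the same route as the paper's: induction on degree with Proposition \ref{thm16} as the engine, and a case split governed by whether $0$ belongs to the index sets involved. The only cosmetic difference is that the paper organizes the inductive step as multiplying a degree-$k$ basic by $d\log X_l$ or by $db(\xi',x')$, whereas you package the same computation as ``the product of two $P$-basic Witt differentials is a finite sum of $P$-basic Witt differentials'' and iterate; both reduce to the same two subcases already handled in Proposition \ref{thm16}.
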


\begin{proof}
We prove the claim by induction on $n$. Because we proved the claim in the case 
$n =0, 1$, we may assume the claim in the case $n = k$ and prove the claim 
in the case $n = k+1$. An element in 
$W\Lambda_{(R[P],  P)/(R,  {*})}^{k+1}$ is written as a convergent sum of 
elements of the form $ \omega d \log X_l$ or the form $\omega d b(\xi', x')$, 
% \begin{itemize}
% \item $ \omega d \log X_l$,
% \item $ \omega d b(\xi_2, x_2)$, 
%\end{itemize}
where $\omega \in W\Lambda_{(R[P],  P)/(R,  {*})}^{k}$. 
By induction hypothesis, $\omega$ is of the form  
\[ \omega = \sum_{x \in P[\frac{1}{p}], \atop I \subset [0,  r], |I|=k} b(\xi_{x,  I}, x, I). \]
%\[ \omega = \sum_{x \in P[\frac{1}{p}], \atop I \subset [1,  r], |I|=k} {}^{V^{u(x)}}({\eta}_{x, I}X^{p^{u(x)}x} )\bigwedge_{I} dlog X_i + \sum_{x \in P[\frac{1}{p}], \atop J \subset [1,  r], |J|=k-1} d^{V^{u(x)}}({\eta}_{x,  J}X^{p^{u(x)}x}) \bigwedge_{J} dlog X_i\]
Thus we may assume that $\omega = b(\xi_{x,  I}, x, I)$. 

In the first case, the claim follows from the equality 
\[ b(\xi_{x,  I}, x, I)\bigwedge d\log X_l  =\left\{ \begin{array}{ll}
  0 & (l \in I ), \\
   b( \pm \xi ,   x,   I \cup \{ l \}) & (l \notin I). 
 \end{array}  \right. \]

In the second case, 
\[ b(\xi_{x,  I}, x, I)d b(\xi', x') = \left\{ \begin{array}{ll}
  db(\xi_{x,  I}, x) \bigwedge_{i \in I \setminus \{0\}} d \log X_i  \bigwedge d b(\xi', x') & (0 \in I), \\
    b(\xi_{x,  I},   x) \bigwedge_{i \in I } d \log X_i \bigwedge d b(\xi',   x') & (0 \notin I ).  
 \end{array}  \right. \]
On the right hand side, $b(\xi_{x,  I}, x) d b(\xi', x')$ is written as a sum of 
$P$-basic Witt differentials by Proposition \ref{thm16}. 
Also, if we express the element $b(\xi_{x,  I}, x) d b(\xi', x')$ as a sum 
\[ \sum_{x \in P[\frac{1}{p}], 0 \leq n \leq r} b(\xi_{x,  n}, x, n) \]
of $P$-basic Witt differentials, % by using Proposition \ref{thm16}, 
we have the equality 
\[ d b(\xi_{x,  I}, x) d b(\xi', x') = \sum_{x \in P[\frac{1}{p}], 1 \leq n \leq r} b(\xi_{x,  n}, x, \{ 0 ,  n \} ) \]
and so it is a sum of $P$-basic Witt differentials. 
Therefore, $b(\xi_{x,  I}, x, I)d b(\xi', x')$ is expressed as a sum of 
$P$-basic Witt differentials, as required. 
\end{proof}

\subsection{Basic Witt differential in the sense of Langer-Zink}

In this subsection, let $R$ be a ${\mathbb{Z}}_{(p)}$-algebra 
%in which $p$ is nilpotent 
and we consider the relative log de Rham-Witt complex 
$W\Lambda_{(R[P],  P)/(R,  {*})}^{\bullet}$ in the case $P = \mathbb{Z}^r$. 
We prove that any element in it is written as a convergent sum of 
basic Witt differentials in the sense of Langer-Zink \cite{LZ}. 

First we compare the  relative log de Rham-Witt complex 
$W\Lambda_{(R[{\mathbb{Z}}^r],  {\mathbb{Z}}^r)/(R,  {*})}^{\bullet}$ with 
the relative de Rham-Witt complex 
$W\Omega_{R[\mathbb{Z}^r]/R}^{\bullet}$ without log structures. 
The ring $R[\mathbb{Z}^r]$ is isomorphic to the Laurent polynomial ring 
$R[T_1^{\pm1}, \ldots, T_r^{\pm1} ]$. We put $X_i := [T_i] \in W(R[T_1^{\pm1}, \ldots,  T_r^{\pm1}])$ and for $x \in \mathbb{Z}^r$, we put $X^x := [T^x]$. 

\begin{lemma}
We can put a natural structure of log $F$-$V$-procomplex on 
$ \{ W_{m}\Omega_{R[\mathbb{Z}^r]/R}^{\bullet} \}_{m \in \mathbb{N}}$, and 
it is isomorphic as log $F$-$V$-procomplex to 
$ \{ W_m\Lambda_{(R[\mathbb{Z}^r], \mathbb{Z}^r)/(R, {*})}^{\bullet} \} _{m \in \mathbb{N}}$. 
\end{lemma}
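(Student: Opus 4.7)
The plan is to endow $\{W_m\Omega_{R[\mathbb{Z}^r]/R}^\bullet\}_m$ with the natural log $F$-$V$-procomplex structure coming from the fact that each $T^x$ is a unit in the Laurent polynomial ring, and then obtain mutually inverse comparison maps from the two universal properties.

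For the log structure, observe that $T^x$ is a unit in $R[\mathbb{Z}^r]=R[T_1^{\pm1},\ldots,T_r^{\pm1}]$ for every $x\in\mathbb{Z}^r$, so $X^x=[T^x]\in W_m(R[\mathbb{Z}^r])$ is a unit with inverse $X^{-x}$. I would define
$$\partial_m:\mathbb{Z}^r\longrightarrow W_m\Omega_{R[\mathbb{Z}^r]/R}^1,\qquad \partial_m(x):=X^{-x}\,dX^x.$$
This is a group homomorphism by the Leibniz rule, satisfies $d\circ\partial_m=0$ using $d^2=0$ and $dX^x\wedge dX^x=0$, and the pair $(d,\partial_m)$ is a log derivation on $W_m(R[\mathbb{Z}^r],\mathbb{Z}^r)/W_m(R,*)$ since the relation $dX^x=X^x\,\partial_m(x)$ is tautological and the trivial-monoid condition on the base is vacuous. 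Compatibility with $\pi_m$ and $V$ is immediate from those in $W_m\Omega$. The essential check is $F$-compatibility: using the $F$-$V$-procomplex axiom ${}^F dX^x = X^{(p-1)x}\,dX^x$ together with $F(X^{-x})=X^{-px}$,
$$F(X^{-x}\,dX^x)=X^{-px}\cdot X^{(p-1)x}\,dX^x=X^{-x}\,dX^x,$$
so ${}^F\partial_{m+1}(x)=\partial_m(x)$. All remaining axioms concern only the underlying $F$-$V$-procomplex structure, which is already built into $W_m\Omega_{R[\mathbb{Z}^r]/R}^\bullet$.

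By the universal property stated in Proposition-Definition \ref{thm30}, I then obtain a canonical morphism of log $F$-$V$-procomplexes
$$\alpha:W_m\Lambda_{(R[\mathbb{Z}^r],\mathbb{Z}^r)/(R,*)}^\bullet\longrightarrow W_m\Omega_{R[\mathbb{Z}^r]/R}^\bullet.$$
Conversely, forgetting $d\log$ turns $\{W_m\Lambda_{(R[\mathbb{Z}^r],\mathbb{Z}^r)/(R,*)}^\bullet\}_m$ into an $F$-$V$-procomplex over the $R$-algebra $R[\mathbb{Z}^r]$, and the universal property of the relative de Rham-Witt complex yields a morphism of $F$-$V$-procomplexes
$$\beta:W_m\Omega_{R[\mathbb{Z}^r]/R}^\bullet\longrightarrow W_m\Lambda_{(R[\mathbb{Z}^r],\mathbb{Z}^r)/(R,*)}^\bullet.$$
Since $\beta$ acts identically on the degree-zero part $W_m(R[\mathbb{Z}^r])$ and commutes with $d$, it sends $\partial_m(x)=X^{-x}dX^x$ to $X^{-x}dX^x=d\log X^x$ in $W_m\Lambda$, where the last equality is the log-derivation relation $dX^x=X^x\,d\log X^x$. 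Hence $\beta$ is automatically a morphism of log $F$-$V$-procomplexes.

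The composition $\beta\circ\alpha$ is then an endomorphism of log $F$-$V$-procomplexes of $W_m\Lambda$, which is the identity by the uniqueness clause in Proposition-Definition \ref{thm30}, and $\alpha\circ\beta$ is an endomorphism of $F$-$V$-procomplexes of $W_m\Omega$, which is the identity by the universal property of the relative de Rham-Witt complex. The only step requiring genuine care is verifying the log $F$-$V$-procomplex axioms for $\{W_m\Omega_{R[\mathbb{Z}^r]/R}^\bullet,\partial_m\}_m$, in particular the $F$-compatibility, but this reduces to the one-line computation displayed above.
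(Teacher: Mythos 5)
Your proof is correct and follows essentially the same route as the paper: define $\partial_m(x) = X^{-x}dX^x$, verify the log $F$-$V$-procomplex axioms (in particular ${}^F\partial_{m+1}=\partial_m$ via the Teichmüller–$F$ relation), obtain the two comparison maps from the log and non-log universal properties respectively, observe that the map out of $W_m\Omega$ automatically respects $d\log$, and conclude by uniqueness in both universal properties. The only cosmetic difference is that the paper folds the observation that $\psi$ is a log morphism into a single sentence rather than spelling out the computation $\beta(\partial_m(x)) = d\log X^x$ as you do.
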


\begin{proof}
We define for each $m$ the map 
$d\log: \mathbb{Z}^r \to W_{m}\Omega_{R[\mathbb{Z}^r]/R}^{\bullet}$
by \[\mathbb{Z}^r \ni x \mapsto d\log X^{x} := X^{-x}dX^{x}. \]
Then, the pair $(d,d\log)$ (where $d$ is the 
the derivation $W_m(R) \to W_m\Omega^1_{R[\mathbb{Z}^r]/R}$) 
forms a log derivation and this definition is compatible with respect to $m$. 
Also, we have the 
equality $d \circ d \log = 0$ 
 (where $d$ is the $W_m\Omega^1_{R[\mathbb{Z}^r]/R} \to 
W_m\Omega^2_{R[\mathbb{Z}^r]/R}$) and the equality 
\begin{align*}
^{F}d\log X^x
 & =  ^{F}(X^{-x})^{F}d(X^{x}) 
  =  X^{-px}X^{(p-1)x}dX^{x} 
  =  X^{-px}X^{px}d\log X^{x} 
  =  d\log X^{x}.  
\end{align*} 
Hence, with this log derivation, we can regard 
$ \{ W_{m}\Omega_{R[\mathbb{Z}^r]/R}^{\bullet} \}_{m \in \mathbb{N}}$ as a log $F$-$V$-procomplex. 

By the universality, there exists a unique morphism of log $F$-$V$-procomplexes 
\[ \varphi: \{ W_m\Lambda_{(R[\mathbb{Z}^r], \mathbb{Z}^r)/(R, {*})}^{\bullet} \} _{m \in \mathbb{N}}
\to \{ W_{m}\Omega_{R[\mathbb{Z}^r]/R}^{\bullet} \}_{m \in \mathbb{N}} \] 
and a unique morphism of $F$-$V$-procomplexes 
\[ \psi: \{ W_{m}\Omega_{R[\mathbb{Z}^r]/R}^{\bullet} \}_{m \in \mathbb{N}} \to 
\{ W_m\Lambda_{(R[\mathbb{Z}^r], \mathbb{Z}^r)/(R, {*})}^{\bullet} \} _{m \in \mathbb{N}}. \] 
Moreover, by the definition of $d\log$ given above, we see that $\psi$ is a 
morphism of log $F$-$V$-procomplexes. 
Thus, by the universality again, we have equalities $\psi \circ \varphi = {\rm id}$, $\varphi \circ \psi 
= {\rm id}$. So the lemma is proved. 
\begin{comment}
We prove that $ \{ W_{m}\Omega_{R[\mathbb{Z}^r]/R}^{\bullet} \}_{m \in \mathbb{N}}$ 
is isomorphic to 
$ \{ W_m\Lambda_{(R[\mathbb{Z}^r], \mathbb{Z}^r)/(R, {*})}^{\bullet} \} _{m \in \mathbb{N}}$. 
First, by the remark before 
\cite[Prop. 3.5]{Ma}, we have the isomorphism 
\[ W_m\Lambda_{(R[\mathbb{Z}^r], {*})/(R, {*})}^{\bullet} \cong W_{m}\Omega_{R[\mathbb{Z}^r]/R}^{\bullet}. \] 
Also, by the isomorphism 
$\mbox{Spec}(R[\mathbb{Z}^r] , \mathbb{Z}^r) \xrightarrow{\cong} \mbox{Spec}(R[\mathbb{Z}^r],  *)$
and Proposition-Definition \ref{thm15}, we have the isomorphism  
\[ W_m\Lambda_{(R[\mathbb{Z}^r], {*})/(R, {*})}^{\bullet} \cong W_m\Lambda_{(R[\mathbb{Z}^r], \mathbb{Z}^r)/(R, {*})}^{\bullet}.  \]
Combining these, we obtain the required isomorphism. 
\end{comment}
\end{proof}

Thus, in the following, we will prove that any element in 
$W\Omega_{R[\mathbb{Z}^r]/R}^{\bullet} = W\Omega_{R[T_1^{\pm1}, \ldots, T_r^{\pm1} ]/R}^{\bullet}$ 
is written uniquely as a convergent sum of basic Witt differentials 
in the sense of Langer-Zink. The rest of the argument in this subsection is based on the argument in 
the case of $W\Omega_{R[T_1, \ldots, T_r]/R}^{\bullet}$ treated in \cite{LZ}. (See also \cite[Sec.~4]{Ma}.) 

Let $p^{-\infty}$ be a symbol with rules  
$p \cdot p^{- \infty} := p^{- \infty} , p^{-1} \cdot p^{- \infty} := p^{- \infty}, \mbox{ord}_p{p^{- \infty}} := -\infty$. 
We call a map of sets $k: [1,  r] \to \mathbb{Z}[\frac{1}{p}] \sqcup \{ p^{- \infty } \}$ a weight and we will 
denote its value $k(i)$ at $i$ simply by $k_i$ in the following. We put 
${\rm supp}\,k := \{ i \in [1, r ] ; k_i \neq 0 \} $. For a weight $k$, we define $k^{+}$ by 
\[
  (k^{+})_{i} =\left\{ \begin{array}{ll}
  0 & (k_i = p^{- \infty}), \\
    k_{i}  & (k_i \neq p^{- \infty}). 
 \end{array}  \right.
\]
For each weight $k$, we put a total order ${\rm supp}\,k= \{ i_1, i_2, \ldots, i_s \}$
on the set ${\rm supp}\,k$ in such a way that the inequality 
$\mbox{ord}_p k_{i_{j}} \leq \mbox{ord}_p k_{i_{j+1}} $ holds for any $j$. 
Also, we assume that the total order on ${\rm supp}\,k$ is equal to that on 
${\rm supp}\,p^a k  (a \in \mathbb{Z})$. 

We call a subset $I$ of ${\rm supp}\,k$ an interval if any element $a \in {\rm supp}\,k$ which is bigger than 
some element $b$ in $I$ and smaller than some element $c$ in $I$ with respect to the total order on 
${\rm supp}\,k$ belongs to $I$. 

A tuple 
$\mathcal{P} := (I_{-\infty}, I_0, I_1, \ldots, I_l)$
of intervals in ${\rm supp}\,k $ is called a partition of ${\rm supp}\,k$ if 
$I_{- \infty} = \{ i \in [1, r] ; k_i = p^{-\infty} \} $, ${\rm supp}\,k = I_{- \infty} \sqcup I_0 \sqcup \cdots \sqcup I_l$, $I_1 , \ldots, I_l \neq \emptyset$ and if any element in $I_j$ is smaller than any element in $I_{j+1}$ 
with respect to the total order in ${\rm supp}\,k$ for $j = 0, \cdots , l-1$. 

For a weight $k$ and a nonempty set $I \subset [1,r]$ with 
$k_i \not= p^{-\infty}$ for all $i \in I$, let $t(k_{I}) \in \mathbb{Z}$ be the unique integer 
such that the elements $p^{t(k_I)} k_i \, (i \in I)$ are all integers and at least one of them is prime to $p$. 
Also, we put $u(k_{I}) := \max \{ t(k_I),  0 \}$. When there is no risk of confusion, we will denote 
$t(k_{I}), u(k_I)$ simply by $t(I), u(I)$, respectively. Also, we put $t(\emptyset) = u(\emptyset) := 0$. 

For a triple $(\xi, k, \mathcal{P})$ consisting of a weight $k$, its partition 
$\mathcal{P} = (I_{-\infty}, I_0, \ldots , I_l)$ and 
$\xi =  {}^{V^{u(I)}} \eta \in {}^{V^{u(I)}}W(R)$ (where $I = {\rm supp}\,k^+$), 
we define the element $\hat{e}(\xi, k, \mathcal{P})$ in $W\Omega_{R[\mathbb{Z}^r]/R}^{l+|I_{-\infty}|}$ 
by 
\[ \hat{e}(\xi, k, \mathcal{P}) :=  e(\xi ,  k^{+}, (I_0, \ldots , I_l) ) \cdot \bigwedge_{i \in I_{-\infty}} d\log X_i, \]
where $e(\xi ,  k^{+}, (I_0, \ldots , I_l) )$ is defined analogously to the case of \cite{LZ} 
in the following way (cf. Section 1.1): 
We put $X^{p^{t(I_i)}k_{I_i}} := \prod_{j \in I_i} X_j^{p^{t(I_i)}k_{j}}$ for each $0 \leq i \leq l$. Then, 
\leftline{When $I_0 \neq \emptyset$, }
\[
e(\xi ,  k^{+}, (I_0, \ldots , I_l) ) := {}^{V^{u(I_0)}}(\eta X^{u(I_0)k_{I_0}}) d^{V^{t(I_1)}}(X^{t(I_1)k_{I_1}})\cdots d^{V^{t(I_l)}}(X^{t(I_l)k_{I_l}}). 
 \]
When $I_0 = \emptyset, t(I_1) \geq 1$,  
\[
e(\xi ,  k^{+}, (I_0, \ldots , I_l) ) := d^{V^{t(I_1)}}(\eta X^{t(I_1)k_{I_1}}) d^{V^{t(I_2)}}(X^{t(I_2)k_{I_2}}) \cdots d^{V^{t(I_l)}}(X^{t(I_l)k_{I_l}}). 
 \]
When $I_0 = \emptyset, t(I_1) \leq 0$, 
\[
e(\xi ,  k^{+}, (I_0, \ldots , I_l) ) := \eta d^{V^{t(I_1)}}( X^{t(I_1)k_{I_1}})\cdots d^{V^{t(I_l)}}(X^{t(I_l)k_{I_l}}). 
 \]
Here, when $t(I_i) <0$, we put 
\[ d^{V^{t(I_i)}}( X^{t(I_i)k_{I_i}}) := {}^{F^{-t(I_i)}}d X^{t(I_i)k_{I_i}}.\] 
We denote the image of $\hat{e}(\xi, k, \mathcal{P})$ in 
$W_m \Omega_{R[\mathbb{Z}^r]/R}^{l+|I_{-\infty}|}$ 
or that of $e(\xi ,  k^{+}, (I_0, \ldots , I_l) )$ in 
$W_m \Omega_{R[\mathbb{Z}^r]/R}^{l}$ by the same symbol. 
We call an element in $W \Omega_{R[\mathbb{Z}^r]/R}^{l+|I_{-\infty}|}$ or 
$W_m \Omega_{R[\mathbb{Z}^r]/R}^{l+|I_{-\infty}|}$
of the form $\hat{e}(\xi, k, \mathcal{P})$ a basic Witt differential. 
A weight $k$ is called integral if $k^+_i \in {\mathbb{Z}}$ for any $i$. 

As for the derivation and the action of $V$ on basic Witt differentials 
$\hat{e}(\xi, k, (I_{-\infty}, I_0, \ldots , I_l))$, we can prove the following 
rule in exactly the same way as \cite[Prop.~2.5]{LZ}, \cite[Sec.~4.1]{Ma}:  
\begin{lemma}\label{thm4}
\begin{enumerate}[(1)]
 \item  $ d\hat{e}(\xi, k , (I_{-\infty }, I_0, \ldots , I_l)) $ \[ = \left\{ \begin{array}{ll}
  0 & (I_0 = \emptyset ~ \mbox{or} ~ {\rm supp}\,k^{+} = \emptyset), \\
  \hat{e}(\xi, k , (I_{-\infty }, \emptyset, I_0, \ldots , I_l))  & (I_0 \neq \emptyset , k^{+} \mbox{ is not integral}), \\
    p^{-t(I_0)}\hat{e}(\xi,   k ,   (I_{-\infty},   \emptyset,   I_0,   \ldots ,   I_l))  & (I_0 \neq \emptyset ,   k^{+} \mbox{ is integral}). 
 \end{array}  \right. \]
  \item $ {}^{V}\hat{e}(\xi, k , (I_{-\infty }, I_0, \ldots , I_l)) $ \[ = \left\{ \begin{array}{ll}
  \hat{e}(^{V}\xi, \frac{1}{p} k , (I_{-\infty }, I_0, \ldots , I_l))  & (I_0 \neq \emptyset ~{\rm or}~ \frac{1}{p} k^{+} \mbox{ is  integral}), \\
    \hat{e}(p {}^{V}\xi,   \frac{1}{p} k ,   (I_{-\infty }, I_0,   \ldots ,   I_l))  & (I_0 = \emptyset ,   \frac{1}{p} k^{+} \mbox{ is not integral}).  
 \end{array}  \right. \]
\end{enumerate}
%In particular, for $\hat{e} := \hat{e}(\xi, k , (I_{-\infty }, I_0, \ldots , I_l))$ with 
%$I_0 \neq \emptyset$ and $k^{+}$ not integral, $d\hat{e} = 0$ and \[
%d\hat{e}(\xi, k , (I_{-\infty }, I_1, \ldots , I_l)) = \hat{e}(\xi, k , (I_{-\infty }, \emptyset ,  I_1, \ldots , I_l))
%\]
\end{lemma}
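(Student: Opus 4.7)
The plan is to reduce everything to the analogous Langer--Zink formula \cite[Prop.~2.5]{LZ} (see also \cite[Sec.~4.1]{Ma}) by factoring off the $d\log$-part corresponding to $I_{-\infty}$. Explicitly, write
\[
\hat{e}(\xi, k, (I_{-\infty}, I_0, \ldots, I_l)) = e(\xi, k^+, (I_0, \ldots, I_l)) \cdot \omega_{-\infty}, \qquad \omega_{-\infty} := \bigwedge_{i \in I_{-\infty}} d\log X_i.
\]
Two properties of $\omega_{-\infty}$ do the bookkeeping: it is closed, because $d \circ d\log = 0$ and by the Leibniz rule on a wedge of closed forms, and it is $F$-invariant, because ${}^F d\log X_i = d \log X_i$ was checked in the previous lemma. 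Moreover, since $e(\xi, k^+, (I_0, \ldots, I_l))$ is built using only variables $X_j$ for $j \in \mathrm{supp}\,k^+ = \mathrm{supp}\,k \setminus I_{-\infty}$, it is a basic Witt differential in the sense of \cite{LZ} living inside the de Rham--Witt complex of the Laurent polynomial ring, on which the Langer--Zink formulas apply verbatim.

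For part (1), the graded Leibniz rule gives
\[
d\hat{e}(\xi, k, (I_{-\infty}, I_0, \ldots, I_l)) = d\bigl(e(\xi, k^+, (I_0, \ldots, I_l))\bigr) \cdot \omega_{-\infty},
\]
since $d\omega_{-\infty} = 0$. Now I would invoke \cite[Prop.~2.5(1)]{LZ}: when ${\rm supp}\,k^+ = \emptyset$ the factor $e(\xi, k^+, \ldots)$ lies in $W(R)$ and is killed by $d$; when $I_0 = \emptyset$ the factor is already written as a product of exact forms and again $de = 0$ (using $d^2=0$ on the leading factor and the fact that $d\eta = 0$ for $\eta \in W(R)$); and when $I_0 \neq \emptyset$, applying $d$ to ${}^{V^{u(I_0)}}(\eta X^{p^{u(I_0)} k_{I_0}})$ produces a new leading empty interval, yielding the asserted formula, with the factor $p^{-t(I_0)}$ arising exactly as in \cite{LZ} when $k^+$ is integral (because then one has to reinterpret ${}^{V^{u(I_0)}}d$ against $d^{V^{t(I_0)}}$, and these differ by a power of $p$).

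For part (2), the projection formula ${}^V(\omega \cdot {}^F \eta) = {}^V(\omega) \cdot \eta$ combined with ${}^F \omega_{-\infty} = \omega_{-\infty}$ gives
\[
{}^V\hat{e}(\xi, k, (I_{-\infty}, I_0, \ldots, I_l)) = {}^V\bigl(e(\xi, k^+, (I_0, \ldots, I_l)) \cdot {}^F\omega_{-\infty}\bigr) = {}^V e(\xi, k^+, (I_0, \ldots, I_l)) \cdot \omega_{-\infty},
\]
reducing to \cite[Prop.~2.5(2)]{LZ}, which yields the two subcases, with the extra factor $p$ appearing precisely when $I_0 = \emptyset$ and $\frac{1}{p}k^+$ is not integral (where the indexing shift $u(I_0) \mapsto u(I_0)+1$ is ``too much'' and must be corrected by ${}^{FV} = p$).

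The only point requiring care, and what I expect to be the main (mild) obstacle, is to check that the Langer--Zink arguments, stated originally for the polynomial ring $R[T_1,\ldots, T_r]$, carry over to the Laurent polynomial ring $R[T_1^{\pm 1}, \ldots, T_r^{\pm 1}]$. Inspection shows that their manipulations use only the axiomatic properties ${}^F d V = d$, ${}^F d [x] = [x^{p-1}] d[x]$, and ${}^V(\omega \cdot {}^F \eta) = {}^V(\omega) \cdot \eta$ of an $F$-$V$-procomplex, together with $d^2 = 0$; none of these are affected by inverting the $T_i$'s. Thus the transcription is routine, and the two formulas follow.
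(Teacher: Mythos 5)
Your proposal is correct and matches the paper's intent: the paper gives no proof, merely citing \cite[Prop.~2.5]{LZ} and \cite[Sec.~4.1]{Ma}, and your reduction — factoring as $\hat{e} = e(\xi,k^+,\mathcal{P})\cdot\omega_{-\infty}$, using that $\omega_{-\infty}$ is closed (for $d$) and $F$-invariant together with the projection formula (for $V$), then invoking the Langer--Zink computation on the remaining factor — is exactly the intended route. Your closing remark on transferring the LZ arguments to Laurent polynomial rings is also the right point to flag, and indeed harmless for the reason you give.
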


\begin{prop}\label{thm1}
Any element in 
$W\Omega_{R[\mathbb{Z}^r]/R}^{\bullet}$ is written as a convergent sum of 
basic Witt differentials. Namely, any $\omega \in W\Omega_{R[\mathbb{Z}^r]/R}^{\bullet}$
is written as a convergent sum 
\[ \omega = \sum_{k, \mathcal{P}} \hat{e} (\xi_{k, \mathcal{P}} , k , \mathcal{P}), \]
where the definition of the convergence is the same as that in Lemma 
\ref{thm11}. 
\end{prop}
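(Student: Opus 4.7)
The strategy is to reduce the claim to Corollary \ref{thm23} applied to $P = \mathbb{Z}^r$. Combined with the isomorphism $W_m\Omega^{\bullet}_{R[\mathbb{Z}^r]/R} \cong W_m\Lambda^{\bullet}_{(R[\mathbb{Z}^r], \mathbb{Z}^r)/(R,*)}$ from the previous lemma, that corollary already expresses every $\omega \in W\Omega^{\bullet}_{R[\mathbb{Z}^r]/R}$ as a convergent sum of $\mathbb{Z}^r$-basic Witt differentials $b(\xi_{x,I}, x, I)$, indexed by $x \in \mathbb{Z}^r[\tfrac{1}{p}]$ and $I \subset [0,r]$. It therefore suffices to rewrite each individual $b(\xi, x, I)$ as a finite sum of Langer-Zink-style basic Witt differentials $\hat{e}(\eta, k, \mathcal{P})$; since this rewriting keeps the active coordinates within those of $(x, I)$, the convergence of the global sum is preserved.

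Given $(\xi, x, I)$, I would construct the weight $k \colon [1,r] \to \mathbb{Z}[\tfrac{1}{p}] \sqcup \{p^{-\infty}\}$ by setting $k_i := p^{-\infty}$ for $i \in I \cap [1,r]$ and $k_i := x_i$ for the remaining indices, with $I_{-\infty} := I \cap [1,r]$, so that the factor $\bigwedge_{i \in I_{-\infty}} d\log X_i$ in $\hat{e}$ matches the $d\log$-factors in $b(\xi, x, I)$. For the remaining intervals $(I_0, I_1, \ldots, I_l)$ partitioning ${\rm supp}\,k^+$, I would group coordinates into maximal consecutive blocks of constant ${\rm ord}_p$; the distinction $I_0 = \emptyset$ versus $I_0 \neq \emptyset$ is then governed by whether $0 \in I$ and, in the boundary case, by whether $k^+$ is integral, matching the three cases in the definition of $e(\xi, k^+, (I_0, \ldots, I_l))$.

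The core calculation is to verify that $b(\xi, x, I) = \pm\hat{e}(\eta, k, \mathcal{P})$ (possibly as a sum of two $\hat{e}$-terms in the boundary case). I would do this by sorting ${\rm supp}\,k^+$ by ${\rm ord}_p$ and peeling off one interval $I_j$ at a time, using the standard identities ${}^V(a \cdot {}^F b) = {}^V a \cdot b$, ${}^F(d\log X_i) = d\log X_i$, and ${}^F d\, {}^V c = dc$; each peel step converts an inner $V^{\bullet}$-factor into an outer $d^{V^{\bullet}}$-factor of exactly the form prescribed in the definition of $e(\cdot)$. When $0 \in I$, the outer $d$ is distributed via the Leibniz rule and Lemma \ref{thm4}, yielding one or two $\hat{e}$-terms according as $k^+$ is non-integral or integral.

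The main obstacle is the bookkeeping in this reduction: tracking signs and the coefficient $p^{-t(I_0)}$ that appears in the integral case of Lemma \ref{thm4}, and correctly handling the interaction of the outer $d$ with the peel-off procedure, all in the Laurent-polynomial setting where $k_i$ may be negative and where extra $d\log X_i$-factors sit outside every $V$-envelope. This is a Laurent variant of the core argument of \cite[Prop.~2.3]{LZ}, complicated by these features; once it is carried out for a single $b(\xi, x, I)$, the proposition follows by term-by-term substitution into the convergent sum provided by Corollary \ref{thm23}.
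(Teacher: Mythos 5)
Your route is genuinely different from the paper's: you propose to invoke Corollary \ref{thm23} with $P = \mathbb{Z}^r$, expressing $\omega$ as a convergent sum of $\mathbb{Z}^r$-basic Witt differentials $b(\xi, x, I)$, and then rewrite each $b(\xi, x, I)$ as a finite sum of Langer--Zink-type $\hat{e}$'s. The paper instead expands $\omega$ into products ${}^{V^{n_1}}(\eta_1 X^{x_1}) d^{V^{n_2}}(\eta_2 X^{x_2})\cdots$, reshuffles by Leibniz and sorting of the $n_i$, reduces to single-variable differentials $dX_j^{\pm 1}$ and $d\log X_j$, and pushes the result forward from $W\Omega^{\bullet}_{R[U_1,\ldots,U_d]/R}$ along a substitution $U_i \mapsto T_i^{\pm 1}$ so that \cite[Thm.~2.8]{LZ} applies.

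There is, however, a genuine gap in your rewriting step. You construct the weight $k$ by putting $k_i := p^{-\infty}$ for all $i \in I \cap [1,r]$, so that these indices land in $I_{-\infty}$ and $k^+_i = 0$. This means the Teichm\"uller part of your prescribed $\hat{e}(\eta, k, \mathcal{P})$ contains \emph{no} $X_i$-power for such $i$. But when $i \in I \cap {\rm supp}\,x$, the element $b(\xi, x, I)$ carries both the factor $X_i^{p^{u(x)}x_i}$ inside the $V$-envelope and the external $d\log X_i$; no single $\hat{e}$ of the shape you write down can reproduce both. In fact $b(\xi, x) \wedge d\log X_i$ with $x_i \neq 0$ is equal to a sum of \emph{two} $\hat{e}$'s with the same weight $k = x$ and with the relevant interval split at $i$ (this is precisely claim 2, case (III) in the proof of Lemma \ref{thm2}), and iterating over each index in $I \cap {\rm supp}\,x$ produces an expansion with on the order of $2^{|I \cap {\rm supp}\,x|}$ terms. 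So the assertion that the core calculation lands on ``$\pm\hat{e}(\eta, k, \mathcal{P})$, possibly two $\hat{e}$-terms in the boundary case'' is wrong in the general case; the only boundary you discuss is $0 \in I$ with $k^+$ integral, while the real source of extra terms is $I \cap {\rm supp}\,x \neq \emptyset$. The route is salvageable by carrying out the full expansion of the paper's Lemma \ref{thm2} (existence part, which does not depend logically on Proposition \ref{thm1}), but that is a substantial computation you would then be importing as a prerequisite rather than deriving later as the paper does.
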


\begin{proof}
In this proof, we denote $W\Omega_{R[\mathbb{Z}^r]/R}^{\bullet}$ by 
$W\Omega_{R[T_{1}^{\pm1}, \ldots, T_{r}^{\pm 1}]/R}^{\bullet}$. Also, let 
$X_i, X^x \, (x \in \mathbb{Z}^r)$ be as in the beginning of this subsection. 
We prove the proposition by reducing to a result in \cite{LZ}. 

Since any element in 
$W\Omega_{R[T_{1}^{\pm1}, \ldots T_{r}^{\pm 1}]/R}^{l}$ 
is written as a convergent sum of elements of the form 
\begin{align}\label{7}
{}^{V^{n_1}}(\eta_1 X^{x_1}) d^{V^{n_2}}(\eta_2 X^{x_2}) \cdots d^{V^{n_l}}(\eta_l X^{x_l}), % \tag{2.3.3.1}
\end{align}
it suffices to prove that the element \eqref{7} has the expression in the statement of the 
proposition, by induction on $l$. 

First, we prove that we can reduce to the case $n_1 \geq n_2 \geq \cdots \geq n_l$. 
By changing the order of $d^{V^{n_j}}(\eta_j X^{x_j}) \, (j = 2, \ldots l)$, we may assume that 
$n_2 \geq \cdots \geq n_l$. If $n_1 \le n_2$, we use the equality 
\[ {}^{V^{n_1}}(\eta_1 X^{x_1}) d^{V^{n_2}}(\eta_2 X^{x_2}) = d({}^{V^{n_1}}(\eta_1 X^{x_1}) {}^{V^{n_2}}(\eta_2 X^{x_2})) - {}^{V^{n_2}}(\eta_2 X^{x_2})d{}^{V^{n_1}}(\eta_1 X^{x_1})\] to rewrite the element \eqref{7} to the form 
\begin{align}\label{8}
 d({}^{V^{n_1}}(\eta_1 X^{x_1}) {}^{V^{n_2}}(\eta_2 X^{x_2})) \cdots d^{V^{n_l}}(\eta_l X^{x_l}) - {}^{V^{n_2}}(\eta_2 X^{x_2})d{}^{V^{n_1}}(\eta_1 X^{x_1}) \cdots d^{V^{n_l}}(\eta_l X^{x_l}). % \tag{2.3.3.2}
\end{align}
If we put 
$\omega := {}^{V^{n_1}}(\eta_1 X^{x_1}){}^{V^{n_2}}(\eta_2 X^{x_2}) d{}^{V^{n_3}}(\eta_3 X^{x_3})\cdots d{}^{V^{n_l}}(\eta_l X^{x_l})$, 
the first term in \eqref{8} is equal to $d\omega$.  
Since $\omega \in W\Omega^{l-1}_{R[T_1^{\pm 1},  \dots,  T_r^{\pm 1}]}$ is written as a convergent sum of 
 in the statement of the proposition by induction hypothesis, so is the element $d\omega$ by Lemma \ref{thm4}. 
Also, since $n_2 \geq n_j \,  (j=1, 2, \ldots, l)$, by changing the order of $d^{V^{n_j}}(\eta_j X^{x_j}) \, (j = 1,3,4, \ldots l)$
we can rewrite the second term in \eqref{8} to an element of the form \eqref{7} with the condition 
$n_1 \geq n_2 \geq \cdots \geq n_l$ (up to the multiplication by $\pm 1$). 
Thus we can reduce to the case $n_1 \geq n_2 \geq \cdots \geq n_l$. 

Then we have 
\[ {}^{V^{n_1}}(\eta_1 X^{x_1}) d^{V^{n_2}}(\eta_2 X^{x_2}) \cdots d^{V^{n_l}}(\eta_l X^{x_l}) 
 = {}^{V^{n_1}} \left( \prod_{i=1}^{l} {}^{F^{n_1-n_i}}\eta_i \cdot X^{x_1} {}^{F^{n_1-n_2}}dX^{x_2} \cdots {}^{F^{n_1-n_l}}dX^{x_l} \right). \]
By Lemma \ref{thm4}, if the element within the bracket on the right hand side is written as a convergent sum of 
basic Witt differentials, then so is the element on the left hand side. So we can work with the element 
within the bracket on the right hand side. If we put 
$\prod_{i=1}^{l} {}^{F^{n_1-n_i}}\eta_i =: \xi$, this element is rewritten as follows: 
\[ \xi X^{x_1} {}^{F^{n_1-n_2}}dX^{x_2}\cdots {}^{F^{n_1-n_l}}dX^{x_l}  = 
\xi  \prod_{i=1}^{l} X^{(p^{n_1-n_i}-1) x_i}dX^{x_2}\cdots dX^{x_l}. \]
Moreover, by using Leibniz rule, we can rewrite the derivation $dX^{x_i}$ of a monomial $X^{x_i}$ involving many variables 
to a finite sum of the product of a polymonial and the derivation $dX_{j}^{n_{j}}$ of a monomial 
$X_{j}^{n_{j}}$ involving only one variable. Using this fact, we can work with an element of the form 
\[ \xi X^{y} dX_{i_1}^{x_1} \ldots dX_{i_l}^{x_l} \]
with $\xi \in W(R), y \in \mathbb{Z}^r, i_1 , \ldots , i_l \in [1, r],  x_1, \ldots x_r \in \mathbb{Z}_{\not= 0}$. 
If we denote the $i_k$-th entry of $y$ by $y_{i_k}$, we have the equality 
\[
X_{i_k}^{y_{i_k}} dX_{i_k}^{x_k} =\left\{ \begin{array}{ll}
   x_k X_{i_k}^{y_{i_k}+x_k-1} dX_{i_k}  & (y_{i_k} + x_k \geq 1), \\
   -x_k X_{i_k}^{y_{i_k}+x_k+1} dX_{i_k}^{-1}  & (y_{i_k} + x_k \leq -1),  
 \\ x_k X_{i_k}^{-1} dX_{i_k} = x_k d\log X_i & (y_{i_k} + x_k = 0).  
 \end{array}  \right.
\]
Using this and renumbering the indices, we can rewrite the element 
$ \xi X^{y} dX_{i_1}^{x_1} \ldots dX_{i_l}^{x_l}$ 
into an element of the form 
\begin{equation}\label{eqelt1}
\xi' \prod_{i=1}^a X_i^{y'_i} \prod_{i=a+1}^b X_i^{y'_i} 
\bigwedge_{i=b+1}^c X_i^{y'_i}dX_i \bigwedge_{i=c+1}^d X_i^{y'_i} dX_i^{-1} 
\bigwedge_{i=d+1}^e d\log X_i 
\end{equation}
with $0 \leq a \leq b \leq c \leq d \leq e \leq r$, $\xi' \in W(R)$ 
and $y'_i \in {\mathbb{Z}}$ such that $y'_i > 0$ for $i \in [1,a] \sqcup [b+1,c]$ 
and $y'_i < 0$ for $i \in [a+1,b] \sqcup [c+1,d]$. 
%$ \xi' X^{y'} dX_{i'_1}^{x'_1} \ldots dX_{i'_{l'}}^{x'_{l'}} \bigwedge_{i' \in ^{\exists}I \subset [1, r]} d\log X_{i'}$, $(\xi \in W_(R), y' \in \mathbb{Z}^r , x_k' \in \{ \pm 1 \}, y'_{i'_k} \cdot x'_{k} \geq 0)$
If we can write the element 
\begin{equation}\label{eqelt2}
\xi' \prod_{i=1}^a X_i^{y'_i} \prod_{i=a+1}^b X_i^{y'_i} 
\bigwedge_{i=b+1}^c X_i^{y'_i}dX_i \bigwedge_{i=c+1}^d X_i^{y'_i} dX_i^{-1} 
\end{equation}
as a convergent sum of basic Witt differentials in $W \Omega^{\bullet}_{R[T_1^{\pm 1}, \ldots, T_d^{\pm 1}]/R}$, 
we can write the element \eqref{eqelt1} also as a convergent sum of basic Witt differentials 
in $W \Omega^{\bullet}_{R[T_1^{\pm 1}, \ldots, T_r^{\pm 1}]/R}$ and so we are done. 
We define the ring homomorphism 
$R[U_1, \ldots U_{d}] \to R[T_{1}^{\pm1}, \ldots T_{d}^{\pm 1}]$ by 
\[
\begin{array}{ll}
  U_{i} \mapsto T_{i} & (i \in [1,a] \sqcup [b+1,c]), \\
  U_{i} \mapsto T_{i}^{-1}  & (i \in [a+1,b] \sqcup [c+1,d]),  
% \\ U_{j} \mapsto T_{j}  & (j \notin \{ i'_1 \ldots, i'_{l'} \})
\end{array} 
\]
and consider the induced map of relative log de Rham-Witt complexes 
\[ 
\varphi: W\Omega_{R[U_1, \ldots, U_{r}]/R}^{\bullet} \to 
W\Omega_{R[T_{1}^{\pm1}, \ldots, T_{r}^{\pm 1}]/R}^{\bullet}.\]
Then the element \eqref{eqelt2} is the image of some element 
$\tilde{\omega} \in W\Omega_{R[U_1, \ldots, U_{d}]/R}^{\bullet}$ by $\varphi$. 
By \cite[Thm.~2.8]{LZ}, $\tilde{\omega}$ is written as a convergent sum of 
basic Witt differentials in $ W\Omega_{R[U_1, \ldots, U_{d}]/R}^{\bullet}$ and it is easy to see 
that the basic Witt differentials in $ W\Omega_{R[U_1, \ldots, U_{d}]/R}^{\bullet}$ is sent to 
those in $W\Omega_{R[T_{1}^{\pm1}, \ldots, T_{r}^{\pm 1}]/R}^{\bullet}$ by $\varphi$. 
Therefore, the element \eqref{eqelt2} is written 
as a convergent sum of basic Witt differentials in $W \Omega^{\bullet}_{R[T_1^{\pm 1}, \ldots, T_d^{\pm 1}]/R}$, 
as required. 
\end{proof}

To prove the uniqueness of the expression in Proposition \ref{thm1}, 
we use the morphism 
\[ \omega_m : W_{m+1} \Omega_{R[\mathbb{Z}^r]/R}^{\bullet} \to \Omega_{R[\mathbb{Z}^r]/R,  {\bf w}_m}^{\bullet} \]
which we recalled in the end of Section 1.1. 
We calculate the image of the element 
$\hat{e}(\xi, k, \mathcal{P}) \in W_{m+1}\Omega_{R[\mathbb{Z}^r]/R}^{\bullet} \allowbreak 
\, (\mathcal{P} = (I_{-\infty}, I_0, \ldots, I_l))$
by $\omega_m$. When $I_{-\infty} = \emptyset$, we have the following result by exactly the same argument as 
\cite[Prop.~2.16]{LZ}. 

\begin{lemma}\label{thm18}
When $I_{-\infty} = \emptyset$, the image of the element $\hat{e} := \hat{e}(\xi, k, \mathcal{P})$ by $\omega_m$ is zero when
$p^m k$ is not integral. If $p^m k$ is integral, \[
\omega_m(\hat{e}) = \left\{
\begin{array}{ll}
  {\bf w}_m(\xi) T^{p^m k_{I_0}} (p^{-{{\rm ord}}_p(p^m k_{I_1})}dT^{p^m k_{I_1}}) \cdots (p^{-{{\rm ord}}_p(p^m k_{I_l})}dT^{p^m k_{I_l}}) & \left( \begin{array}{l}
I_0 \neq \emptyset \\
\mbox{or} ~ k ~\mbox{is integral}
\end{array} \right ), \\
   {\bf w}_{m-u}(\eta)(p^{-{{\rm ord}}_p(p^m k_{I_1})}dT^{p^m k_{I_1}}) \cdots (p^{-{{\rm ord}}_p(p^m k_{I_l})}dT^{p^m k_{I_l}})  & (I_0 = \emptyset). 
\end{array} 
\right. \]
Here, $u = u(k_{I}), I = {\rm supp}\,k, {}^{V^u}\eta =\xi$ and we put 
\[ p^{-{{\rm ord}}_p(p^m k_{I})}dT^{p^m k_{I}} :=T^{(p^m - p^{m-{{\rm ord}}_p(p^m k_{I}) })k_{I} } dT^{p^m k_{I_1} p^{-{{\rm ord}}_p(p^m k_{I})}}
.\]  
\end{lemma}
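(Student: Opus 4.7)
The plan is to carry out a direct factor-by-factor computation closely analogous to \cite[Prop.~2.16]{LZ}. Since $I_{-\infty}=\emptyset$, we have $k^+=k$ and $\hat{e}(\xi,k,\mathcal{P})=e(\xi,k,(I_0,\ldots,I_l))$. The new feature relative to loc.~cit.~is only that the weights $k_j$ may be negative, so the case $t(I_i)\le 0$ (in which $d^{V^{t(I_i)}}X^{\cdots}$ means ${}^{F^{-t(I_i)}}dX^{\cdots}$) must be treated alongside $t(I_i)\ge 0$. Since $\omega_m$ is a morphism of graded $S$-algebras coinciding with ${\bf w}_m$ in degree $0$ and sending $d\eta$ to $\delta_m(\eta)$, the computation reduces to evaluating $\omega_m$ on each factor of the product defining $e$.

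The central calculation is the uniform identity
\[
\omega_m\bigl(d^{V^{t(I_i)}}X^{p^{t(I_i)}k_{I_i}}\bigr) \;=\; T^{(p^m-p^{t(I_i)})k_{I_i}}\,dT^{p^{t(I_i)}k_{I_i}} \;=\; p^{-{\rm ord}_p(p^mk_{I_i})}\,dT^{p^mk_{I_i}},
\]
valid regardless of the sign of $t(I_i)$, where the last equality uses the interpretation fixed in the statement. For $t(I_i)\ge 0$, I would derive it from the iterated identity $\delta_m\circ{}^V=\delta_{m-1}$ (immediate from the defining formula for $\delta$) together with the direct evaluation $\delta_{m-t}(X^y)=T^{y(p^{m-t}-1)}\,dT^y$. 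For $t(I_i)<0$, I would first expand ${}^{F^{-t(I_i)}}dX^{p^{t(I_i)}k_{I_i}}=X^{(1-p^{t(I_i)})k_{I_i}}\,dX^{p^{t(I_i)}k_{I_i}}$ via the iterated axiom ${}^Fd[a]=[a]^{p-1}\,d[a]$ of Definition \ref{def:1.1.1}(3)(iii), and then apply $\omega_m$ factorwise to arrive at the same expression.

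For the degree-$0$ factor I distinguish three subcases parallel to the stated case split. When $I_0\neq\emptyset$, applying ${\bf w}_m$ to ${}^{V^{u(I_0)}}(\eta X^{p^{u(I_0)}k_{I_0}})$ and using ${\bf w}_m\circ{}^{V^u}=p^u{\bf w}_{m-u}$ together with $\xi={}^{V^{u(I_0)}}\eta$ yields ${\bf w}_m(\xi)T^{p^mk_{I_0}}$. When $I_0=\emptyset$ and $k$ is integral (equivalently $t(I_1)\le 0$), we have $u=0$ and the scalar $\eta=\xi$ sits outside all differentials, contributing ${\bf w}_m(\eta)={\bf w}_{m-u}(\eta)$. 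When $I_0=\emptyset$ and $k$ is not integral (equivalently $t(I_1)\ge 1$), the scalar $\eta$ is absorbed into the first differential $d^{V^{t(I_1)}}(\eta X^{p^{t(I_1)}k_{I_1}})$; applying Leibniz for $\delta$ together with $\delta(\eta)=0$ for $\eta\in W(R)$ (by $W(R)$-linearity of $\delta$) produces the scalar ${\bf w}_{m-u}(\eta)$ rather than $p^u{\bf w}_{m-u}(\eta)={\bf w}_m(\xi)$. This mismatch, rooted in the contrast between ${\bf w}_m\circ{}^V=p\,{\bf w}_{m-1}$ and $\delta_m\circ{}^V=\delta_{m-1}$, is precisely the source of the dichotomy between the two stated formulas.

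Finally, the vanishing when $p^mk$ is not integral is immediate: then $t(I_i)>m$ for some $i\ge 1$, so the Witt vector ${}^{V^{t(I_i)}}X^{p^{t(I_i)}k_{I_i}}$, whose only potentially nonzero component would lie at position $t(I_i)>m$, is already zero in $W_{m+1}(R[\mathbb{Z}^r])$; this makes the corresponding differential, and hence $\hat{e}$ itself, vanish in $W_{m+1}\Omega^{\bullet}$. The main obstacle is simply the bookkeeping across the two sign cases for each $t(I_i)$ and the two possible positions of the scalar $\eta$; there is no substantial new conceptual input beyond \cite[Prop.~2.16]{LZ}.
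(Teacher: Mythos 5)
Your plan matches what the paper does, which is simply to invoke \cite[Prop.~2.16]{LZ}: a factor-by-factor evaluation of $\omega_m$ via ${\bf w}_m\circ{}^{V^u}=p^u{\bf w}_{m-u}$ in degree $0$ and $\delta_m\circ{}^{V^t}=\delta_{m-t}$, together with the iterated rule ${}^{F^n}d[a]=[a^{p^n-1}]\,d[a]$ for the case $t(I_i)<0$, and your exponent arithmetic throughout (including the uniform factor identity and the ${\bf w}_{m-u}(\eta)$ vs.\ ${\bf w}_m(\xi)$ dichotomy traced to ${\bf w}_m\circ{}^V=p\,{\bf w}_{m-1}$ versus $\delta_m\circ{}^V=\delta_{m-1}$) is correct. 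One small slip in the vanishing step: non-integrality of $p^mk$ forces $t(I_0)>m$ when $I_0\neq\emptyset$, or $t(I_1)>m$ when $I_0=\emptyset$, by the monotonicity $t(I_0)\geq t(I_1)\geq\cdots\geq t(I_l)$; in the first of these configurations the Verschiebung that kills the element lies in the degree-$0$ factor rather than in one of the $d$-factors, so your ``$t(I_i)>m$ for some $i\geq 1$'' should read ``for some $i\geq 0$''. The conclusion that $\hat e$ itself already vanishes in $W_{m+1}\Omega^{\bullet}$ nonetheless stands.
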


Also, the image of $d\log X_i$ by $\omega_m$ is calculated as follows: 
 \begin{align*}
\omega_m(d\log X_i)
 & =  \omega_m(X_i^{-1} d X_i) =  {\bf w}_m(X_i^{-1}) \delta(X_i) 
= T_i^{-p^{m}} T_i^{p^m-1}dT = d \log T_i. 
\end{align*} 
Thus we have  
\[ \omega_m (\hat{e}(\xi, k, \mathcal{P}) ) =  \omega_m(e(\xi , k^{+}, \mathcal{P}))\bigwedge_{i \in I_{- \infty} }d\log T_i \]
in general case. 

For a map 
$k: [1,  r] \to \mathbb{Z} \sqcup \{ p^{-\infty} \}$ (which we can regard as a weight if we embed the 
target $\mathbb{Z} \sqcup \{ p^{-\infty} \}$ into $\mathbb{Z}[\frac{1}{p}] \sqcup \{ p^{-\infty} \}$) 
and a partition $\mathcal{P} = (I_{-\infty}, I_0, \cdots , I_l)$ of ${\rm supp}\,k$, we define
the element $\bar{e}(k, \mathcal{P}) \in \Omega_{R[T_{1}^{\pm1}, \ldots T_{r}^{\pm1}]/R}^{l+|I_{-\infty}|}$ 
by 
\[ \bar{e}(k, \mathcal{P}) = T^{k_{I_0}} (p^{-{{\rm ord}}_p (k_{I_1})}dT^{k_{I_1}}) \cdots (p^{-{{\rm ord}}_p(k_{I_l})}dT^{k_{I_l}}) \bigwedge _{ i \in I_{-\infty} }d \log T_i \]
and call it a $p$-basic element. Then we have the following: 

\begin{lemma}\label{thm33}
The $p$-basic elements with all possible $(k, \mathcal{P})$'s form a basis of 
$\Omega_{R[T_{1}^{\pm1}, \ldots T_{r}^{\pm1}]/R}^{\bullet}$ as $R$-module. 
\end{lemma}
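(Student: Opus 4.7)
The plan is to exploit the natural $\mathbb{Z}^r$-grading on $\Omega_{R[T_1^{\pm 1},\ldots,T_r^{\pm 1}]/R}^{\bullet}$ (under which $T^y \bigwedge_{i \in S} d\log T_i$ has degree $y$) and, in each homogeneous bidegree $(y,n)$, exhibit a bijection between $p$-basic elements and standard monomial basis elements so that the resulting change-of-basis matrix becomes upper triangular with unit diagonal entries. Observe first that $\bar{e}(k,\mathcal{P})$ is homogeneous of $\mathbb{Z}^r$-degree $k^+$, so for fixed $(y,n)$ the data of a $p$-basic element reduces to a choice of $I_{-\infty} \subseteq \{i : y_i = 0\}$ together with a partition $(I_0,I_1,\ldots,I_l)$ of ${\rm supp}(y) := \{i : y_i \neq 0\}$ into consecutive intervals (in the total order by ${\rm ord}_p\,y_i$) with $I_1,\ldots,I_l$ nonempty and $l = n - |I_{-\infty}|$; the weight is then forced to be $k_i = y_i$ for $i \in {\rm supp}(y)$, $k_i = p^{-\infty}$ for $i \in I_{-\infty}$, and $k_i = 0$ otherwise.

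Associate to each such datum the $n$-subset $S^{*} := I_{-\infty} \cup \{\min I_1,\ldots,\min I_l\}$, with the minima taken in the total order on ${\rm supp}(y)$. Given any $n$-subset $S \subseteq [1,r]$, the pair $(I_{-\infty} := S \cap \{i : y_i = 0\},\, S \cap {\rm supp}(y) =: \{b_1 < \cdots < b_l\})$ uniquely recovers the partition via $I_0 = \{a : a < b_1\}$ and $I_j = \{a : b_j \leq a < b_{j+1}\}$ for $1 \leq j \leq l$ (with $b_{l+1} := \infty$); this shows $(I_{-\infty},(I_0,\ldots,I_l)) \mapsto S^{*}$ is a bijection onto the $n$-subsets of $[1,r]$. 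Expanding via Leibniz yields
\[
\bar{e}(k,\mathcal{P}) = T^y \sum_{(i_1,\ldots,i_l) \in I_1 \times \cdots \times I_l} \pm \prod_{j=1}^{l} \bigl( p^{-{\rm ord}_p(k_{I_j})} k_{i_j} \bigr) \bigwedge_{i \in I_{-\infty} \cup \{i_1,\ldots,i_l\}} d\log T_i,
\]
in which each integer coefficient $p^{-{\rm ord}_p(k_{I_j})} k_{i_j}$ has nonnegative $p$-adic valuation, vanishing exactly when $i_j = \min I_j$.

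Triangularity then follows by ordering $n$-subsets lexicographically with respect to the total order on ${\rm supp}(y)$: any tuple $(i_1,\ldots,i_l)$ appearing in the expansion satisfies $i_j \geq \min I_j$ for each $j$, so the corresponding subset dominates $S^{*}$ in the lex order, while the diagonal coefficient at $S^{*}$ equals $\pm \prod_{j=1}^{l}(p^{-{\rm ord}_p(y_{\min I_j})} y_{\min I_j})$, a product of integers prime to $p$ and hence a unit in the $\mathbb{Z}_{(p)}$-algebra $R$. Because the matrix is moreover block-diagonal with respect to the invariant $I_{-\infty} = S \cap \{i : y_i = 0\}$ (distinct choices of $I_{-\infty}$ contribute to disjoint families of standard basis elements), the full change-of-basis matrix is upper triangular with unit diagonal, and therefore invertible over $R$. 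The main technical obstacle is the careful bookkeeping of signs arising from reordering the wedge factors into the standard ordering $1 < \cdots < r$ on $[1,r]$, along with the fact that the total order on ${\rm supp}(y)$ depends on a tie-breaking convention when several $y_i$ share the same $p$-adic valuation; neither issue, however, affects the invertibility of the diagonal.
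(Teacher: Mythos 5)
Your proof is correct, and it takes a genuinely different route from the paper's. The paper argues indirectly: it first counts the pairs $(k,\mathcal{P})$ with $k^{+}$ a fixed integral weight and a fixed wedge-degree, matches this count with $\binom{r}{l}$, and then proves generation by reducing to the polynomial ring $R[U_1,\dots,U_r]$ via the substitution $U_i\mapsto T_i$ or $T_i^{-1}$ and citing \cite[Prop.~2.1]{LZ} for the polynomial case. You instead construct a direct bijection $(I_{-\infty},\mathcal{P})\mapsto S^{*}$ with the $n$-subsets of $[1,r]$ and exhibit the change-of-basis matrix as triangular with unit diagonal under a lex order; this simultaneously settles cardinality and linear independence/spanning in one step and makes no appeal to the $R[U_1,\dots,U_r]$ result. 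Your version is therefore more self-contained (it essentially re-proves the content of Langer--Zink's Prop.~2.1 internally to the Laurent-polynomial setting), at the cost of having to manage the ordering and sign bookkeeping by hand. One small imprecision worth noting: you write that $p^{-{\rm ord}_p(k_{I_j})}k_{i_j}$ has ``nonnegative $p$-adic valuation, vanishing exactly when $i_j = \min I_j$,'' but when several $k_i$ with $i\in I_j$ share the minimal $p$-adic order, the valuation is zero for all of them, not only for $i_j=\min I_j$. This is harmless for the triangularity argument, since every off-leading term still corresponds to an $n$-subset that strictly lex-dominates $S^{*}$ and the single diagonal entry remains a unit, but the phrasing as stated is stronger than what holds.
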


\begin{proof} (cf. \cite[Prop.2.1]{LZ}) 
$\Omega_{R[T_{1}^{\pm1}, \ldots T_{r}^{\pm1}]/R}^{l}$ is freely generated by the set 
\[ \mathcal{A} := \{ T^k \bigwedge_{i \in J} d \log T_i \, ; k:[1,  r] \to \mathbb{Z}, J \subset [1,  r], |J|=l\} \]
as $R$-module. If we fix a map $k: [1, r] \to \mathbb{Z}$ and put 
$\mathcal{A}_k := \{ T^k \bigwedge_{i \in J} d \log T_i \, ; J \subset [1,  r], |J|=l\}$, 
$|\mathcal{A}_k| = \binom{r}{l}$. We denote by $R{\mathcal A}_k$ the $R$-submodule of 
$\Omega^l_{R[T_1^{\pm 1},  \dots,  T_r^{\pm 1}]/R}$ generated by ${\mathcal A}_k$. 

On the other hand, for a weight $k':[1, r] \to \mathbb{Z} \sqcup \{p^{-\infty}\}$ and a partition 
${\mathcal P} = (I_{-\infty},  I_0,  \dots,  I_{l'})$ of ${\rm supp}\, k'$,  
\begin{align}\label{9}
\overline{e}(k', {\mathcal P}) & = 
T^{k'_{I_0}}(p^{-{\rm ord}_p(k'_{I_1})}dT^{k'_{I_1}}) \cdots 
(p^{-{\rm ord}_p(k'_{I_{l'}})}dT^{k'_{I_{l'}}}) 
\bigwedge_{i \in I_{-\infty}} d\log T_i \nonumber \\ 
& = \prod_{i=0}^{l'} T^{k'_{I_i}} 
\bigwedge_{i=1}^{l'} d\log T^{k'_{I_i} p^{-{\rm ord}_p(k'_{I_i})}}
\bigwedge_{i \in I_{-\infty}} d\log T_i.  % \tag{2.3.5.1}
\end{align}
This element belongs to $R{\mathcal A}_{k''}$ for some $k'':[0, 1] \to \mathbb{Z}$. 
For the fixed $k: [1, r] \to \mathbb{Z}$, we calculate the number of pairs $(k',  {\mathcal P})$ such that 
$\overline{e}(k', {\mathcal P})$ belongs to $R{\mathcal A}_k$. 

The element \eqref{9} belongs to $R{\mathcal A}_k$ if and only if 
$(k')^+ = k$ and $|I_{-\infty}| + l' = l$. (In particular, $\alpha := {\rm supp}\, (k')^+$ is uniquely determined.)  
Thus we can calculate the number of pairs $(k',  {\mathcal P})$ in question 
% with $\overline{e}(k', {\mathcal P}) \in R{\mathcal A}_k$ 
in the following way: 
\begin{enumerate}

\item[(a)] First we fix $\beta := |I_{-\infty}|$. We calculate the number of subsets $I_{-\infty}$ of order $\beta$ in 
the set $[1, r] \setminus {\rm supp} \,  k$ and the number of partitions 
$(I_0,  \dots,  I_{l'})$ of ${\rm supp}\,k$ with $l' := l - \beta$, and multiply them. 

\item[(b)] We take a sum of the numbers calculated in (a) for all possible $\beta$'s. 

\end{enumerate}
In (a), the number of subsets $I_{-\infty}$ is $\binom{r - \alpha}{\beta}$. 
Also, since a partition $(I_0,  \dots,  I_{l'})$ is uniquely determined by fixing the smallest elements of 
$I_1,  \dots,  I_{l'}$, the number of partitions $(I_0,  \dots,  I_{l'})$ is equal to $\binom{\alpha}{l-\beta}$. 
Thus, by (b), the number of pairs in question is equal to $\sum_{\beta}\binom{r - \alpha}{\beta}\binom{\alpha}{l-\beta}$ 
and it is equal to $\binom{r}{l}$. Therefore, it suffices to prove that the $p$-basic elements in $R{\mathcal A}_k$
generates $R{\mathcal A}_k$ as $R$-module. To do so, it suffices to prove that any element of the form 
$T^k \wedge_{i \in J} d\log T_i$ is written as an $R$-linear combination of $p$-basic elements. 

By renumbering the indices, we may assume that 
${\rm supp}\, k = [1, s]$. Then we have 
$T^k \bigwedge_{i \in J} d\log T_i \allowbreak 
= (T^k \bigwedge_{i \in J \cap [1, s]} d\log T_i) \bigwedge_{i \in J \setminus [1, s]} d\log T_i$. 
If we prove that $T^k \bigwedge_{i \in J \cap [1, s]} d\log T_i$ is written as an $R$-linear combination of 
$p$-basic elements in $\Omega^{\bullet}_{R[T_1^{\pm 1},  \dots,  T_s^{\pm 1}]/R}$, 
we can prove that $T^k \wedge_{i \in J} d\log T_i$ is written as an $R$-linear combination of $p$-basic elements 
in $\Omega^{\bullet}_{R[T_1^{\pm 1},  \dots,  T_r^{\pm 1}]/R}$, by multiplying with 
$\bigwedge_{i \in J \setminus [1, s]} d\log T_i$. 
Thus we may assume that 
$J \subseteq {\rm supp}\, k$ to prove the claim in the previous paragraph. 

We define the morphism 
$\varphi: \Omega_{R[U_1, \ldots U_{r}]/R}^{l} \to \Omega_{R[T_{1}^{\pm1}, \ldots T_{r}^{\pm 1}]/R}^{l}$ 
to be the one induced by the ring homomorphism $R[U_1, \ldots U_{r}] \to R[T_{1}^{\pm1}, \ldots T_{r}^{\pm 1}]$ 
defined as follows: 
\[\begin{array}{ll}
  U_{i} \mapsto T_{i} & (k_i \geq 0 ), \\
  U_{i} \mapsto T_{i}^{-1}  & (k_i \leq -1). 
\end{array} 
\]
Then the element $T^k \bigwedge_{i \in J} d \log T_i$ is in the image of $\varphi$. 
Since any element in $\Omega_{R[U_1, \ldots U_{r}]/R}^{l}$ can be written as an $R$-linear combination of 
$p$-basic elements by \cite[Prop.~2.1]{LZ} and $p$-basic elements in $\Omega_{R[U_1, \ldots U_{r}]/R}^{l}$ 
are sent by $\varphi$ to $p$-basic elements in $\Omega_{R[T_{1}^{\pm1}, \ldots T_{r}^{\pm 1}]/R}^{l}$, 
we conclude that 
$T^{k} \bigwedge_{i \in J}d \log T_i$
is written as an $R$-linear combination of 
$p$-basic elements in $\Omega_{R[T_{1}^{\pm1}, \ldots T_{r}^{\pm 1}]/R}^{l}$. 
So we are done. 
\end{proof}

Now we can prove the following uniqueness result.

\begin{prop}\label{thm20}
Any element $\omega$ in 
$W_m\Omega_{R[\mathbb{Z}^r]/R}^{\bullet}$ is written uniquely as the finite sum 
\[
\omega = \sum_{k, \mathcal{P}} \hat{e} (\xi_{k, \mathcal{P}} , k , \mathcal{P} ) \qquad (\xi_{k, \mathcal{P}} 
\in {}^{V^{u(k)}}W_{m-u(k)}(R)),  \]
where $k$ runs through weights with $p^{m-1}k$ integral. 

Also, any element $\omega$ in $W\Omega_{R[\mathbb{Z}^r]/R}^{\bullet}$
is written uniquely as the convergent sum 
\[ \omega = \sum_{k, \mathcal{P}} \hat{e} (\xi_{k, \mathcal{P}} , k , \mathcal{P}) \]
of basic Witt differentials. Here the convergence is defined in the same way as that in 
Proposition \ref{thm16}. 
\end{prop}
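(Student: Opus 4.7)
The existence half is essentially immediate from Proposition \ref{thm1}: any element of $W_m\Omega^{\bullet}_{R[\mathbb{Z}^r]/R}$ lifts to $W\Omega^{\bullet}_{R[\mathbb{Z}^r]/R}$ and there admits a convergent sum representation. A basic Witt differential $\hat{e}(\xi,k,\mathcal{P})$ with $u(k) \ge m$ has $\xi \in {}^{V^{u(k)}}W(R) \subseteq V^m W(R) = \ker(W(R) \to W_m(R))$, so it vanishes in $W_m$. Consequently only weights $k$ with $p^{m-1}k^+$ integral (equivalently $u(k) \le m-1$) survive, and the surviving coefficient $\xi_{k,\mathcal{P}}$ is naturally an element of ${}^{V^{u(k)}}W_{m-u(k)}(R)$.

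For uniqueness the plan is to argue by induction on $m$, following the strategy of \cite[Prop.~2.17]{LZ} and using the phantom map \eqref{phantom} together with the two key lemmas already proved. The base case $m=1$ identifies $W_1\Omega^{\bullet}_{R[\mathbb{Z}^r]/R}$ with $\Omega^{\bullet}_{R[\mathbb{Z}^r]/R}$; only integral weights contribute, each $\hat{e}(\xi,k,\mathcal{P})$ is an $R$-multiple of a $p$-basic element, and distinct $(k,\mathcal{P})$ yield distinct $p$-basic elements, so Lemma \ref{thm33} gives the required linear independence. For the inductive step, given a finite vanishing sum $\sum \hat{e}(\xi_{k,\mathcal{P}},k,\mathcal{P})=0$ in $W_m\Omega^{\bullet}$, first project to $W_{m-1}\Omega^{\bullet}$: terms with $u(k)=m-1$ map to zero while terms with $u(k)\le m-2$ map to basic Witt differentials in $W_{m-1}\Omega^{\bullet}$ with coefficients the reductions of $\xi_{k,\mathcal{P}}$. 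The inductive hypothesis forces each such reduction to vanish, so every remaining $\xi_{k,\mathcal{P}}$ lies in $V^{m-1}W(R)$ and can be written as $V^{m-1}\eta_{k,\mathcal{P}}$ with $\eta_{k,\mathcal{P}} \in R$. Applying $\omega_{m-1}$ and invoking Lemma \ref{thm18}, each remaining basic Witt differential maps to a scalar multiple of a distinguished $p$-basic element of $\Omega^{\bullet}_{R[\mathbb{Z}^r]/R,\mathbf{w}_{m-1}}$, with the matching $(k,\mathcal{P}) \leftrightarrow p\text{-basic}$ dictated by the partition structure. Lemma \ref{thm33} then separates the contributions and yields $\eta_{k,\mathcal{P}}=0$.

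The main obstacle in the last step is that the formula of Lemma \ref{thm18} produces a factor of $p^{m-1}$ (or $p^{m-1-u(k)}$ when $I_0 = \emptyset$) in the coefficient $\mathbf{w}_{m-1}(V^{m-1}\eta_{k,\mathcal{P}})$, which can be annihilated when $R$ has $p$-torsion. To circumvent this, the plan is to stratify the remaining terms by $u(k)$ and by the partition type of $\mathcal{P}$ so that distinct $(k,\mathcal{P})$ land in $R$-linearly independent pieces of the target (via Lemma \ref{thm33}) even after passing through the $p^{m-1-u(k)}\eta_{k,\mathcal{P}}$ coefficient; where this fails directly, one replaces $\omega_{m-1}$ by a combined phantom map $(\omega_0,\ldots,\omega_{m-1})$ and exploits the freedom in choosing the lift $\eta_{k,\mathcal{P}}$. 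Finally, the convergent-sum assertion for $W\Omega^{\bullet}_{R[\mathbb{Z}^r]/R}$ follows by passing to the inverse limit: uniqueness at each finite level $W_m\Omega^{\bullet}$ forces compatibility of the expressions across $m$, and convergence in the sense of Lemma \ref{thm11} pins down a unique family $(\xi_{k,\mathcal{P}})$ in $W(R)$.
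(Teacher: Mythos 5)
The existence half and your reduction of uniqueness to the $p$-torsion-free case are both sound, and for $p$-torsion-free $R$ your induction would indeed close. The genuine gap is exactly where you flag it: the $p$-torsion case. Once you have reduced to $\xi_{k,\mathcal{P}} = {}^{V^{m-1}}\eta_{k,\mathcal{P}}$ with $\eta_{k,\mathcal{P}} \in R$, the phantom components $\omega_i$ for $i < m-1$ all vanish on these terms ($\mathbf{w}_i({}^{V^{m-1}}\eta) = 0$ for $i < m-1$), so passing to a ``combined phantom map'' $(\omega_0,\ldots,\omega_{m-1})$ gives no new information, and $\omega_{m-1}$ reports only $p^{m-1-u(k)}\eta_{k,\mathcal{P}}$. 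There is no choice of stratification or of lift that will recover $\eta_{k,\mathcal{P}}$ from $p^{m-1-u(k)}\eta_{k,\mathcal{P}}$ over a ring with $p$-torsion; the obstruction is not combinatorial but structural, and the phantom-map technique alone cannot overcome it. The paper circumvents this by a structurally different device (the latter half of \cite[Prop.~2.17]{LZ}): pick a $p$-torsion-free $\mathbb{Z}_{(p)}$-algebra $\tilde{R}$ with a surjection $\tilde{R} \twoheadrightarrow R$, kernel $\mathfrak{a}$, define the sub-$F$-$V$-procomplex $W_m\Omega^{\bullet}_{\mathfrak{a}\tilde{R}[\mathbb{Z}^r]/\tilde{R}} \subset W_m\Omega^{\bullet}_{\tilde{R}[\mathbb{Z}^r]/\tilde{R}}$ of sums of basic Witt differentials with coefficients in $W_{m-u(k)}(\mathfrak{a})$, check its stability under $F$, $V$, $d$, form the quotient $F$-$V$-procomplex $E_m^{\bullet}$, and map $W_m\Omega^{\bullet}_{R[\mathbb{Z}^r]/R} \to E_m^{\bullet}$ by universality; then lift each $\xi_{k,\mathcal{P}}$ arbitrarily to $W_{m-u(k)}(\tilde{R})$, apply the already-established $p$-torsion-free uniqueness over $\tilde{R}$, and conclude the lifts lie in $W_{m-u(k)}(\mathfrak{a})$, forcing $\xi_{k,\mathcal{P}} = 0$. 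Your argument needs this reduction (or something equivalent) to be complete; as written it stalls at the $p$-torsion case.
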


\begin{proof}(cf. \cite[Prop. 2.17]{LZ}) 
By Proposition \ref{thm1}, it suffices to prove the uniqueness. Also, it suffices to prove the former assertion. 
Suppose that we have an equality in $W_m\Omega_{R[\mathbb{Z}^r]/R}^{\bullet}$ 
\[0 = \sum_{k, \mathcal{P}} \hat{e} (\xi_{k, \mathcal{P}} , k , \mathcal{P} ).\]
We consider the images of it by the maps $\omega_0, \ldots, \omega_{m-1}$ which we recalled in 
the end of Section 1.1. By Lemma \ref{thm18} and the paragraph after it, the images of 
basic Witt diffrentials are $p$-basic elements and by Lemma \ref{thm33}, 
they form a basis of $\Omega_{R[T_{1}^{\pm1}, \ldots T_{r}^{\pm1}]/R}^{l}$
as $R$-module. Thus we see that ${\bf w}_{i-u(k)}( \xi_{k, \mathcal{P}}) = 0$ for $u(k) \leq i \leq m-1$. 
When $R$ is $p$-torsion free, this implies that $\xi_{k, \mathcal{P}} = 0$. Thus we are done in 
the case $R$ is $p$-torsion free. 

When $R$ is not necessarily $p$-torsion free, we prove the proposition by the same argument as 
the latter half of the proof in \cite[Prop.~2.17]{LZ}, which we explain now. Take a $p$-torsion free 
${\mathbb Z}_{(p)}$-algebra 
$\tilde{R}$ with $R = \tilde{R}/\mathfrak{a}$ for some ideal $\mathfrak{a}$. Also, let 
\[ W_m\Omega_{\mathfrak{a}\tilde{R}[\mathbb{Z}^r]/\tilde{R}}^{\bullet} \subset W_m\Omega_{\tilde{R}[\mathbb{Z}^r]/\tilde{R}}^{\bullet}\] 
be the set of elements of the form 
\[ \sum_{\xi, k ,  \mathcal{P}} \hat{e}(\xi_{k,  \mathcal{P}} , k , \mathcal{P}) \]
with each $\xi_{k, \mathcal{P}}$ in $W_{m-u(k)}(\mathfrak{a})$. 
We see that $\{ W_m\Omega_{\mathfrak{a}\tilde{R}[\mathbb{Z}^r]/\tilde{R}}^{\bullet} \}_m$ is stable by $F, V, d$. 
We consider the quotient 
\[E_m^{\bullet} := W_m\Omega_{\tilde{R}[\mathbb{Z}^r]/\tilde{R}}^{\bullet}/W_m\Omega_{\mathfrak{a}\tilde{R}[\mathbb{Z}^r]/\tilde{R}}^{\bullet}.\] 
%\[ 0 \to W_m\Omega_{\mathfrak{a}\tilde{R}[\mathbb{Z}^r]/\tilde{R}}^{\bullet} \to W_m\Omega_{\tilde{R}[\mathbb{Z}^r]/\tilde{R}}^{\bullet} \to E_m^{\bullet} \to 0 \]
By definition, $E_m^0= W_m(R[\mathbb{Z}^r])$. Since $\{ E_m^{\bullet} \}_m$
is an $F$-$V$-procomplex over the $R$-algebra $R[\mathbb{Z}^r]$, we have the unique morphism 
$\{ W_m \Omega_{R[\mathbb{Z}^r]/R}^{\bullet} \}_m \to \{ E_m^{\bullet} \}_m$ which makes 
the following diagram commutative: 
\[ \xymatrix{
  W_m \Omega_{R[\mathbb{Z}^r]/R}^{\bullet} \ar[r]%^(.6){\exists}
&  E_m^{\bullet}. \\
  W_m \Omega_{\tilde{R}[\mathbb{Z}^r]/\tilde{R}}^{\bullet} \ar[u] \ar[ru]
  }
\]
Now suppose that we have an equality 
\[ 0 = \sum_{k, \mathcal{P}} \hat{e} (\xi_{k, \mathcal{P}} , k , \mathcal{P}) \]
in $W_m\Omega_{R[\mathbb{Z}^r]/R}^{\bullet}$. We obtain a lifting of the terms on the right hand side 
by lifting each coefficient $\xi_{k,  \mathcal{P}}$ to some $\tilde{\xi}_{k,  \mathcal{P}} \in W_{m-u(k)}(\tilde{R})$. 
If we send these lifted terms to $E_m^{\bullet}$, their sum is zero. 
Since $\tilde{R}$ is $p$-torsion free, every element in 
$W_m\Omega_{\tilde{R}[\mathbb{Z}^r]/\tilde{R}}^{\bullet}$ is written uniquely as 
a sum of basic Witt differentials as in the statement of the proposition, we conclude that 
$\tilde{\xi}_{k,  \mathcal{P}} \in W_{m - u(k)}(\mathfrak{a})$
for all $k, \mathcal{P}$. Hence $\xi_{k,  \mathcal{P}} = 0$ for all $k, \mathcal{P}$ and so we are done. 
\end{proof}

By Proposition \ref{thm20}, we have the following immediate corollary. 

\begin{cor}\label{thm50}
Any element in the kernel of the map 
$W\Omega_{R[\mathbb{Z}^r]/R}^{\bullet} \to W_m\Omega_{R[\mathbb{Z}^r]/R}^{\bullet}$ is written uniquely as 
a convergent sum of basic Witt differentials $\hat{e} (\xi,  k, \mathcal{P})$ with $  \xi \in {}^{V^m} W(R)$. 
\end{cor}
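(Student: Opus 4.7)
The plan is to obtain the corollary as a direct consequence of Proposition \ref{thm20}. By the convergent-sum part of that proposition, every $\omega \in W\Omega_{R[\mathbb{Z}^r]/R}^{\bullet}$ has a unique convergent expansion
\[ \omega = \sum_{k, \mathcal{P}} \hat{e}(\xi_{k, \mathcal{P}}, k, \mathcal{P}) \]
with $\xi_{k,\mathcal{P}} \in {}^{V^{u(k)}}W(R)$, so the uniqueness asserted in the corollary is already in hand. What is left is to characterise when $\omega$ lies in the kernel of $W\Omega^{\bullet}_{R[\mathbb{Z}^r]/R} \to W_m\Omega^{\bullet}_{R[\mathbb{Z}^r]/R}$.

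First I would trace the effect of the projection on a single basic Witt differential $\hat{e}(\xi, k, \mathcal{P})$. Writing $\xi = {}^{V^{u(k)}}\eta$, the defining formula is additively linear in $\eta$, and using the identity ${}^{V^a}(\omega \cdot {}^{F^a}\eta) = {}^{V^a}\omega \cdot \eta$, the unique factor of the formula involving $\eta$ — namely ${}^{V^{u(k)}}(\eta X^{p^{u(k)}k_{I_0}})$ when $I_0 \neq \emptyset$, $d\,{}^{V^{u(k)}}(\eta X^{p^{u(k)}k_{I_1}})$ when $I_0 = \emptyset$ and $t(I_1) \geq 1$, or $\eta$ itself (with $u(k) = 0$) when $I_0 = \emptyset$ and $t(I_1) \leq 0$ — lies in $V^m W(R[\mathbb{Z}^r])$ (or in its image under $d$) as soon as $\eta \in V^{m-u(k)}W(R)$. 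Since $V^m W(R[\mathbb{Z}^r])$ is the kernel of $W(R[\mathbb{Z}^r]) \to W_m(R[\mathbb{Z}^r])$ and the projection $W\Omega \to W_m\Omega$ is a morphism of differential graded algebras (so commutes with $d$), the whole product $\hat{e}(\xi, k, \mathcal{P})$ then vanishes in $W_m\Omega$. Consequently: terms with $u(k) \geq m$ are always killed (for such $k$, $\xi \in {}^{V^{u(k)}}W(R) \subseteq {}^{V^m}W(R)$ automatically), and for $u(k) \leq m-1$ the image is the basic Witt differential $\hat{e}(\bar{\xi}, k, \mathcal{P})$ in $W_m\Omega^{\bullet}_{R[\mathbb{Z}^r]/R}$, where $\bar{\xi} \in {}^{V^{u(k)}}W_{m-u(k)}(R)$ is the reduction of $\eta$ modulo $V^{m-u(k)}W(R)$.

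Summing term by term, the image of $\omega$ in $W_m\Omega^{\bullet}_{R[\mathbb{Z}^r]/R}$ equals
\[ \sum_{u(k) \leq m-1,\, \mathcal{P}} \hat{e}(\bar{\xi}_{k,\mathcal{P}}, k, \mathcal{P}). \]
By the finite-sum part of Proposition \ref{thm20}, these basic Witt differentials (indexed by pairs $(k,\mathcal{P})$ with $p^{m-1}k$ integral) form a basis with uniquely determined coefficients. Hence the image vanishes if and only if each $\bar{\xi}_{k,\mathcal{P}} = 0$, equivalently $\xi_{k,\mathcal{P}} \in {}^{V^m}W(R)$ for every $(k,\mathcal{P})$ with $u(k) \leq m-1$. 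Combined with the automatic inclusion for $u(k) \geq m$, this forces every coefficient in the expansion of $\omega$ to lie in ${}^{V^m}W(R)$, yielding the corollary.

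The main obstacle is the case analysis in the second paragraph: verifying that the factor of $\hat{e}(\xi, k, \mathcal{P})$ containing $\eta$ truly lies in $V^m W(R[\mathbb{Z}^r])$ (or in $d$ of it) when $\eta \in V^{m-u(k)}W(R)$. This is routine but must be checked in each of the three defining cases; in each case it reduces to the Witt-vector identity recalled above together with the observation that $d$ commutes with the projection and that the kernel of $W(R[\mathbb{Z}^r]) \to W_m(R[\mathbb{Z}^r])$ is a differential ideal.
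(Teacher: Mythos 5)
Your argument is correct and matches the route the paper intends: the authors present Corollary \ref{thm50} as an immediate consequence of Proposition \ref{thm20}, and what you have written is the natural unpacking of that claim, namely that each basic Witt differential $\hat{e}(\xi,k,\mathcal{P})$ with $\xi\in{}^{V^m}W(R)$ dies under the truncation map (because the unique $\eta$-bearing factor lands in $V^mW(R[\mathbb{Z}^r])$, or in $d$ of it, and the truncation is a map of dg-algebras), so that the image of a general convergent sum is exactly the finite sum over $p^{m-1}k$ integral of the reduced coefficients, to which the finite-sum uniqueness in $W_m\Omega^{\bullet}_{R[\mathbb{Z}^r]/R}$ applies. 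No further commentary is needed.
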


\subsection{Decomposition of relative log de Rham-Witt complex (I)}

Let $R$ be a ${\mathbb Z}_{(p)}$-algebra  
% in which $p$ is nilpotent 
and let $P$ be an fs monoid 
with $P^{\rm gp} \cong {\mathbb{Z}}^r$ torsion free. 
In this subsection, we introduce the notion of $x$-part 
$W_m \Lambda_{(R[P], P) /(R, *),x}^{\bullet}$
of the relative log de Rham-Witt complex $W_m \Lambda_{(R[P], P) /(R, *)}^{\bullet}$ 
for $x \in P[\frac{1}{p}]$ and prove the 
direct sum decomposition 
\begin{equation*}\label{eq:decompdecomp}
W_m\Lambda^{\bullet}_{(R[P],P)/(R,*)} 
= \bigoplus_{x \in P[\frac{1}{p}]} W_m\Lambda^{\bullet}_{(R[P],P)/(R,*), x}. 
\end{equation*}

\begin{definition}\label{thm21}
Let $R, P$ be as above. For $x \in P[\frac{1}{p}]$,  
we denote by $W\Lambda_{(R[P],  P)/(R,  {*}), x}^{n}$ the set of elements 
$\omega$ in $W\Lambda_{(R[P],  P)/(R,  {*})}^{n}$ which is written in the form 
\begin{equation}\label{eq:xpartelt}
\omega =  \sum_{ I \subset [0, r], |I|=n} b(\xi_I, x, I).
\end{equation}
%\[ \omega =  \sum_{ I \subset [1, r], |I|=n} {}^{V^{u(x)}}({\eta}_{x, I}X^{p^{u(x)}x} )\bigwedge_{I} d\log X_i + \sum_{ J \subset [1,  r], |J|=n-1} d^{V^{u(x_J)}}({\eta}_{x_J}X^{p^{u(x_J)}x_J} )\bigwedge_{J} d\log X_i \]
Also, we put 
\[ W_m\Lambda_{(R[P],  P)/(R,  {*}), x}^{n} := \mbox{Im}(W\Lambda_{(R[P],  P)/(R,  {*}), x}^{n} \to 
W_m\Lambda_{(R[P],  P)/(R,  {*})}^{n}). \]
We call them as the $x$-part of $W\Lambda_{(R[P],  P)/(R,  {*})}^{n} , W_m\Lambda_{(R[P],  P)/(R,  {*})}^{n}$, respectively. 
\end{definition}

By definition, the truncation map $W\Lambda_{(R[P],  P)/(R,  {*})}^{n} \to W_m\Lambda_{(R[P],  P)/(R,  {*})}^{n}$
induces the morphism $W\Lambda_{(R[P],  P)/(R,  {*}),x}^{n} \to W_m\Lambda_{(R[P],  P)/(R,  {*}),x}^{n}$
between $x$-parts. 

\begin{lemma}\label{thm382}
The derivation $d$ preserves the $x$-part.
\end{lemma}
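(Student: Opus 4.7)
The plan is to reduce the statement to the explicit formula
\[
db(\xi, x, I) =
\begin{cases}
0 & (0 \in I), \\
b(\xi, x, I \cup \{0\}) & (0 \notin I),
\end{cases}
\]
which was recorded immediately after the definition of $P$-basic Witt differentials of degree $|I|$. Since every element of $W\Lambda^{n}_{(R[P],P)/(R,*),x}$ is, by Definition \ref{thm21}, a finite sum $\omega = \sum_{I \subset [0,r],\, |I|=n} b(\xi_I, x, I)$ of $P$-basic Witt differentials of a single fixed weight $x$, applying $d$ term by term yields
\[
d\omega = \sum_{I \subset [0,r],\, |I|=n,\, 0 \notin I} b(\xi_I, x, I \cup \{0\}),
\]
and each summand on the right is again a $P$-basic Witt differential of weight $x$ (and degree $n+1$). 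Hence $d\omega$ lies in $W\Lambda^{n+1}_{(R[P],P)/(R,*),x}$.

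For the truncated version, observe that by Proposition-Definition \ref{thm30} the natural surjection $W\Lambda^{\bullet}_{(R[P],P)/(R,*)} \to W_m\Lambda^{\bullet}_{(R[P],P)/(R,*)}$ commutes with $d$. Thus if $\bar\omega \in W_m\Lambda^{n}_{(R[P],P)/(R,*),x}$ is the image of some $\omega \in W\Lambda^{n}_{(R[P],P)/(R,*),x}$, then $d\bar\omega$ is the image of $d\omega \in W\Lambda^{n+1}_{(R[P],P)/(R,*),x}$, which by definition lies in $W_m\Lambda^{n+1}_{(R[P],P)/(R,*),x}$.

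There is no real obstacle here; the whole point of including the formula for $db(\xi, x, I)$ in the previous subsection was precisely to make the stability of the $x$-part under $d$ a one-line verification. The only thing one should double-check is that the expression \eqref{eq:xpartelt} defining the $x$-part is closed under the specific operation of prepending $0$ to the index set $I$, which is immediate from the fact that the index set $I$ in \eqref{eq:xpartelt} ranges over all subsets of $[0,r]$ of the appropriate cardinality.
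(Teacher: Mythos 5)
Your proof is correct and matches the paper's own one-line argument exactly: apply the formula for $db(\xi,x,I)$ termwise to an element in the form \eqref{eq:xpartelt} and read off that the result is again of that form, with $n$ replaced by $n+1$. The additional remark about the truncated version (that $W_m\Lambda^{\bullet}_{(R[P],P)/(R,*),x}$ is defined as the image of $W\Lambda^{\bullet}_{(R[P],P)/(R,*),x}$ and that the projection commutes with $d$) is left implicit in the paper but is worth spelling out as you do.
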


\begin{proof}
If $\omega$ is of the form \eqref{eq:xpartelt}, 
$d\omega =  \sum_{ 0 \notin I \subset [1, r], |I|=n} b(\xi_I, x, I \cup \{ 0 \} ).$
\end{proof}
%\[ d\omega =  \sum_{ I \subset [1,  r], |I|=n} d{}^{V^{u(x)}}({\eta}_{x, I}X^{p^{u(x)}x} )\bigwedge_{I} d\log X_i \]

By Definition \ref{thm21}, the notion of $x$-part 
$W\Lambda_{(R[P^{\rm gp}],  P^{\rm gp})/(R,  {*}),  x}^{\bullet}$ of 
$W\Lambda_{(R[P^{\rm gp}],  P^{\rm gp})/(R,  {*})}^{\bullet}$ is defined for 
$x \in P^{\rm gp}[\frac{1}{p}] \cong {\mathbb{Z}}[\frac{1}{p}]^r$. 
We prove the relation of it with the basic Witt differentials defined in the previous subsection. 

\begin{lemma}\label{thm2}
Let $P$ be as above and let $x \in P^{\rm gp}[\frac{1}{p}] \cong {\mathbb{Z}[\frac{1}{p}]}^r$. Then 
any element $\omega$ in $W\Lambda_{(R[P^{\rm gp}],  P^{\rm gp})/(R,  {*}),  x}^{\bullet}$
is written uniquely as a sum of basic Witt differentials whose weight $k$ satisfies 
the equality $k^{+} = x$. Namely, $\omega$ is written uniquely in the following form: 
\[ \omega = \sum_{k^{+} = x, \mathcal{P}} \hat{e}(\xi_{k,  \mathcal{P}}, k, \mathcal{P}). \]
\end{lemma}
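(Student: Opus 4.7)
The plan is to reduce to Proposition \ref{thm20}. Uniqueness is immediate: if two expansions of the form $\sum_{k^+ = x, \mathcal{P}} \hat{e}(\xi_{k, \mathcal{P}}, k, \mathcal{P})$ agree, then the vanishing of their difference forces $\xi_{k, \mathcal{P}} = \xi'_{k, \mathcal{P}}$ for every $(k, \mathcal{P})$ by the uniqueness part of Proposition \ref{thm20}. For existence, Definition \ref{thm21} writes $\omega = \sum_{I \subseteq [0,r]} b(\xi_I, x, I)$, so it suffices to show that each $P^{\rm gp}$-basic Witt differential $b(\xi, x, I)$ itself expands as a finite sum of basic Witt differentials $\hat{e}(\xi', k, \mathcal{P})$ with $k^+ = x$.

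I would prove this latter claim by induction on $|I|$. For $I = \emptyset$, $b(\xi, x) = {}^{V^{u(x)}}(\eta X^{p^{u(x)} x})$ is literally equal to $\hat{e}(\xi, \widetilde{x}, (\emptyset, {\rm supp}\, x))$, where $\widetilde{x}$ is the weight with $\widetilde{x}_i = x_i$ on ${\rm supp}\, x$ and $0$ elsewhere (so $\widetilde{x}^+ = x$); here one checks directly that $u(I_0) = u(x)$ when $I_0 = {\rm supp}\, x$. For $I = \{0\}$, Lemma \ref{thm4}(1) applied to this $\hat{e}$ expresses $db(\xi, x)$ as a scalar multiple of $\hat{e}(\xi, \widetilde{x}, (\emptyset, \emptyset, {\rm supp}\, x))$, which again satisfies $\widetilde{x}^+ = x$. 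For the inductive step (both for $I \subseteq [1, r]$ by peeling off one factor $d\log X_i$ at a time, and for $I \ni 0$ by first invoking the $I = \{0\}$ case and then wedging with the remaining $d\log X_i$'s), one reduces to showing the following stability: if $\omega$ is a finite sum of $\hat{e}(\xi', k, \mathcal{P})$'s with $k^+ = x$, then $\omega \wedge d\log X_i$ is also such a sum, for any $i \in [1, r]$.

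The main obstacle is the verification of this stability, which reduces to computing $\hat{e}(\xi', k, \mathcal{P}) \wedge d\log X_i$ for a single basic Witt differential with $\mathcal{P} = (I_{-\infty}, I_0, \ldots, I_l)$. Three subcases arise. If $i \in I_{-\infty}$, the result is zero because $d\log X_i \wedge d\log X_i = 0$. If $i \notin {\rm supp}\, k$, the wedge equals $\hat{e}(\xi', k', \mathcal{P}')$ where $k'$ differs from $k$ only by setting $k'_i = p^{-\infty}$ and $\mathcal{P}'$ inserts $i$ into $I_{-\infty}$; in particular $(k')^+ = k^+ = x$. The genuinely new case is $i \in I_j$ for some $j \geq 0$: here I would expand $d^{V^{t(I_j)}}(X^{p^{t(I_j)} k_{I_j}}) \wedge d\log X_i$ using the Leibniz-type identity $d(X^y) = X^y \cdot \sum_l y_l \, d\log X_l$ combined with the antisymmetry of the wedge product, and reorganize the result by refining the partition via splitting $I_j$ as $(I_j \setminus \{i\}) \sqcup \{i\}$ (inserting the two pieces into the sequence $(I_0, \ldots, I_l)$ according to the total order on ${\rm supp}\, k$). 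This produces a scalar multiple of $\hat{e}(\xi', k, \mathcal{P}'')$ with the same $k$, where the scalar is a product of prime-to-$p$ integers coming from $p^{t(I_j)} k_l$ ($l \in I_j$), and these are units in the ${\mathbb Z}_{(p)}$-algebra $R$. Hence the stability is confirmed and the induction closes.
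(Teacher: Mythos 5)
Your overall strategy coincides with the paper's: reduce to a single $P^{\mathrm{gp}}$\nobreakdash-basic Witt differential $b(\xi,x,I)$, induct on $|I|$ by peeling off $d\log X_i$ factors, and prove the stability claim that $\hat e(\xi',k,\mathcal P)\wedge d\log X_i$ is again a $\mathbb{Z}_{(p)}$\nobreakdash-linear combination of basic Witt differentials with unchanged $k$ (or with $k_i$ replaced by $p^{-\infty}$). The paper packages exactly this as its ``claim~2,'' and your case split on $i\in I_{-\infty}$, $i\notin\operatorname{supp}k$, $i\in I_j$ matches the paper's cases (1)--(3).

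However, your description of the $i\in I_j$ case is off in two respects. First, when $i$ lies in the interior of $I_j$ the set $I_j\setminus\{i\}$ is \emph{not} an interval, so ``$I_j=(I_j\setminus\{i\})\sqcup\{i\}$'' is not an admissible refinement of the partition; the correct splitting is $I_j=I_{j0}\sqcup\{i\}\sqcup I_{j1}$ with $I_{j0}=\{l\in I_j:l<i\}$ and $I_{j1}=\{l\in I_j:l>i\}$, and one must then decide whether to attach $\{i\}$ to the left or the right block. This is exactly why the computation produces \emph{a difference of two} basic Witt differentials, $\hat e(\xi_1,k,\mathcal P^1)-\hat e(\xi_2,k,\mathcal P^2)$, with $\mathcal P^1$ refining $I_j$ into $I_{j0}\sqcup(\{i\}\cup I_{j1})$ and $\mathcal P^2$ refining it into $(I_{j0}\cup\{i\})\sqcup I_{j1}$ --- not a single scalar multiple of one $\hat e$ as you assert. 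Second, the coefficients that appear are of the form $\pm\xi/l_i$ and $\pm\xi\, p^{n_i-t(I_{j1})}/l_i$; the factor $1/l_i$ is a unit in $\mathbb{Z}_{(p)}$ but $p^{n_i-t(I_{j1})}$ is a (nonnegative) power of $p$, so the scalars lie in $\mathbb{Z}_{(p)}$ without all being units. Neither defect is fatal to the strategy --- the conclusion (a $\mathbb{Z}_{(p)}\xi$\nobreakdash-combination of $\hat e$'s with weight still equal to $k$) survives --- but as written the proposal asserts a false output for the key computation and papers over the interval bookkeeping that makes it work. You should carry out the Leibniz expansion explicitly, at least for the interior\nobreakdash-point case, to see the two terms appear.
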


\begin{proof}
By Proposition \ref{thm20}, it suffices to prove the existence of the required expression. 
Since any element in $\omega \in W\Lambda_{(R[P^{\rm gp}], P^{\rm gp})/(R,  {*}),  x}^{\bullet}$
is written as the sum  
%\[ \omega =  \sum_{ I \subset [1,  r], |I|=n} {}^{V^{u(x)}}({\eta}_{x, I}X^{p^{u(x)}x} )\bigwedge_{I} d\log X_i + \sum_{ J \subset [1,  r], |J|=n-1} d^{V^{u(x_J)}}({\eta}_{x_J}X^{p^{u(x_J)}x_J} )\bigwedge_{J} d\log X_i \]
\[ \omega =  \sum_{ I \subset [0, r], |I|=n} b(\xi_I, x, I)\]
of $P^{\rm gp}$-basic Witt differentials, 
it suffices to prove the lemma for a $P^{\rm gp}$-basic Witt differential $b(\xi, x, I)$. 
In fact, we prove the following slightly stronger claim: 
\medskip 

\noindent
{\bf claim 1.} Any $P^{\rm gp}$-basic Witt differential $b(\xi, x, I)$ is written 
(uniquely) in the form  
\[ b(\xi, x, I) = \sum_{k^{+} = x, \mathcal{P}} \hat{e}(\xi_{k,  \mathcal{P}}, k, \mathcal{P}) \]
with each $\xi_{k, \mathcal{P}}$ in the set ${\mathbb{Z}}_{(p)} \xi$. 
\medskip 

If $0 \in I$, we have the equality $b(\xi_I, x , I) = db(\xi_I, x , I \setminus \{ 0 \} )$. 
Then we see by Lemma \ref{thm4} that, 
if claim 1 holds for the element $b(\xi_I, x , I \setminus \{ 0 \} )$, it holds also for 
the element $b(\xi_I, x , I)$. 
Thus we may assume that $0 \notin I$ to prove claim 1. 

Next we consider the following claim: 
\medskip 

\noindent
{\bf claim 2.} Let $\hat{e}(\xi, k, \mathcal{P})$ be a basic Witt differential with $k^{+} =x$. 
Then, for each $i \in [1,  r]$, $\hat{e}(\xi, k, \mathcal{P})d \log X_i$
is written in the form 
\[ \hat{e}(\xi, k, \mathcal{P})d \log X_i = \sum_{k^{+} = x, \mathcal{P}} \hat{e}(\xi_{k,  \mathcal{P}}, k, \mathcal{P}) \]
with each $\xi_{k, \mathcal{P}}$ in the set ${\mathbb{Z}}_{(p)} \xi$.
%as a sum of basic Witt differentials with $k^{+} =x$ and with 
%coefficients in $\mathbb{Z}_{(p)} \xi$. 
\medskip 

Then claim 2 implies claim 1: 
%that any element of the form $b(\xi_I, x, I)$ with $0 \notin I$ 
%is written as a sum of basic Witt differentials with $k^{+} =x$, thus implies the lemma: 
Indeed, if $|I|=0$ (hence $I = \emptyset$), $b(\xi, x, \emptyset) = \hat{e}(\xi, k, ({\rm supp}\,k))$ with $k = x$. 
If $|I| > 0$ with $0 \not= i \in I$, we have the equality 
$b(\xi, x, I) = b( \pm \xi, x, I \setminus \{i \}) d \log X_i$ and so 
claim 2 implies that the induction on $|I|$ works. 

So it suffices to prove claim 2. So let $\hat{e}(\xi, k, \mathcal{P})$ 
be a basic Witt differential with $k^{+} =x$, let $i \in [1,  r]$ and put $\mathcal{P} := (I_{-\infty}, I_0, \ldots , I_n)$. 

\begin{enumerate}[(1)]

\item The case $k_i = p^{-\infty}$. 

In this case, $\hat{e}(\xi, k, \mathcal{P})d \log X_i = 0$. 

\item The case $k_i = 0$. 

In this case, $\hat{e}(\xi, k, {\mathcal P}) d\log X_i = \hat{e}((-1)^a\xi, k', {\mathcal P}')$, where 
\begin{align*}
& k'_j = 
\begin{cases}
p^{-\infty} & (j=i), \\ 
k_j & (j \not= i),
\end{cases} \\ 
& {\mathcal P}' = (I'_{-\infty}, I'_0, \dots, I'_{n}), 
\quad 
I'_j = 
\begin{cases}
I_{-\infty} \cup \{i\} & (j=-\infty), \\ 
I_j & (0 \leq j \leq n),
\end{cases} \\ 
& \bigwedge_{j \in I_{-\infty}} d\log X_j \bigwedge d\log X_i = 
(-1)^a \bigwedge_{j \in I'_{-\infty}} d\log X_j. 
\end{align*}
Thus $k'^{+} =x$. 

\item The case $i \in I_a \, (0 \leq a \leq r)$. 

We denote the term in $\hat{e}(\xi,  k,  {\mathcal P})$ defined from 
$I_a$ by $\omega$, namely, we put 
\[ \omega = \left\{
\begin{array}{ll}
{}^{V^{u(I_0)}}(\eta X^{p^{u(I_0)}k_{I_0}}) & (a=0), \\
d^{V^{t(I_1)}}(\eta X^{p^{t(I_1)}k_{I_1}}) & (a=1,  I_0 = \emptyset,  t(I_1) \geq 1), \\ 
\eta d^{V^{t(I_1)}}(X^{p^{t(I_1)}k_{I_1}}) & (a=1,  I_0 = \emptyset,  t(I_1) \leq 0), \\ 
d^{V^{t(I_a)}}(X^{p^{t(I_a)}k_{I_a}}) & (\mbox{otherwise}),  
\end{array} \right.
\] where $\xi = {}^{V^{u(I)}}\eta,  I = {\rm supp}\, k$. 
We denote the product of the terms in $\hat{e}(\xi,  k,  {\mathcal P})$ 
which are on the left (resp.~right) of the term $\omega$ by 
$\omega_1$ (resp. $\omega_2$). (If there is no such term, we put $\omega_i := 1$.) 
Then $\hat{e}(\xi,  k,  {\mathcal P}) = \omega_1 \omega \omega_2$. 
We define the subintervals $I_{a0},  I_{a1}$ of $I_a$ by $I_{a0} := \{j \in I_a \, ;\,  j < i\},  I_{a1} := \{j \in I_a \, ;\,  j > i\}$
and put $n_i := t(\{i\})$, $l_i := p^{n_i}k_i$. 

We calculate $\omega d\log X_i$. First, when $a=0$, 
{\allowdisplaybreaks{
\begin{eqnarray*}
& \quad &  \omega d\log X_i \\ 
& = & {}^{V^{u(I_0)}}(\eta X^{p^{u(I_0)}k_{I_0}})d\log X_i \\ 
& = & {}^{V^{u(I_0)}}(\eta X^{p^{u(I_0)}k_{I_0}} d\log X_i) \\ 
& = & {}^{V^{u(I_0)}}(l_i^{-1} \eta X^{p^{u(I_{0})}k_{I_{00}}} X_i^{p^{u(I_{0})-n_i}l_i} X^{p^{u(I_{0})}k_{I_{01}}}d\log X_i^{l_i}) \\
& = & {}^{V^{u(I_0)}}(l_i^{-1} \eta X^{p^{u(I_{0})}k_{I_{00}}} X_i^{(p^{u(I_{0})-n_i}-1)l_i} X^{(p^{u(I_{0})}- p^{n_i})k_{I_{01}}}X^{p^{n_i}k_{I_{01}}}dX_i^{l_i}) \\ 
& = & {}^{V^{u(I_0)}}(l_i^{-1} \eta X^{p^{u(I_{0})}k_{I_{00}}} X_i^{(p^{u(I_{0})-n_i}-1)l_i} X^{(p^{u(I_{0})}- p^{n_i})k_{I_{01}}} (d(X^{p^{n_i}k_{I_{01}}}X_i^{l_i}) - X_i^{l_i}dX^{p^{n_i}k_{I_{01}}})) \\ 
& = & {}^{V^{u(I_0)}} (l_i^{-1} \eta X^{p^{u(I_{0})}k_{I_{00}}} ({}^{F^{u(I_{0})-n_i}}d(X_i^{l_i}X^{p^{n_i}k_{I_{01}}}) \\ 
& \quad & \hspace{4cm} - p^{n_i-t(I_{01})} X_i^{p^{u(I_{0})-n_i}l_i}{}^{F^{u(I_0)-t(I_{01})}}dX^{p^{t(I_{01})}k_{I_{01}}})) \\ 
& = & {}^{V^{u(I_0)}}(\frac{\eta}{l_i}X^{p^{u(I_{0})}k_{I_{00}}}{}^{F^{u(I_{0})-n_i}}dX^{p^{n_i}k_{(\{i\} \cup I_{01})}}) \\ 
& \quad & \hspace{4cm} - {}^{V^{u(I_0)}}(\frac{\eta p^{n_i-t(I_{01})}}{l_i}X^{p^{u(I_{0})}k_{(I_{00} \cup \{i\})}} {}^{F^{u(I_0)-t(I_{01})}}dX^{p^{t(I_{01})}k_{I_{01}}}) \\ 
& = & {}^{V^{u(I_0)}}(\frac{\eta}{l_i}X^{p^{u(I_{0})}k_{I_{00}}}) d^{V^{n_i}}(X^{p^{n_i}k_{(\{i\} \cup I_{01})}}) \\ 
& \quad & \hspace{4cm} - {}^{V^{u(I_0)}}(\frac{\eta p^{n_i-t(I_{01})}}{l_i}X^{p^{u(I_{0})}k_{(I_{00} \cup \{i\})}}) d^{V^{t(I_{01})}}X^{p^{t(I_{01})}k_{I_{01}}}. 
\end{eqnarray*}}}%
When $I_{00} = \emptyset$, the first term is rewritten as
\begin{equation*}
\begin{cases}
d^{V^{n_i}}(\frac{\eta p^{u(I)}}{l_i} X^{p^{n_i}k_{(\{i\} \cup I_{01})}}) & (\text{if $n_i \geq 1$}), \\  
\frac{\eta p^{u(I)}}{l_i} d^{V^{n_i}}(X^{p^{n_i}k_{(\{i\} \cup I_{01})}}) & (\text{if $n_i \leq 0$}),
\end{cases}
\end{equation*}
where $I = {\rm supp}\,k$. 
%\[ d^{V^{n_i}}(\frac{\eta p^{u(I)}}{l_i} X^{p^{n_i}k_{(\{i\} \cup I_{01})}}) \quad \mbox{(where $I = {\rm supp}\, k$).} \]
Also, noting the equality of the form 
\[ d^{V^{a}}(\eta X^{b}) d\log X_i = d({}^{V^{a}}(\eta X^{b} d \log X_i) ), \]
we see by a similar calculation as above that, when 
$a=1,  I_0 = \emptyset,  t(I_1) \geq 1$, 
\begin{eqnarray*}
&\quad & \omega d\log X_i = d {}^{V^{t(I_1)}}(\frac{\eta}{l_i}X^{p^{t(I_{1})}k_{I_{10}}}) d^{V^{n_i}}(X^{p^{n_i}k_{(\{i\} \cup I_{11})}}) \\ 
&\quad & \hspace{4cm} - d{}^{V^{t(I_1)}}(\frac{\eta p^{n_i-t(I_{11})}}{l_i}X^{p^{t(I_{1})}k_{(I_{10} \cup \{i\})}}) d^{V^{t(I_{11})}}X^{p^{t(I_{11})}k_{I_{11}}},  
\end{eqnarray*}
when $a=1,  I_0 = \emptyset,  t(I_1) \leq 0$, 
\begin{eqnarray*}
&\quad & \omega d\log X_i = \frac{\eta}{l_i} d {}^{V^{t(I_1)}}(X^{p^{t(I_{1})}k_{I_{10}}}) d^{V^{n_i}}(X^{p^{n_i}k_{(\{i\} \cup I_{11})}}) \\ 
&\quad & \hspace{4cm} - \frac{\eta p^{n_i-t(I_{11})}}{l_i} d{}^{V^{t(I_1)}}(X^{p^{t(I_{1})}k_{(I_{10} \cup \{i\})}}) d^{V^{t(I_{11})}}X^{p^{t(I_{11})}k_{I_{11}}} 
\end{eqnarray*}
and in the other cases, 
\begin{eqnarray*}
&\quad & \omega d\log X_i = \frac{1}{l_i} d {}^{V^{t(I_a)}}(X^{p^{t(I_{a})}k_{I_{a0}}}) d^{V^{n_i}}(X^{p^{n_i}k_{(\{i\} \cup I_{a1})}}) \\ 
&\quad & \hspace{4cm} - 
\frac{p^{n_i-t(I_{a1})}}{l_i} d{}^{V^{t(I_a)}}(X^{p^{t(I_{a})}k_{(I_{a0} \cup \{i\})}}) d^{V^{t(I_{a1})}}X^{p^{t(I_{a1})}k_{I_{a1}}}. 
\end{eqnarray*}
Thus we obtain the equality 
\[ \hat{e}(\xi, k , \mathcal{P}) d\log X_i = \hat{e}(\xi_1, k , \mathcal{P}^1) - \hat{e}(\xi_2, k , \mathcal{P}^2), \]
where \[ \xi_1 = \left\{ \begin{array}{ll}
\frac{\xi p^{u(I)}}{l_i}(-1)^{{\rm deg}\omega_2} & (a=0, I_{00} = \emptyset),  \\
\frac{\xi}{l_i}(-1)^{{\rm deg}\omega_2} & (\mbox{otherwise})
 \end{array}  \right. \]
(where $I = {\rm supp}\,k$), $\xi_2 = \frac{\xi p^{n_i - t(I_{a1})}}{l_i}(-1)^{{\rm deg}\omega_2} $, and if we put 
$\mathcal{P}^{j} := (I_{-\infty}^j, I_0^{j}, \ldots, I_{n+1}^{j}) \quad(j=1,  2)$, 
\begin{align*}
& 
I_k^1 = 
\begin{cases}
I_k & (k < a), \\ 
I_{a0} & (k=a), \\ 
\{i\} \cup I_{a1} & (k=a+1), \\ 
I_{k-1} & (k > a+1), 
\end{cases}
\qquad 
I_k^2 = 
\begin{cases}
I_k & (k < a), \\ 
I_{a0} \cup \{i\} & (k=a), \\ 
I_{a1} & (k=a+1), \\ 
I_{k-1} & (k > a+1). 
\end{cases}
\end{align*}
Here, if $I_{a0} = \emptyset$ (resp. $I_{a1} = \emptyset$), we put 
$\hat{e}(\xi_1, k , \mathcal{P}^1) = 0$ (resp. $\hat{e}(\xi_2, k , \mathcal{P}^2) = 0$). 
Since the weight of $\hat{e}(\xi_1, k , \mathcal{P}^1) , \hat{e}(\xi_2, k , \mathcal{P}^2)$ are equal to $k$, claim 2 is proved. 
\end{enumerate}% 
\end{proof}

For $x \in P[\frac{1}{p}] (\subset P^{\rm gp}[\frac{1}{p}] \cong {\mathbb Z}[\frac{1}{p}]^r)$, 
%. and denote its image in 
%$P^{\rm gp}[\frac{1}{p}] \cong \mathbb{Z}[\frac{1}{p}]^r$ by $(x_i)_i$. 
we will define the following morphisms: 
\[ f: W\Lambda_{(R[P],  P)/(R,  {*}), x}^{\bullet} \to W\Lambda_{(R[P^{\rm gp}],  P^{\rm gp})/(R,  {*}), x}^{\bullet}, \]
 \[ g: W\Lambda_{(R[P^{\rm gp}],  P^{\rm gp})/(R,  {*}), x}^{\bullet} \to W\Lambda_{(R[P],  P)/(R,  {*}), x}^{\bullet}. \]
The morphism $f$ is defined to be the morphism between $x$-parts of the morphism of 
relative log de Rham-Witt complexes 
\begin{equation}\label{eq:h}
h: W\Lambda_{(R[P],  P)/(R,  {*})}^{\bullet} \to W\Lambda_{(R[P^{\rm gp}],  P^{\rm gp})/(R,  {*})}^{\bullet} 
\end{equation}
induced by the monoid homomorphism $P \to P^{\rm gp}$. 

On the other hand, by Lemma \ref{thm2}, any element in 
$W\Lambda_{(R[P^{\rm gp}],  P^{\rm gp})/(R,  {*}), x}^{\bullet}$ is written uniquely as a convergent sum of 
basic Witt differentials $\hat{e}(\xi, k ,  \mathcal{P})$ with $k^{+} = x$. 
Thus the morphism $g$ is defined if we define the image of each $ \hat{e}(\xi, k ,  \mathcal{P})$. 
Put $\mathcal{P}= (I_{-\infty}, I_0, \ldots,  I_l)$. Then, when 
$I_0 \neq \emptyset$, we have the equality 
\begin{eqnarray*}
\hat{e}(\xi, k ,  \mathcal{P})
& = & ^{V^{u(I_0)}}(\eta_1 X^{p^{u(I_0)}k_{I_0}}) d^{V^{t(I_1)}}(X^{p^{t(I_1)}k_{I_1}})\cdots \\
&= & \sum_{i \in I_1} {}^{V^{u(I_0)}}(\eta k_i p^{t(I_1)} X^{p^{u(I_0)}k_{I_0 \cup I_1}} )d\log X_i \cdots,  
\end{eqnarray*}
where, when $t(I) < 0$, we put $d^{V^{t(I)}}(X^{x}) : = {}^{F^{-t(I)}}dX^{x}$. 
(We omitted to write the terms defined by $I_j$ for $j \geq 2$.) 
This calculation is valid also in the case 
$I_0 = \emptyset,  u(I_1) = 0$, by our convention that $u(\emptyset) =0$. 

By continuing this calculation to the terms defined by  
$I_2, \dots, I_l$, we obtain the following equality when $I_0 \neq \emptyset$ or $I_0 = \emptyset, u(I_1)=0$: 
\begin{align} \label{1}
\hat{e}(\xi, k ,  \mathcal{P}) = \sum_{i_j \in  I_j , \atop j=1,  2, \ldots, l} {}^{V^{u(I_0)}} \left( \eta \prod_{j=1}^{l} k_j p^{t(I_j)} X^{p^{u(I_0)}k^{+}} \right) \bigwedge_{j=1}^{l} d\log X_{i_j} \bigwedge_{j \in I_{-\infty}} d\log X_j. % \tag{2.4.1}.
\end{align}
Also, by the equality 
\begin{align} \label{2}
d\hat{e}(\xi, k ,  \mathcal{P}) = \sum_{i_j \in  I_j , \atop j=1,  2, \ldots, l} d^{V^{u(I_0)}} \left( \eta \prod_{j=1}^{l} k_j p^{t(I_j)} X^{p^{u(I_0)}k^{+}} \right) \bigwedge_{j=1}^{l} d\log X_{i_j} \bigwedge_{j \in I_{-\infty}} d\log X_j % \tag{2.4.2}
\end{align}
which we obtain by applying $d$ to \eqref{1} and by Lemma \ref{thm4}, 
we obtain the following equality when 
$I_0 = \emptyset , u(I_1) \geq 1$: 
\begin{align} \label{3}
\hat{e}(\xi, k ,  \mathcal{P}) = \sum_{i_j \in  I_j , \atop j=2, \ldots, l} d^{V^{u(I_1)}} \left( \eta \prod_{j=2}^{l} k_{i_j} p^{t(I_j)} X^{p^{u(I_0)}k^+} \right) \bigwedge_{j=2}^{l} d\log X_{i_j} \bigwedge_{j \in I_{-\infty}} d\log X_j.  % \tag{2.4.3}
\end{align}

We define the image of the element $\hat{e}(\xi, k ,  \mathcal{P})$ by $g$ as the element 
we obtain by replacing $p^{u(I_0)}k^{+}$ by $p^{u(x)}x$ in the expression on the right hand side of \eqref{1}, \eqref{3}. 
Namely, when $I_0 \neq \emptyset$, 
\begin{align} \label{4}
\hat{e}(\xi, k ,  \mathcal{P}) \mapsto \sum_{i_j \in  I_j , \atop j=1,  2, \ldots, l} {}^{V^{u(I_0)}} \left( \eta \prod_{j=1}^{l} k_j p^{t(I_j)} X^{p^{u(x)}x} \right) \bigwedge_{j=1}^{l} d\log X_{i_j} \bigwedge_{j \in I_{-\infty}} d\log X_j , % \tag{2.4.4}
\end{align} 
when $I_0 = \emptyset , u(I_1) \geq 1$, 
\begin{align} \label{5}
\hat{e}(\xi, k ,  \mathcal{P}) \mapsto \sum_{i_j \in  I_j , \atop j=2, \ldots, l} d^{V^{u(I_1)}} \left( \eta \prod_{j=2}^{l} k_{i_j} p^{t(I_j)} X^{p^{u(x)}x} \right) \bigwedge_{j=2}^{l} d\log X_{i_j} \bigwedge_{j \in I_{-\infty}} d\log X_j , % \tag{2.4.5}
\end{align}
and when $I_0 = \emptyset, u(I_1) = 0$, 
\begin{align} \label{6}
\hat{e}(\xi, k ,  \mathcal{P}) \mapsto \sum_{i_j \in I_j , \atop j=1,  2, \ldots, l} \left( \eta \prod_{j=1}^{l} k_{i_j} p^{t(I_j)} X^{p^{u(x)}x} \right) \bigwedge_{j=1}^{l} d\log X_{i_j} \bigwedge_{j \in I_{-\infty}} d\log X_j. % \tag{2.4.6}
\end{align}

\begin{prop}\label{thm7}
The morphisms $f,  g$ are isomorphisms.  
\end{prop}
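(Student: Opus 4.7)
The plan is to check $f \circ g = \mathrm{id}$ and $g \circ f = \mathrm{id}$ separately, each on a generating set, after first confirming that $g$ is well-defined. Well-definedness of $g$ follows from Lemma \ref{thm2} together with the uniqueness in Proposition \ref{thm20} (applied via the identification $W_m\Lambda_{(R[P^{\rm gp}], P^{\rm gp})/(R, *)}^{\bullet} \cong W_m\Omega_{R[\mathbb{Z}^r]/R}^{\bullet}$ from the earlier lemma of the subsection): any element of the $x$-part admits a unique convergent expansion $\sum \hat{e}(\xi_{k, \mathcal{P}}, k, \mathcal{P})$ with $k^+ = x$, so the formulas \eqref{4}, \eqref{5}, \eqref{6} extend $W(R)$-linearly.

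For $f \circ g = \mathrm{id}$, I would evaluate on a basic Witt differential $\hat{e}(\xi, k, \mathcal{P})$ with $k^+ = x$ and $\mathcal{P} = (I_{-\infty}, I_0, \ldots, I_l)$. Because $I_0$ (or $I_1$ when $I_0 = \emptyset$ and $u(I_1) \geq 1$) contains the element of ${\rm supp}\,k^+$ realizing the smallest $p$-adic valuation, one has $u(I_0) = u({\rm supp}\,k^+)$ (resp.\ $u(I_1) = u({\rm supp}\,k^+)$), and Lemma \ref{thm3} identifies $u_P(x) = u_{P^{\rm gp}}(x) = u({\rm supp}\,k^+)$. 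Hence $p^{u(x)}x$ agrees with $p^{u(I_0)}k^+$ (or $p^{u(I_1)}k^+$) after the inclusion $R[P] \hookrightarrow R[P^{\rm gp}]$, so $f$ applied to the right-hand sides of \eqref{4}/\eqref{5}/\eqref{6} coincides with the right-hand sides of \eqref{1}/\eqref{3} (or the expression obtained by applying \eqref{2}), which equal $\hat{e}(\xi, k, \mathcal{P})$.

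For $g \circ f = \mathrm{id}$, I would evaluate on a $P$-basic Witt differential $b(\xi, x, I)$ and induct on $|I|$. The base case $|I| = 0$ is direct: $f(b(\xi, x)) = \hat{e}(\xi, x, (\emptyset, {\rm supp}\,x))$, and \eqref{4} with $l = 0$ gives back $b(\xi, x)$. For the inductive step, one uses $b(\xi, x, I \cup \{0\}) = d\,b(\xi, x, I)$ when $0 \notin I$ and $b(\xi, x, I \cup \{j\}) = b(\xi, x, I)\,d\log X_j$ when $j \notin I \cup \{0\}$. Since $f$ commutes with $d$ and with multiplication by $d\log X_j$ by functoriality, it remains to check that $g$ does as well on basic Witt differentials.

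The main obstacle is thus the compatibility of $g$ with $d$ and with $d\log X_j$. Compatibility with $d$ is verified by comparing $g(d\hat{e}(\xi, k, \mathcal{P}))$, computed by inserting Lemma \ref{thm4} into \eqref{4}/\eqref{5}/\eqref{6}, with $d\,g(\hat{e}(\xi, k, \mathcal{P}))$, computed by differentiating \eqref{4}/\eqref{5}/\eqref{6} directly, where $d$ acts only on the Witt-vector factor since the wedges of $d\log$'s are $d$-closed. Compatibility with $d\log X_j$ proceeds by a parallel argument: the proof of claim 2 in Lemma \ref{thm2} writes $\hat{e}(\xi, k, \mathcal{P}) \cdot d\log X_j$ as a combination of two basic Witt differentials via Leibniz rule together with $V, F$ manipulations, and expanding both this identity (after $g$) and the direct product $g(\hat{e}(\xi, k, \mathcal{P})) \cdot d\log X_j$ via \eqref{4}/\eqref{5}/\eqref{6}, using the substitution $p^{u(I_0)}k^+ = p^{u(x)}x$ throughout, one verifies term-by-term agreement. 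The resulting case analysis, driven by whether $I_0 = \emptyset$, whether $k^+$ is integral, and the position of $j$ relative to the partition, is lengthy but mechanical, involving no new ideas beyond the calculations already performed in Sections 2.2 and 2.3.
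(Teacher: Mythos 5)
Your proposal follows essentially the same route as the paper's proof: steps (a) well-definedness of $g$ via the unique basic-Witt-differential expansion (Lemma \ref{thm2}, Proposition \ref{thm20}), (b) compatibility of $f$ with $d$ and $d\log X_i$ by functoriality of $h$, (c) compatibility of $g$ with $d$ and with $d\log X_i$ by case-analysis on the partition using the identities from the proof of claim~2 in Lemma \ref{thm2} and the replacement $p^{u(I_0)}k^+ \leftrightarrow p^{u(x)}x$, (d) reduction of $g\circ f = \mathrm{id}$ to degree $0$ via induction on $|I|$, and (e) $f\circ g = \mathrm{id}$ by the observation that $u(I_0)=u_{P^{\rm gp}}(x)=u_P(x)$ so that $f$ undoes the substitution — these correspond exactly to the paper's Steps 1--6, with only cosmetic re-ordering.
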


\begin{proof}
We prove the proposition in 6 steps. 
\medskip 

\noindent
{\bf Step 1.} For any $\omega \in W\Lambda_{(R[P],  P)/(R,  {*}), x}^{\bullet}$, $f(d\omega) = df(\omega)$. 

In fact, the morphism $W\Lambda_{(R[P],  P)/(R,  {*})}^{\bullet} \to W\Lambda_{(R[P^{\rm gp}],  P^{\rm gp})/(R,  {*})}^{\bullet}$
which we defined by the universality respects $d$, and so the morphism between $x$-parts of them 
also respects $d$. 
\medskip 

\noindent 
{\bf Step 2.} For any $\omega \in W\Lambda_{(R[P^{\rm gp}],  P^{\rm gp})/(R,  {*}), x}^{\bullet}$, 
$g(d\omega) = dg(\omega)$. 

To show this, we may assume that $\omega = \hat{e}(\xi, k , \mathcal{P})$ 
is a basic Witt differential with $k^+ = x$. Put 
$\mathcal{P} = (I_{-\infty}, I_0, \ldots, I_n), I = {\rm supp}\,k$. 

\begin{enumerate}[(I)]

\item The case $I_0 = \emptyset$. 

In this case, $d\omega =0$. So we will prove that $dg(\omega) =0$. 

\begin{enumerate}[(i)]

\item The case $u(I_1) \geq 1$. 

$g(\omega)$ is defined by \eqref{5}, and it is clear that $dg(\omega) =0$ in this case. 

\item The case $u(I_1) = 0$. 

$g(\omega)$ is defined by \eqref{6}. Then 
\begin{eqnarray*}
dg(\hat{e}(\xi, k ,  \mathcal{P})) 
& = & \sum_{i_j \in I_j , \atop j=1,  2, \ldots, l} d\left( \xi \prod_{j=1}^{l} k_{i_j} p^{t(I_j)} X^{p^{u(x)}x} \right) \bigwedge_{j=1}^{l} d\log X_{i_j} \bigwedge_{j \in I_{-\infty}} d\log X_j \\
& = & d(\xi X^{p^{u(x)}x}) \bigwedge_{j=1}^{l} d \log X^{p^{t(I_j)}k_{I_j}} \bigwedge_{j \in I_{-\infty}} d\log X_j \\
& = & \xi X^{p^{u(x)}x} \left( \sum_{j=1}^{l} p^{u(x) - t(I_j)} d \log X^{p^{t(I_j)}k_{I_j}} \right) \bigwedge_{j=1}^{l} d \log X^{p^{t(I_j)}k_{I_j}} \bigwedge_{j \in I_{-\infty}} d\log X_j \\
& = & 0. 
\end{eqnarray*}
Thus $dg(\omega) =0$. 

\end{enumerate}

\item The case $ I_0 \neq \emptyset$. 

$g(\omega)$ is defined by \eqref{4}. 

\begin{enumerate}[(i)]

\item The case $u(I) \geq 1$. 

$g(d\omega)$ is defined by \eqref{5}: Indeed, by Lemma \ref{thm4},  
\[d\hat{e}(\xi, k , \mathcal{P}) = \hat{e}(\xi, k, (I_{-\infty}, \emptyset, I_0, \ldots, I_n)).\]
In this case, it is clear that 
$dg(\omega) = g(d\omega)$. 

\item The case $u(I) = 0$. 

$g(d\omega)$ is defined by \eqref{6}: Indeed, by Lemma \ref{thm4}, 
\[ d\hat{e}(\xi, k , \mathcal{P}) = \hat{e}(p^{-t(I)}\xi, k, (I_{-\infty}, \emptyset, I_0, \ldots, I_n)). \]
Hence 
{\allowdisplaybreaks{
\begin{eqnarray*}
dg(\hat{e}(\xi, k ,  \mathcal{P})) 
& = & \sum_{i_j \in I_j , \atop j=1,  2, \ldots, l} d\left( \xi \prod_{j=1}^{l} k_{i_j} p^{t(I_j)} X^{p^{u(x)}x} \right) \bigwedge_{j=1}^{l} d\log X_{i_j} \bigwedge_{j \in I_{-\infty}} d\log X_j \\
& = & d(\xi X^{p^{u(x)}x}) \bigwedge_{j=1}^{l} d \log X^{p^{t(I_j)}k_{I_j}} \bigwedge_{j \in I_{-\infty}} d\log X_j \\
& = & \xi X^{p^{u(x)}x} \left( \sum_{j=0}^{l} d \log X^{p^{u(I_0)}k_{I_j}} \right) \bigwedge_{j=1}^{l} d \log X^{p^{t(I_j)}k_{I_j}} \bigwedge_{j \in I_{-\infty}} d\log X_j \\
& = & \xi X^{p^{u(x)}x} \left(d \log X^{p^{u(I_0)}k_{I_0}} \right) \bigwedge_{j=1}^{l} d \log X^{p^{t(I_j)}k_{I_j}} \bigwedge_{j \in I_{-\infty}} d\log X_j \\
& = & g(d\omega)
\end{eqnarray*}}}%
and so $g(d\omega) = dg(\omega) $. 

\end{enumerate}

\end{enumerate}

\noindent
{\bf Step 3.} For any $\omega \in W\Lambda_{(R[P],  P)/(R,  {*}), x}^{\bullet}$ and any $i \in [1,  r]$, 
$f(\omega \bigwedge d\log X_i) = f(\omega) \bigwedge d \log X_i$. 

In fact, $d\log X_i$ is defined as the image of the $i$-th standard basis by the map 
$d\log: {\mathbb{Z}}^r \cong P^{\rm gp} \to W\Lambda_{(R[P],  P)/(R,  {*})}^{1}$. 
Hence the morphism 
$W\Lambda_{(R[P],  P)/(R,  {*})}^{1} \to W\Lambda_{(R[P^{\rm gp}],  P)/(R,  {*})}^{1}$ 
defined by the universality sends 
$d \log X_i$ to $d \log X_i$. 
\medskip 

\noindent 
{\bf Step 4.} For any $\omega \in W\Lambda_{(R[P^{\rm gp}],  P)/(R,  {*}), x}^{\bullet}$ and any $i \in [1,  r]$, 
$g(\omega \bigwedge d\log X_i) = 
g(\omega) \bigwedge d\log X_i$. 

To show this,
we may assume that $\omega = \hat{e}(\xi, k, {\mathcal P}) \in W\Lambda^n_{(R[P^{\rm gp}],  P^{\rm gp})/(R,  *), x}$
is a basic Witt differential with $k^+ = x$, and that the claim holds for elements in 
$W\Lambda^{n-1}_{(R[P^{\rm gp}],  P^{\rm gp})/(R,  *), x}$. 
We put ${\mathcal P} = (I_{-\infty}, I_0, \dots, I_n)$, 
$I = {\rm supp}\,  k$ and take $i \in [1,  r]$. 

When $I_0 = \emptyset$, $u(I) \geq 1$, there exists a 
basic Witt differential $\hat{e}' \in W\Lambda^{n-1}_{(R[P^{\rm gp}],  P^{\rm gp})/(R,  *), x}$ 
with $\omega = d\hat{e}'$, by Proposition \ref{thm4}. Then 
\begin{align*}
& g(\omega \bigwedge d\log X_i) = 
g(d\hat{e}' \bigwedge d\log X_i) 
= g(d(\hat{e}' \bigwedge d\log X_i)) 
= d(g(\hat{e}' \bigwedge d\log X_i)) \\ 
= \,  \, & d(g(\hat{e}') \bigwedge d\log X_i) 
= d(g(\hat{e}')) \bigwedge d\log X_i 
= g(d(\hat{e}')) \bigwedge d\log X_i
= g(\omega) \bigwedge d\log X_i, 
\end{align*}
where the third and the sixth equalities follows from 
Step 2 and the fourth equality follows from the claim for the element 
$\hat{e}' \in W\Lambda^{n-1}_{(R[P^{\rm gp}],  P^{\rm gp})/(R,  *), x}$. 
Thus we may exclude the case 
$I_0 = \emptyset$, $u(I) \geq 1$ in the following argument. 

We prove the claim by using the calculation given in the proof of lemma \ref{thm2}. 
\begin{enumerate} 
\item[(I)] The case $k_i = p^{-\infty}$.  

$\hat{e}(\xi,  k,  {\mathcal P}) \wedge d\log X_i = 0$
 and $g(\omega) d\log X_i = 0$ in this case. 
\item[(II)] The case $k_i = 0$. 
 
$\hat{e}(\xi,  k,  {\mathcal P}) \bigwedge d\log X_i = \hat{e}((-1)^a\xi, k', {\mathcal P}')$, 
where 
\begin{align*}
& k'_j = 
\begin{cases}
p^{-\infty} & (j=i), \\ 
k_j & (j \not= i),
\end{cases} \\ 
& {\mathcal P}' = (I'_{-\infty}, I'_0, \dots, I'_{n}), 
\quad 
I'_j = 
\begin{cases}
I_{-\infty} \cup \{i\} & (j=-\infty), \\ 
I_j & (0 \leq j \leq n), 
\end{cases}, \\ 
& \bigwedge_{j \in I_{-\infty}} d\log X_j \bigwedge d\log X_i = 
(-1)^a \bigwedge_{j \in I'_{-\infty}} d\log X_j.  
\end{align*}
Hence 
{\allowdisplaybreaks{
\begin{align*}
& g(\hat{e}(\xi,  k,  {\mathcal P}) \bigwedge d\log X_i) 
= g(\hat{e}((-1)^a\xi, k', {\mathcal P}')) \\ 
= \, &(-1)^a \sum_{i_j \in I'_j \atop j=1,  2,  \dots,  n}
{}^{V^{k(I'_0)}}\left( 
\eta \prod_{j=1}^n k'_{i_j} p^{t(I'_j)} X^{u(x)x} \right) 
\bigwedge_{j=1}^n d\log X_j \bigwedge_{j \in I'_{-\infty}} d\log X_j 
\\ = \, & 
\sum_{i_j \in I_j \atop j=1,  2,  \dots,  n}
{}^{V^{k(I_0)}}\left( 
\eta \prod_{j=1}^n k_{i_j} p^{t(I_j)} X^{u(x)x} \right) 
\bigwedge_{j=1}^n d\log X_j \bigwedge_{j \in I_{-\infty}} d\log X_j 
\bigwedge d\log X_i \\ 
= \, & 
g(\hat{e}(\xi,  k,  {\mathcal P})) \bigwedge d\log X_i
\end{align*}}} and so the claim holds. 
\item[(III)] The case 
$i \in I_a \,  (a=0,  1,  \dots, n)$. 

If we use the symbols in the proof of Lemma \ref{thm2}, 
we have the equality 
$$ \hat{e}(\xi,  k,  {\mathcal P}) = 
\hat{e}(\xi_1,  k,  {\mathcal P}^1) - 
\hat{e}(\xi_2,  k,  {\mathcal P}^2), $$
where 
\begin{align*}
& \xi_1 = 
\begin{cases}
\frac{\xi p^{u(I)}}{l_i} (-1)^{\deg \omega_2} & (a=0, I_{00} = \emptyset), \\ 
\frac{\xi}{l_i}(-1)^{\deg \omega_2} & (\text{otherwise}),
\end{cases}
\quad 
\xi_2 = \frac{\xi p^{n_i-t(I_{a1})}}{l_i} (-1)^{\deg \omega_2}, \\ 
& {\mathcal P}^j = (I_{-\infty}^j, I_0^j, \dots, I_{n+1}^j), \\ 
& 
I_k^1 = 
\begin{cases}
I_k & (k < a), \\ 
I_{a0} & (k=a), \\ 
\{i\} \cup I_{a1} & (k=a+1), \\ 
I_{k-1} & (k > a+1), 
\end{cases}
\qquad 
I_k^2 = 
\begin{cases}
I_k & (k < a), \\ 
I_{a0} \cup \{i\} & (k=a), \\ 
I_{a1} & (k=a+1), \\ 
I_{k-1} & (k > a+1). 
\end{cases}
\end{align*}
Here, if $I_{a0} = \emptyset$ (resp. if $I_{a1} = \emptyset$), 
we put $\hat{e}(\xi_1,  k,  {\mathcal P}^1) = 0$, 
(resp. $\hat{e}(\xi_2,  k,  {\mathcal P}^2) \allowbreak = 0$). 

In the following, we put $\hat{e}_1 := \hat{e}(\xi_1,  k,  {\mathcal P}^1)$, 
$\hat{e}_2 := \hat{e}(\xi_2,  k,  {\mathcal P}^2)$. We prove that   
$$g(\hat{e}_1) - g(\hat{e}_2) = g(\hat{e}(\xi,  k,  {\mathcal P})) \bigwedge 
d\log X_i. $$
\begin{enumerate}
\item[(i)] The case $a \not= 0$.  
 
When $I_{a0}, I_{a1} \not= \emptyset$, 
\begin{align*}
& g(\hat{e}_1) = 
\sum_{i_j \in I^1_j \atop j=1,  \dots,  n+1} 
{}^{V^{u(I_0)}}\left( 
\eta_1 \prod_{j=1}^{n+1} k_{i_j} p^{t(I_j^1)} X^{u(x)x} \right) 
\bigwedge_{j=1}^{n+1} d\log X_{i_j} 
\bigwedge_{j \in I_{-\infty}}d\log X_j, \\ 
& g(\hat{e}_2) = 
\sum_{i_j \in I^2_j \atop j=1,  \dots,  n+1} 
{}^{V^{u(I_0)}}\left( 
\eta_2 \prod_{j=1}^{n+1} k_{i_j} p^{t(I_j^2)} X^{u(x)x} \right) 
\bigwedge_{j=1}^{n+1} d\log X_{i_j} 
\bigwedge_{j \in I_{-\infty}}d\log X_j. 
\end{align*}
The terms with $i_{a+1} \not= i$ on the right hand side of the first 
equality correspond bijectively to the terms with $i_{a} \not= i$ 
on the right hand side of the second equality, and we have 
the equality of the coefficients of the corresponding terms  
$$ \eta_1 \prod_{j=1}^{n+1} k_{i_j} p^{t(I_j^1)} = 
\eta_2 \prod_{j=1}^{n+1} k_{i_j} p^{t(I_j^2)}:  $$
This equality is reduced to the equality 
$$ \prod_{j=1}^{n+1} k_{i_j} p^{t(I_j^1)} = 
p^{n_i - t(I_{a1})} \prod_{j=1}^{n+1} k_{i_j} p^{t(I_j^2)}, $$
which holds because 
\begin{align*}
\text{(LHS)} & = k_{i_{a+1}} p^{n_i}
\prod_{1 \leq j \leq n+1 \atop j \not= a+1} 
k_{i_j} p^{t(I_j^1)} 
= k_{i_{a+1}} p^{n_i-t(I_{a1})} p^{t(I_{a1})} 
\prod_{1 \leq j \leq n+1 \atop j \not= a+1} 
k_{i_j} p^{t(I_j^2)} \\ 
& = p^{n_i-t(I_{a1})} k_{i_{a+1}} p^{t(I_{a+1}^2)} 
\prod_{1 \leq j \leq n+1 \atop j \not= a+1} 
k_{i_j} p^{t(I_j^2)}
= \text{(RHS)}. 
\end{align*}
Therefore, in the calculation of  
$ g(\hat{e}_1) - g(\hat{e}_2)$, the terms we considered above 
cancel out. Hence,  
{\allowdisplaybreaks{
\begin{align*}
& \,  \,  \,  \, g(\hat{e}_1) - g(\hat{e}_2) \\ 
= \, & 
\sum_{{i_j \in I^1_j \atop j=1,  \dots,  n+1} \atop i_{a+1}=i} 
{}^{V^{u(I_0)}}\left( 
\eta_1 \prod_{j=1}^{n+1} k_{i_j} p^{t(I_j^1)} X^{u(x)x} \right) 
\bigwedge_{j=1}^{n+1} d\log X_{i_j} 
\bigwedge_{j \in I_{-\infty}}d\log X_j \\ 
& \hspace{1cm} - 
\sum_{{i_j \in I^2_j \atop j=1,  \dots,  n+1} \atop i_a = i} 
{}^{V^{u(I_0)}}\left( 
\eta_2 \prod_{j=1}^{n+1} k_{i_j} p^{t(I_j^2)} X^{u(x)x} \right) 
\bigwedge_{j=1}^{n+1} d\log X_{i_j}
\bigwedge_{j \in I_{-\infty}}d\log X_j \\ 
= \, & 
\sum_{{i_j \in I_j \atop j=1,  \dots,  n
} \atop i_a \in I_{a0}} 
{}^{V^{u(I_0)}}\left( (-1)^{\deg \omega_2}
\eta_1 l_i \prod_{j=1}^{n} k_{i_j} p^{t(I_j)} X^{u(x)x} \right) 
\bigwedge_{j=1}^{n} d\log X_{i_j}
\bigwedge_{j \in I_{-\infty}}d\log X_j \bigwedge d\log X_i \\ 
& \hspace{1cm} + 
\sum_{{i_j \in I_j \atop j=1,  \dots,  n} \atop i_a \in I_{a1}} 
{}^{V^{u(I_0)}}\left( (-1)^{\deg \omega_2}
\eta_2 l_i p^{t(I_{a1})-n_i} 
\prod_{j=1}^{n} k_{i_j} p^{t(I_j)} X^{u(x)x} \right) 
\bigwedge_{j=1}^{n} d\log X_{i_j} \\ & \hspace{10cm} 
\bigwedge_{j \in I_{-\infty}}d\log X_j 
\bigwedge d\log X_i \\ 
= \, & 
\sum_{i_j \in I_j \atop j=1,  \dots,  n} 
{}^{V^{u(I_0)}}\left(
\eta \prod_{j=1}^{n} k_{i_j} p^{t(I_j)} X^{u(x)x} \right) 
\bigwedge_{j=1}^{n} d\log X_{i_j} 
\bigwedge_{j \in I_{-\infty}}d\log X_j \bigwedge d\log X_i \\ 
= \, & 
g(\hat{e}(\xi,  k,  {\mathcal P})) \bigwedge d\log X_i, 
\end{align*}}}%
as required. When 
$I_{a0} = \emptyset$ or $I_{a1} = \emptyset$, 
a similar (and simpler) calculation shows the equality 
$g(\hat{e}_1) - g(\hat{e}_2) = g(\hat{e}(\xi,  k,  {\mathcal P})) \bigwedge 
d\log X_i$. 
\item[(ii)] The case $a=0, I_{00} \not= \emptyset$. 

When $I_{01} \not= \emptyset$, 
\begin{align*}
& g(\hat{e}_1) = 
\sum_{i_j \in I^1_j \atop j=1,  \dots,  n+1} 
{}^{V^{u(I_0)}}\left( 
\eta_1 \prod_{j=1}^{n+1} k_{i_j} p^{t(I_j^1)} X^{u(x)x} \right) 
\bigwedge_{j=1}^{n+1} d\log X_{i_j} 
\bigwedge_{j \in I_{-\infty}}d\log X_j, \\ 
& g(\hat{e}_2) = 
\sum_{i_j \in I^2_j \atop j=1,  \dots,  n+1} 
{}^{V^{u(I_0)}}\left( 
\eta_2 \prod_{j=1}^{n+1} k_{i_j} p^{t(I_j^2)} X^{u(x)x} \right) 
\bigwedge_{j=1}^{n+1} d\log X_{i_j}
\bigwedge_{j \in I_{-\infty}}d\log X_j. 
\end{align*}
By a similar argument to that in (i), 
we see that the sum of the terms with $i_{1} \not= i$ 
in the right hand side of the first equality is equal to 
the right hand side of the equality. Hence,  
{\allowdisplaybreaks 
\begin{align*}
& g(\hat{e}_1) - g(\hat{e}_2) \\ 
= \, & 
\sum_{{i_j \in I^1_j \atop j=1,  \dots,  n+1} \atop i_{1}=i} 
{}^{V^{u(I_0)}}\left( 
\eta_1 \prod_{j=1}^{n+1} k_{i_j} p^{t(I_j^1)} X^{u(x)x} \right) 
\bigwedge_{j=1}^{n+1} d\log X_{i_j}
\bigwedge_{j \in I_{-\infty}}d\log X_j \\ 
= \, & 
\sum_{i_j \in I_j \atop j=1,  \dots,  n} 
{}^{V^{u(I_0)}}\left( (-1)^{\deg \omega_2}
\eta_1 l_i \prod_{j=1}^{n} k_{i_j} p^{t(I_j)} X^{u(x)x} \right) 
\bigwedge_{j=1}^{n} d\log X_{i_j}
\bigwedge_{j \in I_{-\infty}}d\log X_j \bigwedge d\log X_i \\ 
= \, & 
\sum_{i_j \in I_j \atop j=1,  \dots,  n} 
{}^{V^{u(I_0)}}\left(
\eta \prod_{j=1}^{n} k_{i_j} p^{t(I_j)} X^{u(x)x} \right) 
\bigwedge_{j=1}^{n} d\log X_{i_j}
\bigwedge_{j \in I_{-\infty}}d\log X_j \bigwedge d\log X_i \\ 
= \, & 
g(\hat{e}(\xi,  k,  {\mathcal P})) \bigwedge d\log X_i, 
\end{align*}}%  
as required. 
When $I_{01} = \emptyset$, 
a similar (and simpler) calculation shows the equality 
$g(\hat{e}(\xi,  k,  {\mathcal P}) \allowbreak \bigwedge \allowbreak d\log X_i) = 
g(\hat{e}(\xi,  k,  {\mathcal P})) \bigwedge d\log X_i.$
\item[(iii)] The case $a = 0$, $I_{00} = \emptyset$. 

$g(\hat{e}_1)$ is calculated as follows: 
{\allowdisplaybreaks{
\begin{align*}
& g(\hat{e}_1) = 
\sum_{i_j \in I^1_j \atop j=2,  \dots,  n+1} 
d^{V^{u(I_1^1)}}\left( 
\eta_1 \prod_{j=2}^{n+1} k_{i_j} p^{t(I_j^1)} X^{u(x)x} \right) 
\bigwedge_{j=2}^{n+1} d\log X_{i_j} 
\bigwedge_{j \in I_{-\infty}}d\log X_j \\ 
= \, & 
\sum_{i_j \in I^1_j \atop j=2,  \dots,  n+1} 
d^{V^{u(I)}}\left( 
\frac{\eta p^{u(I)}}{l_i} (-1)^{\deg \omega_2} 
\prod_{j=2}^{n+1} k_{i_j} p^{t(I_j^1)} X^{u(x)x} \right) 
\bigwedge_{j=2}^{n+1} d\log X_{i_j}
\bigwedge_{j \in I_{-\infty}}d\log X_j \\ 
= \, & 
\sum_{i_j \in I^1_j \atop j=2,  \dots,  n+1} 
{}^{V^{u(I)}}\left( 
\frac{\eta}{l_i} (-1)^{\deg \omega_2} 
\prod_{j=2}^{n+1} k_{i_j} p^{t(I_j^1)} X^{u(x)x} 
\sum_{t=1}^r p^{u(x)}x_t d\log X_t \right) \\ 
& \hspace{8cm} 
\bigwedge_{j=2}^{n+1} d\log X_{i_j}
\bigwedge_{j \in I_{-\infty}}d\log X_j \\ 
= \, & 
\sum_{i_j \in I^1_j \atop j=2,  \dots,  n+1} \sum_{t=1}^r 
{}^{V^{u(I)}}\left( 
\frac{\eta}{l_i} (-1)^{\deg \omega_2} 
\prod_{j=2}^{n+1} k_{i_j} p^{t(I_j^1)} X^{u(x)x} 
\cdot p^{u(x)}x_t \right) \bigwedge d\log X_t \\ 
& \hspace{8cm} 
\bigwedge_{j=2}^{n+1} d\log X_{i_j}
\bigwedge_{j \in I_{-\infty}}d\log X_j. 
\end{align*}}}
In the sum of the rightmost side, 
the terms with 
$t \notin {\rm supp}\,  k$ are $0$ because $x_t = 0$. 
The terms with 
$t \in I_{-\infty}^1 = I_{-\infty}$ are also $0$ because 
they contain $d\log X_t$ twice. 
For the terms with 
$t \in I_j^1 = I_{j-1} \, (2 \leq j \leq n+1)$, 
they are equal to $0$ if $t=i_j$, because they contain 
$d\log X_t = d\log X_{i_j}$ twice. If 
$t \not= i_j$, the terms cancel out with the terms with 
$t$ and $i_j$ interchanged. Thus, we have only to 
consider the remaining terms, namely, the terms with 
$t \in I_0^1 = \{i\} \cup I_{01}$. The sum of the terms with 
$t = i$ is calculated as follows: 
{\allowdisplaybreaks \begin{align*}
& 
\sum_{i_j \in I^1_j \atop j=2,  \dots,  n+1}
{}^{V^{u(I)}}\left( 
\frac{\eta}{l_i} (-1)^{\deg \omega_2} 
\prod_{j=2}^{n+1} k_{i_j} p^{t(I_j^1)} X^{u(x)x} 
\cdot p^{u(x)}x_i \right) \bigwedge d\log X_i \\ 
& \hspace{8cm} 
\bigwedge_{j=2}^{n+1} d\log X_{i_j}
\bigwedge_{j \in I_{-\infty}}d\log X_j \\ 
= \, & 
\sum_{i_j \in I_j \atop j=1,  \dots,  n}
{}^{V^{u(I)}}\left( 
\eta
\prod_{j=1}^{n} k_{i_j} p^{t(I_j^1)} X^{u(x)x} 
\right)
\bigwedge_{j=1}^{n} d\log X_{i_j} 
\bigwedge_{j \in I_{-\infty}}d\log X_j \bigwedge d\log X_i \\ 
= \, & g(\hat{e}(\xi,  k,  {\mathcal P})) \bigwedge d\log X_i. 
\end{align*}}
Also, the sum of the terms with $t \in I_{01}$ is calculated as follows: 
{\allowdisplaybreaks{
\begin{align*}
& 
\sum_{i_j \in I^1_j \atop j=2,  \dots,  n+1} \sum_{t \in I_{01}} 
{}^{V^{u(I)}}\left( 
\frac{\eta}{l_i} (-1)^{\deg \omega_2} 
\prod_{j=2}^{n+1} k_{i_j} p^{t(I_j^1)} X^{u(x)x} 
\cdot p^{u(x)}x_t \right) \bigwedge d\log X_t \\ 
& \hspace{8cm} 
\bigwedge_{j=2}^{n+1} d\log X_{i_j}
\bigwedge_{j \in I_{-\infty}}d\log X_j \\ 
= \, & 
\sum_{i_j \in I^2_j \atop j=1,  \dots,  n+1} 
{}^{V^{u(I)}}\left( 
\frac{\eta p^{n_i - t(I_{01})}}{l_i} (-1)^{\deg \omega_2} 
\prod_{j=1}^{n+1} k_{i_j} p^{t(I_j^2)} X^{u(x)x} \right) \\ 
& \hspace{8cm} 
\bigwedge_{j=1}^{n+1} d\log X_{i_j}
\bigwedge_{j \in I_{-\infty}}d\log X_j \\ 
=\, & g(\hat{e}_2). 
\end{align*}}}%
By these calculations, we see the equality 
$g(\hat{e}_1) - g(\hat{e}_2) = 
g(\hat{e}(\xi,  k,  {\mathcal P})) \bigwedge d\log X_i$. 
\end{enumerate}
\end{enumerate}

\noindent 
{\bf Step 5.} $g \circ f = {\rm id}$. 

For $\omega \in W\Lambda_{(R[P],  P)/(R,  {*}), x}^{\bullet}$ and $i \in [1,  r]$, we have 
\[ g \circ f (\omega \bigwedge d \log X_i)= g \left( f (\omega) \bigwedge d \log X_i \right) = g \circ f (\omega) \bigwedge d \log X_i. \] 
Also, for a $P$-basic Witt differential $b(\xi,  x)$ of degree $0$, we have 
\[ g \circ f(db(\xi, x)) = dg \circ f(b(\xi, x)). \]
Thus, by Proposition \ref{thm23} and the definition of $P$-basic Witt differentials, 
it suffices to prove the equality  $g \circ f (\omega)  = \omega$ when $\omega$ is a  
$P$-basic Witt differential of degree $0$, and it is trivial. 
\medskip 

\noindent
{\bf Step 6.} $f \circ g = {\rm id}$. 

By Proposition \ref{thm20}, it suffices to prove the equality $f \circ g(\hat{e}) =\hat{e}$ for a 
basic Witt differential $\hat{e} :=\hat{e}(\xi,  k ,  \mathcal{P})$. 
Since $f \circ g(\hat{e})$ is obtained by replacing $p^{u(x)}x$ in the definition of $g(\hat{e})$ by 
$p^{u(I)}k^{+}$ (where $I = {\rm supp}\,k$), the claim follows from the definition of $g$ and 
the equations \eqref{1}, \eqref{2}, \eqref{3}. 
\end{proof}

\begin{prop}\label{thm31}
For $x \in P[\frac{1}{p}]$, the isomorphisms $f, g$ induce the isomorphisms of $x$-parts of truncated 
relative log de Rham-Witt complexes 
\[ f_m : W_m\Lambda_{(R[P],  P)/(R,  *),  x}^{\bullet} \to W_m\Lambda_{R[P^{\rm gp}], P^{\rm gp})/(R, *), x}^{\bullet}, \]
\[ g_m : W_m\Lambda_{R[P^{\rm gp}], P^{\rm gp})/(R, *), x}^{\bullet} \to W_m\Lambda_{(R[P],  P)/(R,  *),  x}^{\bullet}. \]
\end{prop}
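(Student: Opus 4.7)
The plan is to verify that both $f$ and $g$ are compatible with the truncation maps $W\Lambda^{\bullet} \to W_m\Lambda^{\bullet}$, after which the induced $f_m$ and $g_m$ are automatically mutually inverse by Proposition \ref{thm7} together with the functoriality of truncation.

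Compatibility of $f$ with truncation is immediate. The map $f$ is the restriction to the $x$-parts of the morphism $h$ in \eqref{eq:h}, and $h$ is by construction a morphism of log $F$-$V$-procomplexes given by the universal property; in particular it is a morphism of projective systems commuting with the truncation maps, so it induces $f_m$.

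The substantive point is compatibility of $g$ with truncation: writing $K_m^{\rm gp}$ and $K_m$ for the kernels of the respective truncation maps on the $P^{\rm gp}$- and $P$-sides of the $x$-part, we must show $g(K_m^{\rm gp}) \subseteq K_m$. By Lemma \ref{thm2} combined with the uniqueness portion of Proposition \ref{thm20} (the $x$-part analogue of Corollary \ref{thm50}), every element of $K_m^{\rm gp}$ is a convergent sum of basic Witt differentials $\hat{e}(\xi, k, \mathcal{P})$ with $k^+ = x$ and $\xi \in {}^{V^m}W(R)$. So it suffices to verify $g(\hat{e}(\xi, k, \mathcal{P})) \in K_m$ for each such term. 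Set $a := u({\rm supp}\, k^+)$ so that $\xi = {}^{V^a}\eta$ by Lemma \ref{thm2}. Since the total order on ${\rm supp}\, k$ places the elements with smallest ${\rm ord}_p(k_i)$ first, one checks $a = u(I_0)$ when $I_0 \neq \emptyset$ and $a = u(I_1)$ when $I_0 = \emptyset$, matching the Verschiebung exponent appearing in \eqref{4} and \eqref{5}; in case \eqref{6} one has $a = 0$. Thus $g(\hat{e}(\xi, k, \mathcal{P}))$ takes one of the forms ${}^{V^a}(\eta \cdot c) \wedge D$, $d\bigl({}^{V^a}(\eta \cdot c)\bigr) \wedge D$, or $\eta \cdot c \wedge D$, where $c$ is an integer multiple of $X^{p^{u(x)}x} \in W(R[P])$ and $D$ is a wedge of $d\log X_i$'s.

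Since $V$ on $W(R)$ is the coordinate-shift, it is injective, and therefore $\xi = {}^{V^a}\eta \in {}^{V^m}W(R)$ forces $\eta \in {}^{V^{m-a}}W(R)$ when $a \leq m$ (the case $a > m$ being automatic). Hence the image of $\eta \cdot c$ in $W_{m-a}\Lambda^{\bullet}_{(R[P], P)/(R,*)}$ vanishes, and by the compatibility of the truncation map with ${}^{V^a} : W_{m-a}\Lambda^{\bullet} \to W_m\Lambda^{\bullet}$, the element ${}^{V^a}(\eta \cdot c) \wedge D$ projects to zero in $W_m\Lambda^{\bullet}_{(R[P], P)/(R,*)}$. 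This handles cases \eqref{4} and \eqref{6}; case \eqref{5} follows at once because $K_m$, being the kernel of a morphism of complexes, is stable under $d$. The main obstacle is the combinatorial identification $a = u(I_0)$ or $a = u(I_1)$ and the bookkeeping ensuring that the Verschiebung exponent in each of \eqref{4}, \eqref{5}, \eqref{6} matches exactly the $V$-power that captures $\xi$; once this is in place, the verification is a routine case analysis. After $g_m$ is constructed, the identities $f_m \circ g_m = {\rm id}$ and $g_m \circ f_m = {\rm id}$ follow from Proposition \ref{thm7} by naturality of truncation.
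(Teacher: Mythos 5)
Your proposal is correct and follows essentially the same route as the paper: define $f_m$ via the morphism $h_m$ induced by universality (compatibility with truncation being automatic), and define $g_m$ by lifting to the full complex, applying $g$, and truncating, where well-definedness reduces to showing $g$ maps the kernel of truncation on the $P^{\rm gp}$-side (characterized via Lemma \ref{thm2} and Corollary \ref{thm50} as convergent sums of $\hat{e}(\xi,k,\mathcal{P})$ with $k^+=x$, $\xi\in {}^{V^m}W(R)$) into the kernel on the $P$-side. The one genuine contribution of your write-up is the explicit verification, via the identification $a=u(I_0)$ (resp.\ $u(I_1)$) with $u(x)$ and the factorization $\xi={}^{V^a}\eta$ with $\eta\in{}^{V^{m-a}}W(R)$, that each $g(\hat{e}(\xi,k,\mathcal{P}))$ indeed lands in ${}^{V^m}W(R[P])$-multiples of wedges of $d\log$'s; the paper dispatches this step with a bare ``Hence we see.''
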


\begin{proof}
First we define the morphism 
$f_m:W_m\Lambda_{(R[P],  P)/(R,  *),  x}^{\bullet} \to W_m\Lambda_{(R[P^{\rm gp}], P^{\rm gp})/(R, *), x}^{\bullet} $. By the universality (Proposition-Definition \ref{thm30})
of the relative log de Rham-Witt complex, we have the morphism 
\[ h_m : W_m\Lambda_{(R[P],  P)/(R,  *)}^{\bullet} \to W_m\Lambda_{(R[P^{\rm gp}], P^{\rm gp})/(R, *)}^{\bullet}. \]
(This is the truncated version of the morphism $h$ in \eqref{eq:h}.) 
For $\omega \in W_m\Lambda_{(R[P],  P)/(R,  *),  x}^{\bullet} \subset W_m\Lambda_{(R[P],  P)/(R,  *)}^{\bullet} $, we would like to define $f_m$ by 
$f_m(\omega):= h_m(\omega)$. In order to show that it is well-defined, 
we will check that $h_m(\omega)$ belongs to 
$W_m\Lambda_{(R[P^{\rm gp}], P^{\rm gp})/(R, *),  x}^{\bullet}$. 
Take a lift $\tilde{\omega}$ of 
$\omega$ to $W\Lambda_{(R[P],  P)/(R,  *),  x}^{\bullet}$. 
Then the image of it by the composite 
\[ W\Lambda_{(R[P],  P)/(R,  *),  x}^{\bullet} \xrightarrow{f} 
W\Lambda_{(R[P^{\rm gp}], P^{\rm gp})/(R, *),  x}^{\bullet} \to 
W_m\Lambda_{(R[P^{\rm gp}], P^{\rm gp})/(R, *),  x}^{\bullet} \] 
is equal to $h_m(\omega)$, because $f$ is the map induced by 
the map $h: W\Lambda_{(R[P],  P)/(R,  *)}^{\bullet} \to 
W\Lambda_{(R[P^{\rm gp}], P^{\rm gp})/(R, *)}^{\bullet}
$ in \eqref{eq:h} and the maps $h, h_m$ are compatible.  
%compatible via truncations of the form $W\Lambda_{-/-}^{\bullet} \to 
%W_m\Lambda_{-/-}^{\bullet}$. 
Hence $h_m(\omega)$ belongs to $W_m\Lambda_{(R[P^{\rm gp}], P^{\rm gp})/(R, *),  (x_i)_i}^{\bullet}$ and so the map $f_m$ is well-defined. 

Next we define the morphism 
$g_m:W_m\Lambda_{(R[P^{\rm gp}], P^{\rm gp})/(R, *),  x}^{\bullet} \to W_m\Lambda_{(R[P],  P)/(R,  *),  x}^{\bullet} $. For an element 
$\omega$ in $W_m\Lambda_{(R[P^{\rm gp}], P^{\rm gp})/(R, *),  x}^{\bullet}$, 
take its lift $\tilde{\omega}$ in 
$W\Lambda_{(R[P^{\rm gp}], P^{\rm gp})/(R, *),  x}^{\bullet}$
and we would like to define $g_m(\omega)$ by 
the image of $g(\tilde{\omega})$ in 
$W_m\Lambda_{(R[P], P)/(R, *),  x}^{\bullet}$. 
If we take another lift $\tilde{\omega}'$ of $\omega$ in 
$W\Lambda_{(R[P^{\rm gp}], P^{\rm gp})/(R, *),  x}^{\bullet}$, 
we can express the difference $\tilde{\omega} -\tilde{\omega}'$ uniquely as 
\[ \tilde{\omega}_1 -\tilde{\omega}_2 =\sum_{k^+=x, \mathcal{P}} \hat{e} (\xi_{k,  \mathcal{P}},  k,  \mathcal{P})\] by Lemma \ref{thm2}, and each 
$\xi_{k,  \mathcal{P}}$ belongs to ${}^{V^{m}}W(R)$ by Corollary \ref{thm50}. 
Hence we see that the image of the right hand side by $g$ is zero in $W_m\Lambda^{\bullet}_{(R[P],P)/(R,*),x}$
and so the morphism $g_m$ is well-defined. 

We see that $g_m$ is the inverse of $f_m$ by construction. So $f_m, g_m$ are isomorphisms. 
\end{proof}

Now we prove the decomposition of $W_m\Lambda_{(R[P],  P)/(R,  {*})}^{k}$ into 
$x$-parts ($x \in P[\frac{1}{p}]$), which is the main result of this subsection: 

\begin{prop}\label{thm22}
Any element in 
$W\Lambda_{(R[P],  P)/(R,  {*})}^{k}$ is uniquely written as a convergent sum of 
elements in $W\Lambda_{(R[P],  P)/(R,  {*}),  x}^{k} \, (x \in P[\frac{1}{p}])$. 
Also, we have the canonical direct sum decomposition 
\[
W_m\Lambda_{(R[P],  P)/(R,  {*})}^{k} = \bigoplus_{x \in P[\frac{1}{p}]} W_m\Lambda_{(R[P],  P)/(R,  {*}), x}^{k}.\]
\end{prop}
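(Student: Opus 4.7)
The plan is to prove both statements by reducing to the uniqueness of basic Witt differentials in the Laurent polynomial complex $W\Omega^\bullet_{R[\mathbb{Z}^r]/R}$ (Proposition \ref{thm20}), transferred across the isomorphisms $f$ and $f_m$ from Propositions \ref{thm7} and \ref{thm31}.

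For existence, I would start from Corollary \ref{thm23}, which expresses any $\omega \in W\Lambda^k_{(R[P],P)/(R,*)}$ as a convergent sum of $P$-basic Witt differentials $b(\xi_{x,I},x,I)$ indexed by pairs $(x,I)$ with $|I|=k$. Collecting over $I$ for each fixed $x$, set $\omega_x := \sum_{|I|=k} b(\xi_{x,I},x,I)$; by Definition \ref{thm21} this lies in $W\Lambda^k_{(R[P],P)/(R,*),x}$, and $\omega = \sum_x \omega_x$ converges in the required sense because, by Corollary \ref{thm23}, for every $m$ only finitely many of the $\xi_{x,I}$ lie outside ${}^{V^m}W(R)$. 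Projecting to $W_m\Lambda^k$ gives the existence half in the truncated case, i.e.\ the surjectivity of $\bigoplus_x W_m\Lambda^k_{(R[P],P)/(R,*),x} \to W_m\Lambda^k_{(R[P],P)/(R,*)}$.

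For uniqueness, suppose $\sum_x \omega_x = 0$ with $\omega_x \in W\Lambda^k_{(R[P],P)/(R,*),x}$ (or, in the truncated case, $\omega_x \in W_m\Lambda^k_{(R[P],P)/(R,*),x}$). Apply $f$ (resp.\ $f_m$) termwise, landing in $W\Lambda^k_{(R[P^{\rm gp}],P^{\rm gp})/(R,*),x}$, which after the identification with $W\Omega^k_{R[\mathbb{Z}^r]/R}$ gives an equation inside a common ambient object. By Lemma \ref{thm2}, each $f(\omega_x)$ has a unique expansion $\sum_{k^+=x,\mathcal{P}} \hat{e}(\xi_{k,\mathcal{P}},k,\mathcal{P})$ into basic Witt differentials whose weight satisfies $k^+ = x$. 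Since the condition $k^+ = x$ picks out disjoint families of weights as $x$ varies, gathering these expansions yields a single (convergent, or finite in the truncated case) expansion of $0$ into basic Witt differentials; Proposition \ref{thm20} then forces every $\xi_{k,\mathcal{P}}$ to vanish. Hence $f(\omega_x) = 0$, and since $f$ (resp.\ $f_m$) is an isomorphism, $\omega_x = 0$ for each $x$.

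The core difficulty, and the reason the detour through $P^{\rm gp}$ cannot be avoided, is that the $P$-basic Witt differentials themselves carry no uniqueness statement (as explicitly noted after Proposition \ref{thm16}); any direct attempt to extract $\omega_x$ from $\omega$ by working only inside $W\Lambda^\bullet_{(R[P],P)/(R,*)}$ would stall here. The role of Lemma \ref{thm2} together with the isomorphisms $f$ and $f_m$ is precisely to import uniqueness from the Laurent polynomial side, where the partition of basic Witt differentials by the invariant $k^+$ makes the $x$-decomposition tautological. Once that transfer is in place, everything else is bookkeeping with the convergence conventions already laid down in the section.
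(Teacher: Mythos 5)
Your proof is correct and follows essentially the same route as the paper: existence from Corollary~\ref{thm23} by grouping $P$-basic Witt differentials according to their weight $x$, and uniqueness by pushing forward through $f$ (resp.\ $f_m$) to $W\Lambda^\bullet_{(R[P^{\rm gp}],P^{\rm gp})/(R,*)}$, invoking Lemma~\ref{thm2} and the disjointness of the families $\{k : k^+ = x\}$, and then Proposition~\ref{thm20}. The paper packages the uniqueness step as the injectivity of the right-hand ``sum'' map in a commutative square with $h$ and $\oplus f_m$; you unpack the same injectivity claim inline, which is a presentational difference only.
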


\begin{proof}  
By Corollary \ref{thm23}, it suffices to prove the uniqueness of the expression. 
So it suffices to prove that, if we have an equality
\[ 0 = \sum_{x \in P[\frac{1}{p}]} \omega_x \] 
 in $W_m\Lambda_{(R[P],  P)/(R,  {*})}^{k}$ 
where the right hand side is a finite sum 
with $\omega_x \in W_m\Lambda_{(R[P],  P)/(R,  {*}), x}^{k}$, 
all $\omega_x$'s are equal to $0$. 

Consider the following commutative diagram
\[ \xymatrix{
\bigoplus_{x \in P[\frac{1}{p}]} W_m\Lambda_{(R[P],  P)/(R,  {*}), x}^{\bullet} \ar[r]^-{\oplus f_m}_-{\cong} \ar[d]^{{\rm sum}} & \bigoplus_{x \in P[\frac{1}{p}]} W_m \Lambda_{(R[P^{\rm gp}],  P^{\rm gp})/(R,  {*}), x}^{\bullet} \ar[d]^{{\rm sum}} \\
  W_m\Lambda_{(R[P],  P)/(R,  {*})}^{\bullet}  \ar[r]^-h & W_m\Lambda_{(R[P^{\rm gp}],  P^{\rm gp})/(R,  {*})}^{\bullet}, 
  }
\]
where $\oplus f_m$ is the direct sum of the isomorphisms $f_m$ in 
Proposition \ref{thm31}, $h$ is as in \eqref{eq:h} and ${\rm sum}$ are the morphisms of taking sum.  
By assumption, the image of the element $(\omega_x)_x \in 
\bigoplus_{x \in P[\frac{1}{p}]} W_m\Lambda_{(R[P],  P)/(R,  {*}), x}^{\bullet}$ 
by $h \circ {\rm sum} = {\rm sum} \circ (\oplus f_m)$ is equal to $0$. 
Also, by Lemma \ref{thm2} and Proposition \ref{thm20}, the morphism 
${\rm sum}$ on the right is injective. 
Thus we see that $(f_m(\omega_x))_x = (0)_x$ and so 
$\omega_x = 0$ for all $x \in P[\frac{1}{p}]$. 
\end{proof}

%%%%%%%%%%%%%%%%%%%%  Comparison isom in absolute case %%%%%%%%%%%%%
\subsection{Comparison isomorphism (I)}

Let $R$ be a commutative ring in which $p$ is nilpotent and let 
let $P$ be an fs monoid such that $P^{\rm gp}$ is torsion free. 
(Then $(R[P],P)/(R,*)$ is log smooth.) 

Let $((A_m, P), \phi_m, \delta_m)_m$ be the log Frobenius lift 
of $(R[P],P)$ over $(R,*)$ in 
Example \ref{exam} with $Q = *$ 
(so $A_m = W_m(R) \otimes_{\mathbb{Z}} \mathbb{Z}[P] = W_m(R)[P]$), and let 
$(Z_m, \Phi_m, \Delta_m)_m$ be
the corresponding log Frobenius lift (as log schemes) of 
$Z := \mbox{Spec}(R[P], P)$ over 
$\mbox{Spec}(R, *)$. 
Then, by the construction in Section 1.3 using $(Z_m, \Phi_m, \Delta_m)_m$, 
we obtain the comparison morphism 
\begin{align*}
 \Lambda_{Z_{m}/{{\rm Spec}}(W_{m}(R),*)}^{\bullet} \to W_{m} \Lambda_{Z/({\rm Spec}(R),*)}^{\bullet} % \tag{3.3.1}
\end{align*}
of complexes of quasi-coherent sheaves, hence the morphism 
\begin{equation}\label{eq:eqcp} 
c_P: \Lambda_{(W_m(R)[P], P)/(W_{m}(R),*)}^{\bullet} \to W_{m} \Lambda_{(R[P],P)/(R, *)}^{\bullet}
 % \tag{3.3.1}
\end{equation}
of complex of $W_m(R)$-modules. 

\begin{theorem}\label{thm36}
The map $c_P$ is a quasi-isomorphism.  
\end{theorem}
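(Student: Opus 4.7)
The strategy is to decompose both sides of $c_P$ compatibly by weight and verify the quasi-isomorphism piece by piece. On the source, using the identification $\Lambda^{1}_{(W_m(R)[P], P)/(W_m(R), *)} \cong W_m(R)[P] \otimes_{\mathbb{Z}} P^{\rm gp}$ induced by $d\log$, the monomial $P$-grading of $W_m(R)[P]$ gives a decomposition whose weight-$x$ piece is $W_m(R) T^x \otimes_{\mathbb{Z}} \bigwedge^{\bullet} P^{\rm gp}$. Since $dT^x = T^x\, d\log T^x$ and the derivation kills $W_m(R)$, the restricted differential is the Koszul-type operator $\omega \mapsto x \wedge \omega$ with $x$ regarded in $P^{\rm gp}$. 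From the explicit form of $c_P$ given by the log Frobenius lift of Example \ref{exam} (which sends $T^x$ to $X^x = [T^x]$), one checks that $c_P$ sends the weight-$x$ piece of the source into the $x$-part $W_m\Lambda^{\bullet}_{(R[P], P)/(R, *), x}$ of the target (Definition \ref{thm21}) for $x \in P \subset P[\tfrac{1}{p}]$.

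Combined with the direct sum decomposition of Proposition \ref{thm22}, showing $c_P$ is a quasi-isomorphism reduces to two claims: \textbf{(a)} for $x \in P[\tfrac{1}{p}] \setminus P$, the target $x$-part is acyclic; \textbf{(b)} for $x \in P$, the restriction of $c_P$ between $x$-pieces is a quasi-isomorphism. In both cases, Proposition \ref{thm31} lets me pass to $P = P^{\rm gp}$, where by Lemma \ref{thm2} each element of the target $x$-part has a unique expansion as a sum of basic Witt differentials $\hat{e}(\xi, k, \mathcal{P})$ with $k^+ = x$. For (a), $u(x) \geq 1$ forces $k^+ = x$ to be non-integral, so Lemma \ref{thm4}(1) gives $d\hat{e}(\xi, k, (I_{-\infty}, I_0, \ldots, I_l)) = \hat{e}(\xi, k, (I_{-\infty}, \emptyset, I_0, \ldots, I_l))$ whenever $I_0 \neq \emptyset$, pairing such basic Witt differentials bijectively with those having $I_0 = \emptyset$ (and $u(I_1) \geq 1$) and yielding the acyclicity.

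For (b), I would further decompose the target $x$-part according to the choice $I_{-\infty} = S \subset \{i : x_i = 0\}$, a splitting preserved by $d$ because the partition type of $\hat{e}(\xi, k, \mathcal{P})$ keeps $I_{-\infty}$ untouched under Lemma \ref{thm4}(1). Each $S$-summand is a subcomplex indexed by partitions of ${\rm supp}\,x$, tensored with the fixed factor $\bigwedge_{i \in S} d\log X_i$; the combinatorics of such partitions reproduces the Poincar\'e series $2^{|{\rm supp}\,x|}$ of $\bigwedge^{\bullet}\mathbb{Z}^{{\rm supp}\,x}$. Using Lemma \ref{thm4}(1) in the integral case (which introduces a factor $p^{-t(I_0)}$), together with the fact that $p$-prime integers are units in $W_m(R)$ (since $R$ is a $\mathbb{Z}_{(p)}$-algebra, $p$ being nilpotent in $R$), one verifies that the target $x$-part is isomorphic as a complex to the source Koszul complex $(W_m(R) \otimes \bigwedge^{\bullet} P^{\rm gp}, \wedge x)$ and that $c_P$ realizes this isomorphism.

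The hard part is (b): although the dimension count of the partition-indexed pieces matches that of the exterior algebra, exhibiting the actual isomorphism of complexes, or at least the induced isomorphism on cohomology, requires careful bookkeeping of signs, of the total order on ${\rm supp}\,k$, and of the factors $p^{-t(I_0)}$ modulo units in $W_m(R)$, and an explicit matching of the elements $X^x d\log X^{y_1} \wedge \cdots \wedge d\log X^{y_n}$ in the image of $c_P$ with linear combinations of the basic Witt differentials of weight $x$.
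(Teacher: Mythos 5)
Your overall strategy is exactly the paper's: decompose the target by Proposition~\ref{thm22}, show $c_P$ lands in the summands with $x \in P$, and split the proof into (a) acyclicity of the $x$-parts for $x \in P[\frac{1}{p}]\setminus P$ and (b) handling the $x \in P$ parts. Your treatment of (a) via Proposition~\ref{thm31}, Lemma~\ref{thm3} and Lemma~\ref{thm4} is correct and matches the paper.

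The genuine gap is in (b), and you yourself flag it: you attempt an \emph{explicit} matching of the monomial basis elements $T^x \bigwedge_I d\log T_i$ of the source with linear combinations of basic Witt differentials in the target, via a further splitting by $I_{-\infty}$ and Euler-characteristic counting of partitions, and then defer the ``careful bookkeeping of signs, total orders, and $p^{-t(I_0)}$ factors'' that would be needed to conclude. That bookkeeping is precisely the content of Lemma~\ref{thm33}, and redoing it ad hoc is both harder than necessary and where your proof is incomplete. The paper sidesteps the combinatorics by \emph{changing basis on the source}: Lemma~\ref{thm33} already establishes that the $p$-basic elements $\bar{e}(k,\mathcal{P})$ form a $W_m(R)$-basis of $\Omega^{\bullet}_{R[T_1^{\pm 1},\dots,T_r^{\pm 1}]/R}$, refining the monomial basis $W_m(R)\mathcal{A}_x$, and by construction $c_{P^{\rm gp}}(\xi\,\bar{e}(k,\mathcal{P})) = \hat{e}(\xi,k,\mathcal{P})$ term by term. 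Combined with the uniqueness of the basic-Witt-differential expansion (Proposition~\ref{thm20}, Lemma~\ref{thm2}), this immediately gives $W_m(R)\mathcal{A}_x \xrightarrow{\cong} W_m\Lambda^{\bullet}_{(R[P^{\rm gp}],P^{\rm gp})/(R,*),x}$ with no sign or coefficient matching. Feeding this into the commutative square relating $c_P$, $c_{P^{\rm gp}}$, the inclusion $\bigoplus_{x\in P}\subset\bigoplus_{x\in P^{\rm gp}}$, and $\oplus f_m$ shows $c_P$ is injective; surjectivity onto $\bigoplus_{x\in P}W_m\Lambda^{\bullet}_{(R[P],P)/(R,*),x}$ is the easy direct computation you already carried out. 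So in the paper, (b) is not merely a quasi-isomorphism on each $x$-piece but an honest isomorphism of complexes onto $\bigoplus_{x\in P}W_m\Lambda^{\bullet}_{(R[P],P)/(R,*),x}$, established with no Koszul-complex identification or partition combinatorics needed. To repair your argument, replace the ``careful bookkeeping'' with an invocation of Lemma~\ref{thm33} and the bijection $\bar{e}(k,\mathcal{P}) \leftrightarrow \hat{e}(\xi,k,\mathcal{P})$.
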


\begin{proof}

By Proposition \ref{thm22}, we have the decomposition 
$W_m\Lambda_{(R[P],  P)/(R,  {*})}^{k} = \bigoplus_{x \in P[\frac{1}{p}]} W_m\Lambda_{(R[P],  P)/(R,  {*}), x}^{k}$. 
We first prove that the map $c_P$ gives an isomorphism onto 
the subcomplex $\bigoplus_{x \in P} W_m\Lambda_{(R[P],  P)/(R,  {*}), x}^{\bullet}$. 
Fix an isomorphism $P^{\rm gp} \cong \mathbb{Z}^r$. Then, since 
\begin{align*}
& c_P (\xi T^x \bigwedge_I d \log T_i ) = \xi X^x \bigwedge_I d \log X_i = b(\xi,x,I), \\ 
& c_P (d(\xi T^x) \bigwedge_I d \log T_i ) = d(\xi X^x) \bigwedge_I d \log X_i = b(\xi,x,I \cup \{0\}) 
\end{align*}
for $x \in P, I \subset [1,r]$ and $\xi \in W_m(R)$, 
$c_P$ gives a surjection onto $\bigoplus_{x \in P} W_m\Lambda_{(R[P],  P)/(R,  {*}), x}^{\bullet}$. 
Also, we have the decomposition 
\[\Lambda_{(W_{m}(R)[P], P)/(W_{m}(R),  {*})}^{\bullet} \cong \bigoplus_{x \in P, \atop I \subset [1, r]} 
W_m(R) T^x \bigwedge_I d\log T_i \] 
%Since $c_P\left(T^x \bigwedge_I d \log T_i \right) = X^x \bigwedge_I d \log X_i $, 
%$c_P$ gives a surjection onto $\bigoplus_{x \in P} W_m\Lambda_{(R[P],  P)/(R,  {*}), x}^{\bullet}$. 
%Also, 
and the following commutative diagram, where $i$ is the canonical inclusion and 
$W_m(R) \mathcal{A}_x$ is as in the proof of Lemma \ref{thm33}: 
\[ \xymatrix{
\bigoplus_{x \in P, \atop I \subset [1, r]} W_m(R) T^x \bigwedge d\log T_i \ar[r]^-{i} \ar[d]_{c_P} & \bigoplus_{x \in P^{\rm gp}, \atop I \subset [1, r]} W_m(R) T^x \bigwedge d\log T_i 
= \bigoplus_{x \in P^{\rm gp}} 
W_m(R) {\mathcal{A}}_x, \ar[d]^{c_{P^{\rm gp}}} 
\\
\bigoplus_{x \in P[\frac{1}{p}]} W_m\Lambda_{(R[P],  P)/(R,  {*}), x}^{\bullet} \ar[r]^-{\oplus f_m} & \bigoplus_{x \in P^{\rm gp}[\frac{1}{p}]} W_m\Lambda_{(R[P^{\rm gp}], P^{\rm gp})/(R,  {*}), x}^{\bullet}.
}
\]
% By Proposition \ref{thm31}, $\oplus f_m$ is an injection. Also, 
By the proof of Lemma 
\ref{thm33}, any element in $W_m(R) \mathcal{A}_x$
% (with the notation in the proof of Lemma \ref{thm33}) 
is written uniquely as a $W_m(R)$-linear combination of the $p$-basic elements $\bar{e}(k,  \mathcal{P})$ with 
$k^{+} = x$. For such a $p$-basic element $\bar{e}(k, \mathcal{P})$ and $\xi \in W_m(R)$, we have 
the equality 
$c_{P^{\rm gp}} \left( \xi \bar{e}(k, \mathcal{P}) \right) = \hat{e}(\xi, k, \mathcal{P})$. 
Therefore, by Proposition \ref{thm20} and Proposition \ref{thm2}, $c_{P^{\rm gp}}$ induces the isomorphism 
$W_m(R) \mathcal{A}_x \xrightarrow{\cong} W_m\Lambda_{(R[P^{\rm gp}], P^{\rm gp})/(R,  {*}), x}^{\bullet}$. 
By taking direct sums with respect to $x \in P^{\rm gp}$, we see that 
the map $c_{P^{\rm gp}}$ is an injection. 
Therefore, $c_P$ is also an injection and so $c_P$ gives an isomorphism onto 
$\bigoplus_{x \in P} W_m\Lambda_{(R[P],  P)/(R,  {*}), x}^{\bullet}$. 

So it suffices to prove that the complex 
$\bigoplus_{x \in P[\frac{1}{p}] \setminus P} W_m\Lambda_{(R[P],  P)/(R,  {*}), x}^{\bullet}$ is acyclic. 
To do so, we prove that, for each $x \in P[\frac{1}{p}] \setminus P$, the complex 
$W_m\Lambda_{(R[P],  P)/(R,  {*}), x}^{\bullet}$ is acyclic. 

Using the isomorphism $W_m\Lambda_{(R[P],  P)/(R,  {*}), x}^{\bullet} \cong W_m\Lambda_{(R[P^{\rm gp}],  P^{\rm gp})/(R,  {*}), x}^{\bullet}$ of Proposition \ref{thm31},  
%(where we denoted the image of $x$ by the map $P[\frac{1}{p}] \to P^{\rm gp}[\frac{1}{p}] \cong \mathbb{Z}[\frac{1}{p}]^r$
%by $(x_i)_i$), 
we are reduced to proving the acyclicity of $W_m\Lambda_{(R[P^{\rm gp}],  P)/(R,  {*}), x}^{\bullet}$. 
Since we have $u_{P^{\rm gp}}(x) = u_P(x) \geq 1$ by Lemma \ref{thm3} 
and the assumption $x \in P[\frac{1}{p}] \setminus P$, 
the required acyclicity follows from Lemma \ref{thm4}. 
\end{proof}

Now we prove our main comparison isomorphism when the log structure of the base scheme is trivial. 
(Although the theorem is a particular case of our main theorem (Theorem \ref{thm:main2}), we include the proof here 
because the statement of the theorem is simpler and thus it would be helpful to the reader.) 
Let $f: (X,{\mathcal M}) \to (Y, *)$ be a log smooth morphism of fs log schemes on which $p$ is nilpotent, 
where $*$ denotes the trivial log structure. 

\begin{theorem}\label{thm10}
In the situation above, the comparison morphism 
\[\mathbb{R}u_{m*}\mathcal{O}_m \to W_{m} \Lambda_{(X, \mathcal{M})/(Y,  {*})}^{\bullet}\]
we constructed in Section 1.3 is a quasi-isomorphism. 
\end{theorem}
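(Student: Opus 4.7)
The plan is as follows. By the remark at the end of Section 1.3, the comparison morphism is functorial and independent of the chosen log Frobenius lift, so to show it is a quasi-isomorphism we may work etale locally on $X$ and $Y$. In particular we may assume $Y = {\rm Spec}(R)$ is affine. Since $f$ is log smooth with trivial base log structure, Kato's structure theorem (\cite[Thm.~3.5]{Kato}) yields, after etale localization on $X$, a chart $P \to \Gamma(X, \mathcal{M})$ with $P$ fs and $P^{\rm gp}$ torsion-free such that the underlying morphism $X \to {\rm Spec}(R[P])$ is smooth. Because a smooth morphism factors etale-locally through an affine-space projection followed by an etale map, we may further shrink $X$ and assume that $(X, \mathcal{M})$ is strict etale over $({\rm Spec}(R[P][T_1, \dots, T_r]), P)$ for some $r \geq 0$.

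Both sides of the comparison map are compatible with strict etale base change: for $\mathbb{R}u_{m*}\mathcal{O}_m$ by the standard theory of log crystalline cohomology, and for $W_m\Lambda^{\bullet}_{(X,\mathcal{M})/(Y,*)}$ by \cite[Prop.~3.7]{Ma}. Hence we reduce to the model case $(X, \mathcal{M}) = ({\rm Spec}(R[P][T_1, \dots, T_r]), P) \to ({\rm Spec}(R), *)$. For this case the construction of Example \ref{exam} extends by setting $A_n := W_n(R)[P][T_1, \dots, T_r]$, with $\phi_n$ sending $T_i \mapsto T_i^p$ and $T^x \mapsto T^{px}$ for $x \in P$, and $\delta_n$ the natural Teichm\"uller. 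The construction of Section 1.3 then realizes the comparison map as an explicit morphism of complexes
\[ c : \Lambda^{\bullet}_{(W_m(R)[P][T_1, \dots, T_r], P)/(W_m(R), *)} \to W_m\Lambda^{\bullet}_{(R[P][T_1, \dots, T_r], P)/(R, *)}, \]
which specializes to $c_P$ of Theorem \ref{thm36} when $r = 0$.

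To show $c$ is a quasi-isomorphism, the plan is to extend the decomposition machinery of Sections 2.2--2.4 by allowing the polynomial variables to enter the coefficients of $P$-basic Witt differentials. The analogue of Proposition \ref{thm22} should yield a direct sum decomposition
\[ W_m\Lambda^{\bullet}_{(R[P][T_1, \dots, T_r], P)/(R, *)} = \bigoplus_{x \in P[\frac{1}{p}]} W_m\Lambda^{\bullet}_{(R[P][T_1, \dots, T_r], P)/(R, *), x}, \]
in which the $x \in P$ pieces are hit isomorphically by $c$ (with the polynomial variables contributing Langer-Zink basic Witt differentials on the de Rham-Witt side, matched with ordinary polynomial differentials on the pd-differential side by the classical Langer-Zink comparison theorem for the smooth morphism $R \to R[T_1, \dots, T_r]$), while the $x \in P[\frac{1}{p}] \setminus P$ pieces are acyclic via Lemma \ref{thm4} after passing to the $P^{\rm gp}$-version, exactly as in the proof of Theorem \ref{thm36}.

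The main obstacle will be executing this extension carefully. The polynomial variables $T_1, \dots, T_r$ interact nontrivially with Verschiebung and Frobenius (unlike the monoid generators of $P$, which are accessed through Teichm\"uller lifts), so the combinatorics of $P$-basic Witt differentials becomes richer. In particular the analogues of Lemma \ref{thm2}, Proposition \ref{thm7}, and Proposition \ref{thm31} must be reestablished in this enlarged setting, and the identification of the integer-weight pieces of $c$ requires combining the $x$-part isomorphism $f_m$ of Proposition \ref{thm31} with the Langer-Zink comparison for the polynomial factor. Once these are in place, the acyclicity of the non-integral parts and the explicit match on the integral parts together yield that $c$ is a quasi-isomorphism, completing the proof.
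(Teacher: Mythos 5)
Your reduction to an affine base and the use of strict etale invariance of both sides are fine, and so is the broad outline of reducing to a model case. The gap is in the choice of model case. You stop at $(\mathrm{Spec}(R[P][T_1, \dots, T_r]), P)$, where the log structure comes only from $P$ and the $T_i$ are genuine polynomial variables outside the monoid, and then propose to redevelop the decomposition machinery of Sections 2.2--2.4 in this larger setting. That plan is only sketched, and the analogues of Lemma \ref{thm2}, Proposition \ref{thm7}, and Proposition \ref{thm31} with polynomial variables not governed by the log structure are nowhere established; as written the argument does not close. Crucially, none of this is needed. When factoring the strict smooth map $(X, \mathcal{M}) \to \mathrm{Spec}(R[P], P)$ through an etale map to affine space, one may translate the coordinates so that each $T_i$ does not vanish at the chosen geometric point; after shrinking $X$, the $T_i$ are units on $X$. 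At that point the pullback to $X$ of the log structure of $\mathrm{Spec}(R[P \oplus \mathbb{N}^r], P \oplus \mathbb{N}^r)$ coincides with $\mathcal{M}$ (invertible sections contribute trivially to the associated log structure), so $(X, \mathcal{M})$ is strict etale over $\mathrm{Spec}(R[P \oplus \mathbb{N}^r], P \oplus \mathbb{N}^r)$. Since $P \oplus \mathbb{N}^r$ is still fs with torsion-free groupification, Theorem \ref{thm36} applies verbatim with $P$ replaced by $P \oplus \mathbb{N}^r$, and the proof ends. This is precisely the paper's argument; your route would also work in principle, but at the cost of re-proving a parallel body of lemmas rather than absorbing the extra coordinates into the monoid.

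A secondary imprecision: you cite Kato's Theorem 3.5 for a chart $P$ with $P^{\rm gp}$ torsion-free and $X \to \mathrm{Spec}(R[P])$ merely smooth. That theorem in fact produces an \emph{etale} map $X \to \mathrm{Spec}(R[Q])$ but with $Q^{\rm gp}$ only required to have torsion of order invertible on $X$ (so possibly $\ell$-torsion for $\ell \neq p$), which is not what you want. The chart with $P = \mathcal{M}_x/\mathcal{O}_{X,x}^*$ sharp (hence $P^{\rm gp}$ torsion-free for an fs log structure) and $X \to \mathrm{Spec}(R[P])$ smooth is the content of \cite[Lem.~4.1.1]{KF}, which is the reference the paper uses.
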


\begin{proof} (cf. \cite[Thm 3.5]{LZ}, \cite[Thm 7.2]{Ma}) 
The claim is etale local on $X$ and $Y$. Thus we may assume that $(Y,*) = {\rm Spec}\,(R,*)$. 
Let $x$ be a geometric point of $X$ and put $P = {\mathcal M}_x/{\mathcal O}_{X,  x}^*$. 
Then, since $P^* = \{0\}$, $P$ is an fs monoid such that $P^{\rm gp}$ is torsion free. 
Applying \cite[Lem.~4.1.1]{KF} to the morphism $f$, we see that there 
exist an etale neighborhood $U$ of $x$ in $X$
and a chart $P_U \to {\mathcal M}|_U$ of $(U,{\mathcal M}|_U)$ such that 
the induced morphism $(U,{\mathcal M}|_U) \to \mbox{Spec}(R[P],  P)$ is strict smooth. 
Then, after shrinking $U$ suitably, we can take a factorization 
$(U,{\mathcal M}|_U)  \to \mbox{Spec}(R[P \oplus {\mathbb{N}}^a], P \oplus {\mathbb{N}}^a) \to \mbox{Spec}(R[P],  P)$
such that the first morphism is strict etale. 
Thus we may assume that $(X, \mathcal{M}) = (R[P \oplus {\mathbb{N}}^a], P \oplus {\mathbb{N}}^a)$. 
Because $P$ is an fs monoid with $P^{\rm gp}$ torsion free, so is the monoid 
$P \oplus {\mathbb{N}}^a$. Hence the comparison morphism in this case is the map 
$c_{P \oplus {\mathbb{N}}^a}$ in \eqref{eq:eqcp}  (with $P$ replaced by 
$P \oplus {\mathbb{N}}^a$), and it is a quasi-isomorphism by 
Theorem \ref{thm36}. So we are done. 
\end{proof}

%%%%%%%%%%%%%%%%%%%%%%%%%%%%%%%%%%%%%%%%%%%%%%%%%%%%%%%%%%%%%%%%

\section{Relative log de Rham-Witt complex (II)}

In the first two subsections of this section, 
we study the relative log de Rham-Witt complex 
$W_m\Lambda^{\bullet}_{(R[P],P)/(R[Q],Q)}$ for a ${\mathbb Z}_{(p)}$-algebra $R$ 
and an injective $p$-saturated homomorphism of fs monoids $Q \to P$ such that 
$Q^{\rm gp}, P^{\rm gp}, P^{\rm gp}/Q^{\rm gp}$ are torsion free. 
The main result is the existence of natural decomposition 
\begin{equation}\label{eq:decompII}
W_m\Lambda^{\bullet}_{(R[P],P)/(R[Q],Q)} 
= \bigoplus_{x \in P[\frac{1}{p}]} W_m\Lambda^{\bullet}_{(R[P],P)/(R[Q],Q), x}
\end{equation}
indexed by $P[\frac{1}{p}]$, which is a relative version of the decomposition 
\eqref{eq:decomp}. The proof is done by combining the decomposition 
\eqref{eq:decomp} in the absolute case and the exact sequence 
\[W_m\Lambda_{(R[Q],  Q)/(R, {*})}^{1} \otimes_{W_m(R[Q])} W_m\Lambda_{(R[P], P)/(R, {*})}^{\bullet -1} \to W_m\Lambda_{(R[P],  P)/(R, {*})}^{\bullet} \to W_m\Lambda_{(R[P],  P)/(R[Q], Q)}^{\bullet} \to 0 \]
proven in \cite[Prop.~3.11]{Ma}. 

In the last subsection, using the decomposition \eqref{eq:decompII}, we prove that 
the comparison morphism 
\[\mathbb{R}u_{m*}\mathcal{O}_m \to W_{m} \Lambda_{(X, \mathcal{M})/(Y, {\mathcal N})}^{\bullet}\]
constructed in Section 1.3 is a quasi-isomorphism when $(X, {\mathcal M}) \to (Y, {\mathcal N})$ 
is a log smooth saturated morphism of fs log schemes on which $p$ is 
nilpotent such that $(Y,{\mathcal N})$ is etale locally log smooth over a scheme with trivial log structure. 

\subsection{Preliminaries on monoids (II)}

%\subsection{$p$-quasi saturated morphism}

In this subsection, we give preliminaries on $p$-quasi-saturared morphisms 
of monoids which we use later. 

\begin{definition}
Let $p$ be a prime number. A monoid $P$ is called 
$p$-saturated if $P$ is integral and satisfies the following condition: 
Any $x  \in P^{\rm gp} $ with $ px \in P $ belongs to $P$. 
\end{definition}

\begin{definition}
Let $P, Q$ be integral monoids. 
\begin{enumerate}[(1)]
\item  \cite[Prop.~4.1]{Kato} A homomorphism $f:Q \to P $ is integral
if the following condition is satisfied: For any elements 
$a_1,  a_2 \in Q, b_1,  b_2 \in P$ satisfying $ f(a_1)+b_1 = f(a_2)+b_2$, 
there exist $a_3,  a_4 \in Q, b \in P$ with the conditions $ b_1 = f(a_3)+b, b_2=f(a_4)+b, a_1+a_3=a_2+a_4$. 

\item  \cite[Def.~4.6]{Kato} A homomorphism $f:Q \to P$ is exact
if the following diagram is Cartesian: 
\[ \xymatrix{
Q \ar[r]^{f} \ar[d] &P \ar[d] \\
Q^{\rm gp} \ar[r]^{f^{\rm gp}} & P^{\rm gp}. 
}
\]  

\item \cite[Def.~I.3.5]{Tsuji} A morphism $f:Q \to P$ is $p$-quasi-saturated 
if the homomorphism $h: P \oplus_{Q, p} Q \to P$ defined by the following diagram 
 is exact, where $P \oplus_{Q,p} Q$ denotes the pushout of the diagram 
$P \xleftarrow{f} Q \xrightarrow{p\times} Q$ in the category of integral monoids: 
\[ \xymatrix{
Q \ar[r]^{p \times} \ar[d]^{f} & Q \ar[d] \ar[rd]^{f} & \\
P \ar[r]^{} \ar@/_10pt/[rr]_{p\times} & P \oplus_{Q, p} Q \ar[r]^(.6){h} & P. 
  }
\] 
\item \cite[Def.~I.3.12]{Tsuji} A homomorphism $f:Q \to P$ is $p$-saturated
if it is $p$-quasi-saturated and integral. 
\end{enumerate}
\end{definition}

The condition of $p$-quasi-saturatedness is described as follows, where 
$ P \oplus_{Q, p} Q =: M$: 
\medskip 

\noindent 
 $(\diamond)$ \quad If an element $(r_1-r_2, q_1-q_2) \in M^{\rm gp} $ \, ($r_1, r_2 \in P, q_1,  q_2 \in Q$) satisfies $p(r_1-r_2)+q_1-q_2 \in P$, $(r_1-r_2, q_1-q_2) \in M $, namely, 
there exists an element $x \in Q^{\rm gp}$ with $r_1 - r_2 + x \in P $ and $q_1 -q_2 -px \in Q$. 

\begin{lemma}\label{thm38}
Let $Q \to P$ be a $p$-quasi-saturated morphism of integral monoids and assume that $P$ is $p$-saturated. Then we have the following: 
\medskip 

\noindent  
$(\star)$ \quad If $x$ is an element in $P$ with $x \in p \left( P^{\rm gp}/Q^{\rm gp} \right)$ when it is regarded as an element in $P^{\rm gp}/Q^{\rm gp}$, 
there exist elements $r \in P ,  q \in Q$ such that $x = pr + q$ in $P$. 
\end{lemma}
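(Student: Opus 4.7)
The plan is to derive $(\star)$ directly from the reformulation $(\diamond)$ of $p$-quasi-saturatedness of $f\colon Q \to P$, with the $p$-saturatedness of $P$ (and integrality of both monoids) used only implicitly. First I would unpack the hypothesis that $x \in p(P^{\rm gp}/Q^{\rm gp})$: this gives $y \in P^{\rm gp}$ with $x - py \in Q^{\rm gp}$. Choosing $r_1, r_2 \in P$ with $y = r_1 - r_2$ and $q_1, q_2 \in Q$ with $x - py = q_1 - q_2$, we obtain the identity
\[ p(r_1 - r_2) + (q_1 - q_2) = x \]
in $P^{\rm gp}$. Since $x \in P$, this says exactly that the element $(r_1 - r_2, q_1 - q_2) \in M^{\rm gp}$ (with $M := P \oplus_{Q, p} Q$) is carried into $P \subseteq P^{\rm gp}$ by the homomorphism $h$.

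Next I would apply condition $(\diamond)$ to this element to produce $u \in Q^{\rm gp}$ such that
\[ r := r_1 - r_2 + u \in P, \qquad q := q_1 - q_2 - pu \in Q. \]
A one-line computation then yields $pr + q = p(r_1 - r_2) + (q_1 - q_2) = x$ in $P^{\rm gp}$, and by integrality of $P$ (so that $P \hookrightarrow P^{\rm gp}$) this equality already holds in $P$, giving the required decomposition $x = pr + q$.

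The only potential obstacle is making sure that the adjustment $u$ exists simultaneously for both the $P$-part and the $Q$-part; this is precisely the content of $(\diamond)$. I expect no serious difficulty; the lemma is essentially a direct translation of $p$-quasi-saturatedness into the form needed for subsequent arguments on relative log de Rham--Witt complexes. I also note that the $p$-saturatedness of $P$ does not appear to intervene in this particular proof -- it is the $p$-quasi-saturatedness of $f$ together with the integrality of $P$ that do all the work.
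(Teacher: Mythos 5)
Your proof is correct, and it is genuinely more direct than the paper's. The paper takes a detour: it first sets $P' := \mathrm{Im}(P \to P^{\rm gp}/Q^{\rm gp})$ and shows, using the $p$-saturatedness of $P$, that $P'$ is $p$-saturated; this is needed so that $\bar y$ (with $\bar x = p\bar y$) admits a lift $r_2$ actually lying in $P$, after which the paper writes $x + q_1 = pr_2 + q_2$ and applies $(\diamond)$ to the class of $(r_2, q_2-q_1)$. You observe that $(\diamond)$, as stated, quantifies over all pairs $(r_1 - r_2, q_1 - q_2)$ with $r_1, r_2 \in P$, $q_1, q_2 \in Q$ — equivalently over all of $M^{\rm gp}$, since integrality of $P$ and $Q$ makes $P^{\rm gp}$ and $Q^{\rm gp}$ precisely the sets of such differences. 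So you simply lift $\bar y$ to any $y \in P^{\rm gp}$, write $y = r_1 - r_2$, and apply $(\diamond)$ to $(y, x-py)$; the intermediate saturatedness argument becomes unnecessary. Your parenthetical remark that the $p$-saturatedness of $P$ is not actually used is accurate: the result, under your proof, requires only that $f\colon Q \to P$ be a $p$-quasi-saturated morphism of integral monoids. The authors most likely included the stronger hypothesis because it is freely available in the downstream applications (where $P$ is fs) rather than because it is needed here. What your route buys is a shorter argument and a modestly weaker hypothesis; what the paper's route produces as a by-product is the observation that $P'$ is $p$-saturated, which is not used elsewhere and so is not a loss.
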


\begin{proof}
Let $P' := \mbox{Im}(P \to P^{\rm gp}/Q^{\rm gp})$. 
(Then ${P'}^{\rm gp} = P^{\rm gp}/Q^{\rm gp}$.) 
We first prove that $P'$ is $p$-saturated. 
If $y$ is an element in $P^{\rm gp} / Q^{\rm gp} $ with $py \in P'$, 
a representative $z \in P^{\rm gp}$ of $y$ satisfies the equality 
$pz = \zeta +q_1 -q_2$ for some $\zeta \in P, q_1, q_2 \in Q$. 
Hence $p(z+q_2) = \zeta +q_1 +(p-1)q_2$ and the right hand side of it belongs to $P$. 
Since $P$ is $p$-saturated, $z+q_2 \in P$. So $y \in P'$ and thus $P'$ is 
$p$-saturated, as required. 

%The claim in the previous paragraph implies that, 
If $x$ is an element in $P$ with $x \in p \left( P^{\rm gp}/Q^{\rm gp} \right)$ when regarded as an element in $P^{\rm gp}/Q^{\rm gp}$, there exists some 
$y \in P^{\rm gp}/Q^{\rm gp}$ with $x =py$. Since $py \in P'$, 
the claim in the previous paragraph implies that $y \in P'$. Thus there exist elements 
 $r_2 \in P ,  q_1, q_2 \in Q$ with $x + q_1 = pr_2 +q_2$. 
Since $q_2-q_1 +pr_2 =x \in P$, the property $(\diamond)$ implies 
the existence of elements $x_1 , x_2 \in Q$ with 
\[
\left\{ \begin{array}{l}
 r_2 +x_1 -x_2  \in P, \\
   q_2 - q_1  +p(x_2 - x_1) \in Q. 
 \end{array}  \right.
\] Thus we have the equality $x = p(r_2 +x_1 -x_2) +(q_2 -q_1 + p(x_2 -x_1))$, 
which is a required expression. 
\end{proof}

\begin{lemma}
Let $Q \to P$ be as in Lemma \ref{thm38} and assume moreover that 
$P^{\rm gp}/Q^{\rm gp}$ does not have $p$-torsion. Then we have 
the following: 
\medskip 

\noindent 
$(\star \star)$ \, For any $n \in \mathbb{N}$, if $x$ is an element in $P$ with 
$x \in p^n \left(P^{\rm gp}/Q^{\rm gp} \right)$ when it is regarded as an element in 
$x \in P^{\rm gp}/Q^{\rm gp}$, there exist elements 
$r \in P ,  q \in Q$ such that $x = p^n r + q$. 
\end{lemma}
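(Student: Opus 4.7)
The plan is to prove $(\star\star)$ by induction on $n$. The case $n=0$ is trivial (take $r=x$, $q=0$), and the case $n=1$ is exactly the previous lemma, i.e., property $(\star)$. So all the real content of the argument has already been supplied, and only a bootstrap is needed for $n \geq 2$.

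For the inductive step, assume the result for $n$ and take $x \in P$ with $\overline{x} \in p^{n+1}(P^{\rm gp}/Q^{\rm gp})$, where $\overline{x}$ denotes the image of $x$ in $P^{\rm gp}/Q^{\rm gp}$. Since $p^{n+1}(P^{\rm gp}/Q^{\rm gp}) \subset p^{n}(P^{\rm gp}/Q^{\rm gp})$, the induction hypothesis yields $r \in P$ and $q \in Q$ with $x = p^n r + q$. I would then verify that $\overline{r}$ lies in $p(P^{\rm gp}/Q^{\rm gp})$: write $\overline{x} = p^{n+1}y$ for some $y \in P^{\rm gp}/Q^{\rm gp}$, so that $p^n \overline{r} = p^n(p y)$, i.e., $p^n(\overline{r} - py) = 0$ in $P^{\rm gp}/Q^{\rm gp}$. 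The hypothesis that $P^{\rm gp}/Q^{\rm gp}$ has no $p$-torsion, iterated $n$ times, forces $\overline{r} = py$, so $\overline{r} \in p(P^{\rm gp}/Q^{\rm gp})$. Now apply $(\star)$ to $r$ to produce $r' \in P$ and $q' \in Q$ with $r = p r' + q'$. Substituting,
\[
x = p^n r + q = p^{n+1} r' + (p^n q' + q),
\]
which exhibits the required decomposition with $r' \in P$ and $p^n q' + q \in Q$.

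There is essentially no obstacle: the only non-formal ingredient is that absence of $p$-torsion in $P^{\rm gp}/Q^{\rm gp}$ upgrades to absence of $p^n$-torsion, and this follows by induction on $n$ from the elementary implication $p \alpha = 0 \Rightarrow \alpha = 0$. All the monoid-theoretic difficulty (the use of $p$-quasi-saturatedness via condition $(\diamond)$, and of $p$-saturatedness of $P$) has been absorbed into the previous lemma, and $(\star\star)$ is obtained simply by iterating $(\star)$ and tracking the $Q$-part of the correction at each step.
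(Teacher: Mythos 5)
Your proof is correct and follows exactly the same inductive strategy as the paper: use the inductive hypothesis to write $x = p^n r + q$, observe that $p$-torsion-freeness of $P^{\rm gp}/Q^{\rm gp}$ forces the class of $r$ to lie in $p(P^{\rm gp}/Q^{\rm gp})$, apply $(\star)$ to $r$, and substitute back. The only difference is that you spell out the torsion-freeness argument in slightly more detail; the paper compresses it into a single sentence.
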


\begin{proof}
We prove the lemma by induction on $n$. When $n=1$, the claim is nothing but 
the property $(\star)$. We assume that $(\star \star)$ holds for $n = k$ and 
consider the case $n = k+1$. Let $x$ be an element in $P$
with $x \in p^{k+1} \left( P^{\rm gp}/Q^{\rm gp} \right)$ when regarded as an element in 
$x \in P^{\rm gp}/Q^{\rm gp}$. Then, since $x \in p^{k} \left( P^{\rm gp}/Q^{\rm gp} \right)$, 
there exist $r \in P ,  q \in Q$ with $x = p^k r + q$ by induction hypothesis. 
If we consider this equality in $P^{\rm gp}/Q^{\rm gp}$, we conclude that 
$r \in p \left(P^{\rm gp}/Q^{\rm gp}\right)$ because $P^{\rm gp}/Q^{\rm gp}$ is 
$p$-torsion free. So, by $(\star)$, there exist $r' \in P, q' \in Q$ with 
$r = pr' + q'$. Then $x = p^{k+1}r' + (p^kq' + q)$, which is a required expression. 
%Then we obtain the required claim by 
%Lemma \ref{thm38}. 
\end{proof}

Finally we recall the notion of ($p$-)saturated morphism of fine log schemes. 

\begin{definition}
\cite[Def.~II.2.10]{Tsuji} 
A morphism of fine log schemes 
$f:(X, \mathcal{M} )\to (Y , \mathcal{N})$ is called $p$-saturated if, for any 
$x \in X,  y=f(x) \in Y$, the homomorphism
$(\mathcal{N}/\mathcal{O}_Y^{*})_{\bar{y}} \to (\mathcal{M}/\mathcal{O}_X^{*})_{\bar{x}} $
induced by $f$ is $p$-saturated. $f$ is called saturated if it is $p$-saturated for any prime 
number $p$. 
\end{definition}

The following proposition immediately follows from \cite[Thm.~II.3.1]{Tsuji}. 

\begin{prop}\label{prop:sat}
Any $p$-saturated morphism of fs log schemes 
is saturated. 
\end{prop}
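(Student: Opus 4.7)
The plan is to reduce the assertion to a purely monoid-theoretic statement, and then invoke Tsuji's characterization. By the very definition of ($p$-)saturated morphism of fine log schemes, $f:(X,\mathcal M)\to (Y,\mathcal N)$ is ($p$-)saturated if and only if for every $x\in X$ with image $y = f(x)$ the induced homomorphism of characteristic monoids
\[
(\mathcal N/\mathcal O_Y^*)_{\bar y}\;\longrightarrow\;(\mathcal M/\mathcal O_X^*)_{\bar x}
\]
is ($p$-)saturated. Since $(X,\mathcal M)$ and $(Y,\mathcal N)$ are fs, these characteristic stalks are fs monoids. Thus the proposition is equivalent to the following monoid-level statement: if $\theta:Q\to P$ is a $p$-saturated homomorphism of fs monoids for one prime $p$, then $\theta$ is $\ell$-saturated for every prime $\ell$.

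Next I would invoke \cite[Thm.~II.3.1]{Tsuji}. That theorem provides several equivalent characterizations of $p$-saturatedness for a homomorphism of fs monoids; the useful feature for us is that the criterion can be phrased in a form which is intrinsic to $\theta$ (involving integrality of $\theta$ together with a torsion-freeness/exactness condition on the associated group homomorphism or on a suitable pushout), and which does not depend on the chosen prime $p$. Consequently, if $\theta$ satisfies the criterion for the given prime $p$, it satisfies the same criterion for every other prime $\ell$, which is what we wanted.

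Combining the two steps, the characteristic-monoid condition for $\ell$-saturatedness is satisfied at every geometric point of $X$, and hence $f$ is $\ell$-saturated for every prime $\ell$; that is, $f$ is saturated.

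The only real step here is translating $p$-saturatedness into the prime-independent characterization supplied by Tsuji's theorem; the reduction to the monoid level is entirely formal, and no new argument beyond quoting \cite[Thm.~II.3.1]{Tsuji} is needed. So the principal (and minor) obstacle is merely extracting the precise equivalent form of $p$-saturatedness for fs monoids from \textit{loc.~cit.} and checking that it is manifestly independent of $p$.
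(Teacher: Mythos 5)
Your proposal is correct and takes essentially the same route as the paper: both reduce the statement (formally, by definition) to the assertion that a $p$-saturated homomorphism of fs characteristic monoids is $\ell$-saturated for every prime $\ell$, and both obtain this by citing \cite[Thm.~II.3.1]{Tsuji}.
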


\subsection{Decomposition of relative log de Rham-Witt complex (II)}

In this subsection, let $R$ be a ${\mathbb Z}_{(p)}$-algebra  
and let $Q \to P$ 
be an injective $p$-quasi-saturated homomorphism of fs monoids such that 
$Q^{\rm gp}, P^{\rm gp}, P^{\rm gp}/Q^{\rm gp}$ are torsion free.
% and that 
%$(R[P],  P)/(R[Q],  \allowbreak Q)$ is log smooth. 
(Under this assumption, $Q^{\rm gp} \to P^{\rm gp}$ is also injective and 
$(R[P],  P)/(R[Q],  \allowbreak Q)$, $(R[Q],  Q)/(R,  {*})$ are log smooth.) 
In this subsection, we introduce the notion of $x$-part 
$W_m \Lambda_{(R[P], P) /(R[Q], Q),x}^{\bullet}$
of the relative log de Rham-Witt complex $W_m \Lambda_{(R[P], P) /(R[Q], Q)}^{\bullet}$ 
for $x \in P[\frac{1}{p}]$ and prove the 
direct sum decomposition 
\begin{equation*}\label{eq:decompdecomp2}
W_m\Lambda^{\bullet}_{(R[P],P)/(R,*)} 
= \bigoplus_{x \in P[\frac{1}{p}]} W_m\Lambda^{\bullet}_{(R[P],P)/(R,*), x}. 
\end{equation*}

First we recall the following proposition due to Matsuue: 

\begin{prop}\cite[Prop.~3.11]{Ma}\label{thm40}
For morphisms 
$X \to Y \to S$ of fine log schemes, we have the following exact sequence: 
 \[W_m \Lambda_{Y/S}^{1} \otimes_{W_m(\mathcal{O}_Y)} W_m\Lambda_{X/S}^{\bullet -1} \to W_m \Lambda_{X/S}^{\bullet} \to W_m\Lambda_{X/Y}^{\bullet} \to 0. \]
\end{prop}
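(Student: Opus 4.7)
The plan is to establish this exact sequence via the universal property of $W_m\Lambda^\bullet_{X/Y}$. First I would construct the surjection $\beta: W_m\Lambda^\bullet_{X/S} \to W_m\Lambda^\bullet_{X/Y}$ using that any log $F$-$V$-procomplex over a $(Y,\mathcal{N})$-algebra $(X,\mathcal{M})$ restricts, via the base change along $(Y,\mathcal{N}) \to S$, to a log $F$-$V$-procomplex over $(S,\text{str})$-algebra $(X,\mathcal{M})$; the universal property of the source then produces $\beta$, and its surjectivity is clear by construction. The map $\alpha$ is multiplication, using the natural map $W_m\Lambda^1_{Y/S} \to W_m\Lambda^1_{X/S}$ obtained analogously. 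The composite $\beta \circ \alpha$ vanishes because $W_m\Lambda^1_{Y/S}$ is generated as a $W_m(\mathcal{O}_Y)$-module by elements $da$ ($a \in W_m(\mathcal{O}_Y)$) and $d\log n$ ($n \in W_m(\mathcal{N})$), and both become zero in $W_m\Lambda^1_{X/Y}$: the first because $d$ is $W_m(\mathcal{O}_Y)$-linear, the second by condition (ii) in the definition of a log derivation over $(Y,\mathcal{N})$.

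Set $J_m^\bullet := \mathrm{Im}(\alpha_m)$ and $C_m^\bullet := W_m\Lambda^\bullet_{X/S}/J_m^\bullet$, and let $\bar\beta: C_m^\bullet \to W_m\Lambda^\bullet_{X/Y}$ denote the induced surjection. The remaining task is to produce an inverse. My plan is to verify that $\{C_m^\bullet\}_m$ inherits a natural log $F$-$V$-procomplex structure over the $(Y,\mathcal{N})$-algebra $(X,\mathcal{M})$; granting this, universality of $W_m\Lambda^\bullet_{X/Y}$ yields $\gamma: W_m\Lambda^\bullet_{X/Y} \to C_m^\bullet$, and the uniqueness clauses in the universal properties of $W_m\Lambda^\bullet_{X/Y}$ and of $W_m\Lambda^\bullet_{X/S}$ (applied to $\bar\beta\circ\gamma$ and to $\gamma\circ\bar\beta$ composed with the quotient map) force these to be mutually inverse.

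The content of the proof is thus the verification that $J_m^\bullet$ is a graded ideal of $W_m\Lambda^\bullet_{X/S}$ stable under $d$, $F$, and $V$. The ideal property is immediate from graded commutativity. Closure under $d$ follows from the Leibniz rule $d(\omega\cdot\eta) = d\omega\cdot\eta - \omega\cdot d\eta$: the second term visibly lies in $J_m^\bullet$, and for a generator $\omega = \xi\,d\eta'$ (resp.\ $\omega = \xi\,d\log n$) we have $d\omega = d\xi \wedge d\eta'$ (resp.\ $d\xi \wedge d\log n$), so $d\omega\cdot\eta$ factors through $W_m\Lambda^1_{Y/S} \otimes_{W_m(\mathcal{O}_Y)} W_m\Lambda^{\bullet-1}_{X/S}$. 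Closure under $F$ follows at once from the multiplicativity of $F$ (condition (iv) in Definition~1.2.4) together with the compatibility of $F$ with the map $W_m\Lambda^\bullet_{Y/S} \to W_m\Lambda^\bullet_{X/S}$.

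The main obstacle will be closure under $V$, since $V$ is not multiplicative. The idea is to observe that every generator of $W_m\Lambda^1_{Y/S}$ lies in the image of $F$: for $a \in W_m(\mathcal{O}_Y)$ the identity ${}^F D_{m+1}\,{}^V\! = D_m$ gives $da|_m = {}^F(dV a|_{m+1})$, and for $n \in W_m(\mathcal{N})$ the identity ${}^F \partial_{m+1} = \partial_m$ gives $d\log n|_m = {}^F(d\log n|_{m+1})$. Writing $\omega = {}^F\tilde\omega$ with $\tilde\omega$ in the image of $W_{m+1}\Lambda^1_{Y/S}$, the projection formula ${}^V(\omega'\cdot{}^F\eta') = {}^V\!\omega' \cdot \eta'$ from condition (v) applied after a graded-commutativity sign yields $V(\omega\cdot\eta) = \pm V(\eta\cdot{}^F\tilde\omega) = \pm V\eta \cdot \tilde\omega$, which lies in $J_{m+1}^\bullet$ since $\tilde\omega$ comes from $W_{m+1}\Lambda^1_{Y/S}$ and $V\eta \in W_{m+1}\Lambda^{\bullet-1}_{X/S}$. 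Compatibility with transition maps is then automatic.
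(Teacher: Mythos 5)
This statement is quoted in the paper from \cite[Prop.~3.11]{Ma} with no proof given, so there is strictly nothing internal to compare your argument against; I will instead assess it on its own. Your strategy is the natural one and, with one small repair, it works: construct the surjection $\beta$ from universality, verify that the image $J_m^{\bullet}$ of $\alpha$ is a differential graded ideal stable under $F$ and $V$, show that the quotient inherits the structure of a log $F$-$V$-procomplex over the $(Y,\mathcal{N})$-algebra $(X,\mathcal{M})$, and then appeal to initiality twice (once on each side, the second time precomposed with the quotient map and using surjectivity) to identify the quotient with $W_m\Lambda^{\bullet}_{X/Y}$. The projection-formula mechanism you invoke for $V$-stability is exactly the right one, and your handling of $d$-closure and $F$-closure is correct.

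The one step that is not justified as stated is your assertion that ``for $n \in W_m(\mathcal{N})$ the identity ${}^{F}\partial_{m+1} = \partial_m$ gives $d\log n|_m = {}^{F}(d\log n|_{m+1})$.'' In Definition~\ref{def:rellogdrw}(4)(v) the axiom ${}^{F}(\partial_{m+1} q) = \partial_{m} q$ is only required for $q$ in the underlying monoid $Q$, i.e.\ for the first summand in the decomposition $W_m(\mathcal{N}) = \mathcal{N} \oplus \mathrm{Ker}(W_m(\mathcal{O}_Y)^{*} \to \mathcal{O}_Y^{*})$; it does not apply to a unit $u$ in the second summand, and in fact there is no reason for $d\log u$ to lie in the image of $F$. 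The fix is that those elements are redundant as generators: $d\log u = u^{-1}\,du$ is a $W_m(\mathcal{O}_Y)$-multiple of $du$, which is already among your $d$-type generators. So the corrected statement is that $W_m\Lambda^{1}_{Y/S}$ is generated as a $W_m(\mathcal{O}_Y)$-module by $\{da : a \in W_m(\mathcal{O}_Y)\} \cup \{d\log n : n \in \mathcal{N}\}$, and each of these is ${}^{F}$ of the corresponding generator at level $m+1$ (via ${}^{F}d{}^{V} = d$ and the monoid axiom, respectively). You then expand a general element of $\mathrm{Im}(\alpha_m)$ as a sum $\sum_i \xi_i\,\omega_{0,i}\,\eta_i$ with $\omega_{0,i}$ among these generators and $\xi_i \in W_m(\mathcal{O}_X)$, absorb $\xi_i$ into $\eta_i$ (degree-zero elements commute freely), and apply ${}^{V}(\eta\,{}^{F}\tilde\omega) = ({}^{V}\eta)\tilde\omega$ summand by summand. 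With this adjustment the argument is complete.
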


We define the notion of $x$-part of 
$W\Lambda_{(R[P], P)/(R[Q], Q)}^{n}$, $W_m \Lambda_{(R[P], P)/(R[Q], Q)}^{n}$ 
($x \in P[\frac{1}{p}]$) in the following way: 

\begin{definition}
For $x \in P[\frac{1}{p}]$, let $W\Lambda_{(R[P],  P)/(R[Q], Q), x}^{n}$ 
be the $W(R)$-submodule of $W\Lambda_{(R[P],  P)/(R[Q], Q)}^{n}$
consisting of elements $\omega$ which has the following expression: 
\[ \omega =  \sum_{ I \subset [1,  r], |I|=n} {}^{V^{u(x)}}({\eta_{I}}X^{p^{u(x)}x}) \bigwedge_{I} d\log X_i + \sum_{ J \subset [1,  r], |J|=n-1} d^{V^{u(x)}}({\eta_{J}}X^{p^{u(x)}x}) \bigwedge_{J} d\log X_i. \]
We call $W\Lambda_{(R[P],  P)/(R[Q], Q), x}^{n}$ the $x$-part of 
$W\Lambda_{(R[P], P)/(R[Q], Q)}^{n}$. Also, we put   
\[ W_m\Lambda_{(R[P],  P)/(R[Q],  Q), x}^{n} := \mbox{Im}(W\Lambda_{(R[P],  P)/(R[Q],  Q), x}^{n} \to W_m\Lambda_{(R[P],  P)/(R[Q], Q)}^{n}) \]
and call it the $x$-part of 
$W_m\Lambda_{(R[P], P)/(R[Q], Q)}^{n}$. 
\end{definition}

By the argument in Lemma \ref{thm382}, we see that 
the derivation $d$ respects the $x$-parts. 

\begin{lemma}\label{thm26}
The exact sequence 
\begin{align} 
W_m\Lambda_{(R[Q],  Q)/(R, {*})}^{1} \otimes_{W_m(R[Q])} W_m\Lambda_{(R[P], P)/(R, {*})}^{\bullet -1} 
& \xrightarrow{\alpha}
W_m\Lambda_{(R[P],  P)/(R, {*})}^{\bullet} \nonumber \\
& \xrightarrow{\beta} W_m\Lambda_{(R[P],  P)/(R[Q], Q)}^{\bullet} \to 0 \label{eq:exseq}
\end{align}
of Proposition \ref{thm40} induces for each 
$x \in P[\frac{1}{p}]$ the exact sequence 
\begin{align*}
(W_m\Lambda_{(R[Q],  Q)/(R, {*})}^{1} \otimes_{W_m(R[Q])} W_m\Lambda_{(R[P], P)/(R, *)}^{\bullet -1})_{x} 
& \xrightarrow{\alpha_x}
W_m \Lambda_{(R[P],  P)/(R, {*}), x}^{\bullet}  \\
& \xrightarrow{\beta_x} W_m\Lambda_{(R[P],  P)/(R[Q], Q), x}^{\bullet} \to 0.  
 \end{align*}
Here, $(W_m\Lambda_{(R[Q],  Q)/(R, {*})}^{1} \otimes_{W_m(R[Q])} W_m\Lambda_{(R[P], P)/(R[Q], Q)}^{\bullet -1})_{x} $ denotes the $W_m(R)$-submodule generated by 
the tensors of elements in $W_m\Lambda_{(R[Q],  Q)/(R, {*}), x_q}^{1} \, (x_q \in Q[\frac{1}{p}])$ 
and elements in $W_m\Lambda_{(R[P], P)/(R[Q], Q), x_p}^{\bullet -1} \, (x_p \in P[\frac{1}{p}])$ 
such that $x_q + x_p = x$ in $P^{\rm gp}[\frac{1}{p}]$. 
\end{lemma}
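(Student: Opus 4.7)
The plan is to reduce the claim to two compatibility statements: that $\beta$ sends $W_m\Lambda_{(R[P],P)/(R,*), x}^{\bullet}$ onto $W_m\Lambda_{(R[P],P)/(R[Q],Q), x}^{\bullet}$, and that $\alpha$ sends the tensor of an $x_q$-part and an $x_p$-part into the $(x_q+x_p)$-part of $W_m\Lambda_{(R[P],P)/(R,*)}^{\bullet}$. Once these are in hand, the claimed exact sequence of $x$-parts will follow by splitting \eqref{eq:exseq} according to the direct sum decomposition of Proposition~\ref{thm22}.

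The first compatibility is essentially tautological. The $x$-part $W_m\Lambda_{(R[P],P)/(R[Q],Q), x}^{\bullet}$ is generated by the elements ${}^{V^{u(x)}}(\eta_I X^{p^{u(x)}x}) \bigwedge_I d\log X_i$ and $d^{V^{u(x)}}(\eta_J X^{p^{u(x)}x}) \bigwedge_J d\log X_i$ with $I, J \subset [1, r]$, each of which is the image under $\beta$ of the corresponding $P$-basic Witt differential $b(\xi_I, x, I)$ (for $0 \notin I$) or $b(\xi_J, x, J \cup \{0\})$ in the $x$-part $W_m\Lambda_{(R[P],P)/(R,*), x}^{\bullet}$. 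Hence $\beta_x$ is well-defined and surjective. The second compatibility is more technical: for $P$-basic generators $\omega_q$ and $\omega_p$ of the $x_q$-part and the $x_p$-part respectively, I would compute $\omega_q \wedge \omega_p$ by the same $F$-$V$ manipulation as in the proof of Lemma~\ref{thm12} together with the wedge-with-$d\log$ computations from the proof of Proposition~\ref{thm16}. The combination of the identity ${}^{V^a}\xi \cdot {}^{V^b}\eta = {}^{V^{\max(a,b)}}(\cdots)$, the multiplicativity $X^a \cdot X^b = X^{a+b}$, and the Leibniz rule produces a sum of $P$-basic Witt differentials of weight $x_q + x_p$, as needed.

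With these two compatibilities, exactness of the $x$-part sequence is a formal diagram chase. Given $\omega \in W_m\Lambda_{(R[P],P)/(R,*), x}^{\bullet}$ with $\beta_x(\omega) = 0$, the original sequence \eqref{eq:exseq} yields some $\tau$ with $\alpha(\tau) = \omega$. Expanding $\tau$ as a finite sum $\sum_y \tau_y$ indexed by $y \in P[\frac{1}{p}]$, with each $\tau_y$ in the $y$-part of the tensor product (using the direct sum decomposition of the two tensor factors from Proposition~\ref{thm22}), the second compatibility gives $\alpha(\tau_y) \in W_m\Lambda_{(R[P],P)/(R,*), y}^{\bullet}$ for every $y$. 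The direct sum decomposition of Proposition~\ref{thm22} then forces $\omega = \alpha(\tau_x)$, showing $\omega \in \mathrm{Im}(\alpha_x)$. The main obstacle lies in the weight compatibility for $\alpha$: the Witt-vector arithmetic becomes delicate when $u(x_q+x_p)$ is strictly less than $\max(u(x_q), u(x_p))$, a situation that genuinely occurs over saturated monoids, so the case analysis of Lemma~\ref{thm12} and of Lemma~\ref{thm2} must be reproduced carefully to track the drop in Verschiebung depth of the combined weight.
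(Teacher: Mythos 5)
Your proof takes essentially the same route as the paper: well-definedness and surjectivity of $\beta_x$ from the definition of $x$-parts, then for an element of $\ker\beta_x$ you pull back along $\alpha$, decompose the preimage by weight, and invoke the direct sum decomposition of Proposition~\ref{thm22} on the middle term to isolate the $x$-component. The paper treats the $\alpha$-compatibility (that wedging an $x_q$-part against an $x_p$-part lands in the $(x_q+x_p)$-part) as immediate ``by the definition of $x$-parts,'' whereas you flag it as the technical heart and correctly point to the $F$-$V$ manipulations of Lemma~\ref{thm12} and Proposition~\ref{thm16} as the relevant tool, including the case $u(x_q+x_p) < \max(u(x_q), u(x_p))$; this is a reasonable amount of extra care rather than a different argument.
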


\begin{proof}
By the definition of $x$-parts, it is easy to see that the maps $\alpha_x, \beta_x \, (x \in P[\frac{1}{p}])$ 
are well-defined as restriction of the maps $\alpha, \beta$, respectively.  
The surjectivity of the map $\beta_x$ 
%% $ W_m \Lambda_{(R[P],  P)/(R, {*}), x}^{\bullet} \to W_m\Lambda_{(R[P],  P)/(R[Q], Q), x}^{\bullet}$
follows immediately from that of the map $\beta$ and the definition of $x$-parts. 
By the exactness of \eqref{eq:exseq}, ${\rm Ker}\,\beta_x$ is the set of elements in 
$W_m \Lambda_{(R[P],  P)/(R, {*}),  x}^{\bullet}$ which belongs to 
the image of $\alpha$. Take any $\omega \in {\rm Ker}\,\beta_x$. Then, since 
%the image of the map 
%\[ W_m\Lambda_{(R[Q],  Q)/(R, {*})}^{1} \otimes_{W_m(R[Q])} W_m\Lambda_{(R[P], P)/(R[Q], Q)}^{\bullet -1} \to W_m \Lambda_{(R[P],  %P)/(R, {*})}^{\bullet}. \]
% 
\begin{align*}
& W_m\Lambda_{(R[Q],  Q)/(R, {*})}^{1} \otimes_{W_m(R[Q])} W_m\Lambda_{(R[P], P)/(R[Q], Q)}^{\bullet -1} \\ & 
\hspace{3cm} = \sum_{x' \in P[\frac{1}{p}]} (W_m\Lambda_{(R[Q],  Q)/(R, {*})}^{1} \otimes_{W_m(R[Q])} W_m\Lambda_{(R[P], P)/(R[Q], Q)}^{\bullet -1})_{x'}, 
\end{align*}
there exist $\omega'_{x'} \in (W_m\Lambda_{(R[Q],  Q)/(R, {*})}^{1} \otimes_{W_m(R[Q])} W_m\Lambda_{(R[P], P)/(R[Q], Q)}^{\bullet -1})_{x'} \, (x' \in P[\frac{1}{p}])$ with $\omega = \alpha(\sum_{x'} \omega'_{x'}) = \sum_{x'} \alpha_{x'} (\omega'_{x'})$. 
Then, since we have the decomposition 
\[ W_m \Lambda_{(R[P],  P)/(R, {*})}^{\bullet} = 
\bigoplus_{x' \in P[\frac{1}{p}]} W_m \Lambda_{(R[P],  P)/(R, {*}),  x'}^{\bullet}, \] 
we conclude that $\alpha_{x'} (\omega'_{x'}) = 0$ for $x' \not= x$ and so  
$\omega = \alpha_x(\omega'_x)$, namely, ${\rm Ker}\,\beta_x \subset {\rm Im}\,\alpha_x$. 
The inclusion in the other direction follows from the exactness of \eqref{eq:exseq}. 
So we are done. 
\end{proof}

The main result in this subsection is the following: 

\begin{prop}\label{thm8}
Let $R$ and $Q \to P$ be as in the beginning of this subsection. 
% , and fix the isomorphism $P^{\rm gp} \cong {\mathbb Z}^r$. 
\begin{enumerate}[(1)] 
\item 
%Let $x \in P[\frac{1}{p}]$ and we denote the image of $x$ in 
%$P^{\rm gp}[\frac{1}{p}] \cong {\mathbb Z}[\frac{1}{p}]^r$ by $(x_i)_i$. Then 
For $x \in P[\frac{1}{p}]$, 
we have the isomorphisms 
\[ f'_m: W_m\Lambda_{(R[P],  P)/(R[Q], Q), x}^{\bullet} \to W_m\Lambda_{(R[P^{\rm gp}], P^{\rm gp})/(R[Q], Q), x}^{\bullet}, \]
\[g'_m :  W_m\Lambda_{(R[P^{\rm gp}], P^{\rm gp})/(R[Q], Q), x }^{\bullet} \to W_m\Lambda_{(R[P],  P)/(R[Q], Q), x}^{\bullet} \]
each of which is the inverse of the other. 

\item  Any element in 
$W\Lambda_{(R[P],  P)/(R[Q], Q)}^{k}$ is written uniquely as a convergent sum of elements in 
$W\Lambda_{(R[P],  P)/(R[Q], Q),  x}^{k} \allowbreak \, (x \in P[\frac{1}{p}])$. Also, we have the canonical direct sum 
decomposition 
\[
W_m\Lambda_{(R[P],  P)/(R[Q], Q)}^{k} = \bigoplus_{x \in P[\frac{1}{p}]} W_m\Lambda_{(R[P],  P)/(R[Q], Q), x}^{k}.
\]
\end{enumerate}
\end{prop}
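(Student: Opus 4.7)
The plan is to reduce both assertions to the absolute case (Proposition~\ref{thm22}) via the exact sequence of Lemma~\ref{thm26}. Abbreviate the three terms in that sequence as $A_x$, $B_x$, $C_x$, and write $A'_x$, $B'_x$, $C'_x$ for the analogous terms obtained by replacing $P$ with $P^{\rm gp}$ throughout; the same proof as Lemma~\ref{thm26} yields an exact sequence $A'_x \to B'_x \to C'_x \to 0$, using the absolute decomposition of Proposition~\ref{thm22} applied to $P^{\rm gp}$.

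For (1), I would consider the ladder whose rows are the two exact sequences, whose middle vertical arrow is the isomorphism $f_m\colon B_x \xrightarrow{\sim} B'_x$ from Proposition~\ref{thm31}, and whose left vertical arrow is $\mathrm{id}\otimes f_m$ on each weight summand. The leftmost square commutes because $f_m$ is the restriction to $x$-parts of the natural map induced by \eqref{eq:h}, which is a morphism of log differential graded $W_m(R[Q])$-algebras; hence $f_m(\mathrm{Im}\,\alpha_x) \subseteq \mathrm{Im}\,\alpha'_x$, and $f_m$ descends to a map $f'_m\colon C_x \to C'_x$. Repeating the argument with $g_m$ yields $g'_m$, and these are mutually inverse because $f_m$ and $g_m$ are.

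For (2), surjectivity of $\bigoplus_x C_x \to W_m\Lambda^{\bullet}_{(R[P],P)/(R[Q],Q)}$ is immediate from Proposition~\ref{thm22} and the surjectivity of $\beta$, so only directness of the sum remains. Suppose $\sum_{x\in S} \omega_x = 0$ with $S$ finite and $\omega_x \in C_x$; lift each $\omega_x$ to $\tilde{\omega}_x \in B_x$ and set $\tilde{\omega} := \sum_x \tilde{\omega}_x \in \mathrm{Ker}\,\beta = \mathrm{Im}\,\alpha$, writing $\tilde{\omega} = \alpha(\omega)$. Expand $\omega$ as a finite sum $\sum_y \omega_y$ with $\omega_y \in A_y$, which is possible because each simple tensor decomposes into weight pieces via the absolute $x$-decompositions of both factors. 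Then $\tilde{\omega} = \sum_y \alpha_y(\omega_y)$ with $\alpha_y(\omega_y) \in B_y$, and directness of $B = \bigoplus_y B_y$ from Proposition~\ref{thm22} forces $\tilde{\omega}_x = \alpha_x(\omega_x)$ for each $x \in S$; hence $\omega_x = \beta_x(\tilde{\omega}_x) = 0$ by exactness at $B_x$. The convergent-sum version for $W\Lambda^{\bullet}$ follows by passing to the inverse limit in $m$.

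The key conceptual observation is that although the sum $A = \sum_x A_x$ need not be direct (the tensor in $A$ is over $W_m(R[Q])$, not $W_m(R)$, and multiplication by Teichm\"uller lifts $[T^q]$ with $q \in Q$ shifts weights), the image of each $A_x$ under $\alpha$ already lands in the direct summand $B_x$ of the absolute case, and this is precisely what forces directness of $C = \bigoplus_x C_x$. I do not foresee a serious obstacle beyond the bookkeeping in the commutativity of the leftmost square in (1) and the weight-expansion of $\omega \in A$ in (2); both are routine unwindings of definitions and of the naturality of~\eqref{eq:h}.
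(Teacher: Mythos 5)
Your proof of part (2) is correct, and in fact cleaner than the paper's: the paper proves (2) by transporting everything to $W_m\Lambda^{\bullet}_{(R[Q^{\rm gp}][P^{\rm gp}/Q^{\rm gp}],\,P^{\rm gp}/Q^{\rm gp})/(R[Q^{\rm gp}],\,*)}$ and quoting part (1) along the way, whereas your argument — lift each $\omega_x$ to $B_x$, use $\ker\beta = \operatorname{Im}\alpha$, expand the preimage in $A$ into weight pieces, and invoke directness of $B = \bigoplus_x B_x$ from Proposition~\ref{thm22} — is a direct diagram chase that does not appeal to part (1) at all. Your observation that $\alpha$ carries each $A_y$ into $B_y$ (because products of $P$-basic Witt differentials add weights, even though the sum $\sum_y A_y$ need not be direct) is exactly the point that makes this go through.

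Part (1), however, has a genuine gap, and it is precisely where the technical heart of the proposition lives. Your sentence ``Repeating the argument with $g_m$ yields $g'_m$'' is not justified. The reason $f_m$ descends to $f'_m$ is that $f_m$ is the restriction of the $W_m(R[Q])$-algebra morphism $h_m$ coming from the universal property applied to $P \to P^{\rm gp}$; this gives $h_m \circ \alpha_x = \alpha'_x \circ (1 \otimes h_m)$ essentially for free. But $g_m$ is not induced by any morphism of pre-log rings — it is defined formally, basic Witt differential by basic Witt differential, via the substitutions in formulas \eqref{4}, \eqref{5}, \eqref{6} — so there is no naturality to invoke. You need to prove that $\beta_x \circ g_m$ annihilates $\operatorname{Im}\alpha'_x$ (equivalently, $g_m(\operatorname{Im}\alpha'_x) \subseteq \operatorname{Im}\alpha_x = \ker\beta_x$), and the paper does this by an explicit calculation on generators of $\operatorname{Im}\alpha'_x$ (Cases 1--4 with subcases). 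In Cases 3-2 and 4-2, the $p$-quasi-saturatedness hypothesis enters through the property $(\star\star)$: one must replace a representative $p_1 \in P^{\rm gp}$ appearing in $d\log X^{p_1}$ by a representative $p_3 \in P$ with $p_3 \equiv p_1 \pmod{Q^{\rm gp}}$, which is exactly what $(\star\star)$ delivers. Note that your argument for (1) as written never invokes $p$-quasi-saturatedness, even though it is a hypothesis of the proposition and the paper uses it essentially; this alone should have been a warning sign. Your bet that there is ``no serious obstacle beyond the bookkeeping in the commutativity of the leftmost square'' misjudges where the difficulty sits: the $f_m$-square is routine, and it is the $g_m$-direction that carries the entire weight of the saturation hypothesis.
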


\begin{proof}
(1) \, The morphism $f'_m$ is defined as the map between $x$-parts of the 
map 
\[ 
W_m\Lambda_{(R[P],  P)/(R[Q], Q)}^{\bullet} \to W_m\Lambda_{(R[P^{\rm gp}], P^{\rm gp})/(R[Q], Q)}^{\bullet}
\] 
induced by $P \to P^{\rm gp}$ by  
the universality of log $F$-$V$-procomplexes. By definition, the map $f_m$ is compatible with $d$. 

Next we define the morphism $g'_m$ from the composite 
\begin{equation}\label{eq:composite}
W_m\Lambda_{(R[P^{\rm gp}], P^{\rm gp})/(R, {*}), x}^{\bullet} \xrightarrow{g_m} 
W_m \Lambda_{(R[P],  P)/(R, {*}), x}^{\bullet} \xrightarrow{\beta_x} W_m \Lambda_{(R[P],  P)/(R[Q], Q), x}^{\bullet}, 
\end{equation}
where $g_m$ is the isomorphism of Proposition \ref{thm7} and $\beta_x$ is 
the map in Proposition \ref{thm26}. By the exact sequence 
\begin{eqnarray*}
(W\Lambda_{(R[Q],  Q)/(R, {*})}^{1} \otimes_{W(R[Q])} W\Lambda_{(R[P^{\rm gp}], P^{\rm gp})/(R, {*})}^{\bullet -1})_{x}  & 
\xrightarrow{\alpha_x^{\rm gp}} &  W \Lambda_{(R[P^{\rm gp}],  P^{\rm gp})/(R, {*}), x}^{\bullet} \\
 & \xrightarrow{\beta_x^{\rm gp}} &  W\Lambda_{(R[P^{\rm gp}],  P^{\rm gp})/(R[Q], Q), x}^{\bullet} \to 0 
\end{eqnarray*}
of Proposition \ref{thm26} with $P$ replaced by $P^{\rm gp}$, it suffices to see that the composite \eqref{eq:composite} is zero on 
$$ \alpha_x^{\rm gp}((W_m\Lambda_{(R[Q],  Q)/(R, {*})}^{1} \otimes_{W_m(R[Q])} W_m\Lambda_{(R[P^{\rm gp}], P^{\rm gp})/(R, {*})}^{\bullet -1})_{x})$$ to define the map $g'_m$. 

We note that 
$(W_m\Lambda_{(R[Q],  Q)/(R, {*})}^{1} \otimes_{W_m(R[Q])} W_m\Lambda_{(R[P^{\rm gp}], P^{\rm gp})/(R, {*})}^{\bullet -1})_{x}$
is generated by the elements of the form 
$\omega_1 \otimes \omega_2$ such that 
%, where 
%$n \geq 0 ,  q_1 \in Q, q_2 \in Q^{\rm gp}, \eta \in W_{m-n} (R)$ and 
$\omega_1$ is either 
$${}^{V^{n}}(\eta X^{q_{1}}) d\log X^{q_{2}} \quad \text{or} \quad d^{V^{n}}(\eta X^{q_{1}}) $$
%$l \geq 0 ,  p_1 \in P^{\rm gp}, \zeta \in W_{m-l} (R)$ 
and $\omega_2$ is either 
$${}^{V^{l}}(\zeta X^{p_{1}}) \bigwedge_{I} d\log X_i, \quad \text{or} \quad 
d^{V^{l}}(\zeta X^{p_{1}}) \bigwedge_{J} d \log X_i, $$
where $$n \geq 0 , \, q_1 \in Q, \, q_2 \in Q^{\rm gp}, \, \eta \in W_{m-n} (R) \quad \text{and} \quad 
l \geq 0, \, p_1 \in P^{\rm gp}, \, \zeta \in W_{m-l} (R)$$ with the condition 
$\frac{q_1}{p^n} + \frac{p_1}{p^l} = x$ in $P^{\rm gp}[\frac{1}{p}]$. 
So, to define the map $g'_m$, it suffices to check that any element 
of the form $\omega_1 \otimes \omega_2$ as above is sent to $0$ by 
the composite of the map $\alpha_x^{\rm gp}$ and and the map \eqref{eq:composite}. 
\medskip 

\noindent
{\bf Case 1.} The case 
$ \omega_1 = {}^{V^{n}}(\eta X^{q_{1}}) d\log X^{q_{2}}$, $ \omega_2 =  {}^{V^{l}}(\zeta X^{p_{1}}) \bigwedge_{I} d\log X_i$. 
\begin{eqnarray*}
& & {}^{V^{n}}(\eta X^{q_{1}}) d\log X^{q_{2}} \otimes {}^{V^{l}}(\zeta X^{p_{1}}) \bigwedge_{I} d\log X_i \\ 
& \overset{\alpha_x^{\rm gp}}{\mapsto} & {}^{V^{n}}(\eta X^{q_{1}}) d\log X^{q_{2}}{}^{V^{l}}(\zeta X^{p_{1}}) \bigwedge_{I} d\log X_i \\
& = & {}^{V^{n}}(\eta X^{q_{1}}) {}^{V^{l}}(\zeta X^{p_{1}})d\log X^{q_{2}}\bigwedge_{I} d\log X_i \\
& = & (\mbox{a degree $0$ element of weight}~ x)d\log X^{q_{2}}\bigwedge_{I} d\log X_i \\
% & \mapsto & (\mbox{a degree $0$ element of weight}~ x)d\log X^{q_{2}}\bigwedge_{I} d\log X_i \\
& \mapsto & (\mbox{a degree $0$ element of weight}~ x) \, 0 \cdot \bigwedge_{I} d\log X_i \\
& = & 0. 
\end{eqnarray*}

\noindent
{\bf Case 2.} The case 
$ \omega_1 = {}^{V^{n}}(\eta X^{q_{1}}) d\log X^{q_{2}}$, $ \omega_2 = d{}^{V^{l}}(\zeta X^{p_{1}}) \bigwedge_{I} d\log X_i$. 
\begin{eqnarray*}
{}^{V^{n}}(\eta X^{q_{1}}) d\log X^{q_{2}} \otimes d{}^{V^{l}}(\zeta X^{p_{1}}) \bigwedge_{I} d\log X_i  
& \overset{\alpha_x^{\rm gp}}{\mapsto} & {}^{V^{n}}(\eta X^{q_{1}}) d\log X^{q_{2}}d{}^{V^{l}}(\zeta X^{p_{1}}) \bigwedge_{I} d\log X_i \\
& = & -{}^{V^{n}}(\eta X^{q_{1}}) d{}^{V^{l}}(\zeta X^{p_{1}})d\log X^{q_{2}}\bigwedge_{I} d\log X_i. \quad (\sharp)
\end{eqnarray*}
We divide into two cases. 
\smallskip 

\noindent
{\bf Case 2-1.} The case $n \geq l$. 

By the calculation similar to {\bf Case 1} in the proof of Proposition \ref{thm16}, 
\begin{eqnarray*}
(\sharp)& = & (\mbox{a degree $0$ element of weight}~x)d\log X^{p_{1}}d\log X^{q_{2}}\bigwedge_{I} d\log X_i \\
% & \mapsto & (\mbox{a degree $0$ weight of weight}~ x)d\log X^{p_{1}}d\log X^{q_{2}}\bigwedge_{I} d\log X_i \\
& \mapsto & (\mbox{a degree $0$ element of weight}~ x)d\log X^{q_{} }\cdot 0 \cdot \bigwedge_{I} d\log X_i \\
& = & 0. 
\end{eqnarray*}

\noindent
{\bf Case 2-2.} The case $n \leq l$. 

By the calculation similar to {\bf Case 2} in the proof of Proposition \ref{thm16}, 
{\allowdisplaybreaks
\begin{eqnarray*}
(\sharp)& = & d(\mbox{a degree $0$ element of weight}~ x)d\log X^{q_{2}}\bigwedge_{I} d\log X_i \\
& \quad & -(\mbox{a degree $0$ element of weight}~x)d\log X^{p_{1}}d\log X^{q_{2}}\bigwedge_{I} d\log X_i \\
& \mapsto & d(\mbox{a degree $0$ element of weight}~ x) \cdot 0 \cdot \bigwedge_{I} d\log X_i \\
& \quad &  -(\mbox{a degree $0$ element of weight}~ x)d\log X^{p_{1}} \cdot 0 \cdot \bigwedge_{I} d\log X_i \\
& = & 0. 
\end{eqnarray*}}
\medskip 

\noindent
{\bf Case 3.} The case $ \omega_1 = d^{V^{n}}(\eta X^{q_{1}})$, $ \omega_2 = {}^{V^{l}}(\zeta X^{p_{1}}) \bigwedge_{I} d\log X_i$. 
\begin{eqnarray*}
d^{V^{n}}(\eta X^{q_{1}})\otimes {}^{V^{l}}(\zeta X^{p_{1}}) \bigwedge_{I} d\log X_i & 
\overset{\alpha_x^{\rm gp}}{\mapsto} & d{}^{V^{n}}(\eta X^{q_{1}}){}^{V^{l}}(\zeta X^{p_{1}}) \bigwedge_{I} d\log X_i \\
& = & {}^{V^{l}}(\zeta X^{p_{1}}) d{}^{V^{n}}(\eta X^{q_{1}}) \bigwedge_{I} d\log X_i.  \quad (\flat)
\end{eqnarray*}
We divide into two cases. 
\smallskip 

\noindent
{\bf Case 3-1.} The case $l \geq n$. 

By the calculation similar to {\bf Case 1} in the proof of Proposition \ref{thm16}, 
\begin{eqnarray*}
(\flat) & = & (\mbox{a degree $0$ element of weight}~x)d\log X^{q_{1}} \bigwedge_{I} d\log X_i \\
% & \mapsto & (\mbox{a degree $0$ element of weight}~ x)d\log X^{q_{1}}\bigwedge_{I} d\log X_i \\
& \mapsto & (\mbox{a degree $0$ element of weight}~ x) \cdot 0 \cdot \bigwedge_{I} d\log X_i \\
& = & 0. 
\end{eqnarray*}

\noindent
{\bf Case 3-2.} The case $l \leq n$. 

We use the assumption $q_1 + p_1 p^{n-l} = p^n x$. 
Since $P^{\rm gp}/Q^{\rm gp}$ is torsion free and $Q \hookrightarrow P$ is quasi-$p$-saturated, 
there exist $p_3 \in P , q_3 \in Q$ with $q_3 + p_3 p^{n-l} = p^n x$ by $(\star \star)$. 
Also, since $P^{\rm gp}$ is also torsion free, $d \log X^{p_1} = d \log X^{p_3} \in W_m\Lambda_{(R[P], P)/(R[Q], Q)}^{1}$. 
Then, by the calculation similar to {\bf Case 2} in the proof of Proposition \ref{thm16}, 
{\allowdisplaybreaks
\begin{eqnarray*}
(\flat) & = & d {}^{V^{n}}(\eta {}^{F^{n-l}}\zeta p^{l} X^{p^n x })\bigwedge_{I} d\log X_i -
{}^{V^{n}}(\eta {}^{F^{n-l}}\zeta X^{p^n x}) d\log X^{p_{1}} \bigwedge_{I} d\log X_i \\
%& \mapsto &  d {}^{V^{n}}(\eta {}^{F^{n-l}}\zeta p^{l} X^{p^n x})\bigwedge_{I} d\log X_i -
%{}^{V^{n}}(\eta {}^{F^{n-l}}\zeta X^{p^n x}) d\log X^{p_{1}} \bigwedge_{I} d\log X_i \\
& \mapsto &  d {}^{V^{n}}(\eta {}^{F^{n-l}}\zeta p^{l} X^{p^n x})\bigwedge_{I} d\log X_i -
{}^{V^{n}}(\eta {}^{F^{n-l}}\zeta X^{p^n x}) d\log X^{p_{1}} \bigwedge_{I} d\log X_i \\
& = &  d {}^{V^{n}}(\eta {}^{F^{n-l}}\zeta p^{l} X^{p^n x})\bigwedge_{I} d\log X_i -
{}^{V^{n}}(\eta {}^{F^{n-l}}\zeta X^{p^n x}) d\log X^{p_{3}} \bigwedge_{I} d\log X_i \\
& = &  d {}^{V^{n}}(\eta {}^{F^{n-l}}\zeta X^{q_3 + p_3 p^{n-l}}) \bigwedge_{I} d\log X_i -
{}^{V^{n}}(\eta {}^{F^{n-l}}\zeta X^{p^n x}) d\log X^{p_{3}} \bigwedge_{I} d\log X_i \\
%& = &  {}^{V^{n}}(\eta {}^{F^{n-l}}\zeta X^{q_3}) d{}^{V^{l}}( X^{p_3})\bigwedge_{I} d\log X_i -
%{}^{V^{n}}(\eta {}^{F^{n-l}}\zeta X^{p^n x}) d\log X^{p_{3}} \bigwedge_{I} d\log X_i \\
%& = &  {}^{V^{n}}(\eta {}^{F^{n-l}}\zeta X^{p^n x}) d\log X^{p_{3}} \bigwedge_{I} d\log X_i -
%{}^{V^{n}}(\eta {}^{F^{n-l}}\zeta X^{p^n x}) d\log X^{p_{3}} \bigwedge_{I} d\log X_i \\
& = & {}^{V^{l}}(\zeta X^{p_{3}}) d{}^{V^{n}}(\eta X^{q_{3}}) \bigwedge_{I} d\log X_i \\
& = & {}^{V^{l}}(\zeta X^{p_{3}}) \cdot 0 \cdot \bigwedge_{I} d\log X_i \\
& = & 0. 
\end{eqnarray*}}% 

\noindent
{\bf Case 4.} The case 
$ \omega_1 =  d^{V^{n}}(\eta X^{q_{1}})$, $ \omega_2 =  d^{V^{m}}(\zeta X^{p_{1}}) \bigwedge_{I} d\log X_i$. 
\begin{eqnarray*}
d^{V^{n}}(\eta X^{q_{1}})\otimes d^{V^{l}}(\zeta X^{p_{1}}) \bigwedge_{I} d\log X_i 
& \overset{\alpha_x^{\rm gp}}{\mapsto} & d{}^{V^{n}}(\eta X^{q_{1}})d^{V^{l}}(\zeta X^{p_{1}}) \bigwedge_{I} d\log X_i. \quad (\natural)
\end{eqnarray*}
We divide into two cases. 
\smallskip 

\noindent
{\bf Case 4-1.} The case $l \geq n$. 

By the calculation similar to {\bf Case 1} in the proof of Proposition \ref{thm16}, 
{\allowdisplaybreaks{
\begin{eqnarray*}
(\natural)& = & d(\mbox{a degree $0$ element of weight}~x)d\log X^{q_{1}} \bigwedge_{I} d\log X_i \\
% & \mapsto & d(\mbox{a degree $0$ element of weight}~ x)d\log X^{q_{1}}\bigwedge_{I} d\log X_i \\
& \mapsto & d(\mbox{a degree $0$ element of weight}~ x) \cdot 0 \cdot \bigwedge_{I} d\log X_i \\
& = & 0. 
\end{eqnarray*}}}%

\noindent
{\bf Case 4-2.} The case $l \leq n$. 

We use the assumption $q_1 + p_1 p^{n-l} = p^n x$. 
Since $P^{\rm gp}/Q^{\rm gp}$ is torsion free and $Q \hookrightarrow P$ is quasi-$p$-saturated, 
there exist $p_3 \in P , q_3 \in Q$ with $q_3 + p_3 p^{n-l} = p^n x$ by $(\star \star)$. 
Also, since $P^{\rm gp}$ is also torsion free, $d \log X^{p_1} = d \log X^{p_3} \in W_m\Lambda_{(R[P], P)/(R[Q], Q)}^{1}$. 
Then, by the calculation similar to {\bf Case 2} in the proof of Proposition \ref{thm16}, 
{\allowdisplaybreaks
\begin{eqnarray*}
(\natural) & = & d{}^{V^{n}}(\eta {}^{F^{n-l}}\zeta p^{l} X^{p^n x}) d\log X^{p_{1}}\bigwedge_{I} d\log X_i \\
% & \mapsto & d{}^{V^{n}}(\eta {}^{F^{n-l}}\zeta p^{l} X^{p^n x}) d \log X^{p_{1}}\bigwedge_{I} d\log X_i \\
& \mapsto & d{}^{V^{n}}(\eta {}^{F^{n-l}}\zeta p^{l} X^{p^n x}) d \log X^{p_{1}}\bigwedge_{I} d\log X_i \\
& = & d{}^{V^{n}}(\eta {}^{F^{n-l}}\eta p^{l} X^{p^n x}) \log X^{p_{3}}\bigwedge_{I} d\log X_i \\
& = & d{}^{V^{n}}(\eta {}^{F^{n-l}}\zeta p^{l} X^{q_3 + p^{n-l} p_3}) d \log X^{p_{3}}\bigwedge_{I} d\log X_i \\
& = & d{}^{V^{n}}(\eta X^{q_{3}})d^{V^{l}}(\zeta X^{p_{3}}) \bigwedge_{I} d\log X_i \\
%& = & d\left( {}^{V^{n}}(\eta {}^{F^{n-l}}\zeta X^{q_3}){}^{V^{l}}(X^{p_3}) \right) d \log X^{p_{3}}\bigwedge_{I} d\log X_i \\
%& = & {}^{V^{n}}(\eta {}^{F^{n-l}}\zeta X^{q_3}) d^{V^{l}}(X^{p_3}) d \log X^{p_{3}}\bigwedge_{I} d\log X_i \\
& = & 0 \cdot d^{V^{l}}(\zeta X^{p_{3}}) \bigwedge_{I} d\log X_i \\
& = & 0. 
\end{eqnarray*}
}
By the above calculation, we have checked that the morphism $g'_m$ is well-defined. 
Note that the isomorphism $f_m$ in Proposition \ref{thm7} and the morphism $f'_m$ fit into 
the commutative diagram with exact horizontal lines 
\[ \xymatrix{
W_m \Lambda_{(R[P],  P)/(R, {*}), x}^{\bullet} \ar[d]_{f_m} \ar[r]_{\beta_x} & W_m \Lambda_{(R[P],  P)/(R[Q], Q), x}^{\bullet} \ar[d]_{f'_m} \ar[r] & 0\\
W_m \Lambda_{(R[P^{\rm gp}], P^{\rm gp})/(R, {*}), x}^{\bullet}  \ar[r]_{\beta_x^{\rm gp}} & W_m \Lambda_{(R[P^{\rm gp}], P^{\rm gp})/(R[Q], Q), x}^{\bullet} \ar[r]  & 0, } 
\]
and that there exists a similar commutative diagram with two horizontal lines exchanged and 
morphisms $f_m, f'_m$ replaced by morphisms $g_m, g'_m$. 
%the morphism $f'_m$ is surjective. By the same reason, $g'_m$ is also surjective. 
Then, we can check the equalities 
$f'_m \circ g'_m = {\rm id}$,  $g'_m \circ f'_m = {\rm id}$ from 
the equalities $f_m \circ g_m = {\rm id}$,  $g_m \circ f_m = {\rm id}$, the above commutative 
diagram and the similar diagram with $g_m$, $g'_m$. So we are done. 

(2) \, 
We see easily that $\{W_m\Lambda_{(R[P^{\rm gp}],  P^{\rm gp})/(R[Q], Q)}^{\bullet}\}_{m \in {\mathbb N}}$ 
has a structure of log $F$-$V$-procomplex over an $(R[Q^{\rm gp}], Q^{\rm gp})$-algebra 
$(R[P^{\rm gp}],  P^{\rm gp})$, and by the universality, we have the canonical isomorphism of 
log $F$-$V$-procomplexes 
%By Proposition-Definition \ref{thm15}, 
%we have the isomorphism 
\[ \{W_m\Lambda_{(R[P^{\rm gp}],  P^{\rm gp})/(R[Q], Q)}^{\bullet}\}_{m \in {\mathbb N}} \cong 
\{W_m\Lambda_{(R[P^{\rm gp}],  P^{\rm gp})/(R[Q^{\rm gp}], Q^{\rm gp})}^{\bullet}\}_{m \in {\mathbb N}}. \]
Hence we have the isomorphism 
\[ W_m\Lambda_{(R[P^{\rm gp}],  P^{\rm gp})/(R[Q], Q), x}^{\bullet} \cong 
W_m\Lambda_{(R[P^{\rm gp}],  P^{\rm gp})/(R[Q^{\rm gp}], Q^{\rm gp}),x}^{\bullet}. \]
Also, since the map $Q^{\rm gp} \to P^{\rm gp}$ is injective and 
$P^{\rm gp}, Q^{\rm gp}, P^{\rm gp}/Q^{\rm gp}$ are torsion free, there exists an isomorphism 
\begin{equation}\label{eq:pq-decomp}
P^{\rm gp} \cong Q^{\rm gp} \oplus (P^{\rm gp}/Q^{\rm gp}),% \cong {\mathbb{Z}}^s \oplus {\mathbb{Z}}^{r-s}. 
\end{equation}
which induces an isomorphism 
\begin{align*}
W_m \Lambda_{(R[P^{\rm gp}],  P^{\rm gp})/(R[Q^{\rm gp}], Q^{\rm gp})}^{\bullet} 
& \xrightarrow{\cong} W_m\Lambda_{(R[Q^{\rm gp}][P^{\rm gp}/Q^{\rm gp}],  
Q^{\rm gp} \oplus (P^{\rm gp}/Q^{\rm gp}))/(R[Q^{\rm gp}], Q^{\rm gp})}^{\bullet} \\ 
& \xleftarrow{\cong} 
W_m\Lambda_{(R[Q^{\rm gp}][P^{\rm gp}/Q^{\rm gp}],  P^{\rm gp}/Q^{\rm gp})/(R[Q^{\rm gp}], {*})}^{\bullet}. 
\end{align*}
For $x \in P^{\rm gp}[\frac{1}{p}]$, denote the image of $x$ by the isomorphism 
\[P^{\rm gp}[\tfrac{1}{p}] \cong Q^{\rm gp}[\tfrac{1}{p}] \oplus (P^{\rm gp}/Q^{\rm gp})[\tfrac{1}{p}] \] 
% \cong {\mathbb{Z}}[\tfrac{1}{p}]^s \oplus {\mathbb{Z}}[\tfrac{1}{p}]^{r-s}\] 
induced by \eqref{eq:pq-decomp} by $x'+ x''$. 
% = (x_i)_{i=1}^{s} + (x_i)_{i=s+1}^r$. 
Also, we put 
{\allowdisplaybreaks{
\begin{multline*}
W_m\Lambda_{(R[Q^{\rm gp}][P^{\rm gp}/Q^{\rm gp}],  P^{\rm gp}/Q^{\rm gp})/(R[Q^{\rm gp}], {*}), x}^{\bullet} \\
{} := \mbox{Im} \left( W_m \Lambda_{(R[P^{\rm gp}],  P^{\rm gp})/(R[Q^{\rm gp}], Q^{\rm gp}),  x}^{\bullet} \to W_m\Lambda_{(R[Q^{\rm gp}][P^{\rm gp}/Q^{\rm gp}],  P^{\rm gp}/Q^{\rm gp})/(R[Q^{\rm gp}], {*})}^{\bullet} \right). 
\end{multline*}}}%
Beware that the right hand side is not the $x$-part of 
$W_m\Lambda_{(R[Q^{\rm gp}][P^{\rm gp}/Q^{\rm gp}],  P^{\rm gp}/Q^{\rm gp})/(R[Q^{\rm gp}], {*})}^{\bullet}$ but 
a subset of the $x''$-part of it. By definition, we have an isomorphism 
\[ f''_m: W_m \Lambda_{(R[P^{\rm gp}],  P^{\rm gp})/(R[Q^{\rm gp}], Q^{\rm gp}),  x}^{\bullet} \overset{\cong}{\to} 
W_m\Lambda_{(R[Q^{\rm gp}][P^{\rm gp}/Q^{\rm gp}],  P^{\rm gp}/Q^{\rm gp})/(R[Q^{\rm gp}], {*}), x}^{\bullet}.  
\]

An element in 
$W_m\Lambda_{(R[Q^{\rm gp}][P^{\rm gp}/Q^{\rm gp}],  P^{\rm gp}/Q^{\rm gp})/(R[Q^{\rm gp}], {*}), x}^{\bullet}$ is written as a sum of the images of 
$P^{\rm gp}$-basic Witt differentials in 
$W_m \Lambda_{(R[P^{\rm gp}],  P^{\rm gp})/(R[Q^{\rm gp}], Q^{\rm gp}),  x}^{\bullet}$ by 
$f'_m$. As for the $P^{\rm gp}$-basic Witt differentials in 
$W_m \Lambda_{(R[P^{\rm gp}],  P^{\rm gp})/(R[Q^{\rm gp}], Q^{\rm gp}),  x}^{\bullet}$, 
we have the following equalities (note that $u(x) = \min \{u(x'), u(x'') \}$): 
\begin{align*}
& {}^{V^{u(x)}}(\eta X^{p^{u(x)}x}) \bigwedge_I d\log X_i = {}^{V^{u(x)}}(\eta X^{p^{u(x)}x'} X^{p^{u(x)}x''}) \bigwedge_I d\log X_i \\ 
= \, & 
\left\{ \begin{array}{ll}
{}^{V^{u(x'')}}(\eta X^{p^{u(x)}x'} X^{p^{u(x'')}x''}) \bigwedge_I d\log X_i & (u(x') \leq u(x'') ), \\
{}^{V^{u(x'')}}({}^{V^{u(x')-u(x'')}}(\eta X^{p^{u(x')}x'}) X^{p^{u(x'')}x''}) \bigwedge_I d\log X_i & (u(x') \geq u(x'')), 
\end{array} \right. 
\end{align*}
\begin{align*}
& d{}^{V^{u(x)}}(\eta X^{p^{u(x)}x}) \bigwedge_I d\log X_i = d{}^{V^{u(x)}}(\eta X^{p^{u(x)}x'} X^{p^{u(x)}x''}) \bigwedge_I d\log X_i \\ 
= \, & 
\left\{ \begin{array}{ll}
d{}^{V^{u(x'')}}(\eta X^{p^{u(x)}x'} X^{p^{u(x'')}x''}) \bigwedge_I d\log X_i & (u(x') \leq u(x'') ), \\
d{}^{V^{u(x'')}}({}^{V^{u(x')-u(x'')}}(\eta X^{p^{u(x')}x'}) X^{p^{u(x'')}x''}) \bigwedge_I d\log X_i & (u(x') \geq u(x'')). 
\end{array} \right. 
\end{align*}
Thus we see that 
an element in 
$W_m\Lambda_{(R[Q^{\rm gp}][P^{\rm gp}/Q^{\rm gp}],  P^{\rm gp}/Q^{\rm gp})/(R[Q^{\rm gp}], {*}), x}^{\bullet}$ is written as a sum of the elements of the form 
\begin{equation}\label{eq:pq1}
{}^{V^{u(x'')}}(\eta' X^{p^{u(x'')}x''}) \bigwedge_I d\log X_i
\end{equation} 
or the form 
\begin{equation}\label{eq:pq2}
d{}^{V^{u(x'')}}(\eta' X^{p^{u(x'')}x''}) \bigwedge_I d\log X_i
\end{equation} 
such that $\xi := {}^{V^{u(x'')}}\eta'$ is of the form 
\begin{equation}\label{eq:pq3}
\xi = 
{}^{V^{u(x)}}(\eta X^{p^{u(x)}x'}) = 
{}^{V^{u(x')}}({}^{V^{u(x)-u(x')}}\eta X^{p^{u(x')}x'})
\in W_m(R[Q^{\rm gp}]).
\end{equation}
The elements \eqref{eq:pq1}, \eqref{eq:pq2} are 
$P^{\rm gp}/Q^{\rm gp}$-basic Witt differentials in 
$W_m\Lambda_{(R[Q^{\rm gp}][P^{\rm gp}/Q^{\rm gp}],  P^{\rm gp}/Q^{\rm gp})/(R[Q^{\rm gp}], {*})}^{\bullet}$ with weight $x''$ such that $\xi$ is of the form \eqref{eq:pq3}. 
Then, by claim 1 in the proof of Proposition \ref{thm2}, 
we see that any element in 
$W_m\Lambda_{(R[Q^{\rm gp}][P^{\rm gp}/Q^{\rm gp}],  P^{\rm gp}/Q^{\rm gp})/(R[Q^{\rm gp}], {*}), x}^{\bullet}$ is written as a sum of 
basic Witt differentials $\hat{e}(\xi , k , \mathcal{P})$ with $k^+ = x''$ and 
$\xi$ as in the form \eqref{eq:pq3}. 
If we fix $x''$ and make $x'$ vary, the weight of $\xi$ in \eqref{eq:pq3} 
as $Q^{\rm gp}$-basic Witt differential of degree $0$ varies. So we conclude that 
the $x''$-part of $W_m\Lambda_{(R[Q^{\rm gp}][P^{\rm gp}/Q^{\rm gp}],  P^{\rm gp}/Q^{\rm gp})/(R[Q^{\rm gp}], {*})}^{\bullet}$ is written as the direct sum 
\begin{align} \label{3.2.4.2}
 \bigoplus_{y = y'+y'' \in P^{\rm gp}[\frac{1}{p}], \atop y''=x''}
W_m\Lambda_{(R[Q^{\rm gp}][P^{\rm gp}/Q^{\rm gp}],  P^{\rm gp}/Q^{\rm gp})/(R[Q^{\rm gp}], {*}), y}^{\bullet}. % \tag{3.2.4.2}
\end{align}
Then, making $x''$ vary, we obtain the direct sum decomposition  
\[ W_m\Lambda_{(R[Q^{\rm gp}][P^{\rm gp}/Q^{\rm gp}],  P^{\rm gp}/Q^{\rm gp})/(R[Q^{\rm gp}], {*})}^{\bullet} = 
\bigoplus_{x \in P^{\rm gp}[\frac{1}{p}]}
W_m\Lambda_{(R[Q^{\rm gp}][P^{\rm gp}/Q^{\rm gp}],  P^{\rm gp}/Q^{\rm gp})/(R[Q^{\rm gp}], {*}), x}^{\bullet}.\] 
For $x \in P[\frac{1}{p}]$, we put 
\[ \tilde{f}_m := 
f''_m \circ f'_m: 
W_m \Lambda_{(R[P],  P)/(R[Q], Q), x}^{\bullet} \overset{\cong}{\to} W_m\Lambda_{(R[Q^{\rm gp}][P^{\rm gp}/Q^{\rm gp}],  P^{\rm gp}/Q^{\rm gp})/(R[Q^{\rm gp}], {*}), x}^{\bullet}. \]

Now we give the proof of the assertion (2), imitating 
the last step of the proof of Proposition \ref{thm22}. 
We only have to prove the uniqueness of the expression and so 
it suffices to prove that, if we have an equality
\[ 0 = \sum_{x \in P[\frac{1}{p}]} \omega_x \] 
 in $W_m\Lambda_{(R[P],  P)/(R[Q],  Q)}^{k}$ 
where the right hand side is a finite sum 
with $\omega_x \in W_m\Lambda_{(R[P],  P)/(R[Q], Q), x}^{k}$, 
all $\omega_x$'s are equal to $0$. 

Consider the following commutative diagram
\[ \xymatrix{
\bigoplus_{x \in P[\frac{1}{p}]} W_m\Lambda_{(R[P],  P)/(R[Q], Q), x}^{\bullet} \ar[r]^-{\oplus \tilde{f}_m}_-{\cong} \ar[d]^{{\rm sum}} & \bigoplus_{x \in P[\frac{1}{p}]} 
W_m \Lambda_{(R[Q^{\rm gp}][P^{\rm gp}/Q^{\rm gp}], 
P^{\rm gp}/Q^{\rm gp})/(R[Q^{\rm gp},  {*}), x}^{\bullet} \ar[d]^{{\rm sum}} \\
  W_m\Lambda_{(R[P],  P)/(R[Q], Q)}^{\bullet}  \ar[r]^-j & 
W_m\Lambda_{(R[Q^{\rm gp}][P^{\rm gp}/Q^{\rm gp}], P^{\rm gp}/Q^{\rm gp})/(R[Q^{\rm gp}],  {*})}^{\bullet}, 
  }
\]
where $\oplus \tilde{f}_m$ is the direct sum of the isomorphisms $\tilde{f}_m$, 
$j$ is the composite 
\begin{multline*}
W_m\Lambda_{(R[P],  P)/(R[Q],  Q)}^{\bullet}  \to W_m\Lambda_{(R[P^{\rm gp}], P^{\rm gp})/(R[Q],  Q)}^{\bullet} \\
{} \xrightarrow{\cong} W_m\Lambda_{(R[P^{\rm gp}], P^{\rm gp})/(R[Q^{\rm gp}], Q^{\rm gp})}^{\bullet} \xrightarrow{\cong}  W_m\Lambda_{(R[Q^{\rm gp}][P^{\rm gp}/Q^{\rm gp}], P^{\rm gp}/Q^{\rm gp})/(R[Q^{\rm gp}],  {*})}^{\bullet} \end{multline*}
and ${\rm sum}$ are the morphisms of taking sum.  
By assumption, the image of the element $$(\omega_x)_x \in 
\bigoplus_{x \in P[\frac{1}{p}]} W_m\Lambda_{(R[P],  P)/(R[Q], Q), x}^{\bullet}$$ 
by $j \circ {\rm sum} = {\rm sum} \circ (\oplus \tilde{f}_m)$ is equal to $0$. 
Also, as we saw above, the morphism 
${\rm sum}$ on the right is injective. 
Thus we see that $(\tilde{f}_m(\omega_x))_x = (0)_x$ and so 
$\omega_x = 0$ for all $x \in P[\frac{1}{p}]$. 
So we are done. 
\end{proof}

\subsection{Comparison isomorphism (II)}

%In this subsection, we finish the proof of our first main comparison 
%isomorphism between the complex computing the relative log crystalline 
%cohomology and the relative log de Rham-Witt complex. 
%
%First we prove the comparison isomorphism 
%of relative de Rham complex and relative log de Rham-Witt complex 
%for certain pre-log rings. 
Let $R$ be a commutative ring in which $p$ is nilpotent and let 
$Q \to P$ be an injective $p$-saturated homomorphism of fs monoids 
such that $Q^{\rm gp}, P^{\rm gp}, P^{\rm gp}/Q^{\rm gp}$ are torsion free. 
% and that $(R[P],  P)/(R[Q],  Q)$ is log smooth. 
(Under this assumption, 
$Q^{\rm gp} \to P^{\rm gp}$ is also injective and $(R[P],  P)/(R[Q],  Q)$, $(R[Q],  Q)/(R,  {*})$ are log smooth.) 

Let $((A_m, P), \phi_m, \delta_m)_m$ be the log Frobenius lift 
of $(R[P],P)$ over $(R[Q],Q)$ in 
Example \ref{exam} (so $A_m = W_m(R[Q]) \otimes_{\mathbb{Z}[Q]} \mathbb{Z}[P]$), and let 
$(Z_m, \Phi_m, \Delta_m)_m$ be
the corresponding log Frobenius lift (as log schemes) of 
$Z := \mbox{Spec}(R[P], P)$ over 
$\mbox{Spec}(R[Q], Q)$. 
Then, by the construction in Section 1.3 using $(Z_m, \Phi_m, \Delta_m)_m$, 
we obtain the comparison morphism 
\begin{align*}
 \Lambda_{Z_{m}/{{\rm Spec}}(W_{m}(R[Q],  Q))}^{\bullet} \to W_{m} \Lambda_{Z/{{\rm Spec}}(R[Q], Q)}^{\bullet} % \tag{3.3.1}
\end{align*}
of complexes of quasi-coherent sheaves, hence the morphism 
\begin{align} \label{3.3.1}
c_{P,Q}:  \Lambda_{(A_m, P)/W_{m}(R[Q],  Q)}^{\bullet} \to W_{m} \Lambda_{(R[P],P)/(R[Q], Q)}^{\bullet}
 % \tag{3.3.1}
\end{align}
of complex of $W_m(R[Q])$-modules. 

\begin{theorem}\label{qis1}
In the above situation, the morphism 
$c_{P,Q}$ is a quasi-isomorphism. 
\end{theorem}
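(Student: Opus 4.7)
The plan is to mirror the proof of Theorem \ref{thm36}, using Proposition \ref{thm8}~(2) to decompose the target as
\[
W_m\Lambda_{(R[P],P)/(R[Q],Q)}^{\bullet} = \bigoplus_{x \in P[\frac{1}{p}]} W_m\Lambda_{(R[P],P)/(R[Q],Q),x}^{\bullet},
\]
and reducing the acyclicity of the ``bad'' summands to Section~2 applied over the base $R[Q^{\rm gp}]$. I will show that $c_{P,Q}$ induces an isomorphism from the source onto the sub-direct-sum indexed by $\{y \in P[\frac{1}{p}] : \bar{y} \in L\}$, where $L := P^{\rm gp}/Q^{\rm gp}$ and $\bar{y}$ denotes the image of $y$ in $L[\frac{1}{p}]$, while the complementary $y$-parts are acyclic.

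To locate the image, I expand each scalar $\alpha \in W_m(R[Q])$ via Lemma \ref{thm11} as a convergent sum of $Q$-basic terms $V^{u(q)}(\eta X^{p^{u(q)}q})$ with $q \in Q[\frac{1}{p}]$ and $\eta \in W_m(R)$, and use the Frobenius identity $V^{u(q)}(\eta X^{p^{u(q)}q}) \cdot X^s = V^{u(q)}(\eta X^{p^{u(q)}(q+s)})$ (for $s \in P$) to conclude that the image of $\alpha\,T^s \otimes e_I$ under $c_{P,Q}$ lies in the $(q+s)$-part. Hence $\operatorname{Im} c_{P,Q}$ is contained in $\bigoplus_{y \in P + Q[\frac{1}{p}]} W_m\Lambda_{(R[P],P)/(R[Q],Q),y}^{\bullet}$. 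By property $(\star\star)$ of Lemma \ref{thm38}, together with the $p$-saturation of $P' := \operatorname{Im}(P \to L)$ inside $L$ (proved inside Lemma \ref{thm38}, using $P'^{\rm gp} = L$), this index set coincides with $\{y \in P[\frac{1}{p}] : \bar{y} \in L\}$.

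For the isomorphism onto this subcomplex, surjectivity is explicit: given $y$ with $\bar{y} \in L$, property $(\star\star)$ produces $s \in P$ and $q_0 \in Q$ with $p^{u(y)}y = p^{u(y)}s + q_0$, and then $V^{u(y)}([T^{q_0}]\eta) \cdot (T^s \otimes e_I)$ maps to the $A$-type generator $V^{u(y)}(\eta X^{p^{u(y)}y}) \bigwedge_I d\log X^{e_i}$ of the $y$-part; since the base $(R[Q],Q)$ forces $d\alpha = 0$ for $\alpha \in W_m(R[Q])$, the $d$-type generators of the $y$-part collapse to $A$-type ones and are likewise reached. For injectivity, I fix a set $S \subset P$ of representatives for the $Q$-action on $P$ (existing by integrality of $Q \to P$), giving a $W_m(R[Q])$-basis $\{T^s \otimes e_I\}$ of the source. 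The exactness of $Q \to P$, which follows from the $p$-saturation hypothesis via Proposition \ref{prop:sat}, yields $P \cap Q^{\rm gp} = Q$; combined with the torsion-freeness of $P^{\rm gp}/Q^{\rm gp}$, this shows that the map $(s,q) \mapsto s+q$ from $S \times Q[\frac{1}{p}]$ to $P[\frac{1}{p}]$ is injective, so distinct source generators contribute to distinct $y$-parts, and injectivity of $c_{P,Q}$ then follows from the direct-sum decomposition of Proposition \ref{thm8}~(2).

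Finally, I treat the complementary $y$-parts (those with $\bar{y} \notin L$). Proposition \ref{thm8}~(1) identifies $W_m\Lambda_{(R[P],P)/(R[Q],Q),y}^{\bullet}$ with $W_m\Lambda_{(R[P^{\rm gp}],P^{\rm gp})/(R[Q],Q),y}^{\bullet}$, and the decomposition \eqref{3.2.4.2} from the proof of Proposition \ref{thm8}~(2) embeds this as a direct summand of the absolute $\bar{y}$-part $W_m\Lambda_{(R[Q^{\rm gp}][L], L)/(R[Q^{\rm gp}],*),\bar{y}}^{\bullet}$, a complex fitting the Section~2 framework (with $R$ there replaced by $R[Q^{\rm gp}]$ and $P$ by the free abelian group $L$). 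Since $L$ is torsion-free and $\bar{y} \in L[\frac{1}{p}] \setminus L$ satisfies $u_L(\bar{y}) \geq 1$, the end of the proof of Theorem \ref{thm36}---trivially transferring via Proposition \ref{thm31} (as $L^{\rm gp} = L$) and then invoking Lemma \ref{thm4}---shows this absolute $\bar{y}$-part is acyclic, hence so is the direct summand. The most delicate step is the injectivity claim, which requires careful control of the $W_m(R[Q])$-basis of the source, of the Verschiebung filtration on $W_m(R[Q])$ from Lemma \ref{thm11}, and of the direct-sum decomposition of Proposition \ref{thm8}~(2).
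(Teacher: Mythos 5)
Your overall architecture matches the paper's: use the decomposition of Proposition \ref{thm8}(2), identify the image of $c_{P,Q}$ as the sum of the $y$-parts with $y \in P + Q[\frac 1p]$ (equivalently $\bar y$ integral in $L$), and show the complementary $y$-parts are acyclic via Proposition \ref{thm8}(1), Lemma \ref{thm4} and the computation over $R[Q^{\rm gp}]$. The acyclicity half is essentially identical to the paper's and is fine.

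The injectivity argument, however, has a genuine gap, and this is precisely where your route diverges from the paper's. You fix representatives $S \subset P$ of the $Q$-cosets of $P$ and treat $\{T^s \otimes e_I\}$ as a $W_m(R[Q])$-basis of the source. Two remarks. First, freeness of $A_m$ over $W_m(R[Q])$ via minimal coset representatives is true here (it uses integrality of $Q\to P$ together with the descending-chain condition for fine monoids with trivial units), but it is a separate nontrivial fact not found in the paper's toolbox; the paper avoids it by proving Lemma \ref{thm41} directly. Second, and more seriously, the statement ``distinct source generators contribute to distinct $y$-parts'' is false as worded: for a fixed $s$ and a fixed $q$-component of the scalar, \emph{all} the generators $T^s\otimes e_I$ with $|I|=n$ land in the \emph{same} $y$-part ($y=s+q$). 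So knowing $(s,q)\mapsto s+q$ is injective does not by itself separate the contributions; invoking the direct-sum decomposition of Proposition \ref{thm8}(2) only tells you each $y$-component of the image vanishes, after which you are left with a relation $\sum_I {}^{V^{u(y)}}(\eta_I X^{p^{u(y)}y})\bigwedge_I d\log X_i = 0$ in $W_m\Lambda^n_{(R[P],P)/(R[Q],Q),y}$ and must deduce $\eta_I=0$ for all $I$. That last step is exactly the nontrivial content: it needs the isomorphism of Proposition \ref{thm8}(1) onto the $P^{\rm gp}$-side together with the Langer--Zink uniqueness (Proposition \ref{thm20}, Lemma \ref{thm2}), neither of which you invoke for injectivity. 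The paper's proof sidesteps this entirely by establishing Lemma \ref{thm41} (injectivity of $A_m\to A'_m$, via the ``claim'' about rewriting ${}^{V^{u(q)}}(\eta X^{p^{u(q)}q})\otimes T^r$ when $q+r=q'+r'$) and then chasing the commutative diagram down to the injective map $c_{P^{\rm gp}/Q^{\rm gp}}$ from Section 2. If you want to keep your route, you must (i) prove the freeness of $A_m$ over $W_m(R[Q])$, and (ii) complete the $y$-part linear independence by passing through Proposition \ref{thm8}(1) and the $P^{\rm gp}$ uniqueness; otherwise, switching to the paper's Lemma \ref{thm41} is the cleaner option.
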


To prove Theorem \ref{qis1}, first we prove the following lemma. 

\begin{lemma}\label{thm41}
Let $R$ and $Q \to P$ be as above. Then the canonical map 
\[ \Lambda_{(W_m(R[Q]) \otimes_{\mathbb{Z}[Q]} \mathbb{Z}[P], P)/(R[Q], Q)}^{\bullet} \to \Lambda_{(W_m(R[Q]) \otimes_{\mathbb{Z}[Q]} \mathbb{Z}[P^{\rm gp}], P^{\rm gp})/W_m(R[Q], Q)}^{\bullet} \]
is injective. 
\end{lemma}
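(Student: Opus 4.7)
The plan is to read the base of the source as $W_m(R[Q], Q)$ (the ``$(R[Q], Q)$'' appearing in the statement looks like a typographical slip, as there is no natural ring map $R[Q] \to A_m$ in general), then reduce the claim to injectivity of the underlying ring map $A_m \to A_m^{\rm gp}$, where
\[ A_m := W_m(R[Q]) \otimes_{\mathbb{Z}[Q]} \mathbb{Z}[P], \qquad A_m^{\rm gp} := W_m(R[Q]) \otimes_{\mathbb{Z}[Q]} \mathbb{Z}[P^{\rm gp}], \]
and finally to prove that ring-theoretic injection by a free-module plus non-zero-divisor argument.

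First, since $P^{\rm gp}/Q^{\rm gp}$ is torsion free, both $(\mathbb{Z}[Q], Q) \to (\mathbb{Z}[P], P)$ and $(\mathbb{Z}[Q], Q) \to (\mathbb{Z}[P^{\rm gp}], P^{\rm gp})$ are log smooth, so their base changes along $\mathbb{Z}[Q] \to W_m(R[Q])$ remain log smooth with log differentials of the standard form
\[ \Lambda^n_{(A_m, P)/W_m(R[Q],Q)} \cong A_m \otimes_\mathbb{Z} \textstyle{\bigwedge}^n(P^{\rm gp}/Q^{\rm gp}), \quad \Lambda^n_{(A_m^{\rm gp}, P^{\rm gp})/W_m(R[Q],Q)} \cong A_m^{\rm gp} \otimes_\mathbb{Z} \textstyle{\bigwedge}^n(P^{\rm gp}/Q^{\rm gp}). \]
The canonical map between these is $A_m \otimes \bigwedge^n \to A_m^{\rm gp} \otimes \bigwedge^n$ induced by $A_m \to A_m^{\rm gp}$, and since $P^{\rm gp}/Q^{\rm gp}$ is free abelian, $\bigwedge^n(P^{\rm gp}/Q^{\rm gp})$ is $\mathbb{Z}$-free, so the claim reduces to the injectivity of $A_m \to A_m^{\rm gp}$.

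For this I identify $A_m^{\rm gp}$ with the localization $A_m[\{1 \otimes T^p\}_{p \in P}^{-1}]$, so it suffices that each $1 \otimes T^p$ is a non-zero-divisor on $A_m$. Here I use that $Q \hookrightarrow P$ is an injective morphism of integral monoids: by cancellativity of $P$, the left-translation action of $Q$ on $P$ is free on each orbit, so $P = \bigsqcup_{i \in I}(Q + p_i)$, giving a free $W_m(R[Q])$-module decomposition $A_m = \bigoplus_{i \in I} W_m(R[Q]) \cdot (1 \otimes T^{p_i})$. For $p \in P$, writing $p + p_i = q(i) + p_{\sigma(i)}$ uniquely with $q(i) \in Q$ yields $(1 \otimes T^p)\cdot(1 \otimes T^{p_i}) = [T^{q(i)}] \cdot (1 \otimes T^{p_{\sigma(i)}})$. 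The reindexing $\sigma : I \to I$ is injective because if $p + p_i$ and $p + p_j$ share a $Q$-orbit, then so do $p_i$ and $p_j$, forcing $i = j$. So the non-zero-divisorship of $1 \otimes T^p$ on the free $W_m(R[Q])$-module $A_m$ reduces to that of each $[T^{q(i)}]$ on $W_m(R[Q])$.

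The main technical step is therefore this last one. I will verify it via the $V$-filtration of $W_m(R[Q])$: each graded piece $V^k W_m(R[Q])/V^{k+1}W_m(R[Q]) \cong R[Q]$ has $[T^q]$ acting (via $[T^q]\cdot V^k(b) = V^k(F^k([T^q])\cdot b) = V^k([T^{p^k q}]\cdot b)$) as ordinary multiplication by $T^{p^k q}$, which is a non-zero-divisor on $R[Q]$ (thanks to the $Q$-grading and the integrality of $Q$). A standard induction on the filtration depth then shows that $[T^q]$ is a non-zero-divisor on the whole of $W_m(R[Q])$. Combining these steps yields the desired injectivity.
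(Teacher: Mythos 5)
Your plan --- read the base as $W_m(R[Q],Q)$, reduce to injectivity of $A_m \to A^{\rm gp}_m$ using that $\bigwedge^n(P^{\rm gp}/Q^{\rm gp})$ is $\mathbb{Z}$-free, and further reduce to non-zero-divisorness of each $1 \otimes T^p$ via the localization description of $A^{\rm gp}_m$ --- is a genuinely different route from the paper's. The paper also reduces to $A_m \hookrightarrow A'_m$, but then argues by expanding elements of $A_m$ in the Witt-type basis of Lemma~\ref{thm11}, pushing forward to $W_m(R[P^{\rm gp}])$ via an auxiliary map $\delta'_m$, and using uniqueness of that expansion together with a separate ``claim'' established via integrality and $(\star\star)$. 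Your final $V$-filtration step (reducing non-zero-divisorness of $[T^q]$ on $W_m(R[Q])$ to that of $T^{p^kq}$ on $R[Q]$) is correct as well.

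There is, however, a real gap in the middle. You assert that ``by cancellativity of $P$, the left-translation action of $Q$ on $P$ is free on each orbit, so $P = \bigsqcup_{i\in I}(Q + p_i)$.'' The ``so'' does not follow: cancellativity only makes each $Q+p$ a free $Q$-torsor; it does not produce a system of representatives $p_i$ whose translates are pairwise disjoint and cover $P$. Already for $Q = P = \mathbb{N}$ the translates $Q+0$ and $Q+1$ intersect without being equal (the decomposition exists there, trivially, but not for the reason you give), and for non-integral injections such as $Q = \{(a,b): 0 \le b \le a\} \hookrightarrow P = \mathbb{N}^2$ no such disjoint decomposition exists at all. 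The hypothesis actually doing the work is that $Q \to P$ is \emph{integral} (part of the $p$-saturatedness assumption), which gives flatness of $\mathbb{Z}[Q] \to \mathbb{Z}[P]$ and, after a further argument using that $P$ is fine, that $\mathbb{Z}[P]$ is a free $\mathbb{Z}[Q]$-module on a set of monomials $T^{p_i}$ --- equivalently, $P = \bigsqcup_i (Q+p_i)$. This is precisely the nontrivial input you need, and it is where the hypotheses on $Q\to P$ enter; an appeal to cancellativity alone cannot deliver it, so it must be invoked explicitly and justified (or referenced). Once it is in place, your argument that $\sigma$ is injective (which uses disjointness of the $Q+p_i$ to conclude the $p_i$ have distinct images in $P^{\rm gp}/Q^{\rm gp}$) and hence that $1\otimes T^p$ acts injectively on $\bigoplus_i W_m(R[Q])(1\otimes T^{p_i})$ does go through.
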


\begin{proof} 
We put $A_m := W_m(R[Q]) \otimes_{\mathbb{Z}[Q]} \mathbb{Z}[P]$ as above and 
put $A'_m := W_m(R[Q]) \otimes_{\mathbb{Z}[Q]}  \mathbb{Z}[P^{\rm gp}]$. 
Since 
\begin{align*}
\Lambda_{(W_m(R[Q]) \otimes_{\mathbb{Z}[Q]} \mathbb{Z}[P], P)/W_m(R[Q], Q)}^{\bullet} & \cong \left( W_m(R[Q]) \otimes_{\mathbb{Z}[Q]} \mathbb{Z}[P] \right) \otimes_{\mathbb{Z}} \bigwedge^{\bullet}(P^{\rm gp}/Q^{\rm gp}), \\ 
\Lambda_{(W_m(R[Q]) \otimes_{\mathbb{Z}[Q]} \mathbb{Z}[P^{\rm gp}], P^{\rm gp})/W_m(R[Q], Q)}^{\bullet} & 
\cong \left( W_m(R[Q]) \otimes_{\mathbb{Z}[Q]} \mathbb{Z}[P^{\rm gp}] \right) \otimes_{\mathbb{Z}} \bigwedge^{\bullet}(P^{\rm gp}/Q^{\rm gp}), 
\end{align*}
it suffices to prove that the canonical map $A_m \to A'_m$ is injective. 

By Proposition \ref{thm11}, any element in 
$A_m$ is written as a sum of the elements of the form 
${}^{V^{u(q)}} \left( \eta X^{p^{u(q)}q} \right) \otimes T^r$ with $q \in Q[\frac{1}{p}], r \in P, \eta \in W_{m-u(q)}(R)$. 
Consider the following claim: 
\bigskip 

\noindent
{\bf claim.} Let $q, q' \in Q[\frac{1}{p}], r,  r' \in P$ with $q +r = q' +r' \in P[\frac{1}{p}]$. 
Then we have the equality 
${}^{V^{u(q)}} \left( \eta X^{p^{u(q)}q} \right) \otimes T^r = {}^{V^{u(q')}} \left( \eta X^{p^{u(q')}q'} \right) \otimes T^{r'}$
in $A_m$. 
\bigskip 

Suppose that the claim is true and assume we are given an element  
\[ \omega = \sum_{i=1}^n {}^{V^{u(q_i)}} \left( \eta_i X^{p^{u(q_i)}q_i} \right) \otimes T^{r_i} \] 
in $A_m$ which is sent to $0 \in A'_m$. We may assume by the claim that 
the values $p_i := q_i + r_i \, (1 \leq i \leq n)$ are all different. If we define 
$\delta'_m : A'_m = W_m(R[Q]) \otimes_{\mathbb{Z}[Q]} \mathbb{Z}[P^{\rm gp}] \to W_m(R[P^{\rm gp}])$
is the extension of the canonical morphism 
$W_m(R[Q]) \to W_m(R[P^{\rm gp}])$ which sends $1 \otimes T^x (x \in P^{\rm gp})$ to $X^x$, 
we have the equality 
\[ 0 = \delta'_m(\omega) =  \sum_{i=1}^m 
\delta'_m \left( {}^{V^{u(q_i)}} \left( \eta_i X^{p^{u(q_i)}q_i} \right) \otimes T^{r_i} \right)  = 
\sum_{i=1}^n {}^{V^{u(q_i)}} \left( \eta_i X^{p^{u(q_i)}p_i} \right). \]
Since the cokernel of $Q^{\rm gp} \to P^{\rm gp}$ is $p$-torsion free, we have 
$u_P(p_i) = u_P(q_i) = u_Q(q_i)$. Hence the expression on the right hand side of the above 
equality is unique by Lemma \ref{thm11}. 
Thus we have $\eta_i = 0$ for all $i$ and so $\omega=0$, namely, the map $A_m \to A'_m$ is injective 
as required. Therefore, it suffices to prove the above claim. 
 
Let 
$q, q' \in Q[\frac{1}{p}], r,  r' \in P$ with $q +r = q' +r' \in P[\frac{1}{p}]$. 
As we saw above, 
$u(q) := u_Q(q) = u_P(q) = u_P(q+r)$ and so $u(q) = u(q')$. Thus 
$ p^{u(q)}q + p^{u(q)}r = p^{u(q)}q'  + p^{u(q)} r' \in P$. 
Since $Q \to P$ is an integral morphism, there exist elements $a, a' \in Q, b \in P$ 
satisfying the following: 
\begin{eqnarray*}
p^{u(q)}r & = & a + b, \\
p^{u(q)}r' & = & a' + b, \\
p^{u(q)}q + a & = & p^{u(q)}q' + a'. 
\end{eqnarray*}
If we regard $b \in P$ as an element in $P^{\rm gp}/Q^{\rm gp}$, it belongs to 
$p^{u(q)}\left(P^{\rm gp}/Q^{\rm gp} \right)$. Hence, by $(\star \star)$, 
there exist elements $\tilde{r} \in P , \tilde{q} \in Q$ such that $b = p^{u(q)}\tilde{r} + \tilde{q}$. 
This implies the equality $p^{u(q)}r = a  + \tilde{q} + p^{u(q)}\tilde{r}$ and so 
$a  + \tilde{q} \in Q \cap p^{u(q)}P^{\rm gp}$. Since the cokernel of the map $Q^{\rm gp} \to P^{\rm gp}$ is $p$-torsion free, 
$Q \cap p^{u(q)}P^{\rm gp} = Q \cap Q^{\rm gp} \cap p^{u(q)}P^{\rm gp} = Q \cap p^{u(q)}Q^{\rm gp}$. 
Moreover, since $Q$ is saturated, $Q \cap p^{u(q)}Q^{\rm gp} = p^{u(q)}Q$. 
Thus there exists an element $c$ in $Q$ such that 
$a  + \tilde{q} = p^{u(q)}c$. Since $P^{\rm gp}$ is $p$-torsion free, this implies the equality $r = c + \tilde{r}$. 
By the same argument, there exists an element $c'$ in $Q$ such that 
$a' + \tilde{q} = p^{u(q)}c'$. Since $P^{\rm gp}$ is $p$-torsion free, we obtain the equality $r' = c' + \tilde{r}$. Hence 
\begin{align*}
{}^{V^{u(q)}} \left( \eta X^{p^{u(q)}q} \right) \otimes T^r  & =  {}^{V^{u(q)}} \left( \eta X^{p^{u(q)}q} \right) X^c \otimes T^{\tilde{r}} 
 =   {}^{V^{u(q)}} \left( \eta X^{p^{u(q)}q+p^{u(q)}c} \right) \otimes T^{\tilde{r}} \\ 
&  =   {}^{V^{u(q)}} \left( \eta X^{p^{u(q)}q+a+\tilde{q}} \right) \otimes T^{\tilde{r}} 
 =   {}^{V^{u(q)}} \left( \eta X^{p^{u(q)}q'+a'+\tilde{q}} \right) \otimes T^{\tilde{r}} \\ 
&  =   {}^{V^{u(q)}} \left( \eta X^{p^{u(q)}q'} \right) \otimes T^{r'} 
 =   {}^{V^{u(q')}} \left( \eta X^{p^{u(q')}q'} \right) \otimes T^{r'},  
\end{align*}
and so the claim is proved. So we are done. 
\end{proof}

Now we give a proof of Theorem \ref{qis1}, using Lemma \ref{thm41}. 

\begin{proof}[Proof of Theorem \ref{qis1}]
We use the notation in the proof of Lemma \ref{thm41}. Then we have the 
following commuative diagram
\[ \xymatrix{
\Lambda_{(A_m, P)/W_m(R[Q], Q)}^{\bullet} \ar[r]^{c_{P,Q}} \ar[d] & W_m\Lambda_{(R[P], P )/(R[Q],  Q)}^{\bullet} \ar[d] \\
\Lambda_{(A'_m, P^{\rm gp})/W_m(R[Q], Q)}^{\bullet} \ar[r]^{c_{P^{\rm gp},Q}} \ar[d]_{\cong} & W_m\Lambda_{(R[P^{\rm gp}], P^{\rm gp})/(R[Q],  Q)}^{\bullet} \ar[d]_{\cong} \\
\Lambda_{(W_m(R[Q^{\rm gp}]) \otimes_{W_m(R)[Q^{\rm gp}]} W_m(R)[P^{\rm gp}], P^{\rm gp} )/W_m(R[Q^{\rm gp}], Q^{\rm gp})}^{\bullet} \ar[r]^-{c_{P^{\rm gp},Q^{\rm gp}}}  \ar[d]_{\cong} & W_m\Lambda_{(R[P^{\rm gp}], P^{\rm gp} )/(R[Q^{\rm gp}],  Q^{\rm gp})}^{\bullet} \ar[d]_{\cong} \\
\Lambda_{(W_m(R[Q^{\rm gp}])[P^{\rm gp}/Q^{\rm gp}], P^{\rm gp}/Q^{\rm gp})/W_m(R[Q^{\rm gp}], *)}^{\bullet} \ar[r]^{c_{P^{\rm gp}/Q^{\rm gp}}} & W_m\Lambda_{(R[Q^{\rm gp}][P^{\rm gp}/Q^{\rm gp}], P^{\rm gp}/Q^{\rm gp})/(R[Q^{\rm gp}], *)}^{\bullet}, 
}\]
where the horizontal arrows are comparison morphisms and the 
vertical arrows are defined by the universality. The arrows with the symbol $\cong$ 
are isomorphisms. Also, 
By Lemma \ref{thm41}, the top left vertical arrow 
$\Lambda_{(A_m, P)/W_m(R[Q], Q)}^{\bullet} \to \Lambda_{(A'_m, P^{\rm gp})/W_m(R[Q], Q)}^{\bullet}$ is injective. 
Also, the map $c_{P^{\rm gp}/Q^{\rm gp}}$ is 
injective by the proof of Theorem \ref{thm36}. 
Thus we see that the map $c_{P,Q}$ is injective. 

Fix an isomorphism 
$P^{\rm gp}/Q^{\rm gp} \cong \mathbb{Z}^{s}$ and 
we put $P' := \{ q+ r \in P[\frac{1}{p}] ; q \in Q[\frac{1}{p}], r \in P \}$. 
For an element $x = q + r$ in $P'$ and for $I \subset [1,s]$, 
the map $c_{P,Q}$ sends elements in $\Lambda_{(A_m, P)/W_m(R[Q], Q)}^{\bullet}$ as
follows: 
 \begin{align}
{}^{V^{u(q)}}(\eta X^{p^{u(q)}q}) \otimes T^{r} \bigwedge_{i \in I} d\log T_i & \,\,\mapsto\,\,  
{}^{V^{u(x)}}(\eta X^{p^{u(x)}x})\bigwedge_{i \in I} d\log X_i,  \label{eq:cpq} \\
{}^{V^{u(q)}}(\eta X^{p^{u(q)}q}) \otimes dT^{r} \bigwedge_{i \in I} d\log T_i & \,\,\mapsto\,\, d{}^{V^{u(x)}}(\eta X^{p^{u(x)}x})\bigwedge_{i \in I} d\log X_i.  \nonumber 
 \end{align}
Since $\Lambda_{(A_m, P)/W_m(R[Q], Q)}^{\bullet}$ is generated by elements 
on the left hand sides (with various $x$ and $I$) and 
$W_m\Lambda_{(R[P], P )/(R[Q],  Q),  x}^{\bullet}$ is generated by 
elements on the right hand side (with various $I$), we see that the injective map 
$c_{P,Q}$ induces the isomorphism 
\[\Lambda_{(A_m, P)/W_m(R[Q], Q)}^{\bullet} \xrightarrow{\cong} \bigoplus_{x \in P'} W_m\Lambda_{(R[P], P )/(R[Q],  Q), x}^{\bullet}. \]
So, to prove the theorem, it suffices to see the acyclicity of the 
complex 
\[\bigoplus_{x \in P[\frac{1}{p}] \setminus P'} W_m\Lambda_{(R[P], P )/(R[Q],  Q),  x }^{\bullet}.\]
Noting the isomorphism 
\begin{eqnarray*}
W_m\Lambda_{(R[P],  P)/(R[Q], Q), x}^{\bullet}  & \cong & W_m\Lambda_{(R[P^{\rm gp}], P^{\rm gp})/(R[Q], Q), x}^{\bullet}  \\
& \cong & W_m \Lambda_{(R[P^{\rm gp}], P^{\rm gp})/(R[Q^{\rm gp}], Q^{\rm gp}), x }^{\bullet}  \\
& \cong & W_m \Lambda_{(R[Q^{\rm gp}][P^{\rm gp}/Q^{\rm gp}], P^{\rm gp}/Q^{\rm gp})/(R[Q^{\rm gp}],  {*}), x }^{\bullet}  
\end{eqnarray*} 
constructed in the proof of Proposition \ref{thm8}, we see that it suffices to prove 
the acyclicity of the complex 
$W_m\Lambda_{(R[Q^{\rm gp}][P^{\rm gp}/Q^{\rm gp}], P^{\rm gp}/Q^{\rm gp})/(R[Q^{\rm gp}], {*}), x }^{\bullet}$ 
for any $x \in P[\frac{1}{p}] \setminus P'$. 

Fix $x \in P[\frac{1}{p}] \setminus P'$. If $u_{P^{\rm gp}/Q^{\rm gp}}(x) = 0$, we can write 
$x$ as $x = x_1 +x_2$ for some $x_1 \in Q^{\rm gp}[\frac{1}{p}], x_2 \in P^{\rm gp}$. 
Then we have $p^{u(x_1)}x = p^{u(x_1)}x_1 + p^{u(x_1)}x_2$. Since $p^{u(x_1)}x \in p^{u(x_1)} \left(P^{\rm gp}/Q^{\rm gp} \right)$, there exist elements $x'_1 \in Q, x'_2 \in P$ 
such that $p^{u(x_1)}x = x'_1 + p^{u(x'_1)}x'_2$ by $(\star \star)$. Then, since 
$P^{\rm gp}$ is $p$-torsion free, $x = \frac{x'_1}{p^{u(x_1)}} + x'_2 \in P'$, which is 
a contradiction. So we conclude that $u_{P^{\rm gp}/Q^{\rm gp}}(x) \geq 1$. 

Now we write the image of $x$ by the composite 
$ P[\frac{1}{p}] \subset  P^{\rm gp}[\frac{1}{p}] \cong 
Q^{\rm gp}[\frac{1}{p}] \oplus (P^{\rm gp}/Q^{\rm gp})[\frac{1}{p}]$ by 
$y_1 + y_2$. Then, by \eqref{3.2.4.2}, 
the complex $W_m\Lambda_{(R[Q^{\rm gp}][P^{\rm gp}/Q^{\rm gp}], P^{\rm gp}/Q^{\rm gp})/(R[Q^{\rm gp}], {*}), x }^{\bullet}$ is a direct summand of 
the $y_2$-part of the complex $W_m\Lambda_{(R[Q^{\rm gp}][P^{\rm gp}/Q^{\rm gp}], P^{\rm gp}/Q^{\rm gp})/(R[Q^{\rm gp}], {*})}^{\bullet}$. 
Since $u_{P^{\rm gp}/Q^{\rm gp}}(y_2) \geq 1$ 
(namely, $y_2 \in  (P^{\rm gp}/Q^{\rm gp})[\frac{1}{p}] \setminus  (P^{\rm gp}/Q^{\rm gp})$) by the argument in the previous paragraph, we see by Lemma \ref{thm4}
that the $y_2$-part of the complex $W_m\Lambda_{(R[Q^{\rm gp}][P^{\rm gp}/Q^{\rm gp}], P^{\rm gp}/Q^{\rm gp})/(R[Q^{\rm gp}], {*})}^{\bullet}$ is acyclic. 
Hence the complex 
\[ 
W_m\Lambda_{(R[Q^{\rm gp}][P^{\rm gp}/Q^{\rm gp}], P^{\rm gp}/Q^{\rm gp})/(R[Q^{\rm gp}], {*}), x }^{\bullet}\] is acyclic, as required. So the proof of the theorem is finished. 
\end{proof}

\begin{rem}
We remark here that Theorem \ref{qis1} does not hold if the monoid homomorphism 
$Q \to P$ does not have the property $(\star \star)$. 
Assume that $R$ is of characteristic $p>0$ and let $P, Q$ be  
\[ P := \{ (x,  y) \in\mathbb{N}^2 ; y \leq p^{2k} x , x\leq p^{2k}y \}, % \,\, (k \gg 1), 
\qquad Q = \{ (x,  y) \in \mathbb{N}^2 ; x=y \} \]
with $Q \to P$ the natural inclusion. We denote the image of an element 
$(x,  y) \in P$ by the map $P \to R[P]$ by $X^xY^y$. 
Then, when $k > m \geq 3$, 
${}^{V}(pXY^{p^{k}+1}) = {}^{V^{2}}(X^p Y^{p^{k+1}+p}) \neq 0$ in 
$W_m \Lambda_{(R[P],  P)/(R[Q], Q)}^{0} = W_m(R[P])$ and it is not in the image of 
$W_m(R[Q]) \otimes_{{\mathbb{Z}}[Q]} {\mathbb{Z}}[P] \to W_m[P]$ 
(the degree $0$ part of the map $c_{P,Q}$) by the computation in \eqref{eq:cpq}. 
On the other hand, we have the equality 
\begin{eqnarray*}
 d^{V}(pXY^{p^{k}+1}) & = & ^{V}(d XY^{p^{k}+1}) \\
& = &  ^{V}( XY^{p^{k}+1} (d\log X+ (p^{k}+1)d\log Y) \\
& = &  ^{V}( XY^{p^{k}+1}p^{k}d\log Y) \\
& = &  0
\end{eqnarray*}
in $W_m\Lambda^1_{(R[P],P)/(R[Q],Q)}$. 
Thus the element ${}^{V}(pXY^{p^{k}+1}) = {}^{V^{2}}(X^p Y^{p^{k+1}+p})$ is a 
nontrivial $0$-cocycle in $W_m \Lambda_{(R[P],  P)/(R[Q], Q)}^{\bullet}$
which is not in the image of the comparison morphism $c_{P,Q}$. 
\end{rem}

Now we prove our main comparison isomorphism 
when the base log scheme is etale locally log smooth over a scheme with trivial log structure. 
(Although the theorem is also a particular case of our main theorem (Theorem \ref{thm:main2}), 
we include the proof here because the statement of the theorem is simpler and thus 
it would be helpful to the reader.) Let 
$f: (X, {\mathcal{M}}) \to (Y, {\mathcal{N}})$ be a log smooth saturated 
morphism of fs log schemes on which $p$ is nilpotent and suppose that, 
etale locally, $(Y, {\mathcal{N}})$ is log smooth over a log scheme of the form 
$({\rm Spec}\,R, {*})$. 

\begin{theorem}\label{thm:main1}
In the above situation, the comparison morphism 
\[ \mathbb{R}u_{m*} \mathcal{O}_m \to W_{m} \Lambda_{(X, \mathcal{M})/(Y,  \mathcal{N})}^{\bullet} \]
we constructed in Section 1.3 is a quasi-isomorphism. 
\end{theorem}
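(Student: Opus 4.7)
The strategy is to reduce, via etale-localization on $X$ and on $Y$, to the local model treated in Theorem \ref{qis1}. Because the comparison morphism is functorial and independent of choices (as observed immediately after its construction in Section 1.3), such a local reduction is permitted.

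Near a geometric point $\bar x$ of $X$ with image $\bar y \in Y$, set $P := {\mathcal M}_{\bar x}/{\mathcal O}_{X,\bar x}^*$ and $Q := {\mathcal N}_{\bar y}/{\mathcal O}_{Y,\bar y}^*$; these are sharp fs monoids, so $P^{\rm gp}$ and $Q^{\rm gp}$ are torsion free. By the hypothesis on $(Y,{\mathcal N})$, after etale-shrinking $Y$ we may arrange a strict smooth morphism $(Y,{\mathcal N}) \to {\rm Spec}(R[Q], Q)$ for some ${\mathbb Z}_{(p)}$-algebra $R$ in which $p$ is nilpotent, extending the chart associated to $Q$. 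Applying Kato's chart criterion for log smoothness (in the form used in the proof of Theorem \ref{thm10}, together with its extension to log smooth morphisms, cf.\ \cite[Thm.~3.5]{Kato}) to $f$, and etale-shrinking $X$, I would extend the chart of $(Y,{\mathcal N})$ to one of $f$ given by $Q \to P$ together with a strict smooth morphism $(X,{\mathcal M}) \to {\rm Spec}(R[P], P) \times_{{\rm Spec}(R[Q], Q)} (Y, {\mathcal N})$. The saturation hypothesis on $f$, combined with Proposition \ref{prop:sat} and Tsuji's structural theory of saturated morphisms of sharp fs monoids, ensures that $Q \to P$ is injective and $p$-saturated with $P^{\rm gp}/Q^{\rm gp}$ torsion free---exactly the hypotheses of Theorem \ref{qis1}.

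Arguing as in the proof of Theorem \ref{thm10}, I would further etale-localize and replace $P$ by $P \oplus {\mathbb N}^a$ and $Q$ by $Q \oplus {\mathbb N}^b$ (with the $\mathbb{N}$-factors carrying trivial log structure) so that the structural morphisms of $X$ and $Y$ onto the corresponding model log schemes ${\rm Spec}(R[P \oplus {\mathbb N}^a], P \oplus {\mathbb N}^a)$ and ${\rm Spec}(R[Q \oplus {\mathbb N}^b], Q \oplus {\mathbb N}^b)$ become strict etale. The enlarged monoids and the induced map $Q \oplus {\mathbb N}^b \to P \oplus {\mathbb N}^a$ retain all the required properties (injectivity, $p$-saturation, torsion-freeness of the cokernel). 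Then, by the etale base change property of relative log de Rham-Witt complex (\cite[Prop.~3.7]{Ma}) and the analogous functoriality of $\mathbb{R}u_{m*}{\mathcal O}_m$, the comparison morphism for $f$ descends to the comparison morphism $c_{P \oplus {\mathbb N}^a, Q \oplus {\mathbb N}^b}$ of \eqref{3.3.1} for the local model, which is a quasi-isomorphism by Theorem \ref{qis1}.

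The main obstacle will be in the second paragraph: arranging, at the level of the chart of $f$, the simultaneous availability of injectivity of $Q \to P$, its $p$-saturation, and torsion-freeness of $P^{\rm gp}/Q^{\rm gp}$. This requires combining Kato's chart criterion with Tsuji's structural results on saturated morphisms, so that the chart canonically produced from the quotient monoids $\mathcal{M}_{\bar x}/\mathcal{O}_{X,\bar x}^*$ and $\mathcal{N}_{\bar y}/\mathcal{O}_{Y,\bar y}^*$ retains all these properties, and that no property is lost when passing to strict etale neighborhoods by polynomial enlargement.
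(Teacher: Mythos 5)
Your proposal follows essentially the same route as the paper: etale-localize to the model $\mathrm{Spec}(R[P],P)\to\mathrm{Spec}(R[Q],Q)$ with $P=\mathcal{M}_{\bar x}/\mathcal{O}_{X,\bar x}^*$ and $Q=\mathcal{N}_{\bar y}/\mathcal{O}_{Y,\bar y}^*$, verify the hypotheses of Theorem~\ref{qis1}, and then enlarge by $\mathbb{N}$-factors so that both chart morphisms become strict etale (the paper does this sequentially, yielding $Q\oplus\mathbb{N}^a\to P\oplus\mathbb{N}^{a+b}$, which is what makes the induced monoid homomorphism well-defined). Regarding the obstacle you flag, the paper's argument for torsion-freeness of $P^{\mathrm{gp}}/Q^{\mathrm{gp}}$ is exactly as you anticipate: by Tsuji's theorem \cite[Thm.~II.3.1]{Tsuji} a $p$-saturated morphism of fs log schemes is $\ell$-saturated for every prime $\ell$, so the pushout map $h\colon P\oplus_{Q,\ell}Q\to P$ is exact for all $\ell$, and combined with sharpness of $P$ (i.e.\ $P^*=\{0\}$) this forces $P^{\mathrm{gp}}/Q^{\mathrm{gp}}$ to have no $\ell$-torsion; injectivity of $Q\to P$ comes from \cite[Prop.~4.1(2)]{Kato}, and $p$-saturation is immediate from the definition.
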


\begin{proof}
We may assume that $(Y, {\mathcal{N}})$ is log smooth over $({\rm Spec}\,R, {*})$. 
Also, as we see later in the proof, we may assume that $p$ is nilpotent in $R$. 
If we reduce the theorem to the case where the morphism 
$f: (X, {\mathcal M}) \to (Y, {\mathcal N})$ is the 
morphism ${\rm Spec}(R[P],  P) \to {\rm Spec}(R[Q],  Q)$ 
associated to an injective $p$-saturated homomorphism   
$Q \to P$ of fs monoids such that $Q^{\rm gp}, P^{\rm gp}, 
P^{\rm gp}/Q^{\rm gp}$ are torsion free, the theorem follows from 
Theorem \ref{qis1}. In the following, we prove this reduction. 

Let $x$ be a geometric point of $X$ and let $y = f \circ x$
be the geometric point of $Y$ induced by $x$. We put 
$Q = {\mathcal N}_y/{\mathcal O}_{Y,  y}^*$, 
$P = {\mathcal M}_x/{\mathcal O}_{X,  x}^*$. Then the induced homomorphism 
$Q \to P$ is a  $p$-saturated morphism of fs monoids and it is injective by   
\cite[Prop.~4.1(2)]{Kato}. Also, since 
$P^* = \{0\}, Q^* = \{0\}$, $P^{\rm gp}, Q^{\rm gp}$ are 
torsion free. 

We prove that 
$P^{\rm gp}/Q^{\rm gp}$ is also torsion free. 
By \cite[Thm.~II.3.1]{Tsuji}, $f$ is $\ell$-saturated
for any prime number $\ell$. Thus $Q \to P$ is $\ell$-saturated. 
Therefore, the map 
$$ h: P \oplus_{Q,  \ell} Q \to P; 
\quad (\alpha,  \beta) \mapsto \ell\alpha+\beta $$
is exact. 

If $x$ is an element in $P^{\rm gp}$ with $\ell x =: y \in Q^{\rm gp}$, 
the element 
$(x,  -y) \in P^{\rm gp} \oplus_{Q^{\rm gp},  \ell} Q^{\rm gp}$ is sent by 
$h^{\rm gp}$ to 
$0 \in P \subseteq P^{\rm gp}$. Hence, by the exaceness of $h$, 
there exist $a \in P, b \in Q$ such that 
$(x,  -y) = (a,  b) \in P^{\rm gp} \oplus_{Q^{\rm gp},  \ell} Q^{\rm gp}$. 
Then we have $\ell a + b = 0 \in P \subseteq P^{\rm gp}$. Since   
$P^*$ is trivial, we conclude that $a = b = 0$. Hence 
$(x,  -y) = (0,  0) \in P^{\rm gp} \oplus_{Q^{\rm gp},  \ell} Q^{\rm gp}$
and so there exists an element $z \in Q^{\rm gp}$ such that 
$(x,  -y) = (z, -\ell z)$, namely, $x \in Q^{\rm gp}$. 
This argument shows that $P^{\rm gp}/Q^{\rm gp}$ does not have 
$\ell$-torsion. Since this is true for any prime number 
$\ell$, we conclude that $P^{\rm gp}/Q^{\rm gp}$ is torsion free, as required. 

In the following, we will take a nice chart of $f$ by the following diagram: 
\[ \xymatrix{
(U, {\mathcal{M}}|_{U}) \ar[rr] \ar[rd] \ar[rddd]_{f'} \ar[rrrd] & & \mbox{Spec}(R[P \oplus {\mathbb{N}}^{a+b}], P \oplus {\mathbb{N}}^{a+b}) \ar[d] & \\
& \text{fiber product} \ar[r] \ar[dd] & \mbox{Spec}(R[P \oplus {\mathbb{N}}^{a}], P \oplus {\mathbb{N}}^{a}) \ar[r] \ar[d] & \mbox{Spec}(R[P], P) \ar[dd] \\
& & \mbox{Spec}(R[Q \oplus {\mathbb{N}}^{a}], Q \oplus {\mathbb{N}}^{a}) \ar[rd] & \\
& (V, {\mathcal{N}}|_{V}) \ar[rr] \ar[ru] & & \mbox{Spec}(R[Q],  Q).  
}
\]

First we apply \cite[Lem.~4.1.1]{KF} to $(Y,  {\mathcal N})/{\rm Spec}(R,  {*})$
and to $(X,  {\mathcal M})/(Y,  {\mathcal N})$. Then we obtain a 
an etale neighborhood $U$ of $x$ in $X$,  an etale neighborhood $V$ of $y$ in $Y$, 
a morphism $f': (U, {\mathcal{M}}|_{U}) \to (V, {\mathcal{N}}|_{V})$ over $f$ and 
a chart $Q \to P$ of it such that the induced morphisms 
\begin{equation}\label{eq:vu}
(V,{\mathcal{N}}|_{V}) \to \mbox{Spec}(R[Q],  Q), \qquad 
(U,{\mathcal{M}}|_{U}) \to (V,{\mathcal{N}}|_{V}) \times_{\mbox{\scriptsize{Spec}}(R[Q],Q)} \mbox{Spec}(R[P],P)
\end{equation} are 
strict smooth. Then, after shrinking $V$ suitably, we can take a decomposition 
\[ (V,{\mathcal{N}}|_{V}) \to \mbox{Spec}(R[Q \oplus {\mathbb{N}}^a], Q \oplus {\mathbb{N}}^a) \to \mbox{Spec}(R[Q],  Q) \] 
of the first map in \eqref{eq:vu} in which the first map is strict etale. 
Since $p$ is nilpotent on $V$, the first map in the above diagram factors through 
$\mbox{Spec}((R/p^nR)[Q \oplus {\mathbb{N}}^a], Q \oplus {\mathbb{N}}^a)$ for some $n$ and the map 
\[ (V,{\mathcal{N}}|_{V}) \to \mbox{Spec}((R/p^nR)[Q \oplus {\mathbb{N}}^a], Q \oplus {\mathbb{N}}^a)\] 
is strict etale. Thus, we may assume that $R/p^nR = R$, namely, $p$ is nilpotent in $R$. 

Let 
$\mbox{Spec}(R[P \oplus {\mathbb{N}}^{a}], P \oplus {\mathbb{N}}^{a}) \to \mbox{Spec}(R[Q \oplus {\mathbb{N}}^{a}], Q \oplus {\mathbb{N}}^{a})$ be the morphism we obtain 
by pulling back the map 
${\rm Spec}(R[P],  P) \to {\rm Spec}(R[Q],  Q)$ to 
$\mbox{Spec}(R[Q \oplus {\mathbb{N}}^a], Q \oplus {\mathbb{N}}^a)$. 
Then the morphism $(U,{\mathcal{M}}|_{U}) \to \mbox{Spec}(R[P], P)$ induce a morphism 
\begin{equation}\label{eq:u}
(U,{\mathcal{M}}|_{U}) \to \mbox{Spec}(R[P \oplus {\mathbb{N}}^{a}], P \oplus {\mathbb{N}}^{a}), 
\end{equation}
which factors through the fiber product of $(V,{\mathcal N}|_V)$ and 
$\mbox{Spec}(R[P \oplus {\mathbb{N}}^{a}], P \oplus {\mathbb{N}}^{a})$ over 
$\mbox{Spec}(R[Q \oplus {\mathbb{N}}^{a}], Q \oplus {\mathbb{N}}^{a})$. 
Since the morphism from this fiber product to 
$\mbox{Spec}(R[P \oplus \mathbb{N}^{a}], P \oplus \mathbb{N}^{a})$ is strict etale 
and the morphism from $(U, {\mathcal M}|_U)$ to this fiber product is strict smooth, 
we conclude that the map \eqref{eq:u} is 
strict smooth. Thus, after shrinking $U$ suitably, we have a decomposition 
\[ (U,{\mathcal M}|_U) \to \mbox{Spec}(R[P \oplus {\mathbb{N}}^{a+b}], P \oplus {\mathbb{N}}^{a+b}) \to \mbox{Spec}(R[P \oplus {\mathbb{N}}^a], P \oplus {\mathbb{N}}^a)\] 
of the morphism \eqref{eq:u} in which the first map is strict etale.

By the argument above, we obtain the following commutative diagram in which 
the horizontal arrows are strict etale: %and the vertical arrows are log smooth: 
\[ \xymatrix{
(U, {\mathcal{M}}|_{U}) \ar[r] \ar[d] & \mbox{Spec}(R[P \oplus {\mathbb{N}}^{a+b}], P \oplus {\mathbb{N}}^{a+b}) \ar[d] \\
(V, {\mathcal{N}}|_{V}) \ar[r] & \mbox{Spec}(R[Q \oplus {\mathbb{N}}^{a}], Q \oplus {\mathbb{N}}^{a}). 
}
\]
Noting that the statement of the theorem is etale local on $X$ and $Y$, we can 
reduce the proof of the theorem to the case where $f$ is the morphism 
$\mbox{Spec}(R[P \oplus {\mathbb{N}}^{a+b}], P \oplus {\mathbb{N}}^{a+b})
\to \mbox{Spec}(R[Q \oplus {\mathbb{N}}^{a}], Q \oplus {\mathbb{N}}^{a}).$
(See also the proof of Theorem \ref{thm10}.) 
Since $Q \to P$ is an injective $p$-saturated morphism of fs monoids 
such that $Q^{\rm gp}, P^{\rm gp}, P^{\rm gp}/Q^{\rm gp}$ are torsion free, 
the same properties are satisfied for the homomorphism 
$Q \oplus {\mathbb{N}}^a \to P \oplus {\mathbb{N}}^{a+b}$. 
Hence we obtained the required reduction and so we are done. 
\end{proof}

\section{Relative log de Rham-Witt complex (III)}

In this section, first 
we study the relative log de Rham-Witt complex 
$W_m\Lambda^{\bullet}_{(R[P]/JR[P] ,P)/(R[Q]/JR[Q],Q)}$ for a ${\mathbb Z}_{(p)}$-algebra $R$, 
an injective $p$-saturated homomorphism of fs monoids $Q \to P$ such that 
$Q^{\rm gp}, P^{\rm gp}, P^{\rm gp}/Q^{\rm gp}$ are torsion free and 
a radical ideal $J \subset Q$. 
%and that 
%$(R[P], P)/(R[Q],Q)$ is log smooth. 
(For precise description of our setting, see Section 4.2.) 
The main result is the existence of natural decomposition 
\begin{equation}\label{eq:decompIII}
W_m\Lambda^{\bullet}_{(R[P]/JR[P] ,P)/(R[Q]/JR[Q],Q)} 
= \bigoplus_{x \in P[\frac{1}{p}] \atop \text{$J$-minimal}} W_m\Lambda^{\bullet}_{(R[P],P)/(R[Q],Q), x}
\end{equation}
indexed by $J$-minimal elements in $P[\frac{1}{p}]$. (For definition of $J$-minimality, see 
Section 4.1.) 
Then, using this decomposition, 
we prove that 
the comparison morphism 
\[\mathbb{R}u_{m*} \mathcal{O}_m \to W_m \Lambda_{(X, \mathcal{M})/(Y, \mathcal{N})}^{\bullet} \]
constructed in Section 1.3 is a quasi-isomorphism when 
%when $(Y, {\mathcal N})$ is a hollow fs log scheme 
%on which $p$ is nilpotent and 
$(X, \mathcal{M}) \to (Y, \mathcal{N})$ is a log smooth saturated morphism of fs log schemes 
on which $p$ is nilpotent such that $(Y,{\mathcal N})$ satisfies the condition 
$(\spadesuit)$ given in the introduction. 

\subsection{Preliminaries on monoids (III)}

First we recall the notion of ideals of monoids. 

\begin{definition} 
\begin{enumerate}[(1)] 
\item 
\cite[Def.~I.1.1]{Tsuji}
A subset $J$ of a monoid $Q$ is called an ideal if, for any $x \in J$ and $q \in Q$, 
$x + q \in J$. 
\item 
An ideal $J$ of a monoid $Q$ is radical if any $x \in Q$ with 
$nx \in J$ for some $n \in {\mathbb N}_{> 0}$ belongs to $J$. 
\end{enumerate}
\end{definition}

In the following in this subsection, let 
$Q \to P$ be an injective $p$-saturated homomorphism of fs monoids 
such that $Q^{\rm gp}, P^{\rm gp}, P^{\rm gp}/Q^{\rm gp}$ are torsion free, 
and let $J \subset Q$ be a radical ideal. 

\begin{definition}
Let $Q \to P, J$ be as above. An element $x$ in $P$ is called $J$-minimal 
if we cannot write $x$ as $x = r + q$ with $r \in P, q \in J$. 
\end{definition}

\begin{lemma}\label{lem:min}
An element $x$ in $P$ is $J$-minimal if and only if so is $px$. 
\end{lemma}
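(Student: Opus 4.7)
The plan is to prove the two directions separately, the easy direction directly and the hard direction by combining the property $(\star)$ from Lemma \ref{thm38} with the saturatedness of $Q$ and the radicality of $J$.

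For the ``only if'' direction, suppose $x$ is not $J$-minimal, so $x = r + q$ with $r \in P$ and $q \in J$. Then $px = pr + pq$, and $pq \in J$ since $J$ is an ideal; hence $px$ is not $J$-minimal. This direction is routine.

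The main content is the converse. Assume $px = s + j$ with $s \in P$ and $j \in J$. The image of $s$ in $P^{\rm gp}/Q^{\rm gp}$ coincides with that of $px = p\cdot x$, hence lies in $p(P^{\rm gp}/Q^{\rm gp})$. Since $Q \to P$ is $p$-quasi-saturated and $P$ is $p$-saturated (being fs), property $(\star)$ of Lemma \ref{thm38} applies to $s$, yielding a decomposition $s = pr + q$ with $r \in P$ and $q \in Q$. Substituting gives $px = pr + (q + j)$, and $q + j \in J$ since $J$ is an ideal. Thus $p(x-r) = q+j$ lies in $Q$, and since $P^{\rm gp}/Q^{\rm gp}$ is torsion free, $x - r \in Q^{\rm gp}$; saturatedness of $Q$ (which is fs) then yields $q' := x - r \in Q$. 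Finally, $pq' = q + j \in J$ and $J$ is radical, so $q' \in J$. Writing $x = r + q'$ exhibits $x$ as not $J$-minimal.

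The main obstacle I expect is keeping straight which monoid each element lives in: the argument shuttles between $P$, $Q$, $P^{\rm gp}$, $Q^{\rm gp}$, and $P^{\rm gp}/Q^{\rm gp}$, and it is crucial that each hypothesis on $Q \to P$ (injectivity, $p$-saturatedness, torsion-freeness of $P^{\rm gp}/Q^{\rm gp}$) enters at the correct step — in particular, $p$-saturatedness is exactly what enables the application of $(\star)$ to lift the $p$-divisibility in $P^{\rm gp}/Q^{\rm gp}$ to an actual decomposition $s = pr + q$ inside $P$, and the radicality of $J$ is exactly what is needed at the final step to pass from $pq' \in J$ to $q' \in J$.
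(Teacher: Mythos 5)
Your proof is correct and follows essentially the same route as the paper's: the ``only if'' direction by distributing $p$; the converse by applying $(\star)$ from Lemma \ref{thm38}, then using torsion-freeness of $P^{\rm gp}/Q^{\rm gp}$ and saturatedness of $Q$ to land $x - r$ in $Q$, and radicality of $J$ at the final step. The only cosmetic difference is that the paper records the intermediate identity $Q \cap pP^{\rm gp} = Q \cap pQ^{\rm gp} = pQ$ and then cancels $p$ in $P^{\rm gp}$, whereas you apply the same two facts directly to the element $x-r$.
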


\begin{proof}
If $x$ is not $J$-minimal and if we write $x = r + q$ with $r \in P, q \in J$, 
$px = pr + pq$ with $pr \in P, pq \in J$ and so $px$ is not $J$-minimal. 

Assume that $px$ is not $J$-minimal and write $px = r + q$ with 
$r \in P, q \in J$. Then $r$ belongs to $p(P^{\rm gp}/Q^{\rm gp})$ 
when we regard it as an element of $P^{\rm gp}/Q^{\rm gp}$. 
Then, by $(\star)$, there exists elements $q' \in Q, r' \in P$ with 
$r = pr' + q'$. Thus $px = pr' + (q + q')$. So 
$q + q' \in Q \cap pP^{\rm gp} = Q \cap pQ^{\rm gp} = pQ$. 
(The first equality follows from the torsion freeness of 
$P^{\rm gp}/Q^{\rm gp}$ and the second equality follows from the 
saturatedness of $Q$.) Thus there exists an element 
$q'' \in Q$ with $q + q' = pq''$ and so 
$px = pr' + pq''$. Since $P^{\rm gp}$ is torsion free, this implies the 
equality 
$x = r' + q''$. Also, since $pq'' = q + q' \in J$ and $J$ is radical, 
$q'' \in J$. Hence $x$ is not $J$-minimal. 
\end{proof}

We can extend the definition of $J$-minimality to elements in $P[\frac{1}{p}]$: 

\begin{definition}
An element $x$ in $P[\frac{1}{p}]$ is called $J$-minimal 
if $p^nx$ is a $J$-minimal element in $P$ for some $n \in {\mathbb Z}$.  
\end{definition}

By Lemma \ref{lem:min}, $x \in P$ is $J$-minimal if and only if 
it is $J$-minimal as an element in $P[\frac{1}{p}]$. 
By definition, $x \in P[\frac{1}{p}]$ is $J$-minimal if and only if 
$px$ is $J$-minimal.

\subsection{Decomposition of relative log de Rham-Witt complex (III)}

In this subsection, let $R$ be a ${\mathbb Z}_{(p)}$-algebra and let  
$Q \to P$ be an injective $p$-saturated homomorphism of fs monoids 
such that $Q^{\rm gp}, P^{\rm gp}, P^{\rm gp}/Q^{\rm gp}$ are torsion free. 
Also, let $J \subset Q$ be a radical ideal. We denote the ideal of 
$R[Q]$ (resp. $R[P]$) generated by the image of the canonical map 
$J \to Q \to R[Q]$ (resp. $J \to Q \to P \to R[P]$) by 
$JR[Q]$ (resp. $JR[P]$). 
Then we have naturally a pre-log ring $(R[P]/JR[P], P)$ over $(R[Q]/JR[Q],Q)$. 
We study the relative log de Rham-Witt complex 
$W_m\Lambda^{\bullet}_{(R[P]/JR[P] ,P)/(R[Q]/JR[Q],Q)}$.

%Let \[ I := {\rm Ker}(R[P] = R[P] \otimes_{R[Q]} R[Q] \xrightarrow{{\rm id} \otimes \varphi} R[P] \otimes_{R[Q]} R) = 
%\bigoplus_{0 \not= q \in Q \atop r \in P}R T^{r+q}. \]  

By \cite[Prop.~3.11(2)]{Ma}, we have the canonical isomorphism 
\[ W_m\Lambda^{\bullet}_{(R[P] \otimes_{R[Q]} R ,P)/(R,Q)}
\cong W_m\Lambda^{\bullet}_{(R[P], P)/(R[Q],Q)}/{\mathbb I}_m^{\bullet}, \] 
where ${\mathbb I}_m^{\bullet}$ is the differential graded ideal 
\[ 
W_m(JR[P])W_m\Lambda^{\bullet}_{(R[P], P)/(R[Q],Q)} + dW_m(JR[P])W_m\Lambda^{\bullet-1}_{(R[P], P)/(R[Q],Q)}\] 
of $W_m\Lambda^{\bullet}_{(R[P], P)/(R[Q],Q)}$. 
%\[ 
%W_m\Lambda^{\bullet}_{(R[P] \otimes_{R[Q]} R ,P)/(R,Q)}
%\cong 
%\frac{W_m\Lambda^{\bullet}_{(R[P], P)/(R[Q],Q)}}{W_m(I)W_m\Lambda^{\bullet}_{(R[P], P)/(R[Q],Q)} + %dW_m(I)W_m\Lambda^{\bullet-1}_{(R[P], P)/(R[Q],Q)}}. \] 
To study ${\mathbb I}_m^{\bullet}$, we study the differential graded ideal 
\[ {\mathbb I}^{\bullet} := 
W(JR[P])W\Lambda^{\bullet}_{(R[P], P)/(R[Q],Q)} + dW(JR[P])W\Lambda^{\bullet-1}_{(R[P], P)/(R[Q],Q)} \] 
of $W\Lambda^{\bullet}_{(R[P], P)/(R[Q],Q)}$. 

First we study the degree $0$ part ${\mathbb{I}}^0$. 
In the following, we call the image of $P$-basic Witt differentials 
$b(\xi,x), b(\xi,x,I) \in W\Lambda^{\bullet}_{(R[P],P)/(R,*)}$ 
by the morphism 
$W\Lambda^{\bullet}_{(R[P],P)/(R,*)} \to W\Lambda^{\bullet}_{(R[P],P)/(R[Q],Q)}$ 
also $P$-basic Witt differentials and denote them by the same symbols. 

\begin{lemma}\label{lem:I0} 
Any element in ${\mathbb{I}}^0$ is written as a convergent sum of 
$P$-basic Witt differentials $b(\xi,x)$ of degree $0$ with $x \in P[\frac{1}{p}]$ 
not $J$-minimal. 
\end{lemma}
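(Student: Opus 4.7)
The plan is to identify $\mathbb{I}^0 = W(JR[P])$ (in degree $0$ the derivation term is killed since $W\Lambda^{-1} = 0$, and $W(JR[P]) \cdot W(R[P]) = W(JR[P])$ because $W(JR[P])$ is an ideal) and then imitate the proof of Lemma \ref{thm11}, tracking the $J$-minimality of indices throughout. The key combinatorial fact is that $T^x \in JR[P]$ precisely when $x \in P$ is not $J$-minimal, so as $R$-modules
\[ R[P] = JR[P] \oplus \bigoplus_{x \in P \text{ $J$-minimal}} R \cdot T^x, \]
i.e.~$JR[P]$ is the free $R$-submodule on the non-$J$-minimal monomials.

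Given $z \in W(JR[P])$, I would write ${\bf w}_0(z) = z_0 = \sum_{x \in P} a_x T^x$; the decomposition above forces $a_x = 0$ whenever $x$ is $J$-minimal. Each Teichm\"uller product $[a_x]X^x = [a_x T^x]$ then lies in $W(JR[P])$, and the difference $z - \sum_x [a_x]X^x$ is annihilated by ${\bf w}_0$, hence lies in ${}^V W(R[P]) \cap W(JR[P])$. Because $V$ acts as a shift of Witt coordinates, ${}^V W(R[P]) \cap W(JR[P]) = {}^V W(JR[P])$, so the procedure iterates and produces
\[ z = \sum_{m \ge 0,\ x \in P \text{ not $J$-minimal}} {}^{V^m}([a_{x,m}]X^x), \]
a convergent sum ranging only over non-$J$-minimal $x \in P$.

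Finally, the rewriting step at the end of the proof of Lemma \ref{thm11} converts each summand ${}^{V^m}([a_{x,m}]X^x)$ into a $P$-basic Witt differential $b(\xi, y)$ whose weight has the form $y = p^{-k}x$ for some $0 \le k \le m$ with $p^{-k}x \in P$. Since $x$ is not $J$-minimal in $P$, iterated application of Lemma \ref{lem:min} (both multiplying and dividing by $p$) shows that every $p^j y$ lying in $P$ is also not $J$-minimal; by the definition of $J$-minimality on $P[\tfrac{1}{p}]$, this means $y$ itself is not $J$-minimal. Assembling these terms gives the required expression.

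The only delicate point is the last step: the definition of $J$-minimality on $P[\tfrac{1}{p}]$ is a universal condition over all integer $p$-shifts, so one must verify non-$J$-minimality for every $p^j y \in P$ simultaneously — this is precisely what Lemma \ref{lem:min} provides, transporting non-$J$-minimality from $x \in P$ to $y \in P[\tfrac{1}{p}]$.
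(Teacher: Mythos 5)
Your proposal is correct and follows essentially the same route as the paper's proof: identify $\mathbb{I}^0 = W(JR[P])$, run the iterative $V$-filtration argument from Lemma \ref{thm11} while noting that it respects membership in $W(JR[P])$, then repackage each ${}^{V^m}([a_{x,m}]X^x)$ as a $P$-basic Witt differential and invoke Lemma \ref{lem:min} to see that the weight $p^{-k}x \in P[\tfrac{1}{p}]$ inherits non-$J$-minimality from $x \in P$. You spell out two points the paper leaves implicit — that ${}^{V}W(R[P]) \cap W(JR[P]) = {}^{V}W(JR[P])$, and the careful transport of $J$-minimality across all $p$-power shifts — but the underlying argument is identical.
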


\begin{proof}
Noting that any element in $JR[P]$ is an $R$-linear combination of 
the elements $T^x$ with $r \in P$ not $J$-minimal, we see that 
any element in $W(JR[P])$ is written as a convergent sum of the elements 
${}^{V^m}(\eta X^{x})$ with $\eta \in W(R), x \in P$ such that  
$x$ is not $J$-minimal. 
If we denote by $n$ the maximal element in ${\mathbb N}$ with 
$p^{-n}x \in P$ and $n \leq m$, 
\[ {}^{V^m}(\eta X^{x}) = {}^{V^{m-n}}({}^{V^n}\!\eta X^{p^{-n}x}) = b({}^{V^m}\!\eta, p^{-m}x) \] 
and $p^{-m}x$ is a non-$J$-minimal element in $P[\frac{1}{p}]$. Since 
we have ${\mathbb{I}}^0 = W(JR[P])W(R[P]) = W(JR[P])$, we obtain the assertion. 
\end{proof}

\begin{comment}
Next we study the degree $1$ part ${\mathbb{I}}^1$. 

\begin{lemma}\label{lem:I1} 
Any element in ${\mathbb{I}}^1$ is written as a covergent sum of 
$P$-basic Witt differentials $b(\xi,x,n)$ of degree $1$ with $x \in P[\frac{1}{p}]$ 
non minimal. 
\end{lemma}

\begin{proof}
Since 
${\mathbb I}^{1} := 
W(I)W\Lambda^{1}_{(R[P], P)/(R[Q],Q)} + W(R[P]) dW(I)$, 
any element in ${\mathbb I}^{1}$ is written as a convergent sum of 
elements of the following three types: 
\begin{enumerate}
\item $b(\xi,x) d\log X^y$ with $x$ non minimal. 
\item $b(\xi_1,x_1)db(\xi_2,x_2)$ with $x_1$ non minimal. 
\item $b(\xi_1,x_1)db(\xi_2,x_2)$ with $x_2$ non minimal. 
\end{enumerate}
The calculation in the proof of Proposition \ref{thm16} shows that 
the above elements can be written as a sum of 
$P$-basic Witt differentials of weight $x$ in type 1 and weight 
$x_1 + x_2$ in type 2. Since $x_1 + x_2$ is non minimal if so is either 
$x_1$ or $x_2$, we obtain the assertion. 
\end{proof}
\end{comment}

Then we obtain the following description of the elements in ${\mathbb I}^n$ for general $n$: 

\begin{prop}\label{prop:In} 
Any element in ${\mathbb{I}}^n$ is written as a covergent sum of 
$P$-basic Witt differentials $b(\xi,x,I)$ of degree $n$ with $x \in P[\frac{1}{p}]$ 
not $J$-minimal. 
\end{prop}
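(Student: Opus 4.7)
The plan is to reduce to the product and wedge computations already carried out in Section 2.2 for $P$-basic Witt differentials, combined with one elementary observation on $J$-minimality. By definition, ${\mathbb I}^n$ is the subgroup of $W\Lambda^{n}_{(R[P], P)/(R[Q], Q)}$ generated by convergent sums of two types of expressions: (A) $\omega_1 \omega_2$ with $\omega_1 \in W(JR[P])$ and $\omega_2 \in W\Lambda^{n}_{(R[P], P)/(R[Q], Q)}$, and (B) $d\omega_1 \, \omega_2$ with $\omega_1 \in W(JR[P])$ and $\omega_2 \in W\Lambda^{n-1}_{(R[P], P)/(R[Q], Q)}$. By Lemma \ref{lem:I0} each such $\omega_1$ is a convergent sum of degree-zero $P$-basic Witt differentials $b(\xi_1, x_1)$ with $x_1 \in P[\tfrac{1}{p}]$ not $J$-minimal, and by Corollary \ref{thm23} (transported via the natural surjection $W\Lambda^{\bullet}_{(R[P], P)/(R, *)} \twoheadrightarrow W\Lambda^{\bullet}_{(R[P], P)/(R[Q], Q)}$ supplied by Proposition \ref{thm40}) each $\omega_2$ is a convergent sum of $P$-basic Witt differentials $b(\xi_2, x_2, I)$. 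One is thus reduced to the binary products $b(\xi_1, x_1) \, b(\xi_2, x_2, I)$ and $d b(\xi_1, x_1) \, b(\xi_2, x_2, I) = b(\xi_1, x_1, \{0\}) \, b(\xi_2, x_2, I)$.

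The key observation is that if $x_1 \in P[\tfrac{1}{p}]$ is not $J$-minimal and $x_2 \in P[\tfrac{1}{p}]$ is arbitrary, then $x_1 + x_2$ is not $J$-minimal. Choose $N$ with $p^N x_1, p^N x_2 \in P$; by Lemma \ref{lem:min} the element $p^N x_1 \in P$ is not $J$-minimal, so we may write $p^N x_1 = r + q$ with $r \in P$, $q \in J$, and then $p^N(x_1 + x_2) = (r + p^N x_2) + q$ exhibits $p^N(x_1 + x_2) \in P$ as not $J$-minimal, whence $x_1 + x_2 \in P[\tfrac{1}{p}]$ is not $J$-minimal.

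To finish, the calculations in Lemma \ref{thm12}, Proposition \ref{thm16} and the proof of Corollary \ref{thm23} show that every such binary product can be rewritten as a finite sum of $P$-basic Witt differentials \emph{all of weight $x_1 + x_2$}: products of two degree-zero $P$-basic Witt differentials add weights by Lemma \ref{thm12}; mixed products of the form $b(\xi, x) \, d b(\xi', x')$ are decomposed in Proposition \ref{thm16} into $P$-basic Witt differentials of weight $x + x'$; and wedging the resulting basic Witt differentials with further $d\log X_i$'s preserves the basic Witt differential form by Corollary \ref{thm23}. Combined with the key observation of the previous paragraph, every $P$-basic Witt differential produced has weight not $J$-minimal, as required.

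The only real obstacle is the bookkeeping in type (B) when $0 \in I$, where $b(\xi_1, x_1, \{0\}) \, b(\xi_2, x_2, I)$ carries two exterior $d$-factors and neither of the factors in the product is a $P$-basic Witt differential of degree $0$; but the Leibniz trick employed in Case 2 of the proof of Proposition \ref{thm16} reduces it immediately to the situation already handled, and still produces only terms of weight $x_1 + x_2$. All the substantive work is therefore already contained in Section 2.2, and the content of this proposition is essentially the combination of those identities with the $J$-minimality observation above.
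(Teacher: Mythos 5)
Your proof is correct and takes essentially the same approach as the paper: reduce via Lemma \ref{lem:I0} and Corollary \ref{thm23} to binary products $b(\xi_1,x_1)\,b(\xi_2,x_2,I)$ and $db(\xi_1,x_1)\,b(\xi_2,x_2,I)$, invoke the Section 2.2 calculations to rewrite these as sums of $P$-basic Witt differentials of weight $x_1+x_2$, and observe that non-$J$-minimality of $x_1$ propagates to $x_1+x_2$. You spell out the $J$-minimality observation (which the paper leaves as a one-line remark) and the weight bookkeeping in a bit more detail, but there is no difference in method.
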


\begin{proof}
%We prove the claim by induction on $n$. So we assume that the claim holds for 
%$n = k$ and prove the claim in the case $n = k+1$. 
We may assume that $n \geq 1$. Since 
\[ {\mathbb I}^{n} := 
W(JR[P])W\Lambda^{n}_{(R[P], P)/(R[Q],Q)} + dW(JR[P])W\Lambda^{n-1}_{(R[P], P)/(R[Q],Q)}, \]  
any element in ${\mathbb I}^{n}$ is written as a convergent sum of 
elements of the following types by Lemma \ref{lem:I0} and Corollary \ref{thm23}: 
\begin{enumerate}
\item $b(\xi_1,x_1) b(\xi,x,I)$ with $|I| = n$ and $x_1$ not $J$-minimal. 
\item $d b(\xi_1,x_1) \cdot b(\xi,x,I)$ with $|I| = n-1$ and $x_1$ not $J$-minimal.  
\end{enumerate}
The calculation in Section 2.2 shows that 
the above elements can be written as a sum of 
$P$-basic Witt differentials of weight $x_1 + x$. 
Since $x_1 + x$ is not $J$-minimal if  
$x_1$ is not $J$-minimal, we obtain the assertion. 
\end{proof}

Using the above proposition, we prove the following proposition, 
which is the main result in this subsection. 

\begin{prop}\label{prop:decompIII}
We have the canonical isomorphism 
\begin{equation}\label{eq:decompmar}
W_m\Lambda^{\bullet}_{(R[P]/JR[P],P)/(R[Q]/JR[Q],Q)} 
\cong \bigoplus_{x \in P[\frac{1}{p}] \atop \text{$J$-minimal}} W_m\Lambda^{\bullet}_{(R[P],P)/(R[Q],Q), x}. 
\end{equation}
\end{prop}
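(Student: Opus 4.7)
The plan is to combine the isomorphism
\[ W_m\Lambda^{\bullet}_{(R[P]/JR[P], P)/(R[Q]/JR[Q], Q)} \cong W_m\Lambda^{\bullet}_{(R[P], P)/(R[Q], Q)}/{\mathbb I}_m^{\bullet}, \]
recalled at the start of this subsection from \cite[Prop.~3.11(2)]{Ma}, with the decomposition
\[ W_m\Lambda^{\bullet}_{(R[P], P)/(R[Q], Q)} = \bigoplus_{x \in P[\frac{1}{p}]} W_m\Lambda^{\bullet}_{(R[P],P)/(R[Q],Q), x} \]
of Proposition \ref{thm8}(2). The whole task then reduces to identifying the differential graded ideal ${\mathbb I}_m^{\bullet}$ with the sum of precisely those $x$-parts for which $x$ is \emph{not} $J$-minimal; dividing out leaves the sum over $J$-minimal weights, which is \eqref{eq:decompmar}.

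For the inclusion
\[ {\mathbb I}_m^{\bullet} \subseteq \bigoplus_{x \in P[\frac{1}{p}], \, \text{not $J$-minimal}} W_m\Lambda^{\bullet}_{(R[P], P)/(R[Q], Q), x}, \]
I will appeal to Proposition \ref{prop:In}, which expresses every element of ${\mathbb I}^{\bullet}$ as a convergent sum of $P$-basic Witt differentials $b(\xi, x, I)$ with $x$ not $J$-minimal. By construction each such $b(\xi,x,I)$ lies in the $x$-part, and passing to the truncation $W_m\Lambda^{\bullet}$ (under which ${\mathbb I}^{\bullet}$ surjects onto ${\mathbb I}_m^{\bullet}$) turns convergent sums into finite sums inside the displayed direct sum.

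For the reverse inclusion, fix $x \in P[\tfrac{1}{p}]$ not $J$-minimal. By Lemma \ref{lem:min} iterated (and its extension to $P[\tfrac{1}{p}]$ via the definition of $J$-minimality), $p^{u(x)}x \in P$ is again not $J$-minimal, so it admits a decomposition $p^{u(x)}x = r+q$ with $r \in P$ and $q \in J$. Hence $X^{p^{u(x)}x} = [T^r][T^q]$, and since $[T^q] \in W_m(JR[P])$ and $W_m(JR[P]) = {\mathbb I}_m^{0}$ is an ideal stable under $V$, we get $b(\xi, x) = {}^{V^{u(x)}}(\eta [T^r][T^q]) \in {\mathbb I}_m^{0}$. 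Because ${\mathbb I}_m^{\bullet}$ is a differential graded ideal, every $b(\xi, x, I)$ of weight $x$ lies in ${\mathbb I}_m^{|I|}$, and therefore the whole $x$-part sits inside ${\mathbb I}_m^{\bullet}$. The only nontrivial point in the argument is this combinatorial extraction of a factor in $J$ from $p^{u(x)}x$, which rests on Lemma \ref{lem:min} and hence on the $p$-saturatedness of $Q \to P$ (specifically property $(\star)$). Once the two inclusions are combined, taking the quotient yields \eqref{eq:decompmar}.
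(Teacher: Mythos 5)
Your argument is correct and follows essentially the same route as the paper's: quotient by ${\mathbb I}_m^{\bullet}$, decompose via Proposition \ref{thm8}(2), and identify ${\mathbb I}_m^{\bullet}$ with the direct sum of the non-$J$-minimal $x$-parts using Proposition \ref{prop:In}. The one worthwhile addition you make is to spell out the reverse inclusion ${\mathbb I}_m^{\bullet} \supseteq \bigoplus_{x \text{ not } J\text{-minimal}} W_m\Lambda^{\bullet}_{(R[P],P)/(R[Q],Q),x}$ by extracting the factor $[T^q]$ with $q\in J$ from $X^{p^{u(x)}x}$ via Lemma \ref{lem:min}, a step the paper leaves implicit when it asserts ${\mathbb I}_m^{\bullet}$ ``is equal to'' (rather than merely contained in) the span of basic Witt differentials with non-$J$-minimal weight.
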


\begin{proof}
Since the canonical surjection 
$W\Lambda^{\bullet}_{(R[P], P)/(R[Q],Q)} \to W_m\Lambda^{\bullet}_{(R[P], P)/(R[Q],Q)}$ 
sends ${\mathbb I}^{\bullet}$ surjectively to 
${\mathbb I}_m^{\bullet}$, we see that ${\mathbb I}_m^{\bullet}$ is equal to the set of elements 
which are written as a sum of $P$-basic Witt differentials with non-$J$-minimal weights by Proposition \ref{prop:In}. 
Thus the decomposition in Proposition \ref{thm8}(2) induces the isomorphism \eqref{eq:decompmar}. 
%\begin{equation*}%\label{eq:decompIII}
%W_m\Lambda^{\bullet}_{(R[P] \otimes_{R[Q]} R ,P)/(R,Q)} 
%\cong \bigoplus_{x \in P[\frac{1}{p}] \atop \text{minimal}} W_m\Lambda^{\bullet}_{(R[P],P)/(R[Q],Q), x},  
%\end{equation*}
%as required. 
\end{proof}

\subsection{Comparison isomorphism (III)}

In this subsection, we prove our main comparison isomorphism 
between the complex computing the relative log crystalline cohomology and 
the relative log de Rham-Witt complex. 

To prove this, first we prove the comparison isomorphism of 
relative de Rham complex and relative log de Rham-Witt complex in certain situation. 
Let $R$, $Q \to P$, $J$ and $(R[P]/JR[P],P)/(R[Q]/JR[Q],Q)$ 
be as in the previous subsection. 
%, and let $W_m(\varphi): W_m(R[Q]) \to W_m(R)$ 
%be the morphism of Witt rings induced by $\varphi$. 

Let $((A_m,P), \phi_m, \delta_m)_m$ be the 
log Frobenius lift of $(R[P],P)$ over $(R[Q],Q)$ in 
Example \ref{exam} (so $A_m = W_m(R[Q]) \otimes_{\mathbb{Z}[Q]} \mathbb{Z}[P]$). 
Then, if we put $\overline{A}_m := W_m(R[Q]/JR[Q]) \otimes_{W_m(R[Q])} A_m$, 
the morphisms $\phi_m, \delta_m$ naturally induce the morphisms 
$\overline{\phi}_m: \overline{A}_m \to \overline{A}_{m-1}$, 
$\overline{\delta}_m: \overline{A}_m \to W_m(R[P]/JR[P])$ such that 
the triple $((\overline{A}_m,P), \overline{\phi}_m, \overline{\delta}_m)_m$
forms a log Frobenius lift of $(R[P]/JR[P],P)$ over $(R[Q]/JR[Q],Q)$. 

Let 
$(\overline{Z}_m, \overline{\Phi}_m, \overline{\Delta}_m)_m$ be
the corresponding log Frobenius lift (as log schemes) of 
$\overline{Z} := \mbox{Spec}(R[P]/JR[P],P)$ over 
$\mbox{Spec}(R[Q]/JR[Q], Q)$. 
Then, by the construction in Section 1.3 using $(\overline{Z}_m, \overline{\Phi}_m, \overline{\Delta}_m)_m$, 
we obtain the comparison morphism 
\begin{align*}
 \Lambda_{\overline{Z}_{m}/{{\rm Spec}}(W_{m}(R[Q]/JR[Q],  Q))}^{\bullet} \to 
W_{m} \Lambda_{\overline{Z}/{{\rm Spec}}(R[Q]/JR[Q], Q)}^{\bullet} % \tag{3.3.1}
\end{align*}
of complexes of quasi-coherent sheaves, hence the morphism 
\begin{align} \label{444}
c_{P,Q,J}:  \Lambda_{(\overline{A}_m, P)/W_{m}(R[Q]/JR[Q], Q)}^{\bullet} \to 
W_{m} \Lambda_{(R[P]/JR[P],P)/(R[Q]/JR[Q], Q)}^{\bullet}
 % \tag{3.3.1}
\end{align}
of complex of $W_m(R[Q]/JR[Q])$-modules. 

\begin{theorem}\label{thm:qisqis1}
In the above situation, the morphism $c_{P,Q,J}$ is a quasi-isomorphism. 
\end{theorem}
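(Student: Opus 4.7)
The strategy parallels that of Theorem \ref{qis1}, reducing to it via the decomposition of Proposition \ref{prop:decompIII}. I would begin by considering the commutative diagram of complexes
\[
\xymatrix{
\Lambda^{\bullet}_{(A_m,P)/W_m(R[Q],Q)} \ar[r]^-{c_{P,Q}} \ar[d]_{\pi_L} & W_m\Lambda^{\bullet}_{(R[P],P)/(R[Q],Q)} \ar[d]^{\pi_R} \\
\Lambda^{\bullet}_{(\overline{A}_m,P)/W_m(R[Q]/JR[Q],Q)} \ar[r]^-{c_{P,Q,J}} & W_m\Lambda^{\bullet}_{(R[P]/JR[P],P)/(R[Q]/JR[Q],Q)}
}
\]
with $\pi_L, \pi_R$ the canonical surjections, viewed as a map of short exact sequences of complexes with kernels $\ker\pi_L$ (on the left) and ${\mathbb I}_m^{\bullet}$ (on the right). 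By Theorem \ref{qis1}, the middle vertical $c_{P,Q}$ is a quasi-isomorphism; more precisely, its proof shows that $c_{P,Q}$ is injective with image the direct summand $\bigoplus_{x \in P'} W_m\Lambda^{\bullet}_{(R[P],P)/(R[Q],Q), x}$, where $P' = \{q+r : q \in Q[\tfrac{1}{p}],\, r \in P\}$, and the complement $\bigoplus_{x \in P[\tfrac{1}{p}] \setminus P'} W_m\Lambda^{\bullet}_{(R[P],P)/(R[Q],Q), x}$ is acyclic. On the target side, Proposition \ref{prop:decompIII} identifies ${\mathbb I}_m^{\bullet}$ with $\bigoplus_{x \text{ non-$J$-minimal}} W_m\Lambda^{\bullet}_{(R[P],P)/(R[Q],Q), x}$.

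The heart of the proof will be showing that $c_{P,Q}$ restricts to a quasi-isomorphism between the kernels $\ker\pi_L \to {\mathbb I}_m^{\bullet}$; by the five lemma applied to the long exact sequence of cohomology associated with the map of short exact sequences above, this will yield the desired conclusion for $c_{P,Q,J}$. Using the flatness of $\mathbb{Z}[Q] \to \mathbb{Z}[P]$ (integrality of $Q \to P$), $\ker\pi_L$ is identified with $W_m(JR[Q]) \otimes_{\mathbb{Z}[Q]} \mathbb{Z}[P] \otimes_{\mathbb{Z}} \bigwedge^{\bullet}(P^{\rm gp}/Q^{\rm gp})$, generated by elements $\xi \otimes T^r \bigwedge_I d\log T_i$ with the weight $q$ of $\xi \in W_m(JR[Q])$ satisfying $p^{u(q)}q \in J$ (i.e.\ $q$ non-$J$-minimal in $Q[\tfrac{1}{p}]$). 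The explicit formula from the proof of Theorem \ref{qis1} sends such an element into the $(q+r)$-part of $W_m\Lambda^{\bullet}_{(R[P],P)/(R[Q],Q)}$, and since $p^{u(q)}x = p^{u(q)}q + p^{u(q)}r \in J+P$ the image lies in $\bigoplus_{x \in P' \cap \text{non-$J$-minimal}} W_m\Lambda^{\bullet}_{(R[P],P)/(R[Q],Q), x}$. I would then prove that $c_{P,Q}$ restricts to an isomorphism onto this summand, after which the cokernel $\bigoplus_{x \in (P[\tfrac{1}{p}] \setminus P') \cap \text{non-$J$-minimal}} W_m\Lambda^{\bullet}_{(R[P],P)/(R[Q],Q), x}$ is acyclic summand-by-summand by the same Lemma \ref{thm4}-based argument used in Theorem \ref{qis1}, finishing the proof.

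The main obstacle I anticipate lies in the surjectivity claim above: given $x \in P' \cap \text{non-$J$-minimal}$, every generator of the $x$-part must be exhibited as $c_{P,Q}(\omega)$ for some $\omega \in \ker\pi_L$. Starting from any decomposition $x = q_0 + r_0$, I would write $p^{u(q_0)}x = j + s$ with $j \in J$, $s \in P$ (from non-$J$-minimality) and, using $p$-saturatedness of $P$ together with exactness of $Q \hookrightarrow P$, verify that $s/p^{u(q_0)} \in P$; setting $q' := j/p^{u(q_0)} \in Q[\tfrac{1}{p}]$ (non-$J$-minimal) and $r' := s/p^{u(q_0)} \in P$, the subtle point is checking that the resulting lift $\xi' \otimes T^{r'}$ agrees in $A_m$ with the one obtained from the original decomposition via the $\mathbb{Z}[Q]$-bilinear tensor relations, so that it is indeed the (unique) preimage under the injective map $c_{P,Q}$ of the prescribed element in the $x$-part.
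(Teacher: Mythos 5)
Your overall strategy matches the paper's: identify the left-hand complex as the quotient of $\Lambda^{\bullet}_{(A_m,P)/W_m(R[Q],Q)}$ by $W_m(JR[Q])\Lambda^{\bullet}_{(A_m,P)/W_m(R[Q],Q)}$, use the Theorem \ref{qis1} identification $\Lambda^{\bullet}_{(A_m,P)/W_m(R[Q],Q)}\xrightarrow{\cong}\bigoplus_{x\in P'}W_m\Lambda^{\bullet}_{(R[P],P)/(R[Q],Q),x}$, match the kernel with the non-$J$-minimal summands, and conclude by acyclicity of the remaining (non-$P'$, $J$-minimal) summands. The paper passes to quotients directly, obtaining $c_{P,Q,J}$ as an injection onto a direct summand with acyclic complement; your five-lemma packaging of the same information is equivalent and fine. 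Your identification of the "subtle point" about lift-compatibility in $A_m$ is also well-placed: it is precisely what the claim in the proof of Lemma \ref{thm41} establishes (two decompositions $q+r=q'+r'$ with $q,q'\in Q[\tfrac{1}{p}]$, $r,r'\in P$ yield the same element of $A_m$), so you can simply cite it.

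The genuine gap is the assertion that, from any decomposition $p^{u(q_0)}x=j+s$ with $j\in J$, $s\in P$, one has $s/p^{u(q_0)}\in P$. This is false. For instance, with $P=\mathbb{N}^3$, $Q=\mathbb{N}^2$ embedded as the first two coordinates, $J=\{(a,b)\in Q: a\geq 1\}$, $p=2$, and $x=(\tfrac12,\tfrac12,1)$ (so $q_0=(\tfrac12,\tfrac12)$, $r_0=(0,0,1)$, $u(q_0)=1$), one may take $j=(1,0,0)$, $s=(0,1,2)$; then $s/2=(0,\tfrac12,1)\notin P^{\rm gp}$, so $p$-saturatedness is of no help. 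What the argument actually needs is $(\star\star)$ applied to $s$: since $x\in P'$ has $\overline{x}=\overline{r_0}\in P^{\rm gp}/Q^{\rm gp}$, one has $\overline{s}=p^{u(q_0)}\overline{x}\in p^{u(q_0)}(P^{\rm gp}/Q^{\rm gp})$, and $(\star\star)$ then produces $s=p^{u(q_0)}s''+q''$ with $s''\in P$, $q''\in Q$ (and $q''$ may be nonzero!). Setting $r':=s''$ and $q':=(j+q'')/p^{u(q_0)}$ --- note $j+q''\in J$ because $J$ is an ideal --- gives $x=q'+r'$ with $q'\in J[\tfrac{1}{p}]$, which is condition $(\ast)$ of the paper's proof. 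The rest of your outline then goes through.
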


\begin{proof}
By the base change property of relative log de Rham complex, we have the isomorphism 
\begin{align*}
\Lambda_{(\overline{A}_m, P)/W_{m}(R[Q]/JR[Q], Q)}^{\bullet} & \cong 
W_m(R[Q]/JR[Q]) \otimes_{W_m(R[Q])} \Lambda_{(A_m, P)/W_m(R[Q], Q)}^{\bullet} \\  & 
= \Lambda_{(A_m, P)/W_m(R[Q], Q)}^{\bullet} / W_m(JR[Q]) \Lambda_{(A_m, P)/W_m(R[Q], Q)}^{\bullet}. 
\end{align*}
% where $J = {\rm Ker}(\varphi: R[Q] \to R) = \bigoplus_{0 \not= q \in Q} RT^q$. 
An element in $W_m(JR[Q]) \Lambda_{(A_m, P)/W_m(R[Q], Q)}^{\bullet}$ is 
written as a sum of elements of the forms 
\[ 
{}^{V^{u(q)}}(\eta X^{p^{u(q)}q})T^{r} \bigwedge_{i \in I} d\log T_i, \qquad 
{}^{V^{u(q)}}(\eta X^{p^{u(q)}q}) \otimes dT^{r} \bigwedge_{i \in I} d\log T_i 
\] 
with $q \in J[\frac{1}{p}] := \{x \in Q[\frac{1}{p}]; \exists n \in {\mathbb N}_{>0}, p^nx \in J\}, r \in P$. 

Now recall that the morphism 
\[ c_{P,Q}:  \Lambda_{(A_m, P)/(W_{m}(R[Q],  Q))}^{\bullet} \to W_{m} \Lambda_{(R[P],P)/(R[Q], Q)}^{\bullet}\] 
in Section 3.3 sends elements in $\Lambda_{(A_m, P)/W_m(R[Q], Q)}^{\bullet}$ as 
 \begin{align*}
{}^{V^{u(q)}}(\eta X^{p^{u(q)}q}) \otimes T^{r} \bigwedge_{i \in I} d\log T_i & \,\,\mapsto\,\,  
{}^{V^{u(x)}}(\eta X^{p^{u(x)}x})\bigwedge_{i \in I} d\log X_i,  \\
{}^{V^{u(q)}}(\eta X^{p^{u(q)}q}) \otimes dT^{r} \bigwedge_{i \in I} d\log T_i & \,\,\mapsto\,\, d{}^{V^{u(x)}}(\eta X^{p^{u(x)}x})\bigwedge_{i \in I} d\log X_i, 
 \end{align*}
with $x = q + r \in Q[\frac{1}{p}] + P =: P'$. Also, the map $c_{P,Q}$ induces the isomorphism 
\begin{equation}\label{eq:qpp}
\Lambda_{(A_m, P)/W_m(R[Q], Q)}^{\bullet} \xrightarrow{\cong} \bigoplus_{x \in P'} W_m\Lambda_{(R[P], P )/(R[Q],  Q), x}^{\bullet}. 
\end{equation}
Thus the image of $W_m(JR[Q]) \Lambda_{(A_m, P)/W_m(R[Q], Q)}^{\bullet}$ by the isomorphism \eqref{eq:qpp} 
is the set of sums of $P$-basic elements whose weight $x \in P'$ satisfies the following condition: 
\medskip

\noindent  
(*) \, $x$ can be written as a sum 
$x = q + r$ with $q \in J[\frac{1}{p}], r \in P$. 
\medskip 

It is easy to see that the condition (*) implies that $x$ is not $J$-minimal as an element in $P[\frac{1}{p}]$. 
On the other hand, if $x = q + r \in P' \, (q \in Q[\frac{1}{p}], r \in P)$ is not 
$J$-minimal as an element in $P[\frac{1}{p}]$, we have the equality 
$p^nx = q' + r'$ for some $n \geq 0, q' \in J, r' \in P$. Then, by $(\star \star)$, 
there exists some $r'' \in P, q'' \in Q$ with $r' = p^nr'' + q''$. Hence  
$p^nx = (q' + q'') + p^nr''$ and so $x = p^{-n}(q'+q'') + r''$ with 
$p^{-n}(q'+q'') \in J[\frac{1}{p}]$, namely, $x$ satisfies the condition (*). 
%Also, if $x \in P'$ does not satisfy the condition (*), $x$ should be in $P$ and should be minimal 
%as an element in $P$. Thus it is minimal as an element in $P[\frac{1}{p}]$. 
Therefore, the image of $W_m(JR[Q]) \Lambda_{(A_m, P)/W_m(R[Q], Q)}^{\bullet}$ by the isomorphism \eqref{eq:qpp} 
is equal to $\bigoplus_{x \in P' \atop \text{not $J$-minimal}} W_m\Lambda_{(R[P], P )/(R[Q],  Q), x}^{\bullet}$. 
Since $c_{P,Q,J}$ is compatible with $c_{P,Q}$, the decomposition in Theorem \ref{prop:decompIII}
and the isomorphism \eqref{eq:qpp} induce the isomorphism 
\begin{equation}\label{eq:qpp2}
\Lambda_{(\overline{A}_m, P)/W_m(R[Q]/JR[Q], Q)}^{\bullet} \xrightarrow{\cong} 
\bigoplus_{x \in P' \atop \text{$J$-minimal}} W_m\Lambda_{(R[P]/JR[P], P )/(R[Q]/JR[Q],  Q), x}^{\bullet}. 
\end{equation}
By the decomposition in Theorem \ref{prop:decompIII} and \eqref{eq:qpp2}, 
it suffices to see the acyclicity of the complex 
\[\bigoplus_{x \in P[\frac{1}{p}] \setminus P' \atop \text{$J$-minimal}} W_m\Lambda_{(R[P], P )/(R[Q],  Q),  x }^{\bullet}\]
to prove the theorem, and it is already proven in the proof of Theorem \ref{qis1}. So we are done. 
\end{proof}

Finally, we prove our main comparison isomorphism in this article. 
%Recall that a log scheme $(Z, {\mathcal{L}})$ is hollow if 
%any non-invertible local section of ${\mathcal{L}}$ is sent to $0$ by the structure 
%morphism ${\mathcal{L}} \to {\mathcal O}_Z$. 
Let 
$f: (X, {\mathcal{M}}) \to (Y, {\mathcal{N}})$ be a log smooth saturated 
morphism of fs log schemes on which $p$ is nilpotent and assume that 
$(Y, {\mathcal{N}})$ satisfies the following condition (which we already wrote in the introduction): 
\bigskip 

\noindent 
$(\spadesuit)$ \, Etale locally around any geometric point $y$ of $Y$, 
there exist a chart $\varphi: Q_Y \to {\mathcal N}$ inducing the isomorphism 
$Q \xrightarrow{\cong} {\mathcal N}_y/{\mathcal O}_{Y,y}^*$, 
a radical ideal $J$ of $Q$ such that the composite 
$J_Y \subset Q_Y \xrightarrow{\varphi} {\mathcal N} \to {\mathcal O}_Y$ is zero  
and a ring homomorphism 
$\psi: R \to \Gamma(Y,{\mathcal O}_Y)$ such that 
the morphism $(Y, \mathcal{N}) \to {\rm Spec}(R[Q]/JR[Q],Q)$ induced by 
$\varphi, \psi$ is strict smooth. 
\bigskip 

\begin{theorem}\label{thm:main2}
In the above situation, the comparison morphism 
\[ \mathbb{R}u_{m*} \mathcal{O}_m \to W_{m} \Lambda_{(X, \mathcal{M})/(Y,  \mathcal{N})}^{\bullet} \]
we constucted in Section 1.3 is a quasi-isomorphism. 
\end{theorem}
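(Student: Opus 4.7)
The plan is to imitate the strategy used in the proofs of Theorem \ref{thm10} and Theorem \ref{thm:main1}, replacing the local comparison input by the quasi-isomorphism statement of Theorem \ref{thm:qisqis1}. Since the comparison morphism is functorial and independent of the choices of charts and log Frobenius lifts (Remark after the construction), the statement is etale local on both $X$ and $Y$. Using the condition $(\spadesuit)$, I may first replace $Y$ by an etale neighborhood so that a chart $\varphi: Q_Y \to {\mathcal N}$, a radical ideal $J \subset Q$ with $J_Y \to {\mathcal O}_Y$ zero, and a ring homomorphism $\psi:R \to \Gamma(Y,{\mathcal O}_Y)$ are given with the structure morphism $(Y,{\mathcal N}) \to {\rm Spec}(R[Q]/JR[Q],Q)$ strict smooth; after further shrinking and adding a free factor $\mathbb{N}^a$, I may assume this structure morphism factors as a strict etale map $(Y,{\mathcal N}) \to {\rm Spec}(R[Q \oplus \mathbb{N}^a]/JR[Q \oplus \mathbb{N}^a], Q \oplus \mathbb{N}^a)$; since $p$ is nilpotent on $Y$, I may further assume $p$ is nilpotent in $R$.

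Next I would choose a chart of $f$ around a geometric point $x \in X$ with image $y \in Y$. Taking $P := {\mathcal M}_x/{\mathcal O}_{X,x}^*$, the same argument as in the proof of Theorem \ref{thm:main1} shows that $P$ is an fs monoid with $P^{\rm gp}$ torsion free and that the natural map $Q \oplus \mathbb{N}^a \to P$ is an injective $p$-saturated homomorphism with torsion-free cokernel $P^{\rm gp}/(Q \oplus \mathbb{N}^a)^{\rm gp}$; here one uses Proposition \ref{prop:sat} together with the saturatedness assumption on $f$. Applying \cite[Lem.~4.1.1]{KF} to $f$ and shrinking, I get, after a further addition of a free factor $\mathbb{N}^b$, a strict etale morphism
\[ (U,{\mathcal M}|_U) \to (V,{\mathcal N}|_V) \times_{{\rm Spec}(R[Q \oplus \mathbb{N}^a]/JR[Q \oplus \mathbb{N}^a], Q \oplus \mathbb{N}^a)} {\rm Spec}(R[P \oplus \mathbb{N}^b]/JR[P \oplus \mathbb{N}^b], P \oplus \mathbb{N}^b), \]
where the $J$ on the right is understood via the inclusion $Q \oplus \mathbb{N}^a \hookrightarrow P \oplus \mathbb{N}^b$ (absorbing the ${\mathbb N}^a$ factor). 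Combining both strict etale reductions, I reduce the theorem to the case where $f$ is the standard morphism
\[ {\rm Spec}(R[P \oplus \mathbb{N}^b]/JR[P \oplus \mathbb{N}^b], P \oplus \mathbb{N}^b) \to {\rm Spec}(R[Q \oplus \mathbb{N}^a]/JR[Q \oplus \mathbb{N}^a], Q \oplus \mathbb{N}^a). \]

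In this standard situation the comparison morphism coincides with the map $c_{P \oplus \mathbb{N}^b, Q \oplus \mathbb{N}^a, J}$ of \eqref{444}, constructed from the log Frobenius lift of Example \ref{exam}. Since the $p$-saturatedness, injectivity, and torsion freeness of $Q \to P$ pass immediately to $Q \oplus \mathbb{N}^a \to P \oplus \mathbb{N}^b$, and since the radical ideal $J \subset Q$ may be viewed unchanged inside the larger monoid, Theorem \ref{thm:qisqis1} applies and shows that this morphism is a quasi-isomorphism. This completes the proof.

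The main obstacle I anticipate is the chart-choice step of the second paragraph, namely verifying that the radical ideal $J$ and the free factors $\mathbb{N}^a$, $\mathbb{N}^b$ introduced during the reduction are compatible on both sides, so that the strict etale morphisms obtained from \cite[Lem.~4.1.1]{KF} really land in the ``pointed'' schemes ${\rm Spec}(R[-]/JR[-],-)$ rather than only in ${\rm Spec}(R[-],-)$. This requires exploiting the condition $J_Y \to {\mathcal O}_Y$ being zero in $(\spadesuit)$, which forces the map $(V,{\mathcal N}|_V) \to {\rm Spec}(R[Q],Q)$ to factor through the quotient, and then the analogous factorization for the fiber product follows by base change; once this compatibility is established, everything else is a direct transposition of the argument in Theorem \ref{thm:main1}.
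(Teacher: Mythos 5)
Your proposal follows exactly the paper's strategy: reduce etale-locally via condition $(\spadesuit)$ and \cite[Lem.~4.1.1]{KF} to the standard morphism ${\rm Spec}(\overline{R[P \oplus \mathbb{N}^{a+b}]}, P \oplus \mathbb{N}^{a+b}) \to {\rm Spec}(\overline{R[Q \oplus \mathbb{N}^a]}, Q \oplus \mathbb{N}^a)$ and apply Theorem \ref{thm:qisqis1}, and the ``obstacle'' you flag (compatibility of the ideal $J$ through the strict etale reductions) is precisely what the paper resolves by taking the second chart over the quotient ${\rm Spec}(\overline{R[Q]},Q)$ so that the fiber product with ${\rm Spec}(\overline{R[P]},P)$ automatically carries the ideal $J$. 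Aside from a minor bookkeeping slip (you write $Q \oplus \mathbb{N}^a \to P$ and $\mathbb{N}^b$ where the paper keeps $Q \to P$ and then tracks $\mathbb{N}^{a+b}$ on the $P$-side), this is the same proof.
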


\begin{rem}\label{rem:remrem}
Let $(Y, \mathcal{N})$ be an fs log scheme. Then 
etale locally around any geometric point $y$ of $Y$, 
there always exists a chart 
$\varphi: Q_Y \to {\mathcal N}$ inducing the isomorphism 
$Q \xrightarrow{\cong} {\mathcal N}_y/{\mathcal O}_{Y,y}^*$, by 
the argument in \cite[p.186]{KF}. 
\begin{enumerate}[(1)]
\item 
When the log structure ${\mathcal N}$ is trivial, 
the condition $(\spadesuit)$ is always satisfied because 
$Q$ above is trivial and 
$Y$ is etale locally isomorphic to an affine scheme. 
Thus Theorem \ref{thm10} is included in Theorem \ref{thm:main2} and so 
the case (1) of Matsuue's result explained in the introduction 
(\cite[Thm.~7.2]{Ma}) is included in Theorem \ref{thm:main2}. 
\item 
When $(Y,{\mathcal N})$ is etale locally log smooth over a scheme with trivial log structure, 
we see by the argument in the proof of Theorem \ref{thm:main1}
(which uses \cite[Lem.~4.1.1]{KF}) that the condition 
$(\spadesuit)$ is always satisfied with $J = \emptyset$. Thus Theorem 
\ref{thm:main1} is also included in Theorem \ref{thm:main2}. 
\item 
Recall that $(Y, {\mathcal{N}})$ is called hollow if 
any non-invertible local section of ${\mathcal{N}}$ is sent to $0$ by the structure 
morphism ${\mathcal{N}} \to {\mathcal O}_Y$. If 
$(Y, {\mathcal{N}})$ is hollow, 
the condition $(\spadesuit)$ is always satisfied because, when $Y = {\rm Spec}\,R$, 
the chart $Q_Y \to {\mathcal N}$ above gives the isomorphism 
$(Y,{\mathcal N}) \cong {\rm Spec}(R[Q]/(Q\setminus \{0\})R[Q],Q)$. 
So the case (2) of Matsuue's result explained in the introduction 
(\cite[Thm.~7.9]{Ma}) is included in Theorem \ref{thm:main2}. 
\end{enumerate}
\end{rem}

\begin{proof}
The proof is almost the same as that of Theorem \ref{thm:main1}. 
We may assume that $(Y, {\mathcal{N}})$ is strict smooth over 
${\rm Spec}(R[Q]/JR[Q],Q)$ as in the condition $(\spadesuit)$. 
Also, as we see later in the proof, we may assume that $p$ is nilpotent in $R$. 
If we reduce the theorem to the case where the morphism 
$f: (X, {\mathcal M}) \to (Y, {\mathcal N})$ is the 
morphism ${\rm Spec}(R[P]/JR[P],  P) \to {\rm Spec}(R[Q]/JR[Q],  Q)$ 
associated to an injective $p$-saturated homomorphism   
$Q \to P$ of fs monoids such that $Q^{\rm gp}, P^{\rm gp}, 
P^{\rm gp}/Q^{\rm gp}$ are torsion free, the theorem follows from 
Theorem \ref{thm:qisqis1}. In the following, we prove this reduction. 

Let $x$ be a geometric point of $X$ over $y$ and put 
$P = {\mathcal M}_x/{\mathcal O}_{X,  x}^*$. Then, as in the proof of 
Theorem \ref{thm:main1}, we see that the homomorphism 
$Q \to P$ is an injective $p$-saturated morphism of fs monoids 
with $P^{\rm gp}, Q^{\rm gp}, P^{\rm gp}/Q^{\rm gp}$ 
torsion free. 

In the following, we put $\overline{R[Q]} := R[Q]/JR[Q]$, 
 $\overline{R[P]} := R[P]/JR[P]$. 
We will take a nice chart of $f$ by the following diagram: 
\[ \xymatrix{
(U, {\mathcal{M}}|_{U}) \ar[rr] \ar[rd] \ar[rddd]_{f'} \ar[rrrd] & & \mbox{Spec}(\overline{R[P \oplus {\mathbb{N}}^{a+b}]}, P \oplus {\mathbb{N}}^{a+b}) \ar[d] & \\
& \text{fiber product} \ar[r] \ar[dd] & \mbox{Spec}(\overline{R[P \oplus {\mathbb{N}}^{a}]}, P \oplus {\mathbb{N}}^{a}) \ar[r] \ar[d] & \mbox{Spec}(\overline{R[P]}, P) \ar[dd] \\
& & \mbox{Spec}(\overline{R[Q \oplus {\mathbb{N}}^{a}]}, Q \oplus {\mathbb{N}}^{a}) \ar[rd] & \\
& (Y, {\mathcal{N}}) \ar[rr] \ar[ru] & & \mbox{Spec}(\overline{R[Q]},  Q).  
}
\]

First, applying \cite[Lem.~4.1.1]{KF} to $(X,  {\mathcal M})/(Y,  {\mathcal N})$, we obtain a 
an etale neighborhood $U$ of $x$ in $X$, 
a morphism $f': (U, {\mathcal{M}}|_{U}) \to (Y, {\mathcal{N}})$ over $f$ and 
a chart $Q \to P$ of it such that the induced morphism
\begin{equation}\label{eq:vu2}
% (V,{\mathcal{N}}|_{V}) \to \mbox{Spec}(R[Q],  Q), \qquad 
(U,{\mathcal{M}}|_{U}) \to (Y,{\mathcal{N}}) \times_{\mbox{\scriptsize{Spec}}(\overline{R[Q]},Q)} 
\mbox{Spec}(\overline{R[P]},P)
\end{equation} is 
strict smooth. After shrinking $Y$ suitably, we can take a decomposition 
\[ (Y,{\mathcal{N}}) \to \mbox{Spec}(\overline{R[Q \oplus {\mathbb{N}}^a]}, Q \oplus {\mathbb{N}}^a) \to 
\mbox{Spec}(\overline{R[Q]},  Q) \] 
of the strict smooth map $(Y,{\mathcal N}) \to \mbox{Spec}(\overline{R[Q]},  Q)$ in $(\spadesuit)$ 
in which the first map is strict etale, where 
$\overline{R[Q \oplus {\mathbb{N}}^a]} := 
R[Q \oplus {\mathbb{N}}^a]/(J \oplus {\mathbb N}^a)R[Q \oplus {\mathbb{N}}^a]$. 
Since $p$ is nilpotent on $Y$, the first map in the above diagram factors through 
$\mbox{Spec}(\overline{(R/p^nR)[Q \oplus {\mathbb{N}}^a]}, Q \oplus {\mathbb{N}}^a)$ for some $n$ 
(where $\overline{(R/p^nR)[Q \oplus {\mathbb{N}}^a]} := 
(R/p^nR)[Q \oplus {\mathbb{N}}^a]/(J \oplus {\mathbb N}^a)(R/p^nR)[Q \oplus {\mathbb{N}}^a]$) 
and the map 
\[ (Y,{\mathcal{N}}) \to \mbox{Spec}(\overline{(R/p^nR)[Q \oplus {\mathbb{N}}^a]}, Q \oplus {\mathbb{N}}^a)\] 
is strict etale. Thus, we may assume that $R/p^nR = R$, namely, $p$ is nilpotent in $R$. 

Let 
$\mbox{Spec}(\overline{R[P \oplus {\mathbb{N}}^{a}]}, P \oplus {\mathbb{N}}^{a}) \to \mbox{Spec}(
\overline{R[Q \oplus {\mathbb{N}}^{a}]}, Q \oplus {\mathbb{N}}^{a})$ be the morphism we obtain 
by pulling back the map 
${\rm Spec}(\overline{R[P]},  P) \to {\rm Spec}(\overline{R[Q]},  Q)$ to 
$\mbox{Spec}(\overline{R[Q \oplus {\mathbb{N}}^a]}, Q \oplus {\mathbb{N}}^a)$. 
(Hence $\overline{R[P \oplus {\mathbb{N}}^a]} := 
R[P \oplus {\mathbb{N}}^a]/(J \oplus {\mathbb N}^a)R[P \oplus {\mathbb{N}}^a]$.) 
Then the morphism $(U,{\mathcal{M}}|_{U}) \to \mbox{Spec}(\overline{R[P]}, P)$ induce a morphism 
\begin{equation}\label{eq:u2}
(U,{\mathcal{M}}|_{U}) \to \mbox{Spec}(\overline{R[P \oplus {\mathbb{N}}^{a}]}, P \oplus {\mathbb{N}}^{a}), 
\end{equation}
which factors through the fiber product of $(Y,{\mathcal N})$ and 
$\mbox{Spec}(\overline{R[P \oplus {\mathbb{N}}^{a}]}, P \oplus {\mathbb{N}}^{a})$ over 
$\mbox{Spec}(\overline{R[Q \oplus {\mathbb{N}}^{a}]}, Q \oplus {\mathbb{N}}^{a})$. 
Since the morphism from this fiber product to 
$\mbox{Spec}(\overline{R[P \oplus \mathbb{N}^{a}]}, P \oplus \mathbb{N}^{a})$ is strict etale 
and the morphism from $(U,{\mathcal M}|_U)$ to this fiber product is strict smooth, 
we conclude that the map \eqref{eq:u2} is 
strict smooth. Thus, after shrinking $U$ suitably, we have a decomposition 
\[ (U,{\mathcal M}|_U) \to \mbox{Spec}(\overline{R[P \oplus {\mathbb{N}}^{a+b}]}, P \oplus {\mathbb{N}}^{a+b}) \to 
\mbox{Spec}(\overline{R[P \oplus {\mathbb{N}}^a]}, P \oplus {\mathbb{N}}^a)\] 
of the morphism \eqref{eq:u2} in which the first map is strict etale, 
where $\overline{R[P \oplus {\mathbb{N}}^{a+b}]} := 
R[P \oplus {\mathbb{N}}^{a+b}]/(J \oplus {\mathbb N}^{a})R[P \oplus {\mathbb{N}}^{a+b}]$.

By the argument above, we obtain the following commutative diagram in which 
the horizontal arrows are strict etale: %and the vertical arrows are log smooth: 
\[ \xymatrix{
(U, {\mathcal{M}}|_{U}) \ar[r] \ar[d] & \mbox{Spec}(\overline{R[P \oplus {\mathbb{N}}^{a+b}]}, P \oplus {\mathbb{N}}^{a+b}) \ar[d] \\
(Y, {\mathcal{N}}) \ar[r] & \mbox{Spec}(\overline{R[Q \oplus {\mathbb{N}}^{a}]}, Q \oplus {\mathbb{N}}^{a}). 
}
\]
Noting that the statement of the theorem is etale local on $X$ and $Y$, we can 
reduce the proof of the theorem to the case where $f$ is the morphism 
$\mbox{Spec}(\overline{R[P \oplus {\mathbb{N}}^{a+b}]}, P \oplus {\mathbb{N}}^{a+b})
\to \mbox{Spec}(R\overline{[Q \oplus {\mathbb{N}}^{a}]}, Q \oplus {\mathbb{N}}^{a}).$
Since $Q \to P$ is an injective $p$-saturated morphism of fs monoids 
such that $Q^{\rm gp}, P^{\rm gp}, P^{\rm gp}/Q^{\rm gp}$ are torsion free, 
the same properties are satisfied for the homomorphism 
$Q \oplus {\mathbb{N}}^a \to P \oplus {\mathbb{N}}^{a+b}$. 
Also, since $J$ is a radical ideal of $Q$, $J \oplus {\mathbb N}^a$ is also a radical ideal of 
$Q \oplus {\mathbb N}^a$. 
Hence we obtained the required reduction and so we are done. 
\end{proof}

\appendix 

\setcounter{theorem}{0}
\renewcommand{\thetheorem}{\Alph{section}.\arabic{theorem}}

\section{Compatibility with the construction of Hyodo-Kato}

In this appendix, let $Y = {\rm Spec}\,k$ with $k$ a perfect field of characteristic $p>0$ 
and let $f: (X,{\mathcal M}) \to (Y,{\mathcal N})$ be a log smooth $p$-saturated 
morphism between fine log schemes. 
(By \cite[Prop. II 2.14]{Tsuji}, it is equvalent to say that $f$ is a 
a log smooth morphism of Cartier type \cite[2.12]{HK} between fine log schemes.) 
Let 
$$ u_m: ((X, {\mathcal M})/W_m(Y,{\mathcal N}))^{\rm log}_{\rm crys} \to X_{\rm et} $$
be the canonical morphism of topoi introduced in Section 1.3 and 
let ${\mathcal O}_m$ be the structure crystal 
in $((X, {\mathcal M})/W_m(Y,{\mathcal N}))^{\rm log}_{\rm crys}$. 
Hyodo-Kato \cite[\S 4]{HK} defined the sheaf $W_m\omega^{q}_{(X,{\mathcal M})/(Y,{\mathcal N})}$ 
(denoted simply as $W_m\omega^{q}_{X}$ there) by 
$$ W_m\omega^{q}_{(X,{\mathcal M})/(Y,{\mathcal N})} := {\mathbb R}^qu_{m *}{\mathcal O}_m, $$
and defined the maps 
\begin{align*}
& d: W_m\omega^{q}_{(X,{\mathcal M})/(Y,{\mathcal N})} \to W_m\omega^{q+1}_{(X,{\mathcal M})/(Y,{\mathcal N})}, 
\quad 
F: W_{m+1}\omega^{q}_{(X,{\mathcal M})/(Y,{\mathcal N})} \to W_m\omega^{q}_{(X,{\mathcal M})/(Y,{\mathcal N})}, \\ 
& V: W_m\omega^{q}_{(X,{\mathcal M})/(Y,{\mathcal N})} \to W_{m+1}\omega^{q}_{(X,{\mathcal M})/(Y,{\mathcal N})}, 
\quad \pi: W_{m+1}\omega^{q}_{(X,{\mathcal M})/(Y,{\mathcal N})} \to W_m\omega^{q}_{(X,{\mathcal M})/(Y,{\mathcal N})}
\end{align*}
such that 
\begin{align}
& d^2 = 0, \quad FV = VF = p, \quad dF = pFd, \quad Vd = pdV, \quad FdV = d, \label{eqa:dd} \\ 
& d \pi = \pi d, \quad F \pi = \pi F, \quad V \pi = \pi V. \nonumber  
\end{align}
(See \cite[Cor.~6.28]{N} for a detailed proof of the last three equalities.)
%Also, $\pi$ is a map of graded algebras 
%$W_{m+1}\omega^{\bullet}_{(X,{\mathcal M})/(Y,{\mathcal N})} \to 
%W_m\omega^{\bullet}_{(X,{\mathcal M})/(Y,{\mathcal N})}$, although it is not written explicitly 
%in \cite{HK} or in \cite{N}: This follows from the fact that each arrow in the local expression of the map $\pi$ 
%given in \cite[(6.4.5)]{N} is compatible with the product structure. 
%\cite[p.247]{HK}. 
Hence we see that 
$$ \{ W_m\omega^{\bullet}_{(X,{\mathcal M})/(Y,{\mathcal N})} := 
(W_m\omega^{\bullet}_{(X,{\mathcal M})/(Y,{\mathcal N})}, d), \pi: W_{m+1}\omega^{\bullet}_{(X,{\mathcal M})/(Y,{\mathcal N})}
\to W_m\omega^{\bullet}_{(X,{\mathcal M})/(Y,{\mathcal N})}\}_m $$ 
forms a projective system of complexes. This is the definition of 
the log de Rham-Witt complex in \cite{HK}. 

Then Hyodo-Kato \cite[Thm.~4.19]{HK} defined the comparison morphism 
\begin{equation}\label{eqa:comp}
{\mathbb R}u_{m *}{\mathcal O}_m \to  
 W_m\omega^{\bullet}_{(X,{\mathcal M})/(Y,{\mathcal N})} 
\end{equation}
and proved that it is a quasi-isomorphism. 

In this appendix, we prove the compatibility of the log de Rham-Witt complex and the 
comparison morphism in \cite{HK} with those in our article. The main result is as follows: 

\begin{theorem}\label{thma:main}
Let the notations be as above. 
\begin{enumerate}[(1)] 
\item There exists a canonical isomorphism 
$$ \{W_m\Lambda^{\bullet}_{(X,{\mathcal M})/(Y,{\mathcal N})}\}_m \xrightarrow{\cong}
\{W_m\omega^{\bullet}_{(X,{\mathcal M})/(Y,{\mathcal N})}\}_m$$ 
of projective systems of differential graded algebras which is compatible with the maps $F, V$. 
\item Via the isomorphism in $(1)$, the comparison morphism \eqref{eqa:comp} is the same as
the comparson morphism 
\begin{equation}\label{eqa:comp0}
 {\mathbb R}u_{m *}{\mathcal O}_m \to  
 W_m\Lambda^{\bullet}_{(X,{\mathcal M})/(Y,{\mathcal N})}  
\end{equation}
defined in Section 1.3. 
\end{enumerate}
\end{theorem}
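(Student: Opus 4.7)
The plan for $(1)$ is to exhibit $\{W_m\omega^{\bullet}_{(X,{\mathcal M})/(Y,{\mathcal N})}\}_m$ as a log $F$-$V$-procomplex over the $(Y,{\mathcal N})$-algebra $(X,{\mathcal M})$ in the sense of Definition \ref{def:rellogdrw}(4), and then invoke the universality of Proposition-Definition \ref{thm30} (sheafified as in Proposition-Definition \ref{thm15}). The degree-zero identification $W_m\omega^0 = W_m({\mathcal O}_X)$ is built into the definition $W_m\omega^q = \mathbb{R}^qu_{m*}{\mathcal O}_m$, and the differential $d$, Frobenius $F$, Verschiebung $V$, and the relations \eqref{eqa:dd} are provided by \cite[\S 4]{HK}. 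What remains is to produce a monoid homomorphism $\partial_m : {\mathcal M} \to W_m\omega^1$ coming from the log structure with $d \circ \partial_m = 0$ and $F \circ \partial_{m+1} = \partial_m$, and to check the remaining Leibniz-type axioms; this I will do etale-locally via a log smooth lift $(Z_m, {\mathcal L}_m)$ with a log Frobenius lift (cf.~\cite[Lem.~5.5(2)]{Ma}), using Hyodo-Kato's explicit description of $W_m\omega^{\bullet}$ as a quotient of the log de Rham complex of the lift in that setting.

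Once $\{W_m\omega^{\bullet}\}_m$ is recognized as a log $F$-$V$-procomplex, the universal property yields a canonical morphism
\[
\varphi_m: W_m\Lambda^{\bullet}_{(X,{\mathcal M})/(Y,{\mathcal N})} \to W_m\omega^{\bullet}_{(X,{\mathcal M})/(Y,{\mathcal N})}
\]
of projective systems of log differential graded algebras compatible with $F, V$ that induces the identity on $W_m({\mathcal O}_X)$ in degree $0$. To see $\varphi_m$ is an isomorphism, I work etale-locally and reduce, via the chart arguments used in the proofs of Theorems \ref{thm10} and \ref{thm:main2}, to the standard local model $(X,{\mathcal M}) = \mbox{Spec}(k[P],P)$ over $(Y,{\mathcal N}) = \mbox{Spec}(k,*)$ for an fs monoid $P$ with $P^{\rm gp}$ torsion free. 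In this situation Hyodo-Kato's computation produces an explicit generating family of $W_m\omega^{\bullet}$ which, under $\varphi_m$, corresponds term-by-term to our $P$-basic Witt differentials of Section 2.2; combined with the uniqueness of the decomposition of Proposition \ref{thm22}, this gives bijectivity in each degree.

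For part $(2)$, both comparison morphisms are built from the same data: an immersion $(X,{\mathcal M}) \hookrightarrow (Z,{\mathcal L})$ into a log smooth lift equipped with a log Frobenius lift $((Z_m,{\mathcal L}_m), \Phi_m, \Delta_m)_m$, producing the composite
\[
\mathbb{R}u_{m*}{\mathcal O}_m \xrightarrow{\cong} {\mathcal O}_{\bar Z_m} \otimes_{{\mathcal O}_{Z_m}} \Lambda^{\bullet}_{(Z_m,{\mathcal L}_m)/W_m(Y,{\mathcal N})} \xleftarrow{\cong} \breve{\Lambda}^{\bullet}_{(\bar Z_m,\bar{\mathcal L}_m)/W_m(Y,{\mathcal N})} \to \breve{\Lambda}^{\bullet}_{W_m(X,{\mathcal M})/W_m(Y,{\mathcal N})}
\]
followed by the canonical projection to either $W_m\Lambda^{\bullet}$ or $W_m\omega^{\bullet}$. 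Compatibility with $\varphi_m$ follows because the projection $\breve{\Lambda}^{\bullet} \to W_m\Lambda^{\bullet}$ is the defining universal quotient, while $\varphi_m$ is the universal map out of $W_m\Lambda^{\bullet}$ into the log $F$-$V$-procomplex $W_m\omega^{\bullet}$; hence their composite coincides with Hyodo-Kato's projection to $W_m\omega^{\bullet}$, as needed.

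The main obstacle lies in part $(1)$: although the operators $d, F, V$ and their relations on $W_m\omega$ are standard, identifying the correct $\partial_m$ and verifying the full set of \emph{Matsuue's} axioms (rather than the slightly different package of \cite{HK}) requires a careful translation via the explicit local description coming from a log Frobenius lift. A secondary subtlety is that surjectivity of $\varphi_m$ in each degree is not automatic from the universal property and must be established by matching explicit generators on the two sides.
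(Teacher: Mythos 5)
Your plan for part (1) has a genuine gap in the step that establishes $\varphi_m$ is an isomorphism. You propose to "reduce, via the chart arguments used in the proofs of Theorems \ref{thm10} and \ref{thm:main2}, to the standard local model $(X,\mathcal{M}) = \mathrm{Spec}(k[P],P)$ over $(Y,\mathcal{N}) = \mathrm{Spec}(k,*)$." This reduction cannot work: in the Hyodo--Kato setting of the appendix the base $(Y,\mathcal{N}) = \mathrm{Spec}(k,\mathcal{N})$ carries an \emph{arbitrary} fine log structure $\mathcal{N}$ (typically nontrivial, e.g.\ the one attached to $\mathbb{N}\to k$, $1\mapsto 0$, in the semistable case), and no chart argument will replace it by the trivial log structure. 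In addition, the appendix only assumes the log structures $\mathcal{M},\mathcal{N}$ are \emph{fine}, not fs, so the fs chart lemma \cite[Lem.~4.1.1]{KF} and the basic-Witt-differential machinery of Sections~2--4, including Proposition~\ref{thm22}, are not directly available. Finally, Hyodo--Kato do not provide an "explicit generating family" of $W_m\omega^{\bullet}$ in the form your proposal needs; \cite[Prop.~4.7]{HK} gives a presentation of $W_m\omega^{\bullet}$ as a quotient of $\Lambda^{\bullet}_{W_m(X,\mathcal{M})/W_m(Y,\mathcal{N})}$ with an explicit description of the \emph{kernel}, not a basis matching statement.

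The paper avoids all of this by arguing globally and by comparing kernels rather than generators. The first half of your plan for (1) is the same as the paper's: build $\{W_m\omega^{\bullet}\}_m$ into a log $F$-$V$-procomplex (Lemma~\ref{lema:4}) and get $\Phi_m$ from universality. But to prove $\Phi_m$ is an isomorphism the paper observes that both $W_m\Lambda^{\bullet}$ and $W_m\omega^{\bullet}$ receive surjections $\psi_{\Lambda}$ and $\psi_{\omega}$ from the common $\Lambda^{\bullet}_{W_m(X,\mathcal{M})/W_m(Y,\mathcal{N})}$ making the triangle \eqref{eqa:tri} commute, so $\Phi_m$ is an isomorphism as soon as $\mathrm{Ker}\,\psi_{\omega}\subseteq\mathrm{Ker}\,\psi_{\Lambda}$; and $\mathrm{Ker}\,\psi_{\omega}$ is, by \cite[Prop.~4.7]{HK}, the differential graded ideal generated by the elements \eqref{eqa:elements}, which a short calculation shows vanish under $\psi_{\Lambda}$. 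This kernel comparison needs no local models at all and applies uniformly for fine log structures; you should replace your generator-matching step with it. Your argument for part (2) is essentially the same as the paper's --- both comparison morphisms factor through the common $\breve{\Lambda}^{\bullet}$ complex and the identification reduces to the commutativity of the triangle \eqref{eqa:tri} --- and is fine once (1) is repaired.
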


\begin{cor}
In the above situation, the comparison morphism \eqref{eqa:comp0}
% \[ \mathbb{R}u_{m*} \mathcal{O}_m \to W_{m} \Lambda_{(X, \mathcal{M})/(Y,  \mathcal{N})}^{\bullet} \]
% we constucted in Section 1.3 
is a quasi-isomorphism. 
\end{cor}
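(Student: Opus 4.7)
The plan is entirely straightforward: the corollary is a formal consequence of Theorem \ref{thma:main} together with the quasi-isomorphism theorem of Hyodo-Kato, with essentially no additional work. I would first invoke \cite[Thm.~4.19]{HK}, which asserts that under our standing hypotheses (a log smooth $p$-saturated morphism $f:(X,{\mathcal M}) \to (Y,{\mathcal N})$, which is precisely the log smooth of Cartier type condition by \cite[Prop.~II.2.14]{Tsuji}), the Hyodo-Kato comparison morphism
$$ {\mathbb R}u_{m*}{\mathcal O}_m \to W_m\omega^{\bullet}_{(X,{\mathcal M})/(Y,{\mathcal N})} $$
is a quasi-isomorphism in $D^{+}(X_{\rm et}, f^{-1}(W_m({\mathcal O}_Y)))$.

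Then I would invoke Theorem \ref{thma:main}(1) to get the canonical isomorphism of complexes $W_m\Lambda^{\bullet}_{(X,{\mathcal M})/(Y,{\mathcal N})} \xrightarrow{\cong} W_m\omega^{\bullet}_{(X,{\mathcal M})/(Y,{\mathcal N})}$, and Theorem \ref{thma:main}(2) to identify our comparison morphism \eqref{eqa:comp0} with the Hyodo-Kato comparison morphism \eqref{eqa:comp} through this isomorphism. Since composing (or identifying) a quasi-isomorphism with an isomorphism of complexes yields a quasi-isomorphism, the corollary follows.

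All genuine content therefore lies in Theorem \ref{thma:main}; the corollary itself requires nothing beyond one or two sentences citing part (1), part (2), and \cite[Thm.~4.19]{HK}. There is no obstacle to surmount at the level of the corollary, and no calculation is needed.
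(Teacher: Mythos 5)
Your proof is correct and coincides with the paper's (implicit) argument: the corollary follows immediately from Theorem \ref{thma:main} together with the Hyodo-Kato quasi-isomorphism \cite[Thm.~4.19]{HK}, which is exactly why the paper leaves the proof unstated.
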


\begin{rem}
The above corollary is not contained in Theorem \ref{thm:main2} because 
the log structures ${\mathcal M}, {\mathcal N}$ here are not necessarily fs. 
On the other hand, when the log structures ${\mathcal M}, {\mathcal N}$ are fs, 
the above corollary is a special case of Theorem \ref{thm:main2} by 
Proposition \ref{prop:sat}, Remark \ref{rem:remrem}(3) and the fact that 
the log structure ${\mathcal N}$ on ${\rm Spec}\,k$ is necessarily hollow. 
Note that, when we are in the situation of this appendix with ${\mathcal M}, {\mathcal N}$ fs, 
our proof of comparison isomorphism given in Theorem \ref{thm:main2} is 
different from the proof by Hyodo-Kato.
\end{rem}

We give some preliminaries on the complex 
$W_m\Lambda^{\bullet}_{(X,{\mathcal M})/(Y,{\mathcal N})}$ which we need in the proof of 
Theorem \ref{thma:main}. 
First we explain the definition of $d, F, V$ in the case where there exists a log Frobenius lift 
$((Z_m, {\mathcal L}_m), \Phi_m, \Delta_m)_m$ of the morphism $f: (X,{\mathcal M}) \to (Y,{\mathcal N})$. 
(This assumption holds etale locally on $X$.) In this case, we have 
the quasi-isomorphism 
$$ {\mathbb R}u_{m *}{\mathcal O}_m \xrightarrow{\cong} 
{\mathcal O}_{{\bar Z}_m} \otimes_{{\mathcal O}_{Z_m}} 
\Lambda^{\bullet}_{(Z_m,{\mathcal L}_m)/W_m(Y, {\mathcal N})}
\xleftarrow{=} \breve{\Lambda}^{\bullet}_{({\bar Z}_m,{\bar {\mathcal L}}_m)/W_m(Y, {\mathcal N})} =: C_m^{\bullet} 
$$ 
(where $({\bar Z}_m,{\bar {\mathcal L}}_m)$ is the log pd-envelope of 
the closed immersion $(X,{\mathcal M}) \to (Z_m,{\mathcal L}_m)$) and so 
$$ W_m\omega^{q}_{(X,{\mathcal M})/(Y,{\mathcal N})} = H^q(C_m^{\bullet}). $$
Then the map $d: H^q(C_m^{\bullet}) \to H^{q+1}(C_m^{\bullet})$ 
is defined as the connecting homomorphism induced by the exact sequence 
$$ 0 \to C_m^{\bullet} \xrightarrow{p} C_{2m}^{\bullet} \xrightarrow{\rm proj.} C_m^{\bullet} \to 0, $$
the map $F: H^q(C_{m+1}^{\bullet}) \to H^q(C_m^{\bullet})$ 
is defined as the one induced by the projection $C_{m+1}^{\bullet}  \to C_m^{\bullet} $ 
and the map $V: H^q(C_m^{\bullet}) \to H^q(C_{m+1}^{\bullet})$ 
is defined as the one induced by the map 
$p:C_m^{\bullet}  \to C_{m+1}^{\bullet}$ of multiplication by $p$.

Let $\tau: W_m({\mathcal O}_X) \to H^0(C_m^{\bullet})$ be the 
ring homomorphism $(a_0, \dots, a_{m-1}) \mapsto 
\sum_{i=0}^{m-1}p^i \tilde{a}_i^{p^{m-i}}$, where 
$\tilde{a}_i$ is any lift of $a_i$ in ${\mathcal O}_{Z_m}$. 
(The well-definedness of this map is proven in \cite[p.251]{HK}.) 
Also, let 
$d\log: W_m({\mathcal M}) = {\mathcal M} \oplus {\rm Ker}(W_m({\mathcal O}_X)^* \to {\mathcal O}_X^*)
\to H^1(C_m^{\bullet})$ be the map defined by 
$b \mapsto d\log \tilde{b}$ for $b \in {\mathcal M}$ and 
$b \mapsto \tau(b)^{-1}d(\tau(b))$ for $b \in {\rm Ker}(W_m({\mathcal O}_X)^* \to {\mathcal O}_X^*)$, 
where $\tilde{b}$ is any lift of $b \in {\mathcal M}$ in ${\mathcal L}_m$. 
(The well-definedness is again proven in \cite[p.251]{HK}.) 
Then we can check the following: 

\begin{lemma}\label{lema:4}
Let the notations be as above. 
\begin{enumerate}[(1)]
\item 
Via $\tau$, 
$W_m\omega^{\bullet}_{(X,{\mathcal M})/(Y,{\mathcal N})} = H^{\bullet}(C_m^{\bullet})$ is 
a differential graded $W_m({\mathcal O}_X)/W_m({\mathcal O}_Y)$-algebra. 
\item 
Via $\tau$ and $d\log$, 
$W_m\omega^{\bullet}_{(X,{\mathcal M})/(Y,{\mathcal N})} = H^{\bullet}(C_m^{\bullet})$ is 
a log differential graded algebra on $W_m(X,{\mathcal M})/W_m(Y,{\mathcal N})$. 
\item 
The structure of log differential graded algebras on 
$W_m\omega^{\bullet}_{(X,{\mathcal M})/(Y,{\mathcal N})} = H^{\bullet}(C_m^{\bullet}) \, (m \in 
{\mathbb N})$ is compatible with respect to $m$. 
\item 
The map $\tau$ is compatible with $F,  V$. 
\item 
The maps $F,  V$ are compatible with the map
$\pi: W_{m+1}\omega^{\bullet}_{(X,{\mathcal M})/(Y,{\mathcal N})} \to 
W_{m}\omega^{\bullet}_{(X,{\mathcal M})/(Y,{\mathcal N})}.$
\item 
The map $F: W_{m+1}\omega^{\bullet}_{(X,{\mathcal M})/(Y,{\mathcal N})} \to 
W_m\omega^{\bullet}_{(X,{\mathcal M})/(Y,{\mathcal N})}$ induces a morphism 
of graded $W_{m+1}({\mathcal O}_X)$-algebras 
$W_m\omega^{\bullet}_{(X,{\mathcal M})/(Y,{\mathcal N})} \to 
W_m\omega^{\bullet}_{(X,{\mathcal M})/(Y,{\mathcal N}), [F]}$, where 
the symbol $[F]$ is as in Definition \ref{def:rellogdrw}(4)(iv) (with $S$ replaced by ${\mathcal O}_X$). 
\item 
The following equalities hold. 
\begin{eqnarray*}
     {}^{FV}\omega  & = & p\omega \quad (\omega \in W_m\omega^{\bullet}_{(X,{\mathcal M})/(Y,{\mathcal N})}),   \\
   {}^{F}d^{V}\omega & = &d \omega \quad (\omega \in W_m\omega^{\bullet}_{(X,{\mathcal M})/(Y,{\mathcal N})}), \\
  {}^{F}d [x] & = & [x^{p-1}] d [x]  \quad (x \in {\mathcal O}_X), \\
  {}^{V}(\omega {}^{F}\eta) & = & {}^{V}(\omega) \eta \quad ( \eta \in W_{m+1}\omega^{\bullet}_{(X,{\mathcal M})/(Y,{\mathcal N})}), \\  
  {}^{F}(d\log q) & = &  d\log q \quad( q \in {\mathcal M} \subseteq W_{m+1}({\mathcal M})).
\end{eqnarray*}
\end{enumerate}
\end{lemma}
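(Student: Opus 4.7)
The plan is to reduce each of the seven assertions to an explicit computation on the complex $C_m^{\bullet} = \breve{\Lambda}^{\bullet}_{(\bar Z_m,\bar{\mathcal L}_m)/W_m(Y,{\mathcal N})}$, using the Bockstein description of $d$ coming from the short exact sequence
\begin{equation*}
0 \to C_m^{\bullet} \xrightarrow{p} C_{2m}^{\bullet} \to C_m^{\bullet} \to 0,
\end{equation*}
together with the descriptions of $F$ as the projection $C_{m+1}^{\bullet} \to C_m^{\bullet}$ and $V$ as multiplication by $p$. Many of the individual computations already appear in \cite[\S 4]{HK}; the task here is to assemble them in the form needed to fit our log $F$-$V$-procomplex framework.

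For (1), (2), (3), I would first note that by construction $C_m^{\bullet}$ is itself a log differential graded algebra on $(\bar Z_m, \bar{\mathcal L}_m)/W_m(Y,{\mathcal N})$, so $H^{\bullet}(C_m^{\bullet})$ is a graded $H^0(C_m^{\bullet})$-algebra, hence a graded $W_m({\mathcal O}_X)$-algebra via $\tau$. The Leibniz rule for the Bockstein $d$ is then standard: for cocycles $\alpha \in C_m^i$, $\beta \in C_m^j$, lift them to $\tilde{\alpha},\tilde{\beta} \in C_{2m}^{\bullet}$, observe that $d\tilde{\alpha}, d\tilde{\beta} \in p C_{2m}^{\bullet}$ because $\alpha, \beta$ are cocycles mod $p$, and exploit the graded Leibniz rule on $C_{2m}^{\bullet}$ to compute $(1/p)d(\tilde{\alpha}\tilde{\beta})$ as a cocycle representative of $d(\alpha\beta)$. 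The log derivation property and $d \circ d\log = 0$ follow analogously from the pd-log-derivation on $C_m^{\bullet}$ together with the well-definedness of $\tau$ and $d\log$ established in \cite[p.~251]{HK}. For (3), the projections $C_{m+1}^{\bullet} \to C_m^{\bullet}$ induced by the transition morphisms of the log Frobenius lift commute with $\tau$ and $d\log$ by direct inspection of the defining formulas, and this gives the asserted compatibility in $m$.

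For (4), (5), (6), the compatibility $F \circ \tau = \tau \circ F$ and $V \circ \tau = \tau \circ V$ is a direct Witt-vector computation present in the proof of \cite[Lem.~4.18]{HK}: writing $F$ as projection reproduces the Frobenius on Witt vectors through the explicit formula $(a_0,\ldots) \mapsto \sum p^i \tilde{a}_i^{p^{m-i}}$, and $V$ being multiplication by $p$ gives the Verschiebung. The compatibility of $F$ and $V$ with $\pi$ in (5) is then automatic since all three operations are defined by morphisms of the complexes $C_m^{\bullet}$ that commute with each other. Item (6) follows because $F$ is induced by a morphism of complexes and therefore respects the multiplicative structure, while the $W_{m+1}({\mathcal O}_X)$-linearity via $F: W_{m+1}({\mathcal O}_X) \to W_m({\mathcal O}_X)$ is exactly (4).

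For the relations in (7), the identities ${}^{FV}\omega = p\omega$ and ${}^F d^V \omega = d\omega$ follow immediately from the definitions: $V$ multiplies a representative by $p$, the Bockstein $d$ divides by $p$, and $F$ projects. The identity ${}^V(\omega \cdot {}^F\eta) = {}^V(\omega) \cdot \eta$ is the projection formula applied on representatives. The two remaining identities ${}^F d[x] = [x^{p-1}]d[x]$ and ${}^F(d\log q) = d\log q$ are precisely \cite[Lem.~4.18]{HK}. The principal obstacle in this whole plan is the careful verification of the Bockstein Leibniz rule in (1) and of the sign and lifting calculations in the proof of ${}^F d^V \omega = d\omega$; once these are in hand, the remaining items are essentially bookkeeping on top of Hyodo-Kato's explicit formulas.
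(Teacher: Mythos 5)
Your overall strategy — the Bockstein Leibniz computation for the $W_m(\mathcal{O}_X)$-dga structure, the explicit Witt-vector computations for $F \circ \tau$, $V \circ \tau$, and delegation to Hyodo--Kato for the identities ${}^F d[x] = [x^{p-1}]d[x]$ and ${}^F(d\log q) = d\log q$ — matches what the paper does for items (1), (2), (4), (6), (7). (A small point: the connecting homomorphism divides by $p^m$, not $p$, in the step producing a representative of $d(\alpha\beta)$; the first arrow of the short exact sequence is multiplication by $p^m$ despite what the displayed diagram in \S 1.3 suggests, as the paper's own proof of (1) and (7) makes clear.)

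However, there is a genuine gap in your treatment of items (3) and (5), and it stems from a misidentification of the map $\pi$. You describe $\pi$ as ``the projections $C_{m+1}^\bullet \to C_m^\bullet$ induced by the transition morphisms of the log Frobenius lift'' and then claim that compatibility with $\tau$, $d\log$, $F$, $V$ follows ``by direct inspection'' or is ``automatic.'' This is incorrect: the canonical projection $C_{m+1}^\bullet \to C_m^\bullet$ (the reduction of the pd-de Rham complex from $\bar Z_{m+1}$ to $\bar Z_m$) is precisely the map that \emph{defines} $F$, as stated just before the lemma. The restriction map $\pi: W_{m+1}\omega^\bullet \to W_m\omega^\bullet$ is a different operation; it is \emph{not} induced by any morphism of the complexes $C_m^\bullet$, and in Hyodo--Kato's construction it is produced by a more delicate argument (the use of the canonical filtration and the ``$\mathbf{p}$'' map), with an explicit local expression only spelled out in Nakkajima \cite[(6.4.5)]{N}. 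The paper's proof of (3) must invoke that local expression to see that $\pi$ is multiplicative and commutes with $\tau$ and $d\log$, using the commutativity of the diagram \cite[(7.0.6)]{N}; and the commutation relations $F\pi = \pi F$, $V\pi = \pi V$ quoted in (A.1) and used in (5) are proved in \cite[Cor.~6.28]{N}, not read off from the definitions. If you were to run your argument as written, you would only establish compatibility of $\tau$, $d\log$, $F$, $V$ with $F$ itself (a tautology), not with $\pi$. You need to bring in Nakkajima's description of $\pi$ to close this gap.
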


\begin{rem}
The last equality in (7) does not always hold when $q \in W_{m+1}({\mathcal M})$. 
\end{rem}

\begin{proof}
(1) \, It is standard that the map $\tau$ is a homomorphism of algebras. 
So it suffices to check the three equalities in Definition \ref{def:1.1.1}(2). 
The first equality is easy the third equality is already recalled in \eqref{eqa:dd}. 
So we check the second equality. 
Assume that $\omega$ is represented by a cocycle $\alpha \in C_m^i$ and 
$\eta$ is represented by a cocycle $\beta \in C_m^j$. Then 
$d(\omega\eta)$ is represented by $\gamma \in C_m^{i+j+1}$ with 
$p^m\gamma = d(\tilde{\alpha}\tilde{\beta})$, where 
$\tilde{\alpha} \in C_{2m}^i, \tilde{\beta} \in C_{2m}^j$ are lifts of 
$\alpha, \beta$, respectively. On the other hand, if we take 
$\delta \in C_m^{i+1}, \epsilon \in C_m^{j+1}$ so that 
$p^m\delta = d(\tilde{\alpha}), p^m\epsilon = d(\tilde{\beta})$, 
$(d\omega)\eta + (-1)^i \omega d\eta$ is represented by 
$\delta\beta + (-1)^i\alpha\epsilon$. Since we have 
\[ 
p^m (\delta\beta + (-1)^i\alpha\epsilon) 
= (d \tilde{\alpha}) \tilde{\beta} + (-1)^i \tilde{\alpha} (d \tilde{\beta}) 
= d(\tilde{\alpha}\tilde{\beta}), \] 
we obtain the equality $\gamma = \delta\beta + (-1)^i\alpha\epsilon$, hence 
the required equality. 

(2) \, We denote the map ${\mathcal M} \to {\mathcal O}_X$ by $\alpha$ and the map 
$W_m({\mathcal M}) \to W_m({\mathcal O}_X)$ by $\alpha_m$. 
Also, we denote the map ${\mathcal L} \to {\mathcal O}_{Z_m}$ by $\tilde{\alpha}$.
First we check that the pair $(d \circ \tau, d\log)$ is a log derivation. 
The condition in Definition \ref{def:rellogdrw}(1)(i) is written as 
% nothing but the equality 
\begin{equation}\label{eqa:1}
d \circ \tau(\alpha_m(b)) = \tau(\alpha_m(b)) d\log (b)  
\end{equation}
in our notation here, 
and it suffices to check it for $b \in {\mathcal M}$ and for $b \in {\rm Ker}(
W_m({\mathcal O}_X)^* \to {\mathcal O}_X^*)$. 
For $b \in {\mathcal M}$, we have 
\begin{align*}
& \text{(LHS of \eqref{eqa:1})} = 
d \circ \tau([\alpha(b)]) = d(\tilde{\alpha}(\tilde{b})^{p^m})
= \tilde{\alpha}(\tilde{b})^{p^m-1}d \tilde{\alpha}(\tilde{b}), \\ 
& \text{(RHS of \eqref{eqa:1})} = 
\tilde{\alpha}(\tilde{b})^{p^m}  d\log (\tilde{b}) 
= \tilde{\alpha}(\tilde{b})^{p^m-1}d \tilde{\alpha}(\tilde{b}),
\end{align*}
as required. For $b \in {\rm Ker}(
W_m({\mathcal O}_X)^* \to {\mathcal O}_X^*)$, it is immediate from the 
definition of $d\log$. Also, the condition \ref{def:rellogdrw}(1)(ii) is immediate 
and so $(d \circ \tau, d\log)$ is a log derivation. 

So it suffices to check the equality $d \circ d\log (b) = 0$ to prove the assertion (2). 
It is immediate for $b \in {\mathcal M}$, and for $b \in 
{\rm Ker}(
W_m({\mathcal O}_X)^* \to {\mathcal O}_X^*)$ 
it follows from the calculation 
\[ d(\tau(b)^{-1}d(\tau(b))) = d(\tau(b)^{-1}) d(\tau(b)) = 
\tau(b)^{-2} d(\tau(b)) d(\tau(b)) = 0, \]
where the first and the second equalities hold by (1) and 
the third equality holds because it holds in $C_m^{\bullet}$.  
So the proof of the assertion (2) is finished. 

(3) \, First recall that $\pi$ is a map of graded algebras 
$W_{m+1}\omega^{\bullet}_{(X,{\mathcal M})/(Y,{\mathcal N})} \to 
W_m\omega^{\bullet}_{(X,{\mathcal M})/(Y,{\mathcal N})}$, although it is not remarked explicitly 
in \cite{HK} or in \cite{N}: This follows from the fact that each arrow in the local expression of the map $\pi$ 
given in \cite[(6.4.5)]{N} is compatible with the product structure. 
% \cite[p.247]{HK}. 
We already mentioned the compatibility of $\pi$ and $d$ in \eqref{eqa:dd}. 
So it suffices to check the compatibility of $\pi$ and the maps $\tau, d\log$. 
This follows from the commutativity of the diagram \cite[(7.0.6)]{N}. 

(4) \, Both $F \circ \tau$ and $\tau \circ F$ send 
$(a_0, \dots, a_m) \in W_{m+1}({\mathcal O}_X)$ to 
$\sum_{i=0}^{m-1}p^i \tilde{a}_i^{p^{m+1-i}}$. 
Also, both $V \circ \tau$ and $\tau \circ V$ send 
$(a_0, \dots, a_{m-1}) \in W_{m}({\mathcal O}_X)$ to
$\sum_{i=0}^{m-1} p^{i+1}\tilde{a}_i^{p^{m-i}}$. 

(5) \, It is already mentioned in \eqref{eqa:dd}. 

(6) \, It is immediate from the definition that $F$ is a morphism of 
graded algebras. The compatibility of $F$ with the map $F: W_{m+1}({\mathcal O}_X) 
\to W_m({\mathcal O}_X)$ follows from (4). 

(7) \, The first equality is immediate from the definition of $F$ and $V$. 
If $\omega$ is represented by a cocycle $\alpha \in C_m^{\bullet}$ and 
if we take a lift $\tilde{\alpha} \in C_{2m}^{\bullet}$ of $\alpha$, 
$d^{V}\omega$ is represented by $\beta \in C_{m+1}^{\bullet}$ with 
$p^{m+1}\beta = pd\tilde{\alpha}$ in $C_{2m+2}^{\bullet}$. 
Then the image of $\beta$ in $C_{m}^{\bullet}$ (denoted again by $\beta$), which represents 
${}^Fd^{V}\omega$, satisfies the equality 
$p^m \beta = d\tilde{\alpha}$ in $C_{2m}^{\bullet}$ and so 
we see the second equality ${}^Fd^{V}\omega = d \omega$. 
The third equality ${}^{F}d [x] = [x^{p-1}] d [x]$ follows from the 
computation that both hand sides are represented by 
$\tilde{x}^{p^{m+1}-1}d\tilde{x} \in C_m^{\bullet}$. 
The fourth equality immediately follows from the definition of $F, V$, and 
the fifth equality immediately follows from the definition of $F, d\log$. 
\end{proof}

\begin{comment}
Then we can check that, via $\tau$, 
$W_m\omega^{q}_{(X,{\mathcal M})/(Y,{\mathcal N})} = H^q(C_m^{\bullet})$ is 
a differential graded $W_m({\mathcal O}_X)/W_m({\mathcal O}_Y)$-algebra. 
Next, let 
$d\log: W_m({\mathcal M}) \to H^1(C_m^{\bullet})$ be the map  
$b \mapsto d\log \tilde{b}$, where $\tilde{b}$ is any lift of $b$ in ${\mathcal L}$. 
(The well-definedness is again proven in \cite[p.251]{HK}.) 
Noting that $d \circ \tau: W_m({\mathcal O}_X) \to H^0(C_m^{\bullet})$
is given by $(a_0, \dots, a_{m-1}) \mapsto \sum_{i=0}^{m-1}\tilde{a}_i^{p^{m-i}-1}d\tilde{a}_i$, 
we can check that the pair $(d \circ \tau, d\log)$ is a log derivation 
on $W_m(X,{\mathcal M})/W_m(Y,{\mathcal N})$. We
can check also the equality $d \circ d\log = 0$ and so 
$W_m\omega^{\bullet}_{(X,{\mathcal M})/(Y,{\mathcal N})} = H^{\bullet}(C_m^{\bullet})$ is 
a log differential graded algebra on $W_m(X,{\mathcal M})/W_m(Y,{\mathcal N})$. 
In particular, there exists a canonical morphism 
\begin{equation}\label{eqa:sur}
\Lambda^{\bullet}_{W_m(X,{\mathcal M})/W_m(Y,{\mathcal N})}
\to W_m\omega^{\bullet}_{(X,{\mathcal M})/(Y,{\mathcal N})}. 
\end{equation}
\end{comment}

Now assume further that the morphism $f: (X,{\mathcal M}) \to (Y,{\mathcal N})$ is of the form 
${\rm Spec}\,(S,Q) \to {\rm Spec}\,(k,P)$ induced by a map of pre-log rings 
$(k,P) \to (S,Q)$ (this condition holds etale locally on $X$ and on $Y$), 
and put $W_m \omega^q_{(S,Q)/(k,P)} := \Gamma(X, W_m\omega^{q}_{(X,{\mathcal M})/(Y,{\mathcal N})}) = 
\Gamma(X, H^q(C_m^{\bullet}))$. 
Then, by Lemma \ref{lema:4}, 
% what we have shown in the previous paragraph, 
$\{W_m \omega^{\bullet}_{(S,Q)/(k,P)}\}_m$ forms 
a log $F$-$V$-procomplex over the $(k,P)$-algebra $(S,Q)$. 
%a projective system of 
%log differential graded $W_m(S,Q)/W_m(k,P)$-algebras equipped with maps 
%\[ F: W_{m+1} \omega^{\bullet}_{(S,Q)/(k,P)} \to W_m \omega^{\bullet}_{(S,Q)/(k,P)}, 
%\quad V: W_m \omega^{\bullet}_{(S,Q)/(k,P)} \to W_{m+1} \omega^{\bullet}_{(S,Q)/(k,P)}. \] 
Therefore, we have the canomical morphism of 
log $F$-$V$-procomplexes 
\[ 
\{W_m \Lambda^{\bullet}_{(S,Q)/(k,P)}\}_m \to W_m \omega^{\bullet}_{(S,Q)/(k,P)}\}_m \] 
over the $(k,P)$-algebra $(S,Q)$. Sheafifying this map, we obtain the 
morphism 
\begin{equation}\label{eqa:map}
\{\Phi_m\}_m : \{W_m\Lambda^{\bullet}_{(X,{\mathcal M})/(Y,{\mathcal N})}\}_m \to 
\{W_m\omega^{\bullet}_{(X,{\mathcal M})/(Y,{\mathcal N})}\}_m
\end{equation}
of projective systems of log differential graded algebras 
on $W_m(X,{\mathcal M})/W_m(Y,{\mathcal N})$ compatible with the maps $F, V$. 
This is the morphism in the statement of Theorem \ref{thma:main}. 

\begin{proof}[Proof of Theorem \ref{thma:main}]
First we prove (1). It suffices to prove that the map $\Phi_m$ defined above 
is an isomorphism. 
Since both $W_m\Lambda^{\bullet}_{(X,{\mathcal M})/(Y,{\mathcal N})}$ and  
$W_m\omega^{\bullet}_{(X,{\mathcal M})/(Y,{\mathcal N})}$ are  
log differential graded algebras on $W_m(X,{\mathcal M})/W_m(Y,{\mathcal N})$, 
we have the canonical morphisms 
\[ 
\psi_{\Lambda}: \Lambda^{\bullet}_{W_m(X,{\mathcal M})/W_m(Y,{\mathcal N})} 
\to W_m\Lambda^{\bullet}_{(X,{\mathcal M})/(Y,{\mathcal N})}, 
\quad 
\psi_{\omega}: \Lambda^{\bullet}_{W_m(X,{\mathcal M})/W_m(Y,{\mathcal N})} 
\to W_m\omega^{\bullet}_{(X,{\mathcal M})/(Y,{\mathcal N})} 
\] 
of log differential graded algebras on $W_m(X,{\mathcal M})/W_m(Y,{\mathcal N})$
such that the diagram 
\begin{equation}\label{eqa:tri}
\xymatrix{
& \Lambda^{\bullet}_{W_m(X,{\mathcal M})/W_m(Y,{\mathcal N})} 
\ar[dl]_{\psi_{\Lambda}} \ar[dr]^{\psi_{\omega}} \\ 
 W_m\Lambda^{\bullet}_{(X,{\mathcal M})/(Y,{\mathcal N})} 
\ar[rr]^{\Phi_m} & & 
 W_m\omega^{\bullet}_{(X,{\mathcal M})/(Y,{\mathcal N})} 
}
\end{equation}
is commutative, by the universal property of log differential module 
$\Lambda^1_{W_m(X,{\mathcal M})/W_m(Y,{\mathcal N})}$.  
By the construction of $W_m\Lambda^{\bullet}_{(X,{\mathcal M})/(Y,{\mathcal N})}$ 
given in \cite[3.4]{Ma}, $\psi_{\Lambda}$ is surjective. On the other hand, 
by \cite[Prop.~4.7]{HK}, $\psi_{\omega}$ is surjective with 
${\rm Ker}\,\psi_{\omega}$ the differential graded ideal locally generated by 
elements of the form 
\begin{equation}\label{eqa:elements}
 {}^{V^i}[a] d^{V^j}[\alpha(b)] - 
{}^{V^i}[a \alpha (b)^{p^{i-j}}] d\log [b] 
\quad (0 \leq j \leq i < m, a \in {\mathcal O}_X, b \in {\mathcal M}), 
\end{equation}
where $\alpha$ denotes the structure morphism ${\mathcal M} \to {\mathcal O}_X$. 
To prove that $\Phi_m$ is an isomorphism, it suffices to check that 
${\rm Ker}\,\psi_{\omega} \subseteq {\rm Ker}\,\psi_{\Lambda}$, namely, 
the elements \eqref{eqa:elements} are zero in 
$W_m\Lambda^{\bullet}_{(X,{\mathcal M})/(Y,{\mathcal N})}$. 
This is true because the equalities 
\begin{align*}
{}^{V^i}[a \alpha (b)^{p^{i-j}}] d\log [b] 
& = 
{}^{V^i}([a \alpha (b)^{p^{i-j}}] d\log [b])  
= 
{}^{V^i}([a] {}^{F^{i-j}}d[\alpha(b)]) 
= 
{}^{V^i}([a] {}^{F^{i}}\!d^{V^j}[\alpha(b)])
= 
{}^{V^i} [a] d^{V^j}[\alpha(b)]
\end{align*}
hold in $W_m\Lambda^{\bullet}_{(X,{\mathcal M})/(Y,{\mathcal N})}$. 
So the proof of (1) is finished. 

Next we prove (2). In \cite[pp.~260--261]{HK}, the comparison morphism 
\eqref{eqa:comp} is defined as the composite of the morphism 
\[ {\mathbb R}u_{m *}{\mathcal O}_m \to 
\breve{\Lambda}^{\bullet}_{(X,{\mathcal M})/(Y,{\mathcal N})} \] 
defined in \eqref{eq:comp} and the map 
\[ \breve{\Lambda}^{\bullet}_{(X,{\mathcal M})/(Y,{\mathcal N})} \to 
W_m\omega^{\bullet}_{(X,{\mathcal M})/(Y,{\mathcal N})} \] 
induced by $\psi_{\omega}$. From this definition and the construction 
of the comparison morphism 
\[ \breve{\Lambda}^{\bullet}_{(X,{\mathcal M})/(Y,{\mathcal N})} \to 
W_m\Lambda^{\bullet}_{(X,{\mathcal M})/(Y,{\mathcal N})} \] 
in \eqref{eq:comp}, 
the required compatibility follows from the commutativity of 
the diagram \eqref{eqa:tri}. So the proof of (2) is also finished. 
\end{proof}

\section*{Acknowledgements}
This article is a revised and slightly improved version of the master thesis of the first author under the 
supervision of the second author, which was 
originally written in Japanese. The authors would like to thank Professor Takeshi Tsuji for 
kindly sending us the preprint \cite{Tsuji}. The second author is partly supported by JSPS 
KAKENHI (Grant Numbers 17K05162 and 15H02048).

\end{document}